\newtheorem{theorem}{Theorem}[chapter]
\newtheorem{lemma}[theorem]{Lemma}
\newtheorem{corollary}[theorem]{Corollary}
\newtheorem{definition}[theorem]{Definition}
\newtheorem{exercise}[theorem]{Exercise}
\numberwithin{section}{chapter}
\numberwithin{equation}{chapter}
\newcommand{\cal}{\mathcal}
\newcommand{\mt}{\widetilde}
\newcommand{\free}{{\rm Free}\,}
\newcommand{\cH}{{\cal K}}
\newcommand{\cK}{{\cal L}}
\newcommand{\cG}{{\cal G}}
\newcommand{\cF}{{\cal F}}
\newcommand{\kone}{L}
\newcommand{\twist}{{\rm twist}}
\newcommand{\subgraph}{\subset}
\newcommand{\Ext}{{\rm Ext}}
\newcommand{\sheaves}{{\bf Sh}}
\newcommand{\Cat}[1]{{{\rm Cat}(#1)}}
\newcommand{\isom}{\simeq} % later this should, perhaps, be changed to =
\newcommand{\from}{\colon}
\newcommand{\ignore}[1]{}
\newcommand{\Hom}{{\rm Hom}}
\newcommand{\ca}{{\cal A}}
\newcommand{\ck}{{\cal K}}
\newcommand{\ce}{{\cal E}}
\newcommand{\cg}{{\cal G}}
\newcommand{\cm}{{\cal M}}
\newcommand{\cp}{{\cal P}}
\newcommand{\affine}{{\mathbb A}}
\newcommand{\field}{{\mathbb F}}
\newcommand{\integers}{{\mathbb Z}}
\newcommand{\complex}{{\mathbb C}}
\begin{document}

\frontmatter

\title{Sheaves on Graphs, Their Homological Invariants,
  and a Proof of the Hanna Neumann Conjecture}

%    Remove any unused author tags.

%    author one information
\author{Joel Friedman}
\address{Department of Computer Science, 
        University of British Columbia, Vancouver, BC\ \ V6T 1Z4, CANADA,
        and Department of Mathematics, University of British Columbia,
        Vancouver, BC\ \ V6T 1Z2, CANADA. }
\curraddr{}
\email{{\tt jf@cs.ubc.ca} or {\tt jf@math.ubc.ca}}
\thanks{Research supported in part by an NSERC grant.  Research
        done in part at the Centre Bernoulli, funded by the Swiss
        National Science Foundation.}

%    author two information
% \author{}
% \address{}
% \curraddr{}
% \email{}
% \thanks{}

%    \date is required; it is the date received by the editor.
% \date{\today, at \currenttime  (get ride of time in final version)}
% \date{\today}
\date{May 30, 2011}

\subjclass[2000]{Primary 05C10, 55N30, 18F20; Secondary 05C50, 18F10, 14F20}

\keywords{Graphs, sheaves, Hanna Neumann Conjecture, homology, Galois theory.}

% \begin{abstract}
% \end{abstract}
\begin{abstract}
In this paper we establish some foundations regarding sheaves of vector
spaces on graphs and their invariants, such as homology groups and their
limits.  
We then use these ideas to prove the Hanna Neumann
Conjecture of the 1950's; in fact, 
we prove a strengthened form of the conjecture.

We introduce a notion of a sheaf of vector spaces on a graph, 
and develop the foundations of homology theories for such sheaves.
One sheaf invariant,
its ``maximum excess,'' has a number of remarkable
properties.
It has a simple definition, with no reference to homology theory,
that resembles graph expansion.
Yet it is a ``limit'' of Betti numbers, and hence has a
short/long exact sequence theory and resembles the $L^2$ Betti numbers
of Atiyah.
Also, the maximum excess is defined via a 
supermodular function, which gives the maximum excess
much stronger properties than
one has of a typical Betti number.

Our sheaf theory can be viewed as a vast generalization of algebraic graph
theory: each sheaf has invariants associated 
to it---such as Betti numbers and Laplacian matrices---that generalize
those in classical graph theory. 

We shall use ``Galois graph theory'' to reduce the Strengthened
Hanna Neumann Conjecture to showing that certain sheaves, that we
call $\rho$-kernels,
have zero maximum excess.
We use the symmetry in Galois theory to argue that if the
Strengthened Hanna Neumann Conjecture is false, then the maximum excess
of ``most of'' these $\rho$-kernels must be large.
We then give an inductive argument to show that this is impossible.
\end{abstract}

\maketitle

%    Dedication.  If the dedication is longer than a line or two,
%    remove the centering instructions and the line break.
%\cleardoublepage
%\thispagestyle{empty}
%\vspace*{13.5pc}
%\begin{center}
%  Dedication text (use \\[2pt] for line break if necessary)
%\end{center}
%\cleardoublepage

%    Change page number to 7 if a dedication is present.
\setcounter{page}{4}

\tableofcontents

%    Include unnumbered chapters (preface, acknowledgments, etc.) here.
\chapter*{Introduction}

This memoir has two main goals.  First, 
we develop some foundations
on what we call ``sheaves on graphs'' and their invariants.  Second,
using these foundations,
we resolve the Hanna Neumann Conjecture of the 1950's.

Although our foundations of sheaves on graphs seem likely to impact
a number of areas of graph theory, the theme that is common to most
of this memoir is the
Hanna Neumann Conjecture (or HNC).  
Both this conjecture and a strengthening of it,
known as the Strengthened Hanna Neumann Conjecture (or SHNC)
have been extensively
studied 
(see
\cite{burns71,imrich76,imrich77,
servatius83,gersten83,stallings83,walter90,tardos92,dicks94,
tardos96,ivanov99,ar00,dicks01,ivanov01,khan02,meakin02,
jitsukawa03,walter07,everitt08,mineyev10}).
These conjectures are usually stated as
an inequality involving free groups, although both conjectures
have well-known
reformulations in term of finite graphs.
In this memoir we prove both conjectures, using the finite graph 
reformulations,
reducing both to the
vanishing of a homology group of certain sheaves on graphs.

This work was originally written and posted to {\tt arxiv.com}
as two separate articles.
The first aritcle, \cite{friedman_sheaves}, contains the foundational
material on sheaves of graphs, and comprises Chapter~1 of this 
manuscript.
The second article, \cite{friedman_sheaves_hnc}, resolves the SHNC
(and HNC), and Chapter~2 of this manuscript consists of this material.
This manuscript is easier to read than both articles separately, in that
redundant definitions have been discarded, and 
references in Chapter~2 to
material in Chapter~1 are now more specific.
Yet, as we now explain, Chapters~1 and 2 are largely
independently of one another, and Chapter~1, the foundations of
sheaves on graphs, is of interest beyond the HNC and SHNC.
To explain this interest, let us recall
a bit about sheaf theory and its connection to discrete mathematics.

Among many (co)homology theories of topological spaces,
the sheaf approach has many advantages.
For one, it works with non-Hausdorff
spaces, as done first by Serre
in algebraic geometry with the Zariski topology
(see \cite{hartshorne}).
Grothendieck's sheaf theory of \cite{sga4.1,sga4.2,sga4.3,sga5}
defined a notion of a sheaf on very general spaces
now called ``Grothendieck topologies.'' 
While Grothendieck's work has had remarkable success to cohomology
theories
in algebraic
geometry, we believe that graph theory and combinatorics may
greatly benefit by studying very special Grothendieck topologies
formulated from finite, discrete structures.
In particular, we will resolve the SHNC using a simple,
finite Grothendieck topology
associated to any finite graph.

Another aspect of sheaf cohomology is that it vastly generalizes
the cohomology of a space.  Each sheaf has injective resolutions
that give cohomology groups.  When the sheaf is take to be the
``structure sheaf'' of the space, we recover the cohomology groups
of the space.  However, there are many sheaves apart from the
structure sheaf, and the resulting cohomology groups can represent
a variety of aspects of the space.  In particular, each open subset
of a space, $X$, has an associated sheaf on $X$ that reflects many
properties of $X$; we will use
such sheaves in our proof of the SHNC.

One fundamental aspect of any (co)homology theory is that it expresses
relations between related (co)homology groups in terms of exact
sequences.  Furthermore, any exact sequence yields
a triangle inequality 
between the dimensions or ranks of any three consecutive
elements.
In \cite{friedman_cohomology,friedman_cohomology2,friedman_linear}
we began an investigation into applying such inequalities to complexity
theory, in particular to construct formal complexity measures to obtain
lower bounds for formula size.
Similarly, in this manuscript, we prove the SHNC from such an inequality.

In Chapter~1 we define a sheaf on a graph with no reference to sheaf
theory, rather as a collection of vector spaces indexed on the vertices
and edges of the graphs along with certain ``restriction'' maps.
We add that one can view such a sheaf as a simple genelization of an
incidence matrix of a graph; it follows that sheaves on graphs
can be viewed as a vast generalization of classical algebraic graph
theory (of adjacency matrices, Laplacians, etc.).
However,
our sheaves on graphs can also be viewed as
the very special case of sheaves
of finite dimensional vector spaces on a simple Grothendieck
topology that we associate to a finite graph.  In the case where
the graph has no self loops, the Grothendieck topology is equivalent
to a simple topological space.

Chapter~1 begins with a simple definition of sheaves on graphs and
some examples.  However, quickly we begin to study ``limits'' of
Betti numbers of these sheaves.
The most remarkable invariant that we study in Chapter~1 is the
{\em maximum excess} of a sheaf.
We give a number of strong results regarding the maximum excess,
and the related ``twisted'' homology.
These are related to the $L^2$ Betti numbers first studied by Atiyah
(see \cite{atiyah,luck02}); however, the results we
obtain in the case of finite graphs, especially regarding the maximum
excess, seem especially strong.

To summarize the above few paragraphs, 
here are some reasons that Chapter~1 is of interest
independent of the HNC:
\begin{enumerate}
\item sheaf theory on graphs
generalizes algebraic graph theory and, therefore, may strengthen its
applications;
\item our results on maximum excess give tools to study certain graph
invariants such as the ``reduced cyclicity'' and number of ``acyclic
components;''
\item our results on the maximum
excess of sheaves may indicate what one can expect of limits of
Betti numbers on more general structures;
\item any results on sheaves on graphs may give new results and examples
of what to expect on more general finite Grothendieck topologies, 
such as those of
possible interest to complexity theory;
\item any results on Betti numbers of sheaves may yield new inequalities
on other integers that can be viewed as akin to Betti numbers on some
discrete Grothendieck topology.
\end{enumerate}
Of course, despite the above reasons for interest in sheaves on graphs,
the reader will see that Chapter~1 is largely developed with
an eye toward the reduced cyclicity and the HNC.

Let us summarize aspects of Chapter~2, our proof of the SHNC, in general terms.
This will serve to highlight our approach to this problem via 
sheaves on graph,
which is very different than previous approaches.
We use a graph theoretic formulation of the SHNC that involves the
reduced cyclicity of three graphs.
However, using what we call
``Galois graph
theory'' (of \cite{friedman_geometric_aspects,st1}, but also \cite{gross}),
the SHNC amounts to showing that the reduced cyclicity of one graph is
less than that of another graph, and both of these graphs admit a
natural map to the same Cayley graph.

We wish to emphasize that, to the best of our knowledge, 
our manuscript represents the first application of Galois graph theory 
to other parts of graph theory.  That is, Galois graph theory occurs
for its own interest (in \cite{friedman_geometric_aspects}) and for its
connection to number theory (in \cite{st1}).  
However, in this manuscript we make essential use of Galois graph
theory to two
independent questions not obviously related to Galois graph theory.
First, in Chapter~1 we use Galois graph theory to show that maximum excess
scales under pulling back by a covering map; first we prove this for
Galois morphisms, making essential use of the symmetry in Galois theory,
and then we deduce the general case by the Normal Extension Theorem of
Galois graph theory.
Second, 
Galois graph theory is the basis of our construction of
$\rho$-kernels, upon which our approach to the HNC and SHNC is based,
and the symmetry of these $\rho$-kernels is used constantly in Chapter~2.

Let us return to the SHNC, and recall that exact sequences give triangle
inequalities on the dimensions of consecutive terms.
The reduced cyclicity
is a type of limiting first Betti number.  Hence,
one graph has smaller reduced cyclicity than a second graph
provided that there is a surjection from the first graph to the second,
such that the kernel of this surjection has vanishing 
limiting first
Betti number.
Unfortunately there is no such graph surjection in the graphs that arise from
the SHNC.
However, both graphs admit a natural map to the same Cayley graph, and
hence can be viewed as sheaves on this Cayley graph (much as open subsets of
a topological space have associated sheaves).
Remarkably, 
there is a surjection
from the first graph to the second when viewed as sheaves.
The kernel of such a sujection (generally a sheaf)
will be called a $\rho$-kernel, and the SHNC turns out to be implied
by the vanishing limiting first Betti number, or maximum excess,
of an appropriate collection of $\rho$-kernels.

We emphasize that the $\rho$-kernels that we build seem almost forced
upon us, once we look for the surjections described above.  However,
it does not seem to be an easy question, essentially of linear algebra,
to determine whether or not these $\rho$-kernels have vanishing
maximum excess.
In fact, if we define a $\rho$-kernel as the kernel of any surjection
of the two graphs of interest, then there are $\rho$-kernels whose maximum
excess does not vanish.

To complete the proof of the SHNC, we shall show that the maximum excess
of a ``generic'' $\rho$-kernel vanishes.  This main idea is that there
is a symmetry property of the ``excess maximizer,'' which implies that
maximum excess of a generic $\rho$-kernel must be a multiple of the order of 
an associated
Galois group (the group associated to the Cayley graph mentioned above).
From this point one knows that
if the generic maximum excess doesn't vanish, it would
be large; one can then use two different inductive arguments to show that
this is impossible.

For the reader interested only in a proof of the HNC, we mention that
can read its proof in Chapter~2 while skipping most of the material in
Chapter~1.
Indeed, Chapter~2 is based on the ``stand alone'' paper,
\cite{friedman_sheaves_hnc}, written without explicit
reference to homology theory, using only sheaves and maximum excess.
So to read Chapter~2, one needs 
the definitions of sheaves and maximum excess, of
Section~\ref{se:shmain}, the Galois graph theory of Section~\ref{se:shgalois},
and the submodularity of the excess in Section~\ref{se:shme}.
Aside from these results, the proof in \cite{friedman_sheaves_hnc}
needed the fact that the maximum excess is a ``first quasi-Betti number,''
which relies on the main (and most difficult) theorem of Chapter~1.
However, we have recently found a variant of the proof in
\cite{friedman_sheaves_hnc} which does not require this fact.
Hence one can read a complete proof of the HNC and SNHC in this manuscript,
without most of
Chapter~1 and any reference to homology.
However, as explained in Chapter~2, homology still gives valuable insight
into the proof.

We mention that as of writing
\cite{friedman_sheaves,friedman_sheaves_hnc},
Mineyev has informed us of his independent proof of
the HNC and SHNC, first using Hilbert modules
(\cite{mineyev11.1}, based on \cite{mineyev10}), and then
using only combinatorial group theory (\cite{mineyev11.2}).
His approaches seem very different from ours.

% Finally, we wish to mention that the main theorem of Chapter~1,
% Theorem~\ref{th:shmain}
% blah

We wish to thank 
Laurant Bartholdi, for conversations and introducing us to the SHNC,
and Avner Friedman, for comments on a draft of this manuscript.
We thank Luc Illusie, for an inspiring 
discussion regarding our ideas involving sheaf theory, homology, and the
SHNC; this discussion was a turning point in our research.
We wish to thank for following people 
for conversations:
Goulnara Arjantseva, Warren Dicks,
Bernt Everitt, Sadok Kallel, Richard Kent, 
Igor Mineyev, Pierre Pansu, and Daniel Wise.
Finally, we thank
Alain Valette and the Centre Bernoulli at the EPFL for hosting us
during a programme on limits of graphs, where we met Bartholdi and
Pansu and began this work.

\mainmatter
%    Include main chapters here.
\chapter[Foundations of Sheaves on Graphs]{Foundations of Sheaves on Graphs and Their Homological Invariants}

\section{Introduction}
\label{se:shintro}

The main goal of this chapter is to 
introduce a notion of a sheaf on a graph and 
to establish some foundational
results regarding the homology groups 
of such sheaves and related invariants.
After developing some general points we shall focus on a remarkable
invariant of a sheaf that we call the {\em maximum excess}.

The {\em maximum excess} of a sheaf 
arises naturally as a ``limit'' of Betti numbers,
akin to $L^2$ Betti number defined by
Atiyah.  Although such limits have been studied in many contexts, we are able
to show some compellingly strong results about these limits in the
case of sheaves on graphs.
First,
the maximum excess
can be defined, with no reference to homology theory, in a manner that
makes it resemble quantities seen in matching theory or expander graphs.
Second, this definition amounts to 
the maximum of an ``excess'' function that is supermodular; this gives
additional structure to the
maximum excess that is not apparent from homology theory.  
Third, for any given sheaf, the limit is attained from ``twisted Betti
numbers'' by passing to a finite
cover (as opposed to an infinite limit of covers).

Our motivation for studying the maximum excess and certain
Betti numbers arose from studying an important graph invariant
that we call the {\em reduced cyclicity} of a graph.
This invariant arises in one formulation of 
the much studied Hanna Neumann Conjecture of the 1950's 
(see
\cite{burns71,imrich76,imrich77,
servatius83,gersten83,stallings83,walter90,tardos92,dicks94,
tardos96,ivanov99,ar00,dicks01,ivanov01,khan02,meakin02,
jitsukawa03,walter07,everitt08,mineyev10}); in Chapter~2
we shall use the results of this chapter to prove this conjecture.
Moreover, our methods
will prove what is known as
the Strengthened Hanna Neumann Conjecture (or SHNC) of \cite{walter90}.
% The main results involve a remarkable invariant that we call
% the {\em maximum excess} of a sheaf.

Our sheaf theory on graphs is based on the sheaf theory of Grothendieck 
(see \cite{sga4.1,sga4.2,sga4.3,sga5}), built upon
what are now known as
Grothendieck topologies.
In the special case when the graph has no self-loops, the sheaf theory
we describe is equivalent to the sheaf theory on certain topological
spaces (see \cite{hartshorne}).  
The basic definition of sheaves on graphs and their homology groups
are special cases of theory developed in
\cite{friedman_cohomology,friedman_cohomology2,friedman_linear} 
and are probably special cases of situations arising in the fields
of toric varieties and quivers.
However, in this chapter we study a special case of this general notion
of sheaf theory, proving especially strong theorems particular to
sheaves on graphs and obtaining new theorems in graph theory.
In this process we also introduce new invariants in sheaf theory---such as 
``maximum excess''
and ``twisted homology''---and establish theorems about these invariants
that
may become useful to sheaf theories in other settings.

In this chapter we explore primarily those aspects of
sheaf theory directly related to our future study of the SHNC, namely
general properties of the maximum excess.  However, we believe sheaf
theory is a concept fundamental to graph theory, and that there will
probably emerge other applications of these ideas.
One reason for this belief is that many areas in graph theory, such as
expanding graphs, work with the adjacency matrix of a graph.
Any sheaf on a graph, $G$, has an adjacency matrix (and incidence
matrix, Laplacian, etc.) with many of the properties that graph adjacency
matrices have.  A graph has a particularly simple sheaf that we call
its ``structure sheaf.''
The adjacency matrix of the structure sheaf
turns out to be the adjacency matrix of $G$.
In this way the adjacency matrix of a graph, and all of
traditional algebraic graph theory, can be generalized to
sheaf theory; the sheaf theory, given its more general nature and
expressiveness,
may shed new light on traditional algebraic graph theory and its
applications.

New graph theoretic inequalities arise in our sheaf theory out of
``long exact sequences,'' analogous to long exact sequences that
appear in virtually any homology theory.
Indeed, relations between different homology groups are often
expressed in exact sequences, and in any exact sequence of vector spaces,
the dimensions of three consecutive elements
satisfy a triangle inequality.
It is such triangle inequalities that inspire and form the basis of
our approach to 
the SHNC.
% in \cite{friedman_straightening}.

One remarkable aspect of our sheaf theory is that it
adds ``new morphisms''
between graphs. 
In other words, consider two graphs, $G_1$ and $G_2$ that each admit a
morphism to another graph, $G$.
It is possible to associate with each $G_i$ a sheaf,
${\cal S}(G_i)$, over $G$, that contains all the information present in
$G_i$.
Any $G$-morphism from $G_1$ to $G_2$ gives rise to a morphism of sheaves,
from ${\cal S}(G_1)$ to ${\cal S}(G_2)$;
however, there are sheaf morphisms from ${\cal S}(G_1)$ to ${\cal S}(G_2)$
that do not arise from any graph morphism.
For example, there may be a surjection from
${\cal S}(G_1)$ to ${\cal S}(G_2)$ when there is no graph theoretic
surjection $G_1\to G_2$.
Some such ``new surjections'' are
crucial to our proof of the SHNC; the kernel of such ``new surjections''
give a type of sheaf that we call a {\em $\rho$-kernel}, which is the basis
of our approach to the SHNC.
Said otherwise, for any graph, $G$,
there is a faithful functor from the category of ``graphs
over $G$'' to the category of ``sheaves over $G$;'' however this functor is
not full, and some of the ``new morphisms'' between graphs over $G$,
viewed as sheaves over $G$, ultimately yield new
concepts in graph theory needed in our proof of 
the SHNC.

This chapter will focus on four types of invariants of sheaves:
(1) homology groups and resulting Betti numbers, 
(2) twisted homology groups and resulting twisted Betti numbers,
(3) the maximum excess, and
(4) limiting twisted Betti numbers.
Let us briefly motivate our interest in these invariants and describe the
main theorems in this chapter.  This discussion will be made more precise,
with more background, in Section~\ref{se:shmain}.

Our first type of invariant, homology groups of sheaves and resulting
Betti numbers, will not involve any difficult theorems.
The main novelty of this type of invariant is in its definition;
it is chosen in a way that it has appropriate properties for our needs
and
can express some
traditional invariants of a graph; these invariants include 
its Euler characteristic and the traditional zeroth and first Betti numbers.
In sheaf theory, usually sheaf cohomology based on the
global section functor is a central object of study;
however, these cohomology groups do not yield the invariants of interest to 
us in this chapter.
Instead, our homology groups are based on global cosections;
i.e., our homology groups are essentially
Ext groups in the first variable, where the second variable is
fixed to be the
structure sheaf.

The SHNC conjecture can be reformulated in graph theoretic terms, involving
a more troubling graph invariant, $\rho(G)$, of a graph, $G$,
which we call the
{\em reduced cyclicity} of $G$.  
The reason this graph invariant is troubling is that its usual definition
seems to require that we know how many connected components of
$G$ are acyclic, i.e., are isolated vertices or trees.
Prior to this paper, all non-trivial techniques we know
to bound $\rho(G)$ either presuppose something about the number of
acyclic components of $G$, or else they overlook such components; as such,
previous results on the reduced cyclicity usually either require special
assumptions or give results that are not sharp.
Our second set of invariants, the twisted homology groups and their dimensions,
i.e., the
twisted Betti numbers, give $\rho(G_1)$ as the first twisted Betti
number of 
a certain sheaf on $G$, for any graph, $G_1$, with a graph morphism to $G$.  
As such, the long exact sequences arising in twisted homology
give the first sharp relations between values
of $\rho$; however, these relations usually involve sheaves and
not just graphs alone.

Let us sketch the idea of why reduced cyclicity is a special case of
a twisted Betti number.
In this chapter we observe that
$\rho(G)$ is the limit of $h_1(K)/[K\colon G]$
over ``generic Abelian 
coverings maps,'' $K\to G$, where the degree, $[K\colon G]$, of the covering
map tends to infinity.
It is well known that for Abelian covering maps $K\to G$, we can recover
spectral properties of the adjacency matrix of $K$ by 
working with that of $G$ and ``twisting its entries,'' i.e.,
multiplying certain entries by roots of unity that appear in the
characters of the underlying Abelian group.
% (\cite{someone}, known earlier
% as ``signed
% adjacency matrix'' for degree two coverings \cite{someoneelse}).
So we form ``twisted'' homology groups by ``generically twisting'' a sheaf,
with twists that are parameters or indeterminates,
and compute that the reduced connectivity, $\rho(G)$, equals
the  first ``twisted'' Betti
number of the structure sheaf of $G$.
This gives a generalization of the definition of $\rho$ from graphs to
sheaves, and the resulting twisted Betti numbers 
satisfy triangle inequalities coming from
the long exact sequences in twisted homology.

Another promising fact about twisted Betti numbers is that, via
the theory of long exact sequences, one can reduce the
SHNC to the vanishing of the first twisted Betti number of a 
collection of sheaves that we call $\rho$-kernels.

The problem is that the twisted homology approach often seems to be the
``wrong'' way to view the reduced cyclicity, mainly for the following reason.
The
Euler characteristic and reduced cyclicity have a remarkable scaling
property under covering maps, $\phi\from K\to G$, i.e.,
$$
\chi(K)=\chi(G)\deg(\phi), \quad
\rho(K)=\rho(G)\deg(\phi).
$$
Twisted Betti numbers do not always scale in this way; this makes us
suspect that the twisted Betti number is not always
a good generalization of
the reduced cyclicity.

The remedy comes in our third type of invariant, a single invariant of
a sheaf that we shall define and call its {\em maximum excess}.
This is an integer that
one can define simply and with no reference to homology theory.
Its definition resembles combinatorial 
invariants arising in matching theory or
expander graphs.
The maximum excess of any sheaf is at most the first twisted Betti number,
and the two are equal on many types of sheaves, including all constant
sheaves.  Hence the two concepts are related but not identical.
Furthermore,
the SHNC is implied by
the ({\em a priori} weaker)
vanishing maximum excess of $\rho$-kernels, and the maximum excess
satisfies stronger properties that yield better bounds than what
one would
get for the first twisted Betti number.
So for the SHNC, we largely
abandon the idea of using twisted Betti numbers to
generalize $\rho$ from graphs to sheaves, and instead use the
maximum excess.
The problem is that
to proof the SHNC we require inequalities involving the
maximum
excess akin to those holding of Betti numbers of homology theories via
long exact sequences;
there is no {\em a priori} reason that such inequalities should hold.

The main theorem of this chapter, Theorem~\ref{th:shmain},
says that for any fixed sheaf on a graph, $G$, there is an integer,
$q$, with the following property: the maximum
excess and first twisted Betti number agree when the sheaf is
``pulled back'' along a covering map $G'\to G$, provided that the
girth of $G'$ is at least $q$.

The main theorem implies that the maximum excess is a
{\em first quasi-Betti number}, meaning that the maximum excess
satisfies certain triangular inequalities that
we use to prove the SHNC.
However, in Chapter~2 we see that a variation of our proof avoids
these triangular inequalities.

Another view of our main theorem is that there exists a ``limit'' to
the ratio of a twisted Betti number of a pullback of a fixed sheaf
along a graph covering
to the degree of the covering.
We shall call this limiting ratio a ``limiting twisted Betti number,''
which is our fourth type of invariant.  Our main theorem
can be rephrased as saying that the first limiting
twisted Betti number
is just the maximum excess.
It is easy to see that limiting twisted Betti numbers satisfy the 
triangular inequalities we desire for the maximum excess; hence
proving the main theorem proves the desired inequalities for the
maximum excess. 
However, as a limiting Betti number, the
maximum excess actually has associated homology groups whose dimensions
divided by the covering degree approximate the maximum excess.
And
it may turn out that the homology groups themselves may contain useful
information beyond knowing merely their dimension; however,
for our proof of the SHNC,
% in 
% \cite{friedman_straightening}, 
all that we need is
the dimensions of these homology groups, 
i.e., their Betti numbers.

Lior Silbermann has pointed out to us that our notion of 
limiting twisting Betti numbers
is a discrete analogue of ``$L^2$ Betti numbers'' introduced by
Atiyah on manifolds (\cite{atiyah}); the theory involved in the study of
$L^2$ Betti numbers
(see\cite{luck02}), especially the von Neumann dimension of certain
``matrices'' of this theory, may already imply that our limiting twisting Betti
numbers do have a limit and that it is an integer (because the fundamental
group of a graph is a free group).
So part of our results can be viewed as a very explicit type of
$L^2$ or limiting Betti number calculation (for the very special case
sheaves on graphs), 
that includes stronger information; indeed, we
give a simple interpretation of this number (the maximum excess)
and a finite procedure for computing it (pulling back to a graph
of sufficiently large girth and computing a
twisted Betti number).

We note that for the purpose of proving the SHNC, the main results
needed from this chapter are the definitions of a sheaf and its maximum
excess, and a few properties we prove regarding the
maximum excess.  If we could
prove such properties without using homology theory, we could study
the SHNC without homology theory.  Nonetheless, we find that twisted homology
gives important intuition for the maximum excess; for example, we first
proved the SHNC using twisted homology, and only discovered during the
writing of \cite{friedman_sheaves_hnc} that the proof could be written
entirely in terms of the maximum excess.
As we remark at the end of Chapter~2, there is a way to prove the SHNC
with no reference to homology theory, but this requires some extra
combinatorial analysis (namely Appendix~A).

The rest of this chapter is organized as follows.
In Section~\ref{se:shmain} we give precise definitions and statements of
the theorems in this chapter.
In Section~\ref{se:shgalois} we review part of what might be called
``Galois theory of graphs'' that we will use in this chapter.
In Section~\ref{se:shsheaf} we give the basic properties of sheaves 
and homology,
pullbacks and their adjoints; then we explain everything in terms of 
cohomology of
Grothendieck topologies (this explanation will help the reader to understand
the context of our definitions, but this explanation
is not necessary to read the rest
of this paper).
In Section~\ref{se:shtwist} we define the twisted homology and compute
the twisted homology of the constant sheaf of a graph; we also
interpret twisted homology in terms of Abelian covers.
In Section~\ref{se:shme} we establish the basic properties of the
maximum excess, including its bound on the twisted homology.
The next two sections establish our main theorem.
In Section~\ref{se:shabel} we show how to interpret elements of the
first twisted homology group of a graph in terms of the first 
homology group of the
maximum Abelian covering of the graph.
In Section~\ref{se:shequal} we prove Theorem~\ref{th:shmain}, that says that
the first twisted Betti number and the maximum excess agree after
an appropriate pullback.
% In Section~\ref{se:shgen} we generalize the discussion of Theorem~\ref{th:shmain};
% then we define a {\em homogeneous part} of the first twisted homology group
% and show its dimension bounds the maximum excess from below.
In Section~\ref{se:shconclude} we make some concluding remarks.

% We wish to thank 
% Laurant Bartholdi, for conversations and introducing us to the SHNC,
% and Avner Friedman, for comments on a draft of this paper.
% We thank Luc Illusie, for an inspiring 
% discussion regarding our ideas involving sheaf theory, homology, and the
% SHNC.
% We wish to thank for following people 
% for conversations:
% Goulnara Arjantseva, Warren Dicks,
% Bernt Everitt, Sadok Kallel, Richard Kent, 
% Igor Mineyev, Pierre Pansu, and Daniel Wise.
% Finally, we thank
% Alain Valette and the Centre Bernoulli at the EPFL for hosting us
% during a programme on limits of graphs, where we met Bartholdi and
% Pansu and began this work.  

% \input meintro  % temporarily!!!!!!!!!!!!
\section{Basic Definitions and Main Results}
\label{se:shmain}

In this section we will define sheaves and all the main invariants 
of sheaves that we
use in this paper.
We will state the
main theorem in this chapter, and state or describe other results in
this chapter.
In most of this paper we work with directed graphs (digraphs), which
makes things notationally simpler;
as we 
remark in Section~\ref{se:shconclude},
all this sheaf and homology theory works just as well
with undirected graphs, although it is slightly more cumbersome if one wants
to avoid orienting the edges.

\subsection{Definition of Sheaves and Homology}
\label{sb:sheaf_def}
We will allow directed graphs
to have multiple edges and
self-loops;
so in this paper a directed graph (or digraph) consists of tuple
$G=(V_G,E_G,t_G,h_G)$ where $V_G$ and $E_G$ are 
sets---the vertex and edge sets---and
$t_G\from E_G\to V_G$ is the ``tail'' map and 
$h_G\from E_G\to V_G$ the ``head'' map.
Throughout this paper, unless otherwise indicated,
a digraph is assumed to be finite, i.e., the vertex
and edge sets are finite.

Recall that a morphism of digraphs, $\mu\from K\to G$, is a pair
$\mu=(\mu_V,\mu_E)$ of maps $\mu_V\from V_K\to V_G$ and
$\mu_E\from E_K\to E_G$ such that
$t_G\mu_E=\mu_V t_K$ and $h_G\mu_E=\mu_V h_K$.  
We can usually drop the subscripts from $\mu_V$ and $\mu_E$, although
for clarity we shall sometimes include them.

Recall that fibre products exist for directed graphs
(see, for example, \cite{friedman_geometric_aspects}, or
\cite{stallings83}, where fibre products are called ``pullbacks'')
and the fibre
product, $K=G_1\times_G G_2$, of morphisms
$\mu_1\from G_1\to G$ and $\mu_2\from G_2\to G$ has 
$$
V_K=\{(v_1,v_2) \;|\; v_i\in V_{G_i},\;\mu_1 v_1=\mu_2 v_2 \}, 
$$
$$
E_K=\{(e_1,e_2) \;|\; e_i\in E_{G_i},\;\mu_1 e_1=\mu_2 e_2 \},
$$
$$
t_K=(t_{G_1},t_{G_2}),
\quad\mbox{and}\quad h_K=(h_{G_1},h_{G_2}).
$$
For $i=1,2$, respectively,
there are natural digraph
morphisms, $\pi_i\from G_1\times_G G_2\to G_i$ called
projection onto the first and second component, respectively, given by
the respective set theoretic projections on $V_K$ and $E_K$.

We say that $\nu\from K\to G$ is a covering map
(respectively, \'etale\footnote{Stallings, in \cite{stallings83}, uses
the term ``immersion.'' 
}) if 
for each $v\in V_K$, $\nu$ gives a bijection
(respectively, injection)
of incoming edges of $v$ (i.e., those edges whose head is $v$) with those
of $\nu(v)$, and a bijection (respectively, injection)
of outgoing edges of $v$ and $\nu(v)$.
If $\nu\from K\to G$ is a covering map and $G$ is connected, then the
{\em degree} of $\nu$, denoted $[K\colon G]$,
is the number of preimages of a vertex or edge in $G$
under $\nu$ (which does not depend on the vertex or edge); if $G$ is not
connected, one can still write $[K\colon G]$ when the number of preimages
of a vertex or edge in $G$ is the same for all vertices and edges.

Given a digraph, $G$, we view $G$ as an undirected graph (by forgetting
the directions along the edges), and
let $h_i(G)$ denote the $i$-th Betti number of $G$,
and $\chi(G)$ its Euler characteristic; hence $h_0(G)$ is the number
of connected components of $G$, $h_1(G)$ is the minimum number of edges 
needed to be removed from $G$ to leave it free of cycles, and
$$
h_0(G)-h_1(G)=\chi(G) = |V_G|-|E_G|.
$$
Let ${\rm conn}(G)$ denote the connected components of $G$, and let
\begin{equation}\label{eq:shrho}
\rho(G) = \sum_{X \in {\rm conn}(G)} \max(0,h_1(X)-1),
\end{equation}
which we call the {\em reduced cyclicity of $G$}.

For each digraph, $G$, and field, $\field$, our sheaf theory is the
theory of sheaves of finite dimensional $\field$-vector spaces on
a certain
finite Grothendieck topology 
(see \cite{sga4.1,sga4.2,sga4.3,sga5}, where a Grothendieck topology is
called a ``site'')
that we associate to $G$;
this Grothendieck topology
has many properties in common
with topological spaces; in \cite{friedman_cohomology}
we have called these spaces {\em semitoplogical}, and have worked out
the structure of their injective and projective modules, which allows
us to compute derived functors (e.g., cohomology, Ext groups), used in
\cite{friedman_cohomology,friedman_cohomology2,friedman_linear}.
Here we define sheaves and describe a homology theory 
``from scratch,'' without appealing to projective or injective modules; 
later we explain how our homology theory
fits into standard sheaf theory as the derived functors of global
cosections.

\begin{definition} 
\label{de:sheaf}
Let $G=(V,E,t,h)=(V_G,E_G,t_G,h_G)$ 
be a directed graph, and $\field$ a field.  By a {\em sheaf of 
finite dimensional $\field$-vector
spaces on $G$}, or simply {\em a sheaf on $G$},
we mean the data, ${\cal F}$, consisting of 
\begin{enumerate}
\item a finite dimensional
$\field$-vector space, ${\cal F}(v)$, for each $v\in V$, 
\item a finite dimensional
$\field$-vector space, ${\cal F}(e)$, for each $e\in E$, 
\item a linear map, ${\cal F}(t,e)\from{\cal F}(e)\to{\cal F}(te)$ for
each $e\in E$, 
\item a linear map, ${\cal F}(h,e)\from{\cal F}(e)\to{\cal F}(he)$ for
each $e\in E$, 
\end{enumerate}
The vector spaces ${\cal F}(P)$, ranging over all $P\in V_G\amalg E_G$
($\amalg$ denoting the disjoint union), are called the {\em values} of
${\cal F}$.  The morphisms ${\cal F}(t,e)$ and ${\cal F}(h,e)$ are
called the {\em restriction maps}.
If $U$ is a finite dimensional vector space over $\field$,
the {\em constant sheaf associated to $U$},
denoted $\underline{U}$, is the sheaf comprised of the value
$U$ at each vertex and edge, with all restriction maps being the
identity map.
The constant sheaf $\underline{\field}$ will be called the
{\em structure sheaf} of $G$ (with respect to the field, $\field$),
for reasons to be explained later.
\end{definition}

The field, $\field$, is arbitrary, although at times we insist
that it not be finite, and at times that it have characteristic zero.

Now we define homology groups.
To a sheaf, ${\cal F}$, on a digraph, $G$, we set
$$
{\cal F}(E)=\bigoplus_{e\in E} {\cal F}(e), 
\quad
{\cal F}(V)=\bigoplus_{v\in V} {\cal F}(v).
$$
We associate a transformation
$$
d_h=d_{h,{\cal F}}\from {\cal F}(E) \to  {\cal F}(V)
$$
defined by taking ${\cal F}(e)$ (viewed as a component of ${\cal F}(E)$)
to ${\cal F}(he)$ (a component of ${\cal F}(V)$) via the map 
${\cal F}(h,e)$.
Similarly we define $d_t$.  We define the {\em differential of ${\cal F}$}
to be
$$
d=d_{\cal F} = d_h - d_t.
$$
\begin{definition} We define the {\em zeroth} and {\em first homology
groups of ${\cal F}$} to be, respectively,
$$
H_0(G,{\cal F}) = {\rm cokernel}(d),\quad
H_1(G,{\cal F}) = {\rm kernel}(d).
$$
We denote by $h_i(G,{\cal F})$ the dimension of $H_i(G,{\cal F})$ 
as an
$\field$-vector space, and call it the {\em $i$-th Betti number of
${\cal F}$}.
We often just write $h_i({\cal F})$ and $H_i({\cal F})$ if $G$ is clear
from the context (when no confusion will arise between $h_i({\cal F})$,
the dimension, and $h$ the head map of a graph).
We call $H_i(\underline\field)$ 
the $i$-th homology group of $G$ with coefficients
in $\field$, denoted $H_i(G)$ or, for clarity, $H_i(G,\underline\field)$.
\end{definition}
For ${\cal F}=\underline\field$, $d$ is just the usual incidence matrix; thus,
if $\field$ is of characteristic zero, then the $h_i(G)$,
i.e., the dimension of the $H_i(G)$, 
are the usual
Betti numbers of $G$.

Define the {\em Euler characteristic of ${\cal F}$} to be
$$
\chi({\cal F})=\dim\bigl({\cal F}(V)\bigr) -
\dim\bigl( {\cal F}(E) \bigr).
$$
Since $d_{\cal F}$ has domain ${\cal F}(E)$ and codomain
${\cal F}(V)$, we have
$$
h_0({\cal F}) - h_1({\cal F}) = \chi({\cal F}).
$$

If $j\from G'\to G$ is a digraph morphism, there is a naturally defined
sheaf $j_!\underline\field$ on $G$ such that $H_i(j_!\underline\field)$ is naturally
isomorphic to $H_i(G')$ ($j_!$ will be defined as a functor from sheaves
on $G'$ to sheaves on $G$ in Subsection~\ref{sb:pullbacks}); 
when $j$ is an inclusion, then
$j_!\underline\field$ is just the sheaf whose
values are $\field$ on $G'$ and $0$ outside
of $G'$ (i.e., on vertices and edges not in $G'$);
we will usually use $\underline\field_{G'}$ to denote $j_!\underline\field$ 
(which is somewhat abusive unless $j$ is understood).
If $\phi\from G'\to G''$ is a morphism of digraphs over $G$, then
$\phi$ gives rise to a natural morphism of sheaves $\underline\field_{G'}\to
\underline\field_{G''}$.  In this way the functor $G'\mapsto \underline\field_{G'}$ 
includes the category of digraphs over $G$ as a subcategory of
sheaves over $G$.
As mentioned before,
one key aspect of sheaf theory is that the functor is not full, i.e., there
exist (very important) morphisms of 
sheaves $\underline\field_{G'}\to\underline\field_{G''}$
that do not arise from a morphism of digraphs $G'\to G''$; such
morphisms will be needed to define sheaves (their kernels) that we call
$\rho$-kernels, which will be crucial to our approach to the SHNC.

Next we give the long exact sequence in homology associated to
a short exact sequence of sheaves.

\begin{definition} A morphism of sheaves $\alpha\from{\cal F}\to{\cal G}$
on $G$ is a collection of linear maps
$\alpha_v\from{\cal F}(v)\to{\cal G}(v)$ for each $v\in V$ and
$\alpha_e\from{\cal F}(e)\to{\cal G}(e)$ for each $e\in E$ 
such that for each $e\in E$ we have
${\cal G}(t,e)\alpha_e=\alpha_{te}{\cal F}(t,e)$ and 
${\cal G}(h,e)\alpha_e=\alpha_{he}{\cal F}(h,e)$.
\end{definition}

It is not hard to check that
all Abelian
operations
on sheaves, e.g., taking kernels, taking direct sums, checking exactness,
can be done ``vertexwise and edgewise,'' i.e.,
${\cal F}_1\to{\cal F}_2\to{\cal F}_3$ is exact iff
for all $P\in V_G\amalg E_G$, we have
${\cal F}_1(P)\to{\cal F}_2(P)\to{\cal F}_3(P)$ is exact.
This is actually well known,
since our sheaves are presheaves of vector spaces on a category
(see \cite{friedman_cohomology} or Proposition~I.3.1 of
\cite{sga4.1}).

The following theorem results from a straightforward application
of classical homological algebra.
\begin{theorem}\label{th:shshort_long}
To each ``short exact sequence'' of sheaves, i.e.,
$$
0\to {\cal F}_1\to {\cal F}_2 \to {\cal F}_3 \to 0
$$
(in which the kernel of each arrow is the image of the preceding arrow),
there is a natural long exact sequence of homology groups
$$
0\to 
H_1({\cal F}_1)\to
H_1({\cal F}_2)\to
H_1({\cal F}_3)\to
H_0({\cal F}_1)\to
H_0({\cal F}_2)\to
H_0({\cal F}_3)\to
0 .
$$
\end{theorem}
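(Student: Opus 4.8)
The plan is to reduce the statement to the snake lemma of classical homological algebra, as the sentence preceding the theorem indicates. First I would fix the bookkeeping: a sheaf ${\cal F}$ on $G$ determines the two-term complex
$$
{\cal F}(E) \xrightarrow{d_{\cal F}} {\cal F}(V),
$$
which I regard as concentrated in degrees $1$ and $0$, so that its homology is $H_1({\cal F})=\kernel(d_{\cal F})$ in degree $1$ and $H_0({\cal F})={\rm cokernel}(d_{\cal F})$ in degree $0$; this is the convention already used in the definitions above, and it is the one compatible with $h_0({\cal F})-h_1({\cal F})=\chi({\cal F})$.

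Second, I would check that a short exact sequence $0\to{\cal F}_1\to{\cal F}_2\to{\cal F}_3\to 0$ of sheaves on $G$ induces a short exact sequence of these two-term complexes. This amounts to two observations. (i) The two rows
$$
0\to{\cal F}_1(E)\to{\cal F}_2(E)\to{\cal F}_3(E)\to 0
\quad\mbox{and}\quad
0\to{\cal F}_1(V)\to{\cal F}_2(V)\to{\cal F}_3(V)\to 0
$$
are exact: by the fact recalled in the text, exactness of a sequence of sheaves is tested vertexwise and edgewise, and since ${\cal F}_i(E)=\bigoplus_{e\in E}{\cal F}_i(e)$ and ${\cal F}_i(V)=\bigoplus_{v\in V}{\cal F}_i(v)$ are finite direct sums of exact sequences, they are exact. (ii) The squares formed with the vertical maps $d_{{\cal F}_1}, d_{{\cal F}_2}, d_{{\cal F}_3}$ commute: this is precisely the defining compatibility of a morphism of sheaves with the restriction maps ${\cal F}(h,e)$ and ${\cal F}(t,e)$, which says a sheaf morphism commutes with $d_h$ and with $d_t$ separately, hence with $d=d_h-d_t$.

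Third, I would invoke the snake lemma for the resulting short exact sequence of two-term complexes. It produces an exact sequence
$$
0\to\kernel(d_{{\cal F}_1})\to\kernel(d_{{\cal F}_2})\to\kernel(d_{{\cal F}_3})\xrightarrow{\delta}{\rm cokernel}(d_{{\cal F}_1})\to{\rm cokernel}(d_{{\cal F}_2})\to{\rm cokernel}(d_{{\cal F}_3})\to 0,
$$
which, under the identifications of the first step, is exactly the asserted six-term exact sequence $0\to H_1({\cal F}_1)\to\cdots\to H_0({\cal F}_3)\to 0$, with connecting map $\delta\from H_1({\cal F}_3)\to H_0({\cal F}_1)$. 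For the naturality claim, I would note that a morphism from one short exact sequence of sheaves to another induces, via steps one and two, a morphism of the associated short exact sequences of two-term complexes, and then appeal to the standard naturality of the snake-lemma connecting homomorphism.

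I do not expect any genuine obstacle here: the entire content is the snake lemma, and the only things to verify — exactness of the two rows and commutativity of the two squares — are the routine facts above. The single point worth stating explicitly is the degree convention that turns ``kernel'' into $H_1$ and ``cokernel'' into $H_0$; everything else is formal.
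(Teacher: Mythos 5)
Your proposal is correct and follows essentially the same route as the paper: both reduce the statement to the snake lemma applied to the $3\times 3$ commutative diagram of two-term complexes ${\cal F}_i(E)\to{\cal F}_i(V)$, using the vertexwise/edgewise exactness of the rows. Your write-up is a bit more explicit about the degree conventions and the commutativity of the squares, but the underlying argument is the same.
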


\subsection{Quasi-Betti Numbers and Maximum Excess}

For any digraph, $G$, we have that the pair
$h_0,h_1$ assign non-negative integers to
each sheaf over $G$, and these integers satisfy certain properties.
In this chapter we introduce other pairs of invariants,
essentially variations of $h_0,h_1$, that satisfy the same
properties.  Our proof of the SHNC will use the fact that
the ``maximum excess'' is
part of such a pair.  Let us make these notions precise.

\begin{definition} A sequence of real numbers, $x_0,\ldots,x_r$ is
a {\em triangular sequence} if for any $i=1,\ldots,r-1$ we have
$$
x_i\le x_{i-1}+x_{i+1}.
$$
\end{definition}

\begin{definition} Given a digraph, $G$, and a field, $\field$,
consider the category of sheaves of $\field$-vector spaces on $G$.
Let
$\alpha_0,\alpha_1$ be 
two functions
from sheaves 
to the non-negative reals.
We shall say that $(\alpha_0,\alpha_1)$ is a
{\em quasi-Betti number pair (for $G$ and $\field$)} provided that:
\begin{enumerate}
\item for each sheaf, ${\cal F}$, we have
\begin{equation}\label{eq:shquasi_Euler}
\alpha_0({\cal F})-\alpha_1({\cal F}) = \chi({\cal F});
\end{equation}
\item for any sheaves, ${\cal F}_1,{\cal F}_2$ on $G$ we have
$$
\alpha_i({\cal F}_1\oplus {\cal F}_2) =
\alpha_i({\cal F}_1) +
\alpha_i({\cal F}_2) \quad\mbox{for $i=0,1$;}
$$
\item for any short exact sequence of sheaves on $G$
$$
0\to{\cal F}_1\to{\cal F}_2\to{\cal F}_3\to 0,
$$
the sequence of integers
$$
0,
\alpha_1({\cal F}_1),\alpha_1({\cal F}_2),\alpha_1({\cal F}_3),
\alpha_0({\cal F}_1),\alpha_0({\cal F}_2),\alpha_0({\cal F}_3),
0
$$
is triangular.
\end{enumerate}
Moreover, we say that a function, $\alpha$, from sheaves to non-negative
reals is a {\em first quasi-Betti number} if the pair 
$(\alpha_0,\alpha_1)$ with
$$
\alpha_1({\cal F})=\alpha({\cal F}), \quad
\alpha_0({\cal F})=\chi({\cal F})+\alpha({\cal F})
$$
\end{definition}
are quasi-Betti number pair.
The relationship between quasi-Betti numbers and a first quasi-Betti
numbers is forced by equation~(\ref{eq:shquasi_Euler}).

Notice that $(h_0,h_1)$ is a quai-Betti number pair; the only issue
in establishing this is property~(3) of the definition, and
this follows from the long exact sequence given by
Theorem~\ref{th:shshort_long}.

Of course, if $(\alpha_0,\alpha_1)$ is a quasi-Betti number pair, then
clearly $\alpha_1$ is a first quasi-Betti number.

Let us give other quasi-Betti number pairs, beginning with the one
of main interest in this paper.

\begin{definition}
\label{de:excess}
Let ${\cal F}$ be a sheaf on a digraph, $G$.
For any $U\subset {\cal F}(V)$ we define the 
{\em head/tail neighbourhood of $U$}, denoted 
$\Gamma_{\rm ht}(G,{\cal F},U)$, or simply
$\Gamma_{\rm ht}(U)$, to be
\begin{equation}\label{eq:shhead/tail}
\Gamma_{\rm ht} (U) = \bigoplus_{e\in E_G} \{ w\in{\cal F}(e) \;|\;
d_h(w),d_t(w)\in U \};
\end{equation}
we define the {\em excess of ${\cal F}$ at
$U$} to be
$$
{\rm excess}({\cal F},U) = \dim\bigl( \Gamma_{\rm ht}(U) \bigr) - \dim(U).
$$
Furthermore we define the {\em maximum excess} of ${\cal F}$ to be
$$
{\rm m.e.}({\cal F}) = \max_{U\subset{\cal F}(V_G)} {\rm excess}
({\cal F},U).
$$
\end{definition}

We shall see that the excess is a supermodular function, and hence
the maximum excess occurs on a lattice of subsets of ${\cal F}(V)$.
It is not hard to see that for the structure sheaf, $\underline\field$, we have
$$
{\rm m.e.}(\underline\field) = \rho(G).
$$
It is instructive to determine which subsets of $\underline\field(V_G)$
obtain this maximum excess of $\rho(G)$.
So
let $G'$ be obtained from $G$ by discarding all components
with positive Euler characteristic and, optionally,
discarding some components of zero Euler characteristic, and then, optionally
repeatedly pruning any of its leaves 
(i.e., removing a vertex of degree one and
its incident edge); then the excess of $U(G')=\oplus_{v\in V_{G'}}\field(v)$
of $\underline\field$ on $G$ is $\rho(G)$, and, conversely, any
subspace $U\subset\underline\field(V_G)$ achieving the maximum excess of
$\rho(G)$ is of the form $U(G')$ for a $G'$ as above.  
The reader can easily see that such $U(G')$ form a lattice (i.e., are
closed under
intersection and sum).

\begin{theorem}\label{th:shSHNC_main}
The maximum excess is a first quasi-Betti number.
\end{theorem}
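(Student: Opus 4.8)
The plan is to establish that $\mathrm{m.e.}$ satisfies the three defining properties of a first quasi-Betti number, with the third (the triangular inequality attached to a short exact sequence) being the crux. Since $\mathrm{m.e.}$ is already known (from the discussion preceding Definition~\ref{de:excess}) to be a nonnegative integer, and since the pair attached to a first quasi-Betti number is forced by $\alpha_0(\cal F)=\chi(\cal F)+\mathrm{m.e.}(\cal F)$, the only real content is: (1) $\alpha_0(\cal F)\ge 0$, i.e.\ $\mathrm{m.e.}(\cal F)\ge -\chi(\cal F)$; (2) additivity over direct sums; and (3) the triangularity of the seven-term sequence
$$
0,\ \mathrm{m.e.}(\cal F_1),\ \mathrm{m.e.}(\cal F_2),\ \mathrm{m.e.}(\cal F_3),\ \alpha_0(\cal F_1),\ \alpha_0(\cal F_2),\ \alpha_0(\cal F_3),\ 0
$$
for every short exact sequence $0\to\cal F_1\to\cal F_2\to\cal F_3\to 0$.

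The honest route to (3) is through the main theorem of the chapter, Theorem~\ref{th:shmain}: for a \emph{fixed} sheaf on $G$, once one pulls back along a covering $G'\to G$ of large enough girth, the maximum excess equals the first twisted Betti number, and twisted Betti numbers scale by the covering degree. Given three sheaves sitting in a short exact sequence, I would first pull the whole sequence back along a single covering $G'\to G$ whose girth is large enough to work simultaneously for all three sheaves (pullback is exact, being computed vertexwise and edgewise, so the pulled-back sequence is again short exact). On $G'$ the maximum excess of each $\cal F_i$ agrees with its first twisted Betti number $\beta_1$, and $\alpha_0$ agrees with the corresponding $\beta_0=\chi+\beta_1$. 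But twisted Betti numbers \emph{do} fit into a long exact sequence — twisted homology is an honest homology theory — so the seven-term sequence built from $(\beta_0,\beta_1)$ is triangular by the standard argument (each consecutive triple is three terms of a long exact sequence, hence the middle dimension is at most the sum of the outer two). Dividing through by the covering degree $[G':G]$ transfers triangularity back to $\mathrm{m.e.}$ on $G$, since each term of the seven-term sequence on $G'$ is exactly $[G':G]$ times the corresponding term on $G$. This also yields (1): $\alpha_0(\cal F)=\mathrm{m.e.}(\cal F)+\chi(\cal F)$ equals $\beta_0$ of the pullback divided by the degree, hence is nonnegative. For additivity (2), the maximum excess of $\cal F_1\oplus\cal F_2$ is clearly at least $\mathrm{m.e.}(\cal F_1)+\mathrm{m.e.}(\cal F_2)$ by taking the direct sum of optimal subspaces, and the reverse inequality follows either directly from supermodularity of the excess function applied to the product decomposition, or again by reduction to twisted Betti numbers, which are additive because twisted homology commutes with direct sums.

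The main obstacle is that property~(3) genuinely rests on Theorem~\ref{th:shmain}, the deepest result of the chapter — without it there is no evident reason a short exact sequence of sheaves should force a triangular inequality on maximum excesses, because $\mathrm{m.e.}$ is defined by a maximization that a priori has no exactness structure. So this theorem is essentially a corollary of the main theorem, and the proof is short modulo that input; I would present it in exactly that way, flagging that the self-contained argument avoiding homology (alluded to in the introduction, via Appendix~A) is the alternative but that here we simply cite Theorem~\ref{th:shmain}. One point requiring a little care is the simultaneous choice of girth: I would invoke Theorem~\ref{th:shmain} for each of $\cal F_1,\cal F_2,\cal F_3$ to get integers $q_1,q_2,q_3$ and then work over any covering of girth at least $\max(q_1,q_2,q_3)$ — such coverings exist since graphs admit covers of arbitrarily large girth — checking that "large enough girth" is monotone so that all three equalities hold on the common cover.
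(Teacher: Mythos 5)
Your argument matches the paper's: the paper derives Theorem~\ref{th:shSHNC_main} from Theorem~\ref{th:shmain} by observing (just after introducing limiting Betti numbers) that for any fixed covering $\phi\from G'\to G$ the functions $h_i^{\rm twist}(\phi^*{\cal F})/\deg(\phi)$ form a quasi-Betti pair and that the maximum excess is the limit of these, which is exactly your argument of pulling back along a single cover of sufficiently large girth and transferring the triangular inequalities of twisted homology via the degree-scaling of the maximum excess. The paper states this as a one-paragraph remark rather than a detailed proof, but the content is the same.
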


Theorem~\ref{th:shSHNC_main} will be crucial to our proof of
the SHNC (although, as mentioned before, in an alternate proof we
avoid the need for this theorem).
Somewhat surprisingly, the statement of 
this theorem and all the necessary definitions
do not involve any homology theory.

We shall show Theorem~\ref{th:shSHNC_main}
by identifying the maximum excess with a certain
``limit'' Betti number.

\subsection{Twisted Homology}
\label{sb:twisted_def}
One graph theoretic reformulation of the SHNC involves the reduced
cyclicity defined in equation~(\ref{eq:shrho}).
This definition seems difficult to deal with, because of the 
$\max(0,h_1(X)-1)$ term, and of the possibility of $h_1(X)=0$ for
some components, $X$, of $G$.
For a digraph, $G$, one can realize $\rho(G)$ as a ``twisted first Betti
number;'' constructing this
``twisted homology theory'' is our first step towards showing
that the maximum excess is a first quasi-Betti number.

Let us first briefly motivate our definitions of twisted homology.
We begin by noticing that for $G$ connected we have
\begin{equation}\label{eq:shrho_limit}
\rho(G)  = \lim_{n\to\infty} h_1(L_n)/n,
\end{equation}
where for each positive integer $n$ we choose a covering
$L_n\to G$ of degree $n$ such that $L_n$ is connected (for then
$h_0(L_1)=1$ and $h_1(L_n)=h_0(L_n)-\chi(L_n)=1+n\rho(G)$).

One way of choosing $n$ and $L_n\to G$ of degree $n$ such that $L_n$ is
connected is to take $n=p$ a prime number, and take $L_p\to G$ to be
a ``generic'' $\integers/p\integers$ covering of $G$ (see 
Section~\ref{se:shgalois}).  
It is well known that for $\integers/p\integers$ coverings $G'\to G$,
or for any Abelian covering, the eigenvalues of the adjacency matrix
of $G'$ can be computed from those of $G$ after ``twisting'' appropriately;
here ``twisting'' means
multiplying the
entries of $G$'s adjacency matrix by appropriate roots of unity,
according to the characters of the ``Galois group'' of $G'$ over $G$
(see Section~\ref{se:shgalois}).
The same holds for homology groups.

This leads us to a new homology theory, as follows.
Let ${\cal F}$ be a sheaf of $\field$-vector spaces on a digraph, $G$,
and let $\field'$ be a field containing $\field$.
A {\em twist} or {\em $\field'$-twist}, $\psi$, on $G$ is a map
$$
\psi \from E_G\to \field'.
$$
By the {\em twisting} of
${\cal F}$ by $\psi$, denoted ${\cal F}^\psi$, we mean the
sheaf of $\field'$-vector spaces given via 
$$
{\cal F}^\psi(P)=\bigl( {\cal F}(P)\bigr) \otimes_{\field}\field'
$$
for all $P\in V_G
\amalg E_G$, and 
$$
{\cal F}^\psi(h,e)={\cal F}(h,e), \quad
{\cal F}^\psi(t,e)=\psi(e){\cal F}(t,e),
$$
where ${\cal F}(h,e)$ and ${\cal F}(t,e)$ are viewed as $\field'$-linear maps
arising from their original $\field$-linear maps.
In other words, ${\cal F}^\psi$ is the sheaf
on the same vector spaces extended to $\field'$-vector spaces,
but with the tail restriction maps
twisted by $\psi$.
The map, $d_{{\cal F}^\psi}$, viewed as a
matrix, has entries in the field $\field'$.
The groups $H_i({\cal F}^\psi)$ are defined as $\field'$-vector spaces.

Now let $\psi=\{\psi(e)\}_{e\in E_G}$ be 
viewed as $|E_G|$ indeterminates, and let
$\field(\psi)$ denote the field of rational functions in the $\psi(e)$
over $\field$.  Then $d=d_{{\cal F}^\psi}$ can be viewed as a morphism
of finite dimensional vector spaces over $\field(\psi)$, given by a matrix
with entries in $\field(\psi)$.
\begin{definition}
We define the {\em $i$-th twisted
homology group of ${\cal F}$}, denoted by
$$
H_i^\twist({\cal F})=H_i^\twist({\cal F},\psi),
$$
for $i=0,1$, respectively, to be the cokernel and kernel, respectively,
of $d_{{\cal F}^\psi}$ described above as a morphism of $\field(\psi)$
vector spaces.  
We define
the
{\em $i$-th twisted Betti number of ${\cal F}$}, denoted
$h_i^{\rm twist}({\cal F})$, to be dimension of 
$H_i^\twist({\cal F})$.
\end{definition}

We easily see, akin to equation~(\ref{eq:shrho_limit}), that 
$$
\rho(G) = h_1^{\rm twist}(\underline\field).
$$
The analogous short/long exact sequences theorem holds in twisted homology,
and this easily implies that
$h_1^{\rm twist}$ is a quasi-Betti number.
We wish to mention that we can interpret
$$
h_0^{\rm twist}(\underline\field)=\chi(\underline\field)+
h_1^{\rm twist}(\underline\field)=\chi(G)+\rho(G)
$$
as the number of ``acyclic components'' of $G$, i.e., the number of connected
components that are free of cycles.

\subsection{Maximum Excess Versus Twisted Betti Numbers, and The Unhappy
$4$-Bundle}
\label{sb:unhappy}

Note that for the constant sheaf, $\underline\field$, on a digraph, $G$,
the values of $h_1^{\rm twist}$
and the maximum excess agree and equal $\rho(G)$.  Notice also that
it is immediate that $h_1^{\rm twist}$ is a first quasi-Betti number,
but it seems to us more difficult to show that the maximum excess is
a first quasi-Betti number.
This indicates that it would be easier to work with $h_1^{\rm twist}$
rather than the maximum excess in studying the SHNC (and this can be
done).
We give two reasons why we nonetheless use the maximum excess.

First, the SHNC is more directly related to the vanishing maximum
excess of a certain sheaves we call $\rho$-kernels; and this vanishing
is weaker (at least {\em a priori}) than the vanishing of
$h_1^{\rm twist}$ of the $\rho$-kernels.  
Second, the Euler characteristic, reduced cyclicity,
and the maximum excess have a nice
scaling property under ``pullbacks'' via covering maps, that
$h_1^\twist$ does not share.  This makes $h_1^\twist$ seem to be,
at times, the ``wrong'' invariant for certain situations, like those
arising in the SHNC.

Let us discuss the above remarks in more precise terms.
It is easy to see that
$$
h_1^{\rm twist}({\cal F})\ge {\rm m.e.}({\cal F}),
$$
and one can
show that equality holds if for each $e\in E_G$, ${\cal F}(e)$ is either
zero or one dimensional.  In particular, this holds for 
${\cal F}=\complex_L$ for any subgraph, $L$ of $G$. 
However, there are sheaves, such as the ``unhappy 4-bundle,'' that
we will soon describe,
which have maximum excess zero but positive $h_1^\twist$.
The above inequality does show that if $h_1^\twist$ vanishes then so
does the maximum excess; in the case of the SHNC and $\rho$-kernels
this means that vanishing $h_1^\twist$ of $\rho$-kernels is at least
as strong a condition as the SHNC.

We now describe a sheaf we call the {\em unhappy $4$-bundle}.  It is
a highly instructive
example that illustrates a number of points on maximum
excess and twisted homology.
Let $B_2$ be the bouquet of two self-loops, 
i.e., the digraph with one vertex, $v$,
and two self-loops, $e_1,e_2$.
Let ${\cal U}$ be defined as
\begin{equation}\label{eq:shunhappy_values}
{\cal U}(v) = \field^4, \quad {\cal U}(e_i) = \field^2 \quad
\mbox{for $i=1,2$},
\end{equation}
and
\begin{equation}\label{eq:shunhappy_restrictions}
d_h = 
\left[
\begin{matrix} 1&0&1&0 \\ 0&1&0&0 \\ 0&0&0&1 \\ 0&0&0&0 \end{matrix}
\right]
, \quad
d_t = 
\left[
\begin{matrix} 0&0&0&0 \\ 0&0&1&0 \\ 1&0&0&0 \\ 0&1&0&1 \end{matrix}
\right],
\end{equation}
where these matrices multiply 
the coordinates of ${\cal U}(E)$ arranged as
a column vector (the column vector to the right of the matrix),
where ${\cal U}(E)$'s coordinates are  ordered as 
${\cal U}(e_1)\oplus{\cal U}(e_2)$.
The twisted incidence matrix of ${\cal U}$
(which characterizes ${\cal U}$) is given by
\begin{equation}\label{eq:shunhappy_twist}
d_{{\cal U}^{\psi}} =
\left[
\begin{matrix} 1&0&1&0 \\ 0&1&-\psi(e_2)&0 \\ 
-\psi(e_1)&0&0&1 \\ 0&-\psi(e_1)&0&-\psi(e_2) \end{matrix}
\right].
\end{equation}
This matrix has a kernel of dimension one in $\field(\psi)$, however
its maximum excess is zero. Equivalently, 
if ${\cal F}(v)=\field^4$ has $\alpha,\beta,\gamma,\delta$ as its
standard basis (i.e., $\alpha=(1,0,0,0)$, $\beta=(0,1,0,0)$, etc.), then
the image of the four standard coordinates on ${\cal F}(E)$ via 
$d_{{\cal U}^\psi}$ is
\begin{equation}\label{eq:shfour_vectors}
\nu_1= \alpha - \psi(e_1)\gamma, \quad \nu_2=\beta-\psi(e_1)\delta,
\quad \nu_3=\alpha-\psi(e_2)\beta, \quad\nu_4=\gamma-\psi(e_2)\delta.
\end{equation}
The fact that $h_1^\twist({\cal U})\ne 0$ follows from the simple 
computation that
$$
% \bigl( \alpha - \psi(e_1)\gamma\bigr) \wedge \bigl(\beta-\psi(e_1)\delta\bigr)
% \wedge\bigl( \alpha-\psi(e_2)\beta\bigr)\wedge
% \bigl(\gamma-\psi(e_2)\delta\bigr)=0,
\nu_1\wedge\nu_2\wedge\nu_3\wedge\nu_4=0
$$
or the linear dependence relation
$$
% \nu_1-\psi(e_2)\nu_2 - \psi(e_1)\nu_3 + \psi(e_1)\psi(e_2)\nu_4
\nu_1-\psi(e_2)\nu_2 - \nu_3 + \psi(e_1)\nu_4
=0
$$
The reason we call ${\cal U}$ a $4$-bundle is that is four dimensional at
the vertex of $B_2$, and it is has properties akin to a vector bundle;
this will be explained more fully in a sequel to this paper.

For any sheaf, ${\cal F}$, on a digraph, $G$, and 
any morphism $\phi\from K\to G$ of directed graphs, we define the
{\em pullback of ${\cal F}$ via $\phi$} to be the
sheaf $\phi^*{\cal F}$ on $K$ given via
$$
(\phi^*{\cal F})(P) = {\cal F}(\phi(P)) \qquad
\mbox{for all $P\in V_K\amalg E_K$},
$$
and for all $e\in E_K$,
$$
(\phi^*{\cal F})(h,e) = {\cal F}(h,\phi(e)), \quad
(\phi^*{\cal F})(t,e) = {\cal F}(t,\phi(e)).
$$
It is easy to see that
if $\mu$ is a covering map of degree $\deg(\mu)$ then
$$
\chi(\mu^*{\cal F}) = \deg(\mu) \chi({\cal F}),
$$
and, with a little more work (and using Galois graph theory, oddly
enough), that 
\begin{equation}\label{eq:shme_scale}
{\rm m.e.}(\mu^*{\cal F}) = \deg(\mu) {\rm m.e.}({\cal F}).
\end{equation}
The ``unhappy 4-bundle'' also shows that $h_1^\twist$ does not
enjoy this ``scaling by $\deg(\mu)$ under pullback''
property.  Indeed, $h_1^\twist({\cal U})=1$;
however
if $\phi\from G'\to B_2$ (recall ${\cal U}$ is defined on the graph $B_2$)
is the degree two cover of $B_2$ in which the $G'$ edges
mapping to $e_1$ are self-loops, and the edges mapping to $e_2$ are
not, then $h_1^\twist(\phi^*{\cal U})=0$.  In other words, via taking
wedge products or solving for a linear relation, it is 
straightforward to verify
the linear independence of the eight vectors
$$
\nu_1^1= \alpha^1 - \psi(e_1^1)\gamma^1, 
\ \nu_2^1=\beta^1-\psi(e_1^1)\delta^1,
\ \nu_3^1=\alpha^1-\psi(e_2^1)\beta^2, 
\ \nu_4^1=\gamma^1-\psi(e_2^1)\delta^2.
$$
$$
\nu_1^2= \alpha^2 - \psi(e_1^2)\gamma^2, 
\ \nu_2^2=\beta^2-\psi(e_1^2)\delta^2,
\ \nu_3^2=\alpha^2-\psi(e_2^2)\beta^1, 
\ \nu_4^2=\gamma^2-\psi(e_2^2)\delta^1.
$$
% $$
% \nu_1^1= \alpha^1 - \psi(e_1^1)\gamma^1, 
% \quad \nu_2^1=\beta^1-\psi(e_1^1)\delta^1,
% \quad \nu_3^1=\alpha^1-\psi(e_2^1)\beta^2, 
% \quad\nu_4^1=\gamma^1-\psi(e_2^1)\delta^2.
% $$
% $$
% \nu_1^2= \alpha^2 - \psi(e_1^2)\gamma^2, 
% \quad \nu_2^2=\beta^2-\psi(e_1^2)\delta^2,
% \quad \nu_3^2=\alpha^2-\psi(e_2^2)\beta^1, 
% \quad\nu_4^2=\gamma^2-\psi(e_2^2)\delta^1.
% $$

\subsection{The Fundamental Lemma and Limit Homology}

The following is the main and most difficult
theorem in this chapter; it allows us
to connect twisted homology and maximum excess.
For any digraph we shall define the notion of its {\em Abelian girth},
which is always at least as large as its girth.

\begin{theorem}\label{th:shmain}\label{TH:SHMAIN}
For any sheaf, ${\cal F}$, on a digraph, $G$, let
$\mu\from G'\to G$ be a covering map where the Abelian girth
of $G'$ is at
least 
$$
2 \Bigl( \dim\bigl({\cal F}(V)\bigr) + 
\dim\bigl({\cal F}(E)\bigr) \Bigr) + 1.
$$
Then
$$
h_1^\twist(\mu^*{\cal F}) = {\rm m.e.}(\mu^*{\cal F}).
$$
\end{theorem}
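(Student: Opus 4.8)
The inequality $h_1^\twist(\mu^*{\cal F})\ge{\rm m.e.}(\mu^*{\cal F})$ is already noted in the excerpt, so the plan is to establish the reverse inequality $h_1^\twist(\mu^*{\cal F})\le{\rm m.e.}(\mu^*{\cal F})$ under the Abelian girth hypothesis. The starting point is to pick a nonzero element of $H_1^\twist(\mu^*{\cal F})$, i.e. a vector $w$ in the kernel of $d_{(\mu^*{\cal F})^\psi}$ over the rational function field $\field(\psi)$, and then clear denominators so that each component $w_e\in(\mu^*{\cal F})(e)\otimes\field[\psi]$ is a polynomial in the twist indeterminates; one should normalize so that no monomial divides all the $w_e$ simultaneously. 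The idea is to extract from such a $w$ a subspace $U\subset(\mu^*{\cal F})(V)$ whose excess is at least the twisted first Betti number, by looking at the ``leading'' or ``lowest-order'' terms of $w$ with respect to a suitable monomial order on $\field[\psi]$, exploiting that the twist acts only on the tail maps so that the relation $d_h(w)=d_t(w)$ (after twisting) links a given monomial's head-contribution to a shifted monomial's tail-contribution.

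The key combinatorial mechanism, and the reason the Abelian girth enters, is the interpretation of twisted homology in terms of the maximal Abelian cover developed in Section~\ref{se:shabel}: an element of $H_1^\twist$ corresponds to a finitely-supported $1$-chain on the maximal Abelian cover $G^{\rm ab}$ of $G'$ (or of $G$), the monomials in $\psi$ indexing the deck-group $H_1(G;\integers)$-translates. The plan is to take the support of this chain on $G^{\rm ab}$ and show that, because the Abelian girth of $G'$ is large compared to $2(\dim{\cal F}(V)+\dim{\cal F}(E))$, this support projects down to $G'$ in a way that is ``locally tree-like'': within any ball of the relevant radius the cover looks like a tree, so the chain cannot wrap around short cycles and its projected support, together with the restriction data, assembles into an honest subspace $U\subset(\mu^*{\cal F})(V)$ with $\Gamma_{\rm ht}(U)$ large enough. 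Concretely, one wants to show $\dim\Gamma_{\rm ht}(U)-\dim U\ge h_1^\twist(\mu^*{\cal F})$ by a counting argument on the finite portion of $G^{\rm ab}$ that carries the chain, where the girth bound guarantees no ``collisions'' force the estimate to degrade.

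I would organize the argument as: (1) recall/restate the maximal-Abelian-cover description of $H_1^\twist$ from Section~\ref{se:shabel}; (2) given a maximal linearly independent family in $H_1^\twist(\mu^*{\cal F})$ realized by chains on $G'^{\rm ab}$, pass to a finite connected subgraph of $G'^{\rm ab}$ supporting all of them and bound its size in terms of $\dim{\cal F}(V)$, $\dim{\cal F}(E)$; (3) use the Abelian girth hypothesis to conclude that this finite subgraph, and the incidence structure of the pulled-back sheaf on it, injects appropriately into $G'$, so the chains' vertex-supports define a subspace $U$; (4) verify by a dimension count that ${\rm excess}(\mu^*{\cal F},U)\ge h_1^\twist(\mu^*{\cal F})$. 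The main obstacle is step (3)–(4): controlling how the Abelian cover folds onto $G'$ and proving that the folding does not create cancellation that would lower the excess below the twisted Betti number — this is precisely where the factor $2(\dim{\cal F}(V)+\dim{\cal F}(E))+1$ must be shown to suffice, and getting the bookkeeping of supports, restriction maps, and the monomial/deck-group grading to line up is the delicate heart of the proof.
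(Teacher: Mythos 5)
Your overall picture of the first half of the argument is right: pass via Lemma~\ref{lm:habelian} to a finitely supported element $w\in H_1^\oplus(\pi^*\mu^*{\cal F})$ on the universal Abelian cover $G'[\integers]$, then use the girth hypothesis to guarantee the relevant portion of $G'[\integers_{\ge 0}]$ is a forest, and from the support of $w$ build a subspace of positive excess. This matches the spirit of Lemma~\ref{lm:equalifzero} and the moseying-sequence / star-union / capacity bookkeeping in Section~\ref{se:shequal}, although the paper organizes the extraction through alternating C-phases and B-phases rather than a monomial order.

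The genuine gap is step~(4). You want to produce a single subspace $U$ with ${\rm excess}(\mu^*{\cal F},U)\ge h_1^\twist(\mu^*{\cal F})$ directly, taking a ``maximal linearly independent family'' of twisted cycles at once. The paper does not do this and, more importantly, its combinatorial core cannot: Lemma~\ref{lm:equalifzero} is stated and proved only as a positivity transfer, $h_1^\twist(\mu^*{\cal F})>0\implies{\rm m.e.}({\cal F})>0$, built around a \emph{single} chain $w$. It yields one dimension of excess, not $h_1^\twist$ many, and running several independent $w$'s simultaneously through the star-union construction would face collisions and cancellation that the phase/capacity argument is not designed to control. To get the full equality, the paper then bootstraps: it shows one may assume ${\cal F}$ is \emph{tight} (Lemma~\ref{lm:tight1}, using the long exact sequence in twisted homology), handles edge-supported sheaves trivially, and inducts on ${\rm m.e.}({\cal F})$ using the subsheaf ${\cal F}''\subset{\cal F}$ of Lemma~\ref{lm:tight2} (codimension one in edges, maximum excess one less) together with the vanishing of $h_0^\twist(\phi^*{\cal F}'')$ to split the long exact sequence and add up Betti numbers. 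This reduction-and-induction, which also quietly uses the scaling ${\rm m.e.}(\mu^*{\cal F})=\deg(\mu)\,{\rm m.e.}({\cal F})$ of Theorem~\ref{th:shme_scale}, is an essential and non-obvious part of the proof that your plan omits; without it, step~(4) is an unsupported leap.
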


From this lemma it is easy to see that the maximum excess
is a first quasi-Betti number.

\subsection{Limits and Limiting Betti Numbers}

In this subsection we give a new interpretation to our main theorem,
Theorem~\ref{th:shmain}.
For any two covering maps,
$$
\phi_1\from G_1\to G \quad\mbox{and}\quad 
\phi_2\from G_2\to G ,
$$
their fibre product
$$
\phi\from G_1\times_G G_2 \to G
$$
factors through both $\phi_1$ and $\phi_2$, i.e., $\phi$ is a
``common cover.''  It follows that the set, ${\rm cov}(G)$,
of covering maps of a
fixed digraph, $G$, is a directed set, under the partial order
$\phi_1\le\phi_2$ if $\phi_2$ factors through $\phi_1$.  
As such we may speak of limits
in the usual sense of limits of a directed sets; i.e., if $f$ is, say,
a real-valued function on covering maps, then we write
$$
\lim_{\phi\in{\rm cov}(G)} f(\phi) = L
$$
if for any $\epsilon>0$ there is a $\phi_\epsilon\in{\rm cov(G)}$ such
that $|f(\phi')-L|\le \epsilon$ provided that $\phi'$ factors through
$\phi_\epsilon$ (such a limit, $L$, is necessarily unique).

Theorem~\ref{th:shmain} implies that for any sheaf, ${\cal F}$,
on $G$, we have
$$
{\rm m.e.}({\cal F}) = \lim_{\phi\in{\rm cov}(G)} 
\frac{ h_1^\twist(\phi^*{\cal F})}{\deg(\phi)}.
$$
Of course, Theorem~\ref{th:shmain} amounts to saying that this limiting
value
is exactly
attained at any $\phi\from G'\to G$ with $G'$ of sufficiently large 
girth or Abelian girth.

For a sheaf, ${\cal F}$, on a digraph, $G$, we define
$$
\lim_{\phi\in{\rm cov}(G)}
\frac{ h_i^\twist({\cal F})}{\deg(\phi)}
$$
to be the $i$-th limiting Betti number, which we denote
$h_i^{\rm lim}({\cal F})$.  Evidently,
$$
h_1^{\rm lim}({\cal F}) = {\rm m.e.}({\cal F}), \quad
h_0^{\rm lim}({\cal F}) = \chi({\cal F}) + {\rm m.e.}({\cal F}).
$$
It is easy to see that the limit of quasi-Betti pairs is also a
quasi-Betti pair, and that for any fixed covering map
$\phi\from G'\to G$, the functions for $i=0,1$ given by
$$
h_i^{\rm twist}(\phi^*{\cal F})/\deg(\phi)
$$
form a quasi-Betti pair.  This is another way of saying that
Theorem~\ref{th:shmain} implies
Theorem~\ref{th:shSHNC_main}.

% \subsection{Homogeneous Twisted Homology}
% 
% We would like to find a quick algorithm to computed the maximum
% excess of a sheaf, ${\cal F}$, on a digraph,
% $G$.  We know that Theorem~\ref{th:shmain} gives a method provided we
% pullback along a covering map $G'\to G$, where $G'$ has large
% girth.  But the size of $G'$ is generally exponential as a function
% of the dimension of ${\cal F}$ (for fixed $G$), so we seek a
% more efficient algorithm.  
% 
% The twisted homology gives an upper bound on the maximum excess.
% Here we point out that there is a lower bound on the maximum excess
% provided by what we call the ``homogeneous'' part (or subspace) of
% the first twisted homology group.
% At present it is conceivable to us the dimension of the homogeneous
% part always equals the maximum excess; it does for the unhappy
% $4$-bundle.
% The dimension of the homogeneous part of twisted homology can be
% computed via a linear system of equations involving a polynomial number of
% variables in the size of $E_G$ and dimension of ${\cal F}$.
% 
 
\subsection{Sheaves, Adjacency Matrices, and Laplacians}

We remark that from the incidence matrix, $d_{\cal F}=d_h-d_t$, of
a sheaf, ${\cal F}$, one
can define a Laplacians, adjacency matrices, and related matrices
that are analogues of those used for graphs.
This construction can also be viewed as a very special, discrete case
of Hodge theory.
We require that for each $P\in V_G\amalg E_G$, we have that each ${\cal F}(P)$
be endowed with an inner product.  In that way ${\cal F}(V),{\cal F}(E)$
become inner product spaces, and we have adjoint operators
$d_h^*,d_t^*$ and $d^*=d_h^*-d_t^*$ from ${\cal F}(V)$ to ${\cal F}(E)$.
We define
$$
\Delta_0 = d d^*,\quad \Delta_1=d^* d
$$
to be the {\em Laplacians} of ${\cal F}$, which, of course, depend
on the inner products chosen for the values, ${\cal F}(P)$, of
${\cal F}$; we easily see
that $\Delta_i$ is an operator on ${\cal F}(V)$
and ${\cal F}(E)$ respectively for $i=0$ and $i=1$ respectively;
if $\field$ is of characteristic zero, then the $\Delta_i$ are positive
semi-definite operators, and 
the kernel of $\Delta_i$ is $H_i({\cal F})$.
In the special case ${\cal F}=\underline{\field}$, with the same,
standard inner
products on all ${\cal F}(P)=\field$, the Laplacians become
the usual Laplacians of the graph.

Furthermore, given ${\cal F}$ and inner products on the values of ${\cal F}$,
we get generalizations of the adjacency matrix and degree matrix.
For example, if we set
$$
D_0 = d_h d_h^* + d_t d_t^*,\quad
A_0 = d_h d_t^* + d_t d_h^* ,
$$
we have that $\Delta_0 = D_0 - A_0$;  
in the case ${\cal F}=\underline{\field}$ and standard inner products,
$D_0,A_0$, respectively amount to the usual degree and adjacency matrices,
respectively.  One can define $D_1,A_1$ analogously.

One could define a sheaf to be {\em regular} in the way that one would 
define a graph to be regular, i.e., if both $D_0$ and $D_1$ are both
multiples of the identity.  One could measure the {\em expansion} of
a sheaf by the eigenvalues of $A_0,A_1$ or $\Delta_0,\Delta_1$.

We believe that the spectral theory of such matrices and related properties
such as expansion could be quite interesting to pursue.
However, we shall not pursue them further in this paper.

\section{Galois and Covering Theory} 
\label{se:shgalois}

In this section we establish a number of important definitions and
facts concerning
graph coverings, Abelian coverings, and Galois coverings.

There is a collection of facts about number fields that may be called
Galois theory; this would include classical Galois theory, but
also more recent statements such as if $k'$ is a Galois extension
field of $k$, then
$$
k'\otimes_k k' \isom \bigoplus_{{\rm Aut}(k'/k)} k'
$$
(see \cite{sga4.5}, Section~I.5.1).
Such facts have analogues in graph theory, which one might call
``graph Galois theory.'' 
Such facts were described in \cite{friedman_geometric_aspects,
st1};
at least some of these some of these facts were known much earlier, in
\cite{gross}; since these facts are fairly simple and quite powerful,
we presume they may occur elsewhere in the literature (perhaps only
implicitly).

\subsection{Galois Theory of Graphs}

We shall summarize some theorems of \cite{friedman_geometric_aspects};
the reader is referred to there and \cite{st1} for more
discussion.
In this article Galois group actions, when written
multiplicatively (i.e., not viewed as functions or morphisms)
will be written
on the right, since our Cayley
graphs are written with its generators acting on the left.

Let $\pi\from K\to G$ be a covering map of digraphs.
We write ${\rm Aut}(\pi)$, or somewhat abusively
${\rm Aut}(K/G)$ (when $\pi$ is understood),
for the automorphisms of $K$ over $G$, i.e.,
the digraph automorphisms $\nu\from K\to K$ such that $\pi=\pi\nu$.

Now assume that $K$ and $G$ are connected.  Then it is easy to see
(\cite{friedman_geometric_aspects,st1}) that for every $v_1,v_2\in V_K$
there is at most one $\nu\in{\rm Aut}(K/G)$ such that $\nu(v_1)=v_2$;
the same holds with edges instead of vertices.
It follows that $|{\rm Aut}(K/G)|\le [K:G]$, with equality iff
${\rm Aut}(K/G)$ acts transitively on each vertex and edge fibre of
$\pi$.  In this case we say that $\pi$ is {\em Galois}.

If $\pi\from K\to G$ is Galois but $K$ is not connected, $|{\rm Aut}(K/G)|$
can be as large as $[K\colon G]$ factorial (if $K$ is a number of copies of
$G$).
So when $K$ is not connected, we say that a covering map $\pi\from K\to G$
is {\em Galois} provided that we additionally specify a subgroup, $\cg$,
of ${\rm Aut}(K/G)$ of that acts simply (without fixed points) and
transitively on each of the
vertex and edge fibres of $\pi$; we declare $\cg$ to be the Galois group.
Again, this additional specification does not change any of the theorems
here, although it does mean that certain $\pi\from K\to G$ can be 
Galois on each component of $G$ without being Galois in our sense
(consider $G=G_1\amalg G_2$, and $K_i=\pi^{-1}(G_i)$, where $G_1,G_2$
are connected and ${\rm Aut}(K_i/G_i)$ are non-isomorphic groups).

\begin{theorem}[Normal Extension Theorem]
\label{th:shnormal_extension_theorem}
If $\pi\from G\to B$ is a covering map of digraphs, there 
is a covering map $\mu\from K\to G$ such that $\pi\mu$ is Galois.
\end{theorem}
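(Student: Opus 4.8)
The plan is to build $K$ as a ``Galois closure'' of $\pi$ by a configuration-space construction, using only iterated fibre products and no appeal to fundamental groups.

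First I would reduce to the case that $B$ is connected: if $B=\coprod_j B_j$, apply the argument to each $\pi^{-1}(B_j)\to B_j$ and take $K$ to be the disjoint union of the pieces produced, with Galois group the product of their Galois groups; this product acts simply transitively on every fibre over $B$ since each factor does over its own component, which is all the paper's definition of ``Galois'' demands. So assume $B$ connected and put $n=[G\colon B]$, which is well defined because the number of preimages of a vertex or edge is locally constant on $B$. Form the $n$-fold fibre power $P:=G\times_B\cdots\times_B G$; being assembled from fibre products of covering maps, both $P\to B$ and the first-coordinate map $P\to G$ are covering maps. Let $K\subseteq P$ be the full sub-digraph on the tuples $(x_1,\dots,x_n)$ with pairwise distinct coordinates.

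The crux is that ``having pairwise distinct coordinates'' is locally constant on $P$. Indeed, if $(e_1,\dots,e_n)$ is an edge of $P$ (so the $e_i\in E_G$ share a common image in $B$), then $t(e_i)=t(e_j)$ forces $e_i=e_j$, since a covering map is injective on the out-edges at each vertex, and symmetrically $h(e_i)=h(e_j)$ forces $e_i=e_j$; hence such an edge has distinct tail-coordinates iff its $e_i$ are all distinct iff it has distinct head-coordinates. Consequently an edge of $P$ lies in $K$ iff its tail does iff its head does, which yields two things at once: (i) $K$ is a union of connected components of $P$, so $K\to B$ is a covering map, and every in- or out-edge at a vertex of $K$ already lies in $K$, whence the first-coordinate map $\mu\from K\to G$ is also a covering map; and (ii) the fibre of $K\to B$ over a vertex $b$ is the set of injections $\{1,\dots,n\}\hookrightarrow\pi^{-1}(b)$, which, since $|\pi^{-1}(b)|=n$, is the set of bijections, on which $S_n$ acts simply transitively by permuting $\{1,\dots,n\}$ (and likewise on edge fibres). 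Declaring $S_n\subseteq{\rm Aut}(K/B)$ to be the Galois group therefore makes $K\to B$ Galois, and since this map is exactly $\pi\mu$, the pair $(K,\mu)$ is what we want.

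The only step that needs genuine (if brief) care is the local-constancy claim, which is what simultaneously makes $K\to B$ Galois and $\mu$ a covering map; the rest is fibre-product bookkeeping plus the disconnected-$B$ reduction. A more classical alternative would instead use that $\pi_1(B)$ is free of finite rank: take subgroups $H_i\le\pi_1(B)$ corresponding to the components of $G$ and form the cover attached to the finite-index normal subgroup $\bigcap_i\bigcap_{g}gH_ig^{-1}$, then pull back along $G\to B$ to obtain an honest covering map onto $G$; but the configuration-space construction is cleaner and wholly combinatorial.
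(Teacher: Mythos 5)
Your main construction — forming the $n$-fold fibre power of $G$ over $B$ with $n=[G\colon B]$, taking the induced subgraph $K$ on tuples with pairwise distinct coordinates, and letting $S_n$ permute indices — is precisely Gross's proof as sketched in the paper; the local-constancy lemma (an edge of $P$ has distinct coordinates iff its tail does iff its head does) is exactly the right key observation, and your argument for it via injectivity of covering maps on in- and out-stars is correct. (The paper writes $n=|V_G|$, but that equals $[G\colon B]$ only when $B$ has a single vertex, which was Gross's original setting; your choice $n=[G\colon B]$ is the right reading for a general connected $B$.)

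The reduction to connected $B$, however, does not work as stated. If $B=\coprod_j B_j$, $K=\coprod_j K_j$, and $\cg=\prod_j\cg_j$ acts component-wise, then for a fibre lying inside $K_{j_0}$ every element of $\{1\}\times\prod_{j\ne j_0}\cg_j$ fixes that entire fibre, so $\cg$ does not act simply on it; equivalently, each fibre of $K_j\to B_j$ has size $|\cg_j|$, not the required common size $|\cg|$. A fix is to pad: replace $K_j$ by $K_j\times\prod_{j'\ne j}\cg_{j'}$ (meaning $\prod_{j'\ne j}|\cg_{j'}|$ disjoint copies of $K_j$), with $\cg_j$ acting on the $K_j$ factor and each $\cg_{j'}$ acting by translation on the corresponding extra coordinate; then every fibre over $B$ is a $\cg$-torsor, and $K_j\times\prod_{j'\ne j}\cg_{j'}\to K_j\to G_j$ is still a covering map. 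If you only need the statement for connected $B$ — as in the paper's applications, where a degree $[K\colon G]$ is assumed defined — you may simply omit the reduction.
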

In this situation we say that $K$ is a {\em normal extension} of
$G$ (assuming the maps $\mu$ and $\pi$ are understood).
By convention, all graphs are finite in the paper unless otherwise specified.
Generally speaking, we will not address infinite graphs in the context
of Galois theory; however, if the $\pi\from G\to B$ in this theorem
is a morphism of finite degree, even if $G$ and $B$ are infinite digraphs,
then the proof of the Normal Extension Theorem due to Gross is still
valid.

Let us outline two proofs of the Normal Extension Theorem.
The proof in \cite{friedman_geometric_aspects} uses the fact that
$G$ corresponds to a subgroup, $S$, of index $n=|V_G|$ of the
group $\pi_1(B)$, the fundamental group
of $B$ (which is the free group on $h_1(B)$ elements).  The intersection
of $xSx^{-1}$ over a set of coset representatives of $\pi_1(B)/S$
is a normal subgroup, $N$, of finite size (at worst $n^n$, since there are
$n$ cosets and each $xSx^{-1}$ is of index $n$); $\pi(B)/N$ then
naturally corresponds to a Galois cover $K\to B$ of at most $n^n$
vertices.

There is a very pretty proof of the Normal Extension Theorem
discovered earlier by Jonathan Gross 
in \cite{gross}, giving a better bound on the
number of vertices of $K$.  For any positive integer
$k$ at most $n=|V_G|$, let $\Omega^k(G)$ be the subgraph of
$G\times_{B}G\times_{B}\cdots\times_{B}G$ (multiplied $k$ times)
induced on the set of vertices of the form $(v_1,\ldots,v_k)$ where
$v_i\ne v_j$ for all $i,j$ with $i\ne j$.  Each $\Omega^k(G)$ 
admits a covering map to
$G$ by
projecting onto any one of its components.  But $\Omega^n(G)$, which
has edge and vertex fibers of size $n!$, 
is Galois by the natural, transitive action of $S_n$
(the symmetric
group on $n$ elements) on $\Omega^n(G)$.  So $\Omega^n(G)$ is a Galois
cover of degree at most $n!$ over $B$.

\subsection{Galois Coordinates}
\label{sb:galois_coordinates}

Given a graph, $G$, and a group, $\cg$, consider the task of
describing all
Galois covering maps $\pi\from K\to G$ with Galois group $\cg$;
consider also the task of giving
a meaning to a ``random'' such Galois covering
(i.e., describe a natural probability space whose atoms are such
coverings).
This can be done in a number of ways, via Galois coordinates or the
monodromy map.  Here we shall review these ideas and apply them.
These ideas occur (in parts) in many places in the literature;
see, for example, 
\cite{friedman_alon,friedman_relative,amit_random,friedman_geometric_aspects}.

Again, fix a graph, $G$, and a group, $\cg$.  By {\em Galois coordinates
on $G$ with values in $\cg$}
we mean a choice of $a_e\in \cg$ for each $e\in E_G$.  From the $\{a_e\}$
we build a covering map $\phi\from K\to G$ by taking $V_K=V_G\times \cg$
and taking $E_K=E_G\times \cg$ with the head and tail, respectively, of
an edge $(e,a)$ being 
\begin{equation}\label{eq:shaes}
h_K(e,a)=(h_G e,a_e a ),\quad t_K(e,a)=(t_G e, a),
\end{equation}
respectively.
We define a $\cg$ action on $K$ via 
$g\in \cg$ is the morphism
such that for $P\in V_G\amalg E_G$ and $a\in\cg$, $g$ sends $(P,a)$ to
\begin{equation}\label{eq:shgonK}
(P,a)g = (P,ag);
\end{equation}
in view of the fact that $a_e$ multiplies to the left in 
equation~(\ref{eq:shaes}), we see that the right multiplication
of $g$ on $a$ in equation~(\ref{eq:shgonK}) actually defines a digraph
morphism.
Let $\phi$ be projection onto the first coordinate.
Clearly $\phi$ is a Galois covering with Galois group $\cg$.

Conversely, let $\phi\from K\to G$ be any $\cg$ Galois covering.
We may identify $V_K$ with $V_G\times\cg$ by choosing for each
$v\in V_G$ an element $v'\in V_K$ such that $\phi(v')=v$ and declaring
$v'$ to have coordinates 
$(v,1)$ where $1$ is the identity in $\cg$; we say that $v'$ is the
{\em origin for $v$ in $K$}; then for all
$v''\in V_K$ with $\phi(v'')=v$ there is a unique $g\in\cg$ with
$v''=v'g$, and we declare $v''$ to have coordinates $(v,g)$.  
For any $g'\in\cg$ we have $v''g'=vgg'$ which has coordinates $(v,gg')$;
hence $g'$ acts on coordinates by right multiplication.
Now choose an edge $e'\in E_K$, and let $e=\phi(e')$; there exist
unique $a_{e'},g\in \cg$ for which the endpoints of $e'$ have coordinates
$$
te'=(te,g),\quad he'=(he,a_{e'}g).
$$
But the $\cg$ action on $K$ then shows that for any $g'$ we have
$$
t(e'g')=(te,gg'),\quad h(e'g')=(he,a_{e'}gg').
$$
It follows that $a_{e'}$ depends only on $e=\phi(e')$, i.e.,
$a_{e'}=a_{e'g'}$ for all $e'\in K$ and $g'\in \cg$.
In other words, there is a unique $a_e$ for each $e\in E_G$ such 
that the $\phi$ fibres of $e$
join $(t_G e,g)$ to $(h_G e,a_e g)$ for each $g\in \cg$.
In summary, for each choice of an element in the vertex fibres we get
Galois coordinates (and conversely).

Notice that in setting the coordinates on $V_K$, if for $v\in V_G$
we choose a different origin, namely $v'g_v$ instead of $v'$, then 
we have $v'g=(v' g_v)g_v^{-1}g$ for any $g\in\cg$; it follows that the
vertx $v'g$, which would have had coordinates $(v,g)$ with $v'$ as origin,
will have coordinates $(v, g_v^{-1}g)$ with $v'g$ as origin.  In particular, if
for $e'\in V_K$ and $e=\phi(e)$ we have
$te'=(te,g)$ and $he'=(he,a_e g)$ in one set of coordinates for some
$g$, and the
origins of $te$ and $he$ are respectively translated by $g_{te}$ and $g_{he}$,
then in the new coordinates
$$
te' = (te, g_{te}^{-1}g), \quad
he' = (he, g_{he}^{-1}a_e g).
$$
Setting $g'=g_{te}^{-1}g$, it follows that in the new, translated 
coordinates we have $te'=(te,g')$ and $he'=(he,\mt a_e g')$, where
$$
\mt a_e = g_{he}^{-1} a_e g_{te}.
$$
So changing Galois coordinate origins as such amounts to a transformation
of Galois coordinates
\begin{equation}\label{eq:shcoord_change}
a_e \mapsto \mt a_e = g_{he}^{-1} a_e g_{te}
\end{equation}
for a family $\{g_v\}_{v\in V_G}$ of $\cg$ values indexed on $V_G$.

Galois coordinates give a nice model of a random Galois cover of a given
graph with given Galois group---just choose the each $a_e$ uniformly in
$\cg$, assuming $\cg$ is finite, and independently over the $e\in E_G$.
If one wants a model of a random cover, one that is not Galois, one often
chooses $V_K$ to have vertices $V_G\times\{1,\ldots,n\}$, where $n$ is 
the degree of the cover, and chooses random matchings over each $G$ edge
(random permutations over self-loops); see, e.g.,
\cite{friedman_alon,friedman_relative,amit_random}.

\subsection{Walks and Monodromy}
\label{sb:walks_monodromy}
Another type of coordinates for Galois coverings are the monodromy maps.
For this we need to fix some notation regarding walks in a digraph.

\begin{definition} Let $G$ be a digraph.  
By an {\em oriented edge of $G$} we mean a formal symbol
$e^+$ or $e^-$ where $e\in E_G$.  We extend the head and tail map
to oriented edges via $he^+=te^-=he$ and $te^+=he^-=te$.
We say that the {\em inverse} of $e^+$ is $e^-$ and vice versa.
An {\em undirected walk} (or simply {\em walk}) in $G$
is an alternating sequence of vertices and oriented edges
$w=(v_0,f_1,v_1,f_2,v_2,\ldots, f_r,v_r)$ with $hf_i=v_i$, $tf_i=v_{i-1}$
for $i=1,\ldots,r$; we call $r$ its {\em length};
we say that $w$ is {\em closed} if $v_r=v_0$;
we say that $w$ is {\em non-backtracking} or {\em reduced}
if for each $i=1,\ldots,r-1$, $f_i$ and $f_{i+1}$ are not inverses of
each other.
\end{definition}

If $G$ is a digraph and $v\in V_G$, then we define $\pi_1(G,v)$ to
be the group of non-backtracking closed walks about $v$, where 
the group operation is concatenation of walks (which we reduce
until they are non-backtracking).  This, of course,
is isomorphic to
the usual fundamental group, $\pi_1(\mt G,v)$, where $\mt G$ is the
geometric realization of $G$, where vertices of $G$ correspond to points and 
edges of $G$ correspond to unit
intervals.
If $G$ is connected, then $\pi_1(G,v)$ is a free group on $h_1(G)$
generators.
We may also describe $\pi_1(G,v)$ as the classes of closed walks about
$v$, where two walks are equivalent if they reduce to the same
non-backtracking word (``reduce'' meaning repeatedly eliminating any
two consecutive steps of the walk that traverse an edge and then its inverse).

Let $\phi\from G'\to G$ be Galois with Galois
group $\cg$, with $G$ connected,
and let $\{a_e\}$ be Galois coordinates for $\phi$.  Extend the $\{a_e\}$
to be defined on oriented edges via
$a_{e^+}=a_e$, $a_{e^-}=a_e^{-1}$.
Fix a $v\in V_G$.  Then for any closed walk, $w$, about $v$ in $G$, we
let $e_i$ be the oriented edge traversed by $w$ on the $i$-th step and set
$$
{\rm Mndrmy}_{\phi,\{a_e\}}(w) = a_{e_k}\ldots a_{e_1},
$$
where $\{a_e\}_{e\in E_G}$ are Galois coordinates on $\phi$.  
We call ${\rm Mndrmy}_{\phi,\{a_e\}}$ the {\em monodromy} map with respect
to $\{a_e\}$; it is a group morphism from $\pi_1(G,v)$ to ${\cal G}$.
Conversely, given a group morphism
$$
M\from \pi_1(G,v)\to \cg
$$
with $G$ connected, 
we can form a covering $\phi\from G'\to G$ with Galois coordinates
$\{a_e\}$ such that ${\rm Mndrmy}_{\phi,\{a_e\}}=M$; indeed,
we let $T$ be an undirected spanning tree for $G$, define $a_e=1$
for $e\in E_T$ (where $1$ denotes the identity in $\cg$), and
define $a_e$ for $e\in E_G\setminus E_T$ by taking an element 
$\gamma\in \pi_1(G,v)$ composed entirely of $E_T$ edges except
for one edge $e$ (traversed in the same orientation as $e$) and
set $a_e=M(e)$; since $\pi_1(G,v)$ is a free group on $E_G\setminus E_T$,
this implies that $M$ is well-defined and equals
${\rm Mndrmy}_{\phi,\{a_e\}}$.

If we change
Galois coordinates on $\phi$, then according to 
equation~(\ref{eq:shcoord_change}) we get a conjugate element.  
Hence there is a natural map:
$$
{\rm Mndrmy}_\phi\from \pi_1(G,v) \to {\rm ConjClass}(\cg).
$$
If $v'\in V_G$ has a path, $p$, to $v$, then
the map $\gamma\mapsto p\gamma p^{-1}$ gives a homomorphism
$\pi_1(G,v)\to\pi_1(G,v')$, and the two monodromy maps, respectively,
send $\gamma$ and $p\gamma p^{-1}$ to the same conjugacy class; hence
we get a map
$$
{\rm Mndrmy}_\phi\from \pi_1(G)\to {\rm ConjClass}(\cg)
$$
independent of the base point (for $G$ connected).
Any notion defined on conjugacy classes of $\cg$ becomes defined on
$\pi_1(G)$ via monodromy.  For example, if $\cg$ is Abelian, then
the conjugacy classes of $\cg$ are the same as $\cg$, and we get a
homomorphism
$$
{\rm Mndrmy}_\phi\from \pi_1(G)\to \ca,
$$
for any cover $\phi\from G'\to G$ with Abelian Galois group
$\ca$
(compare this to the discussion of torsors in Section~5.2 of
\cite{friedman_geometric_aspects}).
We remark that if the monodromy map is onto $\ca$, and $G$ is connected
then $G'$ is connected; indeed, this means that any two vertices in the
same fiber are connected, since any vertex in $G'$ has a path to
a vertex in any vertex fibre (lifted from the element of 
$\pi_1(G)$ that maps to the appropriate element of $\ca$); 
hence we can connect any two vertices via a
path.

\subsection{Covering maps and $\rho$}

Here we describe
a remarkable property of $\rho$ under covering maps.
\begin{theorem} 
\label{th:shremarkable_rho}
For any covering map $\pi\from K\to G$ of degree $d$,
we have $\chi(K)=d\chi(G)$ and $\rho(K)=d\rho(G)$.
\end{theorem}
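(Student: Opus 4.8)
The plan is to prove the two identities separately, starting with the Euler characteristic. Since $\pi\from K\to G$ is a covering map of degree $d$, every vertex and every edge of $G$ has exactly $d$ preimages, so $|V_K| = d\,|V_G|$ and $|E_K| = d\,|E_G|$; subtracting gives $\chi(K) = |V_K| - |E_K| = d(|V_G| - |E_G|) = d\,\chi(G)$. This part is immediate and requires only the definition of covering map. Note also that the same counting shows $\pi$ restricts to a covering map of degree $d$ on each connected component of $G$ (onto its preimage), so it suffices to prove the $\rho$ identity component by component; I would therefore reduce at once to the case where $G$ is connected.

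So assume $G$ is connected. I would split into two cases according to $h_1(G)$. If $h_1(G) = 0$, then $G$ is a tree, so $\chi(G) = h_0(G) = 1$ and $\rho(G) = 0$; since $\chi(K) = d\chi(G) = d$ and $K$ has no cycles either (a cycle in $K$ would project to a closed non-backtracking walk in $G$, impossible in a tree — or more simply, $h_1(K) = h_1(G) \cdot d' $ fails, so instead argue $\chi(K) = h_0(K) - h_1(K) = d$ with $h_1(K)\ge 0$, and each component of $K$ being a cover of a tree is a tree), we get $h_1(X) = 0$ for every component $X$ of $K$, hence $\rho(K) = 0 = d\rho(G)$. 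The substantive case is $h_1(G) \ge 1$. Here I claim $K$ is connected. Indeed, if $K$ were disconnected with components $X_1,\ldots,X_k$, each $X_j \to G$ would be a covering map of some degree $d_j \ge 1$ with $\sum d_j = d$; but a connected covering of a connected graph $G$ with $h_1(G)\ge 1$ corresponds to a finite-index subgroup of the free group $\pi_1(G)$, and one can instead argue directly: the key point is that when $h_1(G)\ge 1$ the cover need not be connected in general, so this claim as stated is false. I would instead not claim connectedness, but compute $\rho(K)$ directly.

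The correct approach for $h_1(G) \ge 1$: let $X_1,\ldots,X_k$ be the connected components of $K$, with $X_j \to G$ a covering map of degree $d_j$, so $\sum_{j=1}^k d_j = d$. For each $j$, since $X_j$ is a connected cover of the connected graph $G$ with $h_1(G) \ge 1$, we have $h_1(X_j) = 1 - \chi(X_j) = 1 - d_j\chi(G) = 1 - d_j(1 - h_1(G)) = 1 + d_j(h_1(G) - 1) \ge 1 + d_j \cdot 0$; since $h_1(G)\ge 1$ this is $\ge 1$, so in fact $h_1(X_j) \ge 1$ and thus $\max(0, h_1(X_j) - 1) = h_1(X_j) - 1 = d_j(h_1(G)-1) = d_j\,\rho(G)$ (using that $\rho(G) = h_1(G) - 1$ when $G$ is connected with $h_1(G) \ge 1$). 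Summing over $j$ gives $\rho(K) = \sum_j d_j\,\rho(G) = d\,\rho(G)$, as desired. The one technical fact I rely on, that a connected cover $X_j$ of a connected $G$ has $h_0(X_j) = 1$ (trivially) and hence $h_1(X_j) = 1 - \chi(X_j)$, is just the definition of $h_1$; and $\chi(X_j) = d_j\chi(G)$ is the degree-$d_j$ case of the Euler characteristic identity already proven.

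The main obstacle is purely bookkeeping: one must handle the interaction between the three regimes — $h_1(G) = 0$ (tree), $h_1(G) = 1$ (Euler characteristic $0$, where $\rho = 0$ and the $\max(0,\cdot)$ truncation is active on the nose), and $h_1(G) \ge 2$ — and verify that in every case the contribution of each component $X_j$ of $K$ is exactly $d_j\,\rho(G)$. The genuinely delicate spot is the $h_1(G) = 1$ case: there $\rho(G) = 0$, each component $X_j$ of $K$ is again a graph with $\chi(X_j) = 0$ hence $h_1(X_j) = 1$, so $\max(0, h_1(X_j)-1) = 0$, and $\rho(K) = 0 = d\cdot 0$ — the truncation is what makes this work and is exactly why $\rho$ (rather than raw $h_1$) has this clean scaling. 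I would make sure the write-up isolates this point rather than hiding it inside a uniform formula.
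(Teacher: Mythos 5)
Your proof is correct and follows essentially the same approach as the paper's: prove the $\chi$ identity by counting preimages, reduce to $G$ connected, decompose $K$ into components each covering $G$, and deduce $h_1$ of each component from $\chi$-scaling. You are slightly more careful than the paper's terse proof in one respect: after reducing to $G$ and $K$ both connected, the paper applies the chain $\rho(G)=h_1(G)-1=-\chi(G)$, which presupposes $h_1(G)\ge 1$; the case $h_1(G)=0$ is left implicit there (the conclusion is trivially true since a connected cover of a tree has degree one), and you handle it explicitly. Your write-up would read more cleanly if you deleted the false start claiming $K$ is connected and then retracting it, and if you noted that the $h_1(G)=1$ case needs no special isolation: your uniform computation shows $h_1(X_j)=1+d_j\bigl(h_1(G)-1\bigr)\ge 1$ whenever $h_1(G)\ge 1$, so $\max\bigl(0,h_1(X_j)-1\bigr)=h_1(X_j)-1$ automatically in that whole range.
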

\begin{proof} The claim on $\chi$ follows since $d=|V_K|/|V_G|=|E_K|/|E_G|$.
To show the claim on $\rho$, it suffices to consider the case of $G$
connected, the general case obtained by summing over connected components; 
but similarly it suffices to consider the case of $K$ connected.
In this case
$$
\rho(G)=h_1(G)-1 = -\chi(G) = -d\chi(K) = d\bigl(h_1(K)-1\bigr)=d\rho(K).
$$
\end{proof}

\section{Sheaf Theory and Homology}
\label{se:shsheaf}

In this section we define sheaves of vector spaces over a graph, $G$,
and their homology groups, and give their basic properties.  
Then we explain the definitions and properties in terms of sheaf
theory on Grothendieck topologies; in case $G$ has no self-loops,
we describe a topological space, ${\rm Top}(G)$, whose sheaves give an
equivalent description of our notion of sheaf.

In the first subsection we describe everything in simple terms,
giving some claims without proof; the reader can either prove them
from scratch, or wait until the second subsection where we explain
that all of these claims are special cases of well-known results.

\subsection{Homology and Pullbacks}
\label{sb:pullbacks}

The basic definitions of sheaves were given in 
Subsection~\ref{sb:sheaf_def}.  In this subsection we 
prove Theorem~\ref{th:shshort_long} and discuss pullbacks and related
functors.

% \begin{proof}{\bf Of Theorem~\ref{th:shshort_long}.\ \ }
\begin{proof}[Proof (of Theorem~\ref{th:shshort_long})]
By the ``vertexwise and edgewise'' nature of taking images and
kernels, we see that we have a diagram
\begin{diagram}[nohug,height=2em,width=3em,tight]
&& 0 && 0 && 0 && \\
&& \dTo && \dTo && \dTo && \\
0 & \rTo & {\cal F}_1(E) & \rTo & {\cal F}_2(E) & \rTo & {\cal F}_3(E)
& \rTo 0 \\
&& \dTo && \dTo && \dTo && \\
0 & \rTo & {\cal F}_1(V) & \rTo & {\cal F}_2(V) & \rTo & {\cal F}_3(V)
& \rTo 0 \\
&& \dTo && \dTo && \dTo && \\
&& 0 && 0 && 0 && \\
\end{diagram}
The theorem follows from the standard ``delta'' or ``connecting'' map
in homological algebra, via the ``snake lemma'' 
(see \cite{lang,atiyah_book,hilton}).
\end{proof}

Next we describe the functoriality of sheaves.
For any sheaf, ${\cal F}$, on a graph, $G$, and 
any morphism $\phi\from K\to G$ of directed graphs, recall from
Subsection~\ref{sb:unhappy} that the
``pullback'' sheaf $\phi^*{\cal F}$ on $K$ is defined via
$$
(\phi^*{\cal F})(P) = {\cal F}(\phi(P)) \qquad
\mbox{for all $P\in V_K\amalg E_K$},
$$
and for all $e\in E_K$,
$$
(\phi^*{\cal F})(h,e) = {\cal F}(h,\phi(e)), \quad
(\phi^*{\cal F})(t,e) = {\cal F}(t,\phi(e)).
$$

If ${\cal F}$ is a sheaf on $G$ and $K$ is a subgraph of $G$, then
there is a sheaf on $G$ denoted ${\cal F}_K$ called ``${\cal F}$ restricted
to $K$ and extended by zero,'' defined by $({\cal F}_K)(P)$ is $0$ if
$P\notin V_K\amalg E_K$, and otherwise ${\cal F}(P)$; the restriction maps
are inherited from ${\cal F}$ (when $0$ is not involved).
Notice that in case ${\cal F}=\underline{\field}$, then we have
\begin{equation}\label{eq:shexplicit_adjoint}
\underline{\field}_K(V_G)=\field^{V_K},\quad
\underline{\field}_K(E_G)=\field^{E_K},
\end{equation}
and $d=d_h-d_t$ is the standard incidence matrix of $K$; hence
$H_i(\underline{\field}_K)\isom H_i(K)$.

If $\phi\from K\to G$ is an arbitrary map, and ${\cal F}$ a sheaf on
$K$, there is a natural sheaf $\phi_!{\cal F}$ on $G$ defined as follows:
$$
(\phi_!{\cal F})(P)= \bigoplus_{Q\in\phi^{-1}(P)} {\cal F}(Q),\quad
\forall P\in V_G\amalg E_G,
$$
with the restriction maps induced from those of ${\cal F}$, i.e.,
$(\phi_!{\cal F})(h,e)$ is the sum of the maps taking, for $e'\in
\phi^{-1}(e)$, the
${\cal F}(e')$ component of $(\phi_!{\cal F})(e)$ to the
${\cal F}(he')$ component of $(\phi_!{\cal F})(he)$ via the map
${\cal F}(h,e')$.
The reader can now observe that
\begin{equation}\label{eq:shshriek}
(\phi_!{\cal F})(V_G) \isom {\cal F}(V_K),\quad 
(\phi_!{\cal F})(E_G) \isom {\cal F}(E_K),
\end{equation}
and $d_{\phi_!{\cal F}}$ is the same map as $d_{\cal F}$ modulo
these isomorphisms; hence
\begin{equation}\label{eq:shspecial_adjoint}
H_i(\phi_!{\cal F}) \isom H_i({\cal F})
\end{equation}
for $i=0,1$.
In Subsection~\ref{sb:adjoint} we prove that $\phi_!$ is the left
adjoint of $\phi^*$, and in particular the isomorphisms of homology groups
above
are immediate;
in Subsection~\ref{sb:vanishing} we explain the role of $\phi_!$ in
certain ``vanishing theorems'' (of sheaf invariants).
We shall make special use of $\phi_!$ for $\phi$ \'etale in our approach
to the SHNC (see Theorems~\ref{th:etale_contagious} and
\ref{th:motivate}).

If $\phi\from K\to G$ is the inclusion of a subgraph, and ${\cal F}$
is a sheaf on $G$, then ${\cal F}_K$, defined before, equals
$\phi_!\phi^*{\cal F}$.  More generally we write ${\cal F}_K$ for
$\phi_!\phi^*{\cal F}$ for arbitrary $\phi$, provided that $\phi$
is understood in context.  
Since $\phi^*\underline{\field}=\underline{\field}$ for arbitrary
$\phi$, we always have $\underline{\field}_K = \phi_!\underline{\field}$.
This observation, combined with equation~(\ref{eq:shspecial_adjoint}),
gives another proof that
$H_i(\underline{\field}_K)$ is canonically isomorphic to $H_i(K)$ for
$i=0,1$; this proof, based on adjoints,
is less explicit than the proof based on 
equation~(\ref{eq:shexplicit_adjoint}) and the remarks just below it.

The tensor product of two sheaves on $G$ is defined as the tensor product
their
values at each point and each vertex, as $\field$-vector spaces.
Note that if $\phi\from K\to G$ is an arbitrary morphism of digraphs,
and ${\cal F}$ is a sheaf on $G$, we easily verify that
$$
{\cal F}_K = \phi_!\phi^*{\cal F} = {\cal F}\otimes \underline\field_K,
$$
and if $K'\to G$ is another morphism we have an isomophism of sheaves on
$G$
\begin{equation}\label{eq:tensor_product}
\underline\field_{K} \otimes \underline\field_{K'} \isom
\underline\field_{K\times_G K'}.
\end{equation}
Furthermore,
if $L\to G$ is an arbitrary digraph morphism, we have an equality
of sheaves on $K$,
$$
\phi^*\underline\field_L = \underline\field_{K\times_G L}.
$$

If $G'\subset G$, then there is a natural inclusion of sheaves on $G$,
$\underline\field_{G'}\to\underline\field$ (but not generally any nonzero
morphism from $\underline\field=\underline\field_G$ 
to $\underline\field_{G'}$).

If $\phi\from K\to G$ is a morphism of graphs, 
and 
$\alpha\from {\cal F}_1\to{\cal F}_2$ is a morphism of sheaves on $K$,
then we have natural a natural morphism
$$
\phi_!\alpha \from \phi_!{\cal F}_1\to\phi_!{\cal F}_2,
$$
that make $\phi_!$ a functor on the category of sheaves.  Similarly
for $\phi_*$, and for the pullback,
$\phi^*$ (which acts the other way, from sheaves
and their morphisms on $G$ to those on $K$).

\subsection{Standard Sheaf Theories}

In this subsection we explain the connections with classical sheaf
theory on topological spaces.  We then describe our definitions
and particular choice of homology theory (and the role of $\phi_!$)
in terms of the view of Grothendieck et al. (\cite{sga4.1,sga4.2,sga4.3,
sga5}).

First consider an arbitrary topological space on a finite set, $X$.
Say that an open set, $U$, in $X$ is {\em irreducible} if $U$ is 
nonempty\footnote{If the empty set were considered irreducible, the
subcategory of irreducible open sets would have an initial element, making
the structure sheaf injective and giving the wrong homology groups.
One can say that the empty set is the union of proper subsets, namely
the empty union; as such the empty set is reducible ``by definition.''}
and
not the union of its proper subsets.
It is known that the category
of sheaves on $X$ is equivalent to the category of presheaves on
the irreducible open subsets; this can be proven directly---the
essential idea is that if a set is not irreducible, then we can
construct its value at a sheaf from those on its subsets;
there is also a proof in Section~2.5 of 
\cite{friedman_cohomology}, where this fact
follows easily from the Comparison Lemma
of
\cite{sga4.1}, Expos\'e III, 4.1. 
As is pointed out in \cite{friedman_cohomology}, this 
theorem is valid for any finite {\em semitopological} Grothendieck
topology,
where semitopological means that the underlying category has only
one morphism from any object to itself.

For example, if $X=\{A,B,C,D\}$ with irreducible open sets
being $\{A\}$, $\{C\}$, $\{A,B,C\}$, and $\{A,D,C\}$.  Then one
can recover a sheaf on $X$ (which has seven open sets) on the basis
of its values on these four sets, and any presheaf on these four sets
extends to a sheaf on $X$.
We remark that $X$ geometrically
corresponds (see \cite{friedman_cohomology})
to a circle, $X$, covered by two overlapping intervals, the intervals
corresponding to $\{A,B,C\}$ and $\{A,D,C\}$.  We have
$h_i(X)=1$ for $i=0,1$.

Let $G$ be a digraph with no self-loops.  In this case our sheaf
theory agrees with a standard topological one.  Namely,
let ${\rm Top}(G)$ be the topological space on $V_G\amalg E_G$,
whose open sets are subgraphs of $G$.  There are two types of
open irreducible sets: those of the form $\{v\}$ with $v\in V_G$, and
those of the form $\{he,e,te\}$ with $e\in E_G$; for each $e$ we have
$\{he\}$ and $\{te\}$ are subsets of $\{he,e,te\}$, and hence a sheaf
on ${\rm Top}(G)$ is determined by its values on the sets of type
$\{v\}$ and $\{he,e,te\}$ and the restrictions from the values on
$\{he,e,te\}$ to both $\{he\}$ and $\{te\}$.
We therefore recover our definition of a sheaf on a graph
(i.e., Definition~\ref{de:sheaf}).

Note that in the above $X=\{A,B,C,D\}$ definition, this is equivalent
to ${\rm Top}(G)$ with $V_G=\{A,C\}$ and $E_G=\{B,D\}$ and any heads/tails
correspondences making this a graph of two vertices joined by two
edges.

Notice that the above construction also gives a space, ${\rm Top}(G)$,
when $G$ has self-loops.  But this space has the wrong properties and
homology groups.  For example, if $G$ has one vertex and one self-loop,
then $h_i(G)=1$ for $i=0,1$ as defined in the previous section; however,
${\rm Top}(G)$ amounts to one irreducible open lying in another (with
only one inclusion, not the desired two), and we have
$h_1({\rm Top}(G))=0$.  So we now give a Grothendieck topology for
every digraph, $G$, that gives our sheaf and homology theory.

For each digraph, $G$, let ${\rm Cat}(G)$ be the category whose
objects are $V_G\amalg E_G$ and where the $2|E_G|$ non-identity morphisms are
given by $he\to e$ and $te\to e$ ranging over all $e\in E_G$ (with two
distinct morphisms
$he\to e$ and $te\to e$, even when $he=te$).
Then a sheaf over ${\rm Cat}(G)$ with the {\em grossi\`ere topologie},
i.e., a presheaf over the category ${\rm Cat}(G)$, is just the notion
of a sheaf given earlier.
Again, if $e$ is a self-loop, then this category has two morphisms between
two distinct objects; it is easy to see that the category of
sheaves over a graph with a self-loop cannot
be equivalent to the category of sheaves over any topological space.

Notice that earlier definitions regarding sheaves on $G$ and related
matters often involve
a $P$ in $V_G\amalg E_G$, giving vertices and edges a somewhat equal
treatment; this happens because $V_G$ and $E_G$ comprise the objects of
${\rm Cat}(G)$, and only the morphisms of ${\rm Cat}(G)$ distinguish
them.

At this point we will use explain certain features of the homology
theory we use here.
The proofs are in or are easy consequences of material in
\cite{friedman_cohomology}, and is mostly
easily derivable from material in
\cite{sga4.1,sga4.2,sga4.3,sga5} (which contains a lot of other 
material$\ldots$).
We shall assume the reader is familiar with basic
sheaf and cohomology theory
found in any algebraic geometry text, such as \cite{hartshorne}, and
we will just list a few points that are not standard,
or where the finite graph situation is 
different.
Let $\sheaves(G)$ be the category of sheaves of vector spaces (over some
fixed field, $\field$) on $G$.

\begin{enumerate}
\item $\sheaves(G)$ have enough projectives as well as injectives.
(See \cite{friedman_cohomology} for a simple 
characterization of all injectives
or projectives.)
\item If $u\from K\to G$ is a morphism of graphs, the pullback,
$u^*\from\sheaves(G)\to\sheaves(K)$ is defined via
$$
(u^*{\cal F})(P) = {\cal F}(u(P))
$$
for $P\in V_K\amalg E_K$, with its natural restriction maps
inherited from ${\cal F}$ (this is the same pullback defined in
Subsections~\ref{sb:unhappy} and
\ref{sb:pullbacks}); $u^*$ has a left adjoint, $u_!$ (defined in
Subsection~\ref{sb:pullbacks}), and a right
adjoint, $u_*$ (see \cite{sga4.1}, Expos\'e I, Proposition~5.1).  
In other words, 
\begin{equation}\label{eq:shadjoint}
\Hom_G(\phi_!{\cal F},{\cal L})  \isom \Hom_K({\cal F},\phi^*{\cal L})\quad
\forall {\cal F}\in\sheaves(G),\ {\cal L}\in\sheaves(K),
\end{equation}
and similarly for $\phi_*$.  
\item As a consequence we have
\begin{equation}\label{eq:shadjoint2}
\Ext^i_G(\phi_!{\cal F},{\cal L})  \isom \Ext^i_K({\cal F},\phi^*{\cal L})\quad
\forall {\cal F}\in\sheaves(G),\ {\cal L}\in\sheaves(K),
\end{equation}
and similarly for $\phi_*$.
\item If $u\from G'\to G$ is an inclusion of graphs, then
$u_!{\cal F}$ is just ${\cal F}_{G'}$, i.e., the sheaf that is zero
outside $G'$ and ${\cal F}$ when restricted to $G'$.
\item Any sheaf, ${\cal F}$, over $G$ has an injective resolution
$$
\Biggl( \bigoplus_{v\in V_G} (k_v)_* {\cal F}(v) 
\oplus \bigoplus_{e\in E_G} (k_e)_* {\cal F}(e) \Biggr) 
\longrightarrow
\Biggl( \bigoplus_{e\in E_G} (k_e)_* \bigl({\cal F}(te)\oplus{\cal F}(he) \bigr)
\Biggr)
$$
where for $P\in V_G\amalg E_G$, $k_P$ denotes the morphism from the 
category, $\Delta_0$, of one object and one (identity) morphism, to
$\Cat{G}$ sending the object of $\Delta_0$ to $P$.
In our case, this means that for a vector space, $W$, we have
$(k_P)_*W$ has the value $W^{d(Q)}$ at $Q$, where $d(Q)$ is the number
of morphisms from $Q$ to $P$.  For ${\cal F}=\underline{\field}$ this is homotopy
equivalent to a simpler resolution, namely
\begin{equation}\label{eq:shbetter_than_standard}
\underline{\field}\to \bigoplus_{v\in V_G} (k_v)_*\field
\to \bigoplus_{e\in E_G} (k_e)_*\field
\end{equation}
(see the paragraph about greedy resolutions and ``rank'' order in
Section~2.11 of \cite{friedman_cohomology}).
\item Similarly, any sheaf, ${\cal F}$, over $G$ has a projective resolution
$$
\Biggl( \bigoplus_{e\in E_G} \bigl( 
(k_{te})_! {\cal F}(e) \oplus
(k_{he})_! {\cal F}(e) \bigr) \Biggr)
\longrightarrow
\Biggl( \bigoplus_{v\in V_G} (k_v)_! {\cal F}(v) 
\oplus \bigoplus_{e\in E_G} (k_e)_! {\cal F}(e) \Biggr) 
$$
Again, $\underline{\field}$ (and numerous other sheaves encountered in practice)
have a simpler (``rank'' order) resolution:
\begin{equation}\label{eq:shbetter_than_standard2}
\bigoplus_{v\in V_G} (k_v)_!\field^{d_v-1} \to
\bigoplus_{e\in E_G} (k_e)_!\field \to \underline{\field},
\end{equation}
where $d_v$ is the degree of $v$ (the sum of the indegree and
outdegree), and the $d_v-1$ represents the fact that 
$\field^{d_v-1}$ is really the kernel of the map
$\field^{d_v}\to\field$ which is addition of coordinates;
similarly, in equation~(\ref{eq:shbetter_than_standard}), the $\field$
in $(k_e)_!\field$ is really the cokernel of the diagonal
inclusion $\field\to\field^2$, with the $2$ in $\field^2$ coming from
the fact that each edge is incident upon two vertices.

\item This means that the derived functors, 
${\rm Ext}^i({\cal F}_1,{\cal F}_2)$, of ${\rm Hom}({\cal F}_1,{\cal F}_2)$
can be computed as the cohomology groups of
$$
\bigoplus_{v\in V_G} {\rm Hom}\bigl( {\cal F}_1(v),{\cal F}_2(v) \bigr) 
\oplus 
\bigoplus_{e\in E_G}{\rm Hom}\bigl( {\cal F}_1(e),{\cal F}_2(e) \bigr) 
$$
$$
\longrightarrow
\bigoplus_{e\in E_G}{\rm Hom}\bigl( {\cal F}_1(e),{\cal F}_2(te)\oplus
{\cal F}_2(he) \bigr)
$$ 
\end{enumerate}

Now we can understand our choice of homology groups.
From equations~(\ref{eq:shbetter_than_standard}) and
(\ref{eq:shbetter_than_standard2}), we see that the constant
sheaf, $\underline\field$,
has a simple injective resolution but a more awkward projective
resolution.  So the homology theory that we've defined earlier amounts to
$$
H_i({\cal F}) = \bigl( \Ext^i({\cal F},\underline{\field}) \bigr)^\vee,
$$
where $\,^\vee$ denotes the dual space;
we have
$$
h_0({\cal F})-h_1({\cal F}) = \chi({\cal F}) = 
\dim\bigl( {\cal F}(V) \bigr)
-
\dim\bigl( {\cal F}(E) \bigr).
$$
As an alternative, one could study the standard cohomology theory
$$
H^i({\cal F})=\Ext^i(\underline{\field},{\cal F}).
$$
But we easily see that
$$
\dim\bigl(H^0({\cal F})\bigr)-\dim\bigl(H^1({\cal F})\bigr)
=
\dim\bigl({\cal F}(E)\bigr) - \sum_{v\in V_G} (d_v-1) \dim\bigl(
{\cal F}(v) \bigr) .
$$
This is another avenue to study, but does not seem
to capture in a simple way the invariant $\rho=\rho(G)$ of a digraph,
$G$.

We remark that we could reverse the role of open and closed sets in 
this discussion.
Indeed, to any sheaf, ${\cal F}$, of finite dimensional
$\field$-vector spaces
on a finite category,
${\cal C}$, we can take
the spaces dual to the ${\cal F}(P)$ for objects, $P$, of ${\cal C}$,
thereby getting a sheaf, ${\cal F}^\vee$, defined on ${\cal C}^{\rm opp}$,
the category opposite to ${\cal C}$ (i.e., the category obtained by
reversing the arrows).  Taking the opposite category has the effect of
exchanging open and closed sets, exchanging projectives and injectives,
etc.

Let us briefly explain the name ``structure sheaf.'' Generally speaking,
in sheaf theory each topological space or Grothendieck topology comes
with a special sheaf called the ``structure sheaf'' that has several
properties.  One key property is that the ``global sections'' of
a sheaf, ${\cal F}$, should reasonably be interpreted as the sheaf
homomorphisms to ${\cal F}$ from the structure sheaf.
This makes ``global cosections,'' on which our homology theory is based,
to be sheaf homomorphisms from ${\cal F}$ to the ``structure sheaf.''
Hence we call $\underline{\field}$ the structure sheaf.

% For our categories, it is simply a question of whether a vertex should
% be an open set and an edge a closed set or vice versa.

% If $U$ is an open subset of a topological space, $X$, and $Z=X\setminus U$
% is the closed complement, then for any sheaf of Abelian groups, ${\cal F}$,
% we have a short exact sequence
% $$
% 0\to {\cal F}_U \to {\cal F} \to {\cal F}_Z \to 0.
% $$
% This is a fundamental and ubiquitous sequence in sheaf
% theory, and one might ask why it never appears
% in this paper (well, except here$\ldots$).  
% Note that $X=\Top{G}$ and $U=\Top{G'}$ where $G'$ is a subgraph
% of $G$ with the same vertex set as $G$, then $\field_Z$ consists of a sum
% of ``vertexless edges.''  But it turns out that
% the SHNC fails if we allow vertexless edges, and so
% it is hard to use $\field_Z$ directly to conclude the SHNC for ${\cal F}$.
% Perhaps a modification of these arguments that would allow us to use 
% vertexless edges and simplify our proof.  Also note that the corresponding
% long exact sequences in cohomology are pretty easy to derive directly, e.g.,
% $$
% 0\to H^0(G)\to H^0(G') \to {\rm Ext}^1(\field_Z,\field) 
% \to H^1(G)\to H^1(G')\to 0,
% $$
% where ${\rm Ext}^1(\field_Z,\field)$ can be identified with $\field^{E(G)
% \setminus E(G')}$.

\subsection{$\nu_!$, the left adjoint to $\nu^*$}
\label{sb:adjoint}

As mentioned in the previous subsection, if $\nu\from G'\to G$ is
an arbitrary graph morphism, then $\nu^*$ has a left adjoint, $\nu_!$.
In this subsection we show that $\nu_!$ is the left adjoint to $\nu_*$,
based on the general construction given in \cite{sga4.1}.
Although $\nu^*$ has a right adjoint, $\nu_*$, for our homology theory
it is $\nu_!$ that seems more important.

% Let us first explain the importance of $\nu_!$ by appealing to general
% principles of adjoints and derived functors.

% To say that $\nu_!$ is the left adjoint of $\nu^*$ 
% means that for sheaves ${\cal F}\in\sheaves(G')$ and
% ${\cal G}\in\sheaves(G)$ we have a natural isomorphism
% \begin{equation}\label{eq:shadjoint}
% {\rm Hom}_{\sheaves(G)}(\nu_!{\cal F},{\cal G}) \isom
% {\rm Hom}_{\sheaves(G')}({\cal F},\nu^*{\cal G}) .
% \end{equation}
% In this case we have a natural isomorphism of derived functors
% $$
% {\rm Ext}^i_{\sheaves(G)}(\nu_!{\cal F},{\cal G}) \isom
% {\rm Ext}^i_{\sheaves(G')}({\cal F},\nu^*{\cal G}) 
% $$
% for any $i$.  Taking ${\cal F}={\cal G}=\underline{\field}$ we get
% $$
% H_i(\nu_!\underline{\field})\isom H_i(G').
% $$
% Hence the homology groups of the sheaf $\nu_!\underline{\field}$ on $G$
% are isomorphic to those of $G'$.  Moreover, the sheaf $\nu_!\underline{\field}$
% will reflect the structure of $G'$.  So let us give the 
% construction of $\nu_!$.

The general construction of $\nu_!$ is given in 
\cite{sga4.1}, Expos\'e I, Proposition~5.1).  
Alternatively, the reader can simply take the $\nu_!$ that we describe
and verify that it satisfies equation~(\ref{eq:shadjoint}).

According to \cite{sga4.1}, Expos\'e I, Proposition~5.1, given 
a sheaf, ${\cal F}$, on a graph $G$, i.e., a presheaf
on ${\rm Cat}(G)$, the value $\nu_!{\cal F}(P)$ for $P\in V_G\amalg E_G$ 
is determined as follows:
form the category $I^P_\nu$ whose objects are
$$
\{ (m,X)\ |\ X\in V_{G'}\amalg E_{G'},\ 
\mbox{$m\from P\to\nu(X)$ is a morphism in ${\rm Cat}(G)$} \},
$$
with a morphism from $(m,X)$ to $(m',X')$ being a morphism
$\mu\from X\to X'$ in ${\rm Cat}(G')$ such that $m'=\nu(\mu)m$;
then the projection $(m,X)\mapsto X$ followed by ${\cal F}$ gives
a contravariant functor from $I^P_\nu$ to $\field$-vector spaces, and
we take the inductive limit in $I^P_\nu$.  
It follows that if $e\in E_G$, then $I^e_\nu$ is category whose objects
are $({\rm id}_e,e')$ where $e'$ lies over $e$, and ${\rm id}_e$ is the
identity at $e$.  It follows that
$$
(\nu_!{\cal F})(e) = \bigoplus_{e'\in\nu^{-1}(e)} {\cal F}(e').
$$
If $v\in V_G$, then $I^v_\nu$ contains the following:
\begin{enumerate}
\item $({\rm id}_v,v')$ for each
$v'$ over $v$; 
\item $(\mu,e')$ for every $e'\in E_{G'}$ over an $e\in E_G$
with $he=v$, with $\mu$ the morphism from $v$ to $e$ given by the head
relation; and
\item the same with ``tail'' replacing ``heads.''
\end{enumerate}
We claim that each object $(\mu,e')$ has a unique morphism in $I^v_\nu$
to an element $({\rm id}_v,v')$, where $v'=he'$ in part~(2) and
$v'=te'$ in part~(3).  So the inductive limit for $(\nu_!{\cal F})(v)$
can be restricted to the subcategory of objects in part~(1), and we again
get a direct sum:
$$
(\nu_!{\cal F})(v) = \bigoplus_{v'\in\nu^{-1}(v)} {\cal F}(v').
$$
We leave it to the reader to verify that
the restriction maps of $\nu_!{\cal F}$ are just the
natural maps induced by ${\cal F}$.

Now we see that 
$$
(\nu_!{\cal F})(V_G) \isom
{\cal F}(V_{G'}),\quad
(\nu_!{\cal F})(E_G) \isom
{\cal F}(E_{G'}),
$$
with $d_{\nu_!{\cal F}}$ and $d_{{\cal F}}$ identified under the 
isomorphism.  Hence they have the same homology groups, same adjacency matrix,
etc.  The main difference is that one is a sheaf on $G$, the other a sheaf
on $G'$.

\subsection{$\nu_!$ and Contagious Vanishing Theorems}
\label{sb:vanishing}
In this section, we comment that vanishing of homology groups of a sheaf
implies the vanishing certain homology groups of related sheaves.
We call such results ``contagious vanishing'' theorems.  
This gives a nice use of the sheaves $\nu_!\underline{\field}$.
Let us 
first explain our interest in such results, as motivated by the
SHNC.

As mentioned before, we will show
that the SHNC is implied by 
the vanishing maximum excess of a sheaf that we call a $\rho$-kernel.
The $\rho$-kernel actually arises when considering a trivial and very
special case of
the SHNC; however it turns out that the vanishing of the maximum
excess these $\rho$-kernels
actually imply the entire SHNC.  What happens is that the trivial
case of the SHNC, when expressed as a short/long exact sequence, can
be ``tensored'' with sheaves of the form $\nu_!\underline{\field}$;
then a general ``contagious vanishing theorem'' implies that the
maximum excess of the $\rho$-kernel tensored with 
$\nu_!\underline{\field}$ vanishes; this proves {\em all cases} of the SHNC.
In other words, the vanishing of a homology group of a sheaf 
or of a related group can
be more powerful than it first seems.
Let us describe the underlying ideas, which are not specific to the SHNC.

Let $G'\subset G$ be digraphs, and let $G$ be a sheaf on
${\cal F}$.  Then we have an exact sequence
$$
0\to {\cal F}_{G'}\to {\cal F} \to {\cal F}/{\cal F}_{G'} \to 0.
$$
Of course, when $G$ has no self-loops, then this is a special case
of the general short exact sequence
$$
0\to {\cal F}_U \to {\cal F} \to {\cal F}_Z \to 0,
$$
where ${\cal F}$ is a sheaf on a topological space, $U$ is an open subset,
and $Z$ is the closed complement (see \cite{hartshorne}, Chapter II, 
Exercise 1.19 or Chapter III, proof of Theorem~2.7).
The long exact sequence implies that if $h_1({\cal F})=0$, then
$h_1({\cal F}_{G'})=0$.
Of course,
the same is true of any first quasi-Betti number, and so we have the
following simple but useful theorem.

\begin{theorem}
\label{th:open_contagious}
If $\alpha_1$ is any first quasi-Betti number for sheaves of $\field$-vector
spaces
on a graph, $G$, and if $\alpha_1({\cal F})=0$ for such a sheaf, ${\cal F}$, 
then for any subgraph, $G'$,
of $G$ we have $\alpha_1({\cal F}_{G'})=0$.
\end{theorem}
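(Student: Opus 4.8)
The plan is to derive this immediately from the defining properties of a first quasi-Betti number, together with the short exact sequence of sheaves associated to a subgraph inclusion. First I would form, for the given subgraph $G'\subset G$, the short exact sequence of sheaves on $G$
$$
0\to {\cal F}_{G'}\to {\cal F}\to {\cal F}/{\cal F}_{G'}\to 0,
$$
which is well defined since all Abelian operations on sheaves (kernels, cokernels, checking exactness) are performed vertexwise and edgewise, as noted earlier in the chapter. Here ${\cal F}_{G'}=\phi_!\phi^*{\cal F}$ for the inclusion $\phi\from G'\to G$, i.e., the sheaf agreeing with ${\cal F}$ on $G'$ and zero elsewhere, and ${\cal F}/{\cal F}_{G'}$ is simply the quotient presheaf, which is again a sheaf.

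Next I would write $(\alpha_0,\alpha_1)$ for the quasi-Betti number pair determined by the first quasi-Betti number $\alpha_1$, so that $\alpha_0({\cal H})=\chi({\cal H})+\alpha_1({\cal H})$ for every sheaf ${\cal H}$. Applying property~(3) of the definition of a quasi-Betti number pair to the short exact sequence above, the sequence of non-negative reals
$$
0,\ \alpha_1({\cal F}_{G'}),\ \alpha_1({\cal F}),\ \alpha_1({\cal F}/{\cal F}_{G'}),\ \alpha_0({\cal F}_{G'}),\ \alpha_0({\cal F}),\ \alpha_0({\cal F}/{\cal F}_{G'}),\ 0
$$
is triangular. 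The relevant triangular inequality is the one centered at the term $\alpha_1({\cal F}_{G'})$, namely
$$
\alpha_1({\cal F}_{G'}) \le 0 + \alpha_1({\cal F}).
$$
Since by hypothesis $\alpha_1({\cal F})=0$, and since $\alpha_1$ takes values in the non-negative reals, this forces $\alpha_1({\cal F}_{G'})=0$, which is exactly the claim.

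I do not expect any serious obstacle here; the only point requiring a word of care is the identification of the first term of the triangular sequence — one must use the leftmost $0$ (coming from exactness on the left of the homology-type sequence, or simply built into the definition of the triangular sequence attached to a short exact sequence) so that the inequality reads $\alpha_1({\cal F}_{G'})\le 0+\alpha_1({\cal F})$ rather than something weaker. Everything else is formal: the construction of the short exact sequence from $G'\subset G$, and the invocation of property~(3). If one wanted, the same argument shows more generally that $\alpha_1(\phi_!\phi^*{\cal F})=0$ for any digraph morphism $\phi\from K\to G$ with $\alpha_1({\cal F})=0$, using the analogous short exact sequence; but the statement as given only asks for subgraph inclusions.
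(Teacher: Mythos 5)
Your proof is correct and is essentially the paper's own argument: form the short exact sequence $0\to{\cal F}_{G'}\to{\cal F}\to{\cal F}/{\cal F}_{G'}\to 0$ and apply the triangular inequality from property~(3) of the quasi-Betti number pair at the position of $\alpha_1({\cal F}_{G'})$. The paper states this more tersely (citing the long exact sequence for $h_1$ and then noting the same holds for any first quasi-Betti number), but the substance is identical.
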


The intuition is clear in case $\alpha_1$ is $h_1$ or $h_1^\twist$ or
the maximum excess, and ${\cal F}=\underline{\field}$: passing to a subgraph
cannot increase the first Betti number or the reduced cyclicity of a graph.

One way in which a sheaf ${\cal F}_{G'}$ can naturally arise is when we
take a short exact sequence of sheaves in $G$,
$$
0\to {\cal F}_1\to {\cal F}_2\to {\cal F}_3 \to 0,
$$
and take the tensor product with 
$\underline{\field}_{G'}$; the tensor product preserves exactness
(i.e., all higher Tor groups vanish in sheaves of vector spaces over graphs),
so we get a new short exact sequence
$$
0\to {\cal F}_1\otimes\underline{\field}_{G'} 
\to {\cal F}_2\otimes\underline{\field}_{G'} 
\to {\cal F}_3 \otimes\underline{\field}_{G'} \to 0;
$$
now note that for any sheaf, ${\cal F}$, on $G$ we have
$$
{\cal F}\otimes\underline{\field}_{G'} = {\cal F}_{G'}.
$$

As a consequence, if one has an exact sequence of sheaves on $G$,
$$
0\to {\cal F}_1\to {\cal F}_2\to {\cal F}_3 \to 0,
$$
and one expects that
${\rm m.e.}({\cal F}_2)\le {\rm m.e.}({\cal F}_3)$, then a
simple homological explanation for this inequality would be
that ${\rm m.e.}({\cal F}_1)=0$.  But this would, in turn, imply
that ${\rm m.e.}(({\cal F}_2)_{G'}) \le
{\rm m.e.}(({\cal F}_3)_{G'})$ for all open subsets, $G'$, of $G$,
which could be a much stronger inequality (and is much stronger
for the setting of the
SHNC).

Let us state a
slightly stronger ``contagious vanishing'' theorem that we
shall apply to the maximum excess.
\begin{definition} By a {\em scaling} first quasi-Betti number, $\alpha_1$,
we mean a rule that, for some field, $\field$, and {\em any} digraph, $G$, 
assigns a non-negative real number to each sheaf of $\field$-vector spaces
over $G$, such that
\begin{enumerate}
\item $\alpha_1$ is a first quasi-Betti number when restricted to
sheaves on $G$ for any digraph, $G$; 
\item for any covering map $\phi\from K\to G$ of digraphs and any sheaf,
${\cal F}$, on $G$ we have
$$
\alpha_1(\phi^*{\cal F}) = \alpha_1({\cal F})\,\deg(\phi);
$$
and
\item for an \'etale $\phi\from K\to G$ and any sheaf, ${\cal F}$,
on $K$ we have
$$
\alpha_1(\phi_!{\cal F}) = \alpha_1({\cal F}).
$$
\end{enumerate}
\end{definition}
By the end of this chapter we will know that the maximum excess is a
scaling first quasi-Betti number: 
condition~(2) follows
from Theorem~\ref{th:shme_scale};
conditions~(1) and (3) follow from Theorem~\ref{th:shmain} by taking
limits; since condition~(1) is almost immediate, we prove only
condition~(3).

For arbitrary $\phi\from K\to G$, and arbitrary
$\mu\from G'\to G$, let $K'=
G'\times_G K$,
and let $\mu'\from K'\to K$ 
and $\phi'\from K'\to G'$ be the projections.
We easily see (on each vertex and edge of $G$) a natural isomorphism
\begin{equation}\label{eq:base_change}
(\phi')_!(\mu')^*{\cal F} \isom \mu^*\phi_!{\cal F}.
\end{equation}
Using equation~(\ref{eq:shspecial_adjoint}) we have
$$
h_1\bigl((\mu')^*{\cal F}\bigr) = 
h_1\bigl((\phi')_!(\mu')^*{\cal F}\bigr) = 
h_1(\mu^*\phi_!{\cal F}).
$$
Now we take $\mu\from G'\to G$ to be a covering map; 
then $\mu'\from K'\to K$ is a covering map of the same degree 
as $\mu$; since $\phi$ is \'etale, so is $\phi'\from K'\to G'$,
and hence the girth of $K'$ is at least that of $G'$ (since any
closed, non-backtracking walk on $K'$ pushes down, via $\phi'$, to one
on $G'$ of equal length).  
Hence if the girth of $G'$ is sufficiently large we have
$$
{\rm m.e.}({\cal F})=h_1\bigl((\mu')^*{\cal F}\bigr)/\deg(\mu')
=
h_1(\mu^*\phi_!{\cal F})/\deg(\mu)={\rm m.e.}(\phi_!{\cal F}).
$$

\begin{theorem}\label{th:etale_contagious}
Let $\alpha_1$ be a scaling first quasi-Betti number.
If $\alpha_1({\cal F})=0$ for a sheaf, ${\cal F}$, on a digraph,
$G$, and $\nu\from G'\to G$ is
\'etale,
then $\alpha_1({\cal F}_{G'})=0$ where
${\cal F}_{G'}=\nu_!\nu^*{\cal F}\isom {\cal F}\otimes\underline\field_{G'}$.
\end{theorem}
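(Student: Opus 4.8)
The plan is to reduce the assertion to the special case already handled by Theorem~\ref{th:open_contagious} -- the case in which $\nu$ is literally the inclusion of a subgraph -- by first enlarging $G'$ to a digraph that \emph{covers} $G$. The tool for this is the graph-theoretic form of Marshall Hall's theorem (see \cite{stallings83}): every \'etale map of finite digraphs $\nu\from G'\to G$ factors as $\nu=\mu\circ\iota$, where $\iota\from G'\to G''$ realises $G'$ as a subgraph of a finite digraph $G''$ and $\mu\from G''\to G$ is a covering map. I would prove this by an explicit construction: fix an integer $d$ with $d\ge|\nu^{-1}(v)|$ for all $v\in V_G$; over each $v\in V_G$ let the vertices of $G''$ over $v$ be a $d$-element set $F_v$ containing $\nu^{-1}(v)$; and over each edge $e\in E_G$ observe that the edges of $G'$ lying over $e$ induce a partial matching between $F_{t_G e}$ and $F_{h_G e}$ -- partial precisely because $\nu$, being \'etale, is injective on the set of edges entering, and on the set of edges leaving, each vertex -- which one then completes arbitrarily to a perfect matching, declaring its $d$ pairs to be the edges of $G''$ over $e$. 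The resulting $G''$ is a covering of $G$ of degree $d$ and contains $G'$ as a subgraph.

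Granting the factorization, the proof is a short chase through the three defining properties of a scaling first quasi-Betti number. Because $\mu$ is a covering map, property~(2) gives $\alpha_1(\mu^*{\cal F})=\deg(\mu)\,\alpha_1({\cal F})=0$. Because $G'$ is a subgraph of $G''$, Theorem~\ref{th:open_contagious} applied to the sheaf $\mu^*{\cal F}$ on $G''$ gives $\alpha_1\bigl((\mu^*{\cal F})_{G'}\bigr)=0$, where $(\mu^*{\cal F})_{G'}=\iota_!\iota^*\mu^*{\cal F}$. Using the functoriality identities $\nu_!=\mu_!\iota_!$ and $\nu^*=\iota^*\mu^*$, valid for any composable pair of digraph morphisms, we then have
$$
{\cal F}_{G'}=\nu_!\nu^*{\cal F}=\mu_!\bigl(\iota_!\iota^*\mu^*{\cal F}\bigr)=\mu_!\bigl((\mu^*{\cal F})_{G'}\bigr),
$$
and since a covering map is in particular \'etale, property~(3) applied to $\mu$ gives $\alpha_1({\cal F}_{G'})=\alpha_1\bigl((\mu^*{\cal F})_{G'}\bigr)=0$, as desired.

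The only step that is not pure formalism is the completion of an \'etale map to a covering; everything else is bookkeeping with the axioms and with the already-proved Theorem~\ref{th:open_contagious}. I expect that completion to be the main point to get right, the crux being the verification that the matching induced over each edge is genuinely partial -- which is exactly the \'etale hypothesis -- and that it extends to a perfect matching, which is immediate since the two fibres have equal size $d$. Finally, in the intended application $\alpha_1$ is the maximum excess, for which properties~(2) and~(3) used above were established respectively from Theorem~\ref{th:shme_scale} and from the girth argument preceding the statement, and property~(1) follows from Theorem~\ref{th:shmain}; since Theorem~\ref{th:shmain} is proved independently, no circularity arises.
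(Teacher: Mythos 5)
Your proof is correct and follows essentially the same route as the paper's: factor $\nu$ as an open inclusion followed by a covering map, apply the scaling property to $\mu^*{\cal F}$, invoke Theorem~\ref{th:open_contagious}, and finish with property~(3) for $\mu$ and the functoriality identity $\mu_!\iota_!\iota^*\mu^*=\nu_!\nu^*$. The only difference is that you spell out the proof of the \'etale factorization (which the paper in fact also records later, as the lemma in Section~\ref{se:megalois} attributed to Stallings/Marshall Hall), while the paper's proof of this theorem simply asserts it.
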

\begin{proof}
Since $\nu$ is \'etale, it factors as an open inclusion
$j\from G'\to G''$ followed by a covering map 
$\mu\from G''\to G$.
Since $\alpha_1$ scales, we have $\alpha_1({\cal F})=0$ implies that
$\alpha_1(\phi^*{\cal F})$.
which implies
$\alpha_1({\cal F}')=0$, where 
$$
{\cal F}'=(\mu^*{\cal F})_{G'}=j_!j^*\mu^*{\cal F}
$$
by Theorem~\ref{th:open_contagious}.
Hence
$$
\alpha_1(\mu_!{\cal F}')=\alpha_1({\cal F}')=0.
$$
But
$$
\mu_!{\cal F}' = \mu_!j_!j^*\mu^*{\cal F} \isom \nu_!\nu^*{\cal F}
={\cal F}_{G'},
$$
so 
$$
\alpha_1({\cal F}_{G'})=\alpha_1(\mu_!{\cal F}')=0.
$$
\end{proof}

Note that if $\phi\from K\to G$ is not \'etale, then for a sheaf,
${\cal F}$, on $K$,
the maximum excess of ${\cal F}$ and $\phi_!{\cal F}$ need not agree.
For example,
consider $\phi\from B_2\to B_1$,
the unique morphism of digraphs, where $B_i$ is the graph with one
vertex and $i$ self-loops, and ${\cal F}={\cal U}$, the unhappy
$4$-bundle of Subsection~\ref{sb:unhappy}.  Then
${\rm m.e.}({\cal U})=0$ but ${\rm m.e.}(\phi_!{\cal U})=1$, as the
head/tail neighbourhood of the span of $\beta-\gamma$ is two dimensional.
Moreover, notice how our proof 
that ${\rm m.e.}({\cal F})={\rm m.e.}(\phi_!{\cal F})$
for $\phi$ \'etale would fail to work for arbitrary $\phi$:
for arbitrary $\phi$, not necessarily \'etale, we would
not be able to assert
that $K'$ has large girth; in the example in this paragraph, 
$\phi$ takes two edges to one,
and as a result $K'$ always has girth at most two (and Abelian girth
at most eight).

We finish with a remark that may be useful when generalizing sheaves
to discrete structures beyond graphs.
Equation~(\ref{eq:base_change}) is known
as 
a ``base change'' morphism.  In more general contexts, there is usually
a natural ``base change'' morphism
$$
\mu^*\phi_!{\cal F}
\to
(\phi')_!(\mu')^*{\cal F} 
$$
that is not generally injective or surjective, not even for presheaves
of vector spaces on finite categories; see
\cite{friedman_cohomology}.
However if $\phi\from K\to G$ is any digraph morphism, then $\phi$
determines a functor $\Phi\from{\rm Cat}(K)\to{\rm Cat}(G)$ of the 
associated categories, and this functor is always ``target liftable,''
i.e., for any morphism, $\alpha$, of ${\rm Cat}(G)$ 
and object, $P$, in ${\rm Cat}(K)$ with $\Phi(P)$ being
the target of $\alpha$, there is a morphism, $\alpha'$, for ${\rm Cat}(K)$
whose target is $P$ and with $\Phi(\alpha')=\alpha$.
In \cite{friedman_cohomology} we see that the ``target liftable'' property
for $\phi$ (or, more precisely, $\Phi$) guarantees the isomorphism
in the base change morphism (actually \cite{friedman_cohomology} speaks
of the dual morphism $\mu^*\phi_*{\cal F}\to (\phi')_*(\mu')^*{\cal F}$
and ``source liftable'' in the discussion after Theorem~10.2 there). 
So if we, for example, spoke of graphs without requiring the edges to have
both endpoints, then the simple example of \cite{friedman_cohomology}
shows that equation~(\ref{eq:base_change}) would fail.
Of course, if $K$ is a subgraph obtained from $G$ by deleting any number
of edges, then the inclusion $\phi$ (or, more precisely, associated 
$\Phi$) is not source liftable.
Hence ``target liftability'' (and not ``source liftability'') should
guide us in generalizing sheaves on graphs to more general discrete
structures, if we wish to have similar theorems about analogues of
the maximum excess being invariant under $\phi_!$ for \'etale $\phi$.

\section{Twisted Cohomology}
\label{se:shtwist}
In this section we describe a number of aspects 
of twisted homology,
and give its relationship to the homology of pullbacks under Abelian
covers.
We show that the first twisted Betti number
of the structure sheaf of a graph, $G$, agrees with $\rho(G)$.
We then prove a number of related results, such as giving a 
condition under which the maximum excess
agrees with the first Betti number.

\subsection{Remarks on the Definition}

Twists and twisted homology were defined in Subsection~\ref{sb:twisted_def}.
In this subsection we make a few remarks on the definitions.

In our definition of twists, 
for symmetry we could have also specified a multiplier (like $\psi(e)$) for
${\cal F}^\psi(h,e)$, not just ${\cal F}^\psi(t,e)$; i.e., we could have
defined a twists to be a map $E_G\times\{t,h\}\to\field'$.
But there is no real need for a
${\cal F}^\psi(h,e)$ multiplier,
since all twisted
homology groups would be isomorphic.

Note that $h_i^\twist({\cal F})$ could be alternatively described as
the ``generic dimension of $h_i({\cal F}^\psi)$;'' more precisely,
there is a polynomial, $f$, in $\{\psi_e\}$ over $\field$ such that
the dimension of $h_i({\cal F}^\psi)$ for any fixed twist, $\psi$, with
$\psi_e\in\field$, is $h_i^\twist({\cal F})$ provided
that $f(\psi)\ne 0$.  Furthermore, for any particular $\psi\in\field^{E_G}$,
the dimension of $h_i({\cal F}^\psi)$ is at least the generic dimension.
All these facts follow from the fact that the rank of a matrix is the
size of the largest square submatrix whose determinant does not vanish.
This discussion assumes either that $\field$ is infinite or that
$\field$ is considered as
embedded in an infinite or sufficiently large extension field
of itself (it is not clear how to give an interesting meaning to ``generic''
when dealing with finite dimensional spaces over finite fields).
Of course, the advantange of our original definition, which involves
$\field(\psi)$ with the $\psi$ being indeterminates, is that it gives
a simple, usable definition for arbitrary $\field$, even when $\field$
is finite.

\subsection{Twists and Abelian Coverings}

We now wish to describe twisting as giving the homology of pullbacks
under Abelian coverings.  
% This is a tedious but standard type of
% calculation; since this is not essential to the rest of the paper,
% we outline the proof.  
Given an Abelian group, ${\cal A}$, say that a field, $\field$, is a 
{\em Fourier field for ${\cal A}$} if $\field$ contains $n=|{\cal A}|$
distinct $n$-th roots of $1$ (which holds, for example, when the 
characteristic of $\field$ is relatively prime to $n$ and $\field$ is
algebraically closed).  In this case,
if ${\cal A}$, acts on
a vector space, $S$, over a field, $\field$, then we have a canonical
isomorphism
$$
\bigoplus_\nu S^\nu \isom S,
$$
where $\nu\from\ca\to\field$ ranges over all characters on $\ca$ and
$$
S^\nu = \{ s\in S \ |\  as=\nu(a)s \mbox{\ for all $a\in\ca$} \} ;
$$
indeed, for each $\nu$ we have $S^\nu\subset S$, and these inclusions
give a map
from the direct sum of the $S^\nu$ to $S$; the inverse map, from $S$ to the
direct sum of the $S^\nu$, is given as the sum of the maps from
$S$ to any particular $S^\nu$ via
\begin{equation}\label{eq:shinverse_fourier}
s \mapsto (1/n)\sum_{\alpha\in\ca} \nu^{-1}(\alpha) (\alpha s);
\end{equation}
the values $1/n$ and $\nu^{-1}(\alpha)$ all lie in $\field$ for any
$\field$ that is a Galois field for $\ca$.

\begin{lemma}\label{lm:abelian}
Let $\phi\from G'\to G$ be an Abelian covering map with Galois group
${\cal A}$.
Let ${\cal F}$ be a sheaf of $\field$-vector spaces on $G$ such that
$\field$ is Fourier field for ${\cal A}$.  Then
\begin{equation}\label{eq:shhom_decomp}
H_i(\phi^*{\cal F}) \isom
\bigoplus_\psi \bigl( H_i(\phi^*{\cal F}) \bigr)^\nu,
\end{equation}
the sum is over all characters, $\nu$, of $\ca$.
Let $\vec a=\{a_e\}_{e\in E_G}$ be any 
Galois coordinates for $\phi\from G'\to G$, and
for any character, $\nu$, of ${\cal A}$, let
$\nu(\vec a)$ denote the $\field$-twist taking $e\in E_G$ to
$\nu(a_e)$.
Then for each $\nu$ we have
$$
\bigl( H_i(\phi^*{\cal F}) \bigr)^\nu \isom 
H_i\bigl({\cal F}^{\nu(\vec a)}\bigr).
$$
\end{lemma}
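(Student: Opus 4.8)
The plan is to use the action of the Galois group $\ca={\rm Aut}(\phi)$ to split the homology of $\phi^*{\cal F}$ into isotypic components and then identify each component explicitly in Galois coordinates. First I would set up the $\ca$-action: each $g\in\ca$ is an automorphism of $G'$ with $\phi g=\phi$, so it induces an automorphism of the pullback sheaf $\phi^*{\cal F}$; the restriction maps of $\phi^*{\cal F}$ are pulled back from $G$ and hence are $\ca$-equivariant, so $\ca$ acts on the two-term complex
$$
C_\bullet\colon\quad (\phi^*{\cal F})(E_{G'})\ \xrightarrow{\ d\ }\ (\phi^*{\cal F})(V_{G'})
$$
by chain maps, and in particular on each $H_i(\phi^*{\cal F})$.

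Because $\field$ is a Fourier field for $\ca$, the canonical decomposition $S\isom\bigoplus_\nu S^\nu$ recalled just before the statement applies to any $\ca$-module $S$, and, $d$ being $\ca$-equivariant, it applies to the complex itself: $C_\bullet\isom\bigoplus_\nu C_\bullet^\nu$ as a direct sum of subcomplexes. Taking homology commutes with this finite direct sum, and the $\nu$-summand of $H_i(\phi^*{\cal F})$ under the induced decomposition is precisely $H_i(C_\bullet^\nu)$ (equivalently, the idempotent of \eqref{eq:shinverse_fourier} onto the $\nu$-part defines an exact functor on $\ca$-modules). This already gives \eqref{eq:shhom_decomp}, and reduces the second assertion to an isomorphism of complexes between $C_\bullet^\nu$ and the complex $({\cal F}^{\nu(\vec a)})(E_G)\to({\cal F}^{\nu(\vec a)})(V_G)$ computing $H_i({\cal F}^{\nu(\vec a)})$.

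The heart of the matter is this last identification, which I would carry out in the Galois coordinates $\vec a=\{a_e\}$ of Subsection~\ref{sb:galois_coordinates}, where $V_{G'}=V_G\times\ca$, $E_{G'}=E_G\times\ca$, $h_{G'}(e,a)=(h_Ge,a_ea)$, $t_{G'}(e,a)=(t_Ge,a)$, and $g\in\ca$ acts by $(P,a)\mapsto(P,ag)$. An element of $(\phi^*{\cal F})(V_{G'})$ is a family $(x_{v,a})$ with $x_{v,a}\in{\cal F}(v)$ on which $g$ shifts the $\ca$-coordinate, so the $\nu$-isotypic elements are exactly those with $x_{v,a}=\nu(a)\,x_{v,1}$, and $x\mapsto(x_{v,1})_{v}$ gives an isomorphism $(\phi^*{\cal F})(V_{G'})^\nu\isom{\cal F}(V_G)$, and likewise $(\phi^*{\cal F})(E_{G'})^\nu\isom{\cal F}(E_G)$. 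I would then transport $d_h$ and $d_t$ of $\phi^*{\cal F}$ through these isomorphisms: the tail part goes to the ordinary $d_t$ of ${\cal F}$, while the head part, because $h_{G'}(e,a)=(h_Ge,a_ea)$ shifts the coset by $a_e$, goes to the ordinary $d_h$ of ${\cal F}$ with each restriction map ${\cal F}(h,e)$ multiplied by $\nu(a_e)^{-1}$. Thus $C_\bullet^\nu$ is the complex of the sheaf on $G$ with the same values as ${\cal F}$, tail maps ${\cal F}(t,e)$, and head maps $\nu(a_e)^{-1}{\cal F}(h,e)$; rescaling the edge value ${\cal F}(e)$ by $\nu(a_e)^{-1}$ converts this isomorphically into the twisted sheaf ${\cal F}^{\nu(\vec a)}$ of Subsection~\ref{sb:twisted_def} (or one invokes the remark there that moving a twist from the tail maps to the head maps leaves the homology unchanged). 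Together with the previous paragraph this yields $(H_i(\phi^*{\cal F}))^\nu\isom H_i({\cal F}^{\nu(\vec a)})$.

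The one place to tread carefully — fussy bookkeeping rather than a genuine difficulty — is the matching of conventions in this last computation: the precise direction of the $\ca$-action on $\phi^*{\cal F}$ (so that the isotypic condition reads $x_{v,a}=\nu(a)x_{v,1}$ and not with $\nu(a)^{-1}$), and the tail-twist normalization used to define ${\cal F}^\psi$ in Subsection~\ref{sb:twisted_def}. Since \eqref{eq:shhom_decomp} ranges over all characters of $\ca$, the first assertion is insensitive to these choices; for the second one simply needs the conventions of Subsections~\ref{sb:galois_coordinates} and~\ref{sb:twisted_def} reconciled so as to land on exactly $\nu(\vec a)$. Everything else is the routine verification that the exhibited identifications of vector spaces intertwine the differentials.
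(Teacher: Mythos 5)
Your proof is correct and follows essentially the same line as the paper: set up the $\ca$-action on $\phi^*{\cal F}$, split the two-term complex by Fourier/isotypic decomposition (using that $\field$ is a Fourier field), then in Galois coordinates identify the $\nu$-piece with the twisted complex via $f\mapsto(f(\cdot,1))$ followed by the rescaling $f'(e)=\nu(a_e)^{-1}\mt f(e)$, which converts the resulting head-map twist into the tail-map twist $\nu(\vec a)$. The paper's proof performs exactly this rescaling to produce $f'$ and verifies the resulting commutative square; your organization at the level of chain complexes is a harmless repackaging of the same computation.
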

\begin{proof}
We have an $\ca$ action on $(\phi^*{\cal F})(E_{G'})$ via
$$
(af)(e) = f(ea)
$$
for all $a\in\ca$, $f\in (\phi^*{\cal F})(E_{G'})$, and $e\in E_{G'}$.
Similarly $(af)(v)=f(va)$ defines an
$\ca$ action on $(\phi^*{\cal F})(V_{G'})$.
The map in equation~(\ref{eq:shinverse_fourier}) gives isomorphisms
$$
(\phi^*{\cal F})(E) \to \bigoplus_{\nu} 
\bigl( (\phi^*{\cal F})(E) \bigr)^\nu,
\quad
(\phi^*{\cal F})(V) \to \bigoplus_{\nu} 
\bigl( (\phi^*{\cal F})(V) \bigr)^\nu,
$$
and $d_{\phi^*{\cal F}}$ intertwines with these maps, which
establishes
equation~(\ref{eq:shhom_decomp}).
It remains to identify
$$
\bigl( H_i(\phi^*{\cal F}) \bigr)^\nu 
$$
with $H_i$ of the appropriately twisted ${\cal F}$.
So choose Galois coordinates, $\{a_e\}$, and therefore
identify $V_{G'}$ with $V_G\times \ca$ and $E_{G'}$ with $E_G\times \ca$
so that
$$
h(e,a)=(he,a_e a ) \quad\mbox{and}\quad
t(e,a)=(te,a)
$$
(as in Subsection~\ref{sb:galois_coordinates}).
Given an $f\in (\phi^*{\cal F})(E)$, define $\mt f\in{\cal F}(E)$ via
$$
\mt f(e) = f(e,{\rm id}_\ca),
$$
where ${\rm id}_\ca$ is the identity of $\ca$ and we identify
$E_{G'}$ with $E_G\times\ca$ as above.  Similarly define a linear
map $f\mapsto \mt f$ from $(\phi^*{\cal F})(V)$  to ${\cal F}(V)$.
Now consider
$$
f\in \bigl( H_1(\phi^*{\cal F}) \bigr)^\nu.
$$
For all $v'\in V_{G'}$ we have
$$
\sum_{e'\ {\rm s.t.}\ te'=v'} f(e') =
\sum_{e'\ {\rm s.t.}\ he'=v'} f(e') .
$$
Taking $v'=(v,{\rm id}_\ca)$ yields
$$
\sum_{te=v} f(e,{\rm id}_\ca) =
\sum_{he=v} f(e,a_e^{-1}) 
= \sum_{he=v} (a_e^{-1} f) (e,{\rm id}_\ca),
$$
which, since $f\in ( H_1(\phi^*{\cal F}) )^\nu$,
$$
= \sum_{he=v} \nu(a_e^{-1})  f (e,{\rm id}_\ca).
$$
It follows that
$$
\sum_{te=v} \mt f(e) =
\sum_{he=v} \nu(a_e^{-1})  \mt f (e).
$$
In other words, if we set $f'(e)=\nu(a_e^{-1})\mt f(e)$, then we have
$$
\sum_{te=v} \nu(a_e) f'(e) =
\sum_{he=v} f'(e).
$$
Hence $f'\in H_1({\cal F}^{\nu(\vec a)})$.
Clearly given $f'$ we can reconstruct $\mt f$ and then $f$, namely
$$
f(e,a) = \nu(a_e)\nu(a)f'(e).
$$
Hence $f\mapsto f'$ is an isomorphism
$$
\bigl( (\phi^*{\cal F})(E_{G'}) \bigr)^\nu \to {\cal F}^{\nu(\vec a)}(E_G).
$$
Furthermore we have an analogous map $f\mapsto \mt f$
$$
\bigl( (\phi^*{\cal F})(V_{G'}) \bigr)^\nu \to {\cal F}^{\nu(\vec a)}(V_G),
$$
namely, $\mt f(v)=f(v,{\rm id}_\ca)$, which likewise is an isomorphism.
Hence we get a commutative diagram:
$$
\begin{diagram}
\bigl( (\phi^*{\cal F})(E_{G'}) \bigr)^\nu & \rTo^{f\mapsto f'} &
{\cal F}^{\nu(\vec a)}(E_G) \\
\dTo^{d_{\phi^*{\cal F}}} & & \dTo_{d_{{\cal F}^{\nu(\vec a)}}} \\
\bigl( (\phi^*{\cal F})(V_{G'}) \bigr)^\nu & \rTo^{f\mapsto \mt f} &
{\cal F}^{\nu(\vec a)}(V_G)  \\
\end{diagram}
$$
Since the horizontal arrows are isomorphisms, this diagram sets up
isomorphisms between the kernel and cokernel
of the vertical arrows.  Hence for $i=0,1$ we have
$$
\bigl( H_i(\phi^*{\cal F}) \bigr)^\nu \isom
H_i({\cal F}^{\nu(\vec a)})
$$
\end{proof}

Lemma~\ref{lm:abelian} shows that if $\field$ is any infinite field,
${\cal F}$ is any sheaf on a digraph, $G$,
and we take a random $\integers/p\integers$ cover $\mu\from G'\to G$,
then we have that $h_i(\mu^*{\cal F})/p$ tends to $h_i^\twist({\cal F})$
in probability as $p\to\infty$.

Lemma~\ref{lm:abelian} also shows that if $\mu\from G'\to G$ is an Abelian
cover with covering group $\ca$, then $H_i(\mu^*{\cal F})$ is the sum
of $|\ca|$ groups, each isomorphic to an $H_i({\cal F}^\psi)$ for a
particular value of $\psi$, and hence of dimension at least 
$h_i^\twist({\cal F})$.  We conclude the following lemma.

\begin{lemma}\label{lm:twist_bound}
If $\mu\from G'\to G$ is any Abelian cover of $G$, and
${\cal F}$ is any sheaf on $G$, then
$$
h_i(\mu^*{\cal F}) \ge \deg(\mu) h_i^\twist({\cal F}).
$$
\end{lemma}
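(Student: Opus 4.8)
The plan is to deduce the inequality from Lemma~\ref{lm:abelian} together with the elementary fact that specializing indeterminates cannot increase the rank of a matrix. First I would observe that neither side of the asserted inequality changes if we enlarge the base field: $h_i(\mu^*{\cal F})$ is the dimension of the kernel or cokernel of a fixed matrix over $\field$, hence is invariant under $\otimes_\field\field''$ for any extension $\field''/\field$, and $h_i^\twist({\cal F})$ is by definition the dimension of the kernel or cokernel of $d_{{\cal F}^\psi}$ over $\field(\psi)$, which likewise is unchanged on replacing $\field(\psi)$ by $\field''(\psi)$. So we may assume from the start that $\field$ is a Fourier field for the Galois group $\ca$ of $\mu$; when ${\rm char}(\field)$ does not divide $|\ca|$ (in particular in characteristic zero) this is achieved simply by passing to the splitting field of $x^{|\ca|}-1$ over $\field$. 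This puts us in the situation of Lemma~\ref{lm:abelian}.

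Next I would fix Galois coordinates $\vec a=\{a_e\}_{e\in E_G}$ for $\mu$. Lemma~\ref{lm:abelian} then gives, for $i=0,1$, an isomorphism
$$
H_i(\mu^*{\cal F})\isom\bigoplus_\nu H_i\bigl({\cal F}^{\nu(\vec a)}\bigr),
$$
the sum running over all $|\ca|=\deg(\mu)$ characters $\nu$ of $\ca$, so that $h_i(\mu^*{\cal F})=\sum_\nu\dim H_i({\cal F}^{\nu(\vec a)})$. For each $\nu$, the twist $\nu(\vec a)$, sending $e\in E_G$ to $\nu(a_e)\in\field$, is one particular specialization of the indeterminates $\psi$; since any minor of $d_{{\cal F}^\psi}$ that is identically zero remains zero under every specialization, the rank of $d_{{\cal F}^\psi}$ cannot increase upon specializing, and hence $\dim H_i({\cal F}^{\nu(\vec a)})\ge h_i^\twist({\cal F})$ for every $\nu$. (This is exactly the ``generic dimension'' inequality recorded in the remarks on the definition of twisted homology.) Summing over the $\deg(\mu)$ characters gives $h_i(\mu^*{\cal F})\ge\deg(\mu)\,h_i^\twist({\cal F})$.

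I expect the only real obstacle to be the reduction to a Fourier field in the first step. If ${\rm char}(\field)=p$ divides $|\ca|$, then no extension of $\field$ is a Fourier field for $\ca$, the algebra $\field[\ca]$ is not semisimple, and Lemma~\ref{lm:abelian} is not directly available. Since in the later applications $\field$ has characteristic zero, one option is to impose that hypothesis here; alternatively one can factor $\mu$ into a tower of prime-degree Abelian covers and treat the primes dividing $p$ separately, replacing the direct-sum decomposition by a filtration of $\mu^*{\cal F}$ along the powers of the augmentation ideal of $\field[\ca]$, whose associated graded pieces are again twists of ${\cal F}$; extracting a lower bound on homology from a filtration rather than a splitting would then be the technical heart of the general case, and I would expect that bookkeeping to be where the difficulty lies.
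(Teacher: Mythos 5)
Your argument matches the paper's essentially step for step: the paper likewise invokes Lemma~\ref{lm:abelian} to decompose $H_i(\mu^*{\cal F})$ into $\deg(\mu)$ summands $H_i({\cal F}^{\nu(\vec a)})$, observes that each such specialization has dimension at least the generic value $h_i^\twist({\cal F})$, and sums. Your caveat about ${\rm char}(\field)$ dividing $|\ca|$ is a genuine point the paper elides --- it applies Lemma~\ref{lm:abelian} without explicitly arranging the Fourier-field hypothesis, so the written proof tacitly assumes the reduction you describe is available (which suffices for the paper's only use of the lemma, covers of arbitrarily large prime degree).
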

This can be viewed as an upper bound for $h_i^\twist({\cal F})$.
Now we note the trivial lower bound
$$
h_1^\twist({\cal F}) \ge -\chi({\cal F}),
$$
since $h_1^\twist({\cal F})$ is the kernel of a matrix whose dimension
of domain minus that of codomain is $-\chi({\cal F})$.

If $G$ is any connected
digraph, then for any prime, $p$, we claim that $G$ has an Abelian cover 
of degree $p$ that is connected; indeed,
just take the monodromy map to map any
generator of $\pi_1(G)$ to $1\in\integers/p\integers$ and use the remark at
the end of Subsection~\ref{sb:walks_monodromy}.  
In this case we have $h_1(G')=1-\chi(G')=1-p\chi(G)=p\rho(G)+1$.
But by Lemma~\ref{lm:twist_bound} with ${\cal F}=\underline{\field}$
(so that $\mu^*{\cal F}=\underline{\field}$ on $G'$) we have
$$
h_1^\twist(\underline{\field}) \le  h_1(G',\underline{\field})/p
= h_1(G')/p =\rho(G)+(1/p).
$$
Letting $p\to\infty$ we conclude $h_1^\twist(\underline{\field})\le \rho(G)$.
But the ``trivial lower bound'' gives
$$
h_1^\twist(\underline{\field})\ge -\chi(\underline{\field}) = \rho(G).
$$
If $G$ is not connected then we apply the above to each of its connected
components and conclude the following theorem.
\begin{theorem} For any digraph, $G$, we have $\rho(G)=h_1^\twist(\underline{\field})$.
\end{theorem}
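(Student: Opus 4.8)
The plan is to prove the equality by a two‑sided estimate, after first reducing to the case where $G$ is connected. Since $G$ is the disjoint union of its connected components, the structure sheaf splits as $\underline\field=\bigoplus_{X\in{\rm conn}(G)}\underline\field_X$; because $h_1^\twist$ is a (first) quasi‑Betti number it is additive over this direct sum, while $\rho(G)=\sum_X\rho(X)$ holds by the very definition~(\ref{eq:shrho}). So it suffices to show $h_1^\twist(\underline\field)=\rho(G)$ for $G$ connected, which I now assume, and I split on whether $G$ contains a cycle.

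If $h_1(G)=0$ — i.e. $G$ is a tree or a single vertex — then $\rho(G)=0$. Here $h_1^\twist(\underline\field)\ge 0$ is automatic (it is a dimension), and for the reverse inequality I would use that a specialization of the twist can only increase the nullity of the twisted incidence matrix: at the all‑ones twist one recovers the ordinary incidence matrix of $G$, which for a forest is injective (prune a leaf, conclude its unique edge has coefficient zero, induct), so $h_1^\twist(\underline\field)\le h_1(\underline\field)=h_1(G)=0$. Hence $h_1^\twist(\underline\field)=0=\rho(G)$. If instead $h_1(G)\ge 1$, then $\rho(G)=h_1(G)-1=-\chi(G)\ge 0$. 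The lower bound is the trivial one: $d_{\underline\field^\psi}$ is a matrix over $\field(\psi)$ whose domain ${\cal F}(E)$ exceeds its codomain ${\cal F}(V)$ in dimension by $-\chi(\underline\field)=-\chi(G)$, so $h_1^\twist(\underline\field)\ge -\chi(G)=\rho(G)$. For the upper bound I would exploit Abelian covers: $\pi_1(G)$ is now a nontrivial free group, so for every prime $p$ the monodromy construction of Subsection~\ref{sb:walks_monodromy} (send one free generator to $1\in\integers/p\integers$, the rest to $0$) produces a \emph{connected} covering $\mu\from G'\to G$ with Galois group $\integers/p\integers$. Then $\mu^*\underline\field=\underline\field$ on the connected graph $G'$, so $h_1(\mu^*\underline\field)=h_1(G')=1-\chi(G')=1-p\,\chi(G)=1+p\,\rho(G)$, using $\chi(G')=p\,\chi(G)$ and $\rho(G)=h_1(G)-1$. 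Lemma~\ref{lm:twist_bound} gives $p\,h_1^\twist(\underline\field)\le h_1(\mu^*\underline\field)=1+p\,\rho(G)$, hence $h_1^\twist(\underline\field)\le \rho(G)+1/p$; letting $p\to\infty$ yields $h_1^\twist(\underline\field)\le\rho(G)$. Combined with the lower bound this finishes the proof.

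The main obstacle is the upper bound in the generic (cyclic) case. One must produce, for arbitrarily large prime degree, an Abelian cover that remains connected, so that its first Betti number is pinned down exactly by its Euler characteristic via $h_0=1$; and one must notice that a single specialization of $\psi$ only bounds $h_1^\twist(\underline\field)$ by $h_1(G)$, which is one too large when $h_1(G)\ge1$. That gap of $1$, together with the factor $1/p$ that the cover argument contributes, is exactly why one passes to the limit over primes rather than computing a single twisted matrix, and why the acyclic case has to be peeled off and handled separately.
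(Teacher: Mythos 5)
Your proof is correct and follows essentially the same route as the paper: the trivial lower bound $h_1^\twist(\underline\field)\ge -\chi(\underline\field)$, the connected degree-$p$ Abelian cover produced by monodromy, Lemma~\ref{lm:twist_bound}, and the $p\to\infty$ limit. Your treatment is in fact slightly tidier than the paper's in one respect: by explicitly peeling off the acyclic connected components (where $\pi_1(G)$ is trivial, so the connected prime-degree cover does not exist, and where $-\chi(G)=-1\ne 0=\rho(G)$) and handling them by specializing $\psi\equiv 1$ to the ordinary incidence matrix of a forest, you close a small gap that the paper leaves implicit when it writes $-\chi(\underline\field)=\rho(G)$ and $h_1(G')=1-\chi(G')$ uniformly for connected $G$.
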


\subsection{The Maximum Excess Bound}

Let ${\cal F}$ be a sheaf of $\field$-vector spaces
on a digraph, $G$, and let $U\subset{\cal F}(V)$.
% Let $\psi\from E_G\to\field'$ be any twist.
Let $\psi=\{\psi(e)\}_{e\in E_G}$ be a twist of indeterminates.
Then $d=d_{{\cal F}^\psi}\from {\cal F}(E)\to{\cal F}(V)$ can 
be restricted as a 
morphism 
$$
\Gamma_{\rm ht}(U)\otimes_\field \field'\to U\otimes_\field \field'.
$$
By the ``trivial bound,'' the kernel of this morphism has dimension at least
$$
\dim\bigl( \Gamma_{\rm ht}(U) \bigr) - \dim(U) = {\rm excess}({\cal F},U).
$$
Hence the kernel of $d$ has at least this dimension.  This gives the
following simple bound.

\begin{lemma}\label{lm:trivial}
For any sheaf, ${\cal F}$, on a digraph, $G$, we have
$$
h_1^\twist({\cal F}) \ge {\rm m.e.}(\cal F).
$$
\end{lemma}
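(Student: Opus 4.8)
The plan is to exhibit, inside $H_1^\twist({\cal F})$, a subspace of dimension at least ${\rm m.e.}({\cal F})$, using the elementary ``trivial bound'' that a linear map of finite dimensional vector spaces whose domain has dimension $a$ and codomain dimension $b$ has kernel of dimension at least $a-b$. First I would fix a subspace $U\subset{\cal F}(V)$ with ${\rm excess}({\cal F},U)={\rm m.e.}({\cal F})$, and work over the field $\field(\psi)$ of rational functions in the indeterminates $\psi=\{\psi(e)\}_{e\in E_G}$, recalling that by definition $H_i^\twist({\cal F})$ is the homology of $d_{{\cal F}^\psi}\from{\cal F}(E)\otimes_\field\field(\psi)\to{\cal F}(V)\otimes_\field\field(\psi)$.

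The key observation is that $d_{{\cal F}^\psi}$ maps the subspace $\Gamma_{\rm ht}(U)\otimes_\field\field(\psi)$ of ${\cal F}(E)\otimes_\field\field(\psi)$ into $U\otimes_\field\field(\psi)$. Indeed, for an element $w$ supported on an edge $e$ and lying in $\Gamma_{\rm ht}(U)$ we have $d_h(w),d_t(w)\in U$ by the very definition of the head/tail neighbourhood; since the twist rescales only the tail restriction maps, $d_{{\cal F}^\psi}(w)=d_h(w)-\psi(e)\,d_t(w)$, which lies in $U\otimes_\field\field(\psi)$ because $U$ is a $\field$-subspace, hence closed under multiplication by the scalar $\psi(e)$ after extension of scalars. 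Thus $d_{{\cal F}^\psi}$ restricts to a $\field(\psi)$-linear map $\Gamma_{\rm ht}(U)\otimes\field(\psi)\to U\otimes\field(\psi)$ whose source and target have $\field(\psi)$-dimensions $\dim_\field\Gamma_{\rm ht}(U)$ and $\dim_\field U$ respectively.

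Applying the trivial bound to this restricted map, its kernel has $\field(\psi)$-dimension at least $\dim\bigl(\Gamma_{\rm ht}(U)\bigr)-\dim(U)={\rm excess}({\cal F},U)={\rm m.e.}({\cal F})$. Any vector in this kernel is in particular annihilated by $d_{{\cal F}^\psi}$ on all of ${\cal F}(E)\otimes\field(\psi)$, hence belongs to $H_1^\twist({\cal F})$; this yields $h_1^\twist({\cal F})\ge{\rm m.e.}({\cal F})$. I expect essentially no obstacle here: the only step deserving a moment's attention is verifying the containment $d_{{\cal F}^\psi}\bigl(\Gamma_{\rm ht}(U)\bigr)\subset U\otimes\field(\psi)$, which is immediate once one notes that twisting affects only the tail maps and leaves the defining conditions $d_h(w),d_t(w)\in U$ intact.
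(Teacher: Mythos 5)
Your proof is correct and follows essentially the same route as the paper: restrict $d_{{\cal F}^\psi}$ to the subspace $\Gamma_{\rm ht}(U)\otimes_\field\field(\psi)\to U\otimes_\field\field(\psi)$ and invoke the trivial rank bound. The only difference is that you spell out explicitly why the restricted map lands in $U\otimes\field(\psi)$ (namely that twisting only rescales $d_t$), a point the paper leaves implicit but also notes separately at the end of Subsection~\ref{sb:compartment}.
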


We wish to show that this holds with equality in certain cases;
Theorem~\ref{th:shmain} says that
equality will hold if ${\cal F}$ is pulled back appropriately.

\begin{definition} If ${\cal F}$ is a sheaf on a digraph, $G$, we say 
that ${\cal F}$ is {\em edge simple} if ${\cal F}(e)$ is of dimension
$0$ or $1$ for each $e\in E_G$.
\end{definition}

\begin{theorem}\label{th:shedge_simple}
Let $\field$ be an infinite field.
Let ${\cal F}$ be an edge simple sheaf of $\field$-vector spaces on
a digraph, $G$.
Then
$$
h_1^\twist({\cal F}) = {\rm m.e.}({\cal F}).
$$
\end{theorem}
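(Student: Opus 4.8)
By Lemma~\ref{lm:trivial} we already have $h_1^\twist(\cal F)\ge{\rm m.e.}(\cal F)$, so the whole content is the reverse inequality $h_1^\twist(\cal F)\le{\rm m.e.}(\cal F)$. I would first discard the edges $e$ with $\cal F(e)=0$, since they affect neither side, and then for each of the remaining ``active'' edges, which form a set $E_1$, fix a basis vector $b_e$ of the one-dimensional space $\cal F(e)$ and set $p_e=\cal F(h,e)b_e\in\cal F(he)$ and $q_e=\cal F(t,e)b_e\in\cal F(te)$, viewed as elements of $\cal F(V)$. Then the $e$-th column of $d_{\cal F^\psi}$ is $w_e=p_e-\psi(e)q_e$, so that $h_1^\twist(\cal F)=|E_1|-{\rm rank}_{\field(\psi)}\{w_e:e\in E_1\}$, while for edge-simple $\cal F$ one has $\dim\Gamma_{\rm ht}(U)=|E_1(U)|$ with $E_1(U)=\{e\in E_1:p_e,q_e\in U\}$, hence ${\rm m.e.}(\cal F)=\max_U\bigl(|E_1(U)|-\dim U\bigr)$. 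Thus the statement to prove becomes
$$
{\rm rank}_{\field(\psi)}\{w_e:e\in E_1\}\ \ge\ \min_{U\subseteq\cal F(V)}\bigl(|E_1\setminus E_1(U)|+\dim U\bigr).
$$

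\textbf{The combinatorial core.} The plan is to recognize the right-hand side as the rank of a transversal matroid. Let $\cal M$ be the linear matroid on $\cal F(V)$ over $\field$, and consider the sets $A_e=\{p_e,q_e\}$ for $e\in E_1$. By Rado's theorem (the rank formula for the matroid on $E_1$ induced by $\cal M$ and the family $(A_e)$), there is a subset $S\subseteq E_1$ carrying an independent transversal $\{r_e:e\in S\}$ — i.e.\ $r_e\in\{p_e,q_e\}$, the $r_e$ are distinct, and $\{r_e:e\in S\}$ is $\field$-linearly independent — with
$$
|S|=\min_{F\subseteq E_1}\Bigl(|E_1\setminus F|+\dim_\field{\rm span}\{p_e,q_e:e\in F\}\Bigr).
$$
A one-line double inclusion identifies this minimum with the one above: given $F$, take $U={\rm span}\{p_e,q_e:e\in F\}$ (then $F\subseteq E_1(U)$ and $\dim U$ is as written); given $U$, take $F=E_1(U)$ (then ${\rm span}\{p_e,q_e:e\in F\}\subseteq U$). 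Hence $|S|=\min_U\bigl(|E_1\setminus E_1(U)|+\dim U\bigr)$.

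\textbf{Lifting the transversal by a specialization of the twist.} It remains to turn the $\field$-independent set $\{r_e:e\in S\}$ into the required lower bound on ${\rm rank}_{\field(\psi)}\{w_e\}$. For this I would specialize $\psi$ to a concrete $\psi^0$: put $\psi^0(e)=0$ when $r_e=p_e$, so $w_e(\psi^0)=p_e=r_e$, and $\psi^0(e)=\epsilon^{-1}$ when $r_e=q_e$, so $-\epsilon\,w_e(\psi^0)=q_e-\epsilon p_e$ (the remaining values are irrelevant, say $0$). The $|S|\times|S|$ matrix formed from these (rescaled) columns specializes at $\epsilon=0$ to the matrix of the $r_e$, which has a nonvanishing $|S|\times|S|$ minor; that minor is a polynomial in $\epsilon$, nonzero at $\epsilon=0$, hence nonzero for all but finitely many $\epsilon\in\field$ — and it is precisely here that the hypothesis ``$\field$ infinite'' is used to choose such an $\epsilon\neq0$. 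For that $\epsilon$ the vectors $\{w_e(\psi^0):e\in S\}$ are $\field$-linearly independent, so ${\rm rank}_{\field(\psi)}\{w_e:e\in E_1\}\ge{\rm rank}_\field\{w_e(\psi^0):e\in S\}=|S|$, which is exactly what was needed. Together with Lemma~\ref{lm:trivial} this gives $h_1^\twist(\cal F)={\rm m.e.}(\cal F)$.

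\textbf{Where the difficulty lies.} The genuinely new point is the recognition that the generic rank of the ``pencil'' $\{p_e-\psi(e)q_e\}$ is governed by the transversal matroid of the pairs $\{p_e,q_e\}$; once that is seen, the min-over-$F$ / min-over-$U$ identity and Rado's theorem make the combinatorics routine, and the only technical care needed is in the specialization step, which is exactly why $\field$ must be infinite. (Edge-simplicity is essential: for a sheaf such as the unhappy $4$-bundle the columns of $d_{\cal F^\psi}$ are not of the single-pencil form $p_e-\psi(e)q_e$, and indeed the conclusion fails there.)
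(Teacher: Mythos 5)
Your proof is correct, but it takes a genuinely different route from the paper's. The paper proves the reverse inequality $h_1^\twist \le {\rm m.e.}$ by directly exhibiting a subspace $U \subseteq {\cal F}(V)$ of high excess: it orders the twisted columns $v_i = a_i + \psi_i b_i$, reduces via auxiliary hypotheses (no ``critical'' vectors; the $\{a_i\}_{i\in I}$ independent whenever $\{v_i\}_{i\in I}$ is) to a case where wedge-product expansions force all $a_j, b_j$ into $U = {\rm span}\{a_1,\ldots,a_{r'}\}$, and then removes those hypotheses by a generic $\field$-translation $\psi\mapsto\psi+\theta$ (this is where the paper uses that $\field$ is infinite) and by an inductive reduction through a sub-sheaf with fewer edges. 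Your argument instead identifies the combinatorial core as Rado's theorem on independent transversals: the maximum excess of an edge-simple sheaf equals the rank of the matroid induced on $E_1$ by the linear matroid on ${\cal F}(V)$ via the pairs $A_e=\{p_e,q_e\}$, and the only place the rational-function field or the infinite-field hypothesis enters is a single specialization of $\psi$ and a nonvanishing polynomial in $\epsilon$. This is cleaner and more modular: it separates the $\field$-linear combinatorics (delegated to a classical matroid theorem) from the field-of-fractions arithmetic (a one-step genericity argument), and makes explicit that the theorem is really about the generic rank of the one-parameter pencil $p_e - \psi(e)q_e$ being governed by the Rado rank of the pair family. The paper's proof is more self-contained (it effectively reproves the needed Rado-type fact from scratch with wedge products) but is correspondingly more ad hoc; each approach has its advantages, and both handle the same two degenerate issues (repeated or dependent heads, uses of the infinite-field hypothesis) in essentially dual ways, one by perturbing $\psi$ to clean up the $a_i$'s, the other by perturbing $\psi$ to lift an $\field$-independent transversal.
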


\begin{proof} 
Let $\{e_1,\ldots,e_r\}\subset E$ be the edges where ${\cal F}(e)\ne 0$.
Let $\psi=\{\psi_i\}_{i=1,\ldots,r}$ be
indeterminates, and let
$$
{\cal F}(V)(\psi)  = \bigl( {\cal F}(V) \bigr) \otimes_\field \field(\psi).
$$
For each $e_i$ choose a $w_i\in {\cal F}(e_i)$ with $w_i\ne 0$, and 
let 
$$
v_i = a_i + \psi_i b_i \in {\cal F}(V)(\psi), \quad\mbox{with}\quad
a_i = {\cal F}(h,e_i)(w_i),\quad
b_i = {\cal F}(t,e_i)(w_i).
$$
Say that a $v_j$ is {\em critical} for $v_1,\ldots,v_r$ if the span
of $\{v_i\}_{i\ne j}$ is of dimension one less than $\{v_i\}_{i=1,\ldots,r}$.
% If $r'$ is the dimension of the span of the $v_i$, then
% $h_1^\twist({\cal F})$ is $r-r'$ and $r-r'$ is invariant under discarding
% expendable vectors.  
Let us first prove the lemma assuming that
no vector is critical.
Let $r'$ be the dimension of the span of the $v_i$, so
$h_1^\twist({\cal F})=r-r'$.
In view of Lemma~\ref{lm:trivial}, suffices to show that 
$$
{\rm m.e.}({\cal F}) \ge r-r'.
$$
If $r-r'=0$ there is nothing to prove.  So we may assume $r'<r$.

We wish to show that there exists a $U\subset{\cal F}(V)$ such that
$$
|\{ i \ |\ a_i,b_i \in U \}| \ge \dim(U)+ r-r'.
$$
Let us first assume that for any $I$ with $\{v_i\}_{i\in I}$ 
independent (over $\field(\psi)$) we also have that $\{a_i\}_{i\in I}$ are
independent (over $\field$).

By reordering the $v_i$, we may assume
that
$$
v_1,v_2,\ldots v_{r'}
$$
are linearly independent.
Let $A$ be the span of $a_1,\ldots,a_{r'}$.
Consider that
\begin{equation}\label{eq:shwedgeab}
(a_1 + \psi_1 b_1) \wedge \cdots \wedge (a_{r'+1} + \psi_{r'+1} b_{r'+1}) = 0.
\end{equation}
Considering the constant coefficient (i.e., with no $\psi_i$'s)
of this wedge product,
we have $a_1\wedge\cdots\wedge a_{r'+1}=0$, and therefore
$a_{r'+1}\in A$; similarly considering the $\psi_{r'+1}$
coefficient shows that $b_{r'+1}\in A$.  Replacing $v_{r'+1}$ with
any $v_s$ with $s>r'+1$ shows that
$$
b_{r'+1},\ldots,b_r,a_1,\ldots,a_r\in A.
$$

In other words, we have shown that
if $U$ is the span of the $a_1,\ldots,a_r$, we have that $U$ is
$r'$ dimensional and contains any $b_j$ such that $j$ lies outside
a set, $I$, such that $|I|=r'$ and $\{v_i\}_{i\in I}$ are independent.
But no vector, $v_i$, is critical for $\{v_i\}$; hence for any $j$ there
is an $I$ of size $r'$ such that $j$ lies outside $I$ and
$\{v_i\}_{i\in I}$ are independent.  Hence $b_j\in U$ for any $j=1,\ldots,r$.
Hence ${\rm excess}({\cal F},U)\ge r-r'$.  This establishes the lemma
when no vector, $v_i$, is critical, and when for all $I$, $\{v_i\}_{i\in I}$
are independent implies that $\{a_i\}_{i\in I}$ are as well.

Now let us establish the lemma assuming no vector, $v_i$, is critical but
without assuming $\{v_i\}_{i\in I}$ independent implies
$\{a_i\}_{i\in I}$ is independent.
Note that since $\field$ is infinite, any
generic set in $\field^n$ (i.e., complement of the set of zeros of a
polynomial) is nonempty.
For each $I$ for which $\{v_i\}_{i\in I}$ is independent, we have
$$
\bigwedge_{i\in I} (a_i+\psi_i b_i) \ne 0 
\quad\mbox{(in $\Lambda^{|I|}({\cal F}(V)\otimes_\field \field(\psi))$\;)}.
$$
So for a generic set, $G_I$, of $\theta\in\field^r$ we have
$$
\bigwedge_{i\in I} (a_i+\theta_i b_i) \ne 0.
$$
So choose a $\theta\in\field^r$ in the intersection of all $G_I$ for all $I$
with $\{v_i\}_{i\in I}$ independent.  Let $\mt\psi=\psi+\theta$
(where $\theta\in\field^r$ and $\psi$ is a collection of $r$
indeterminates), and let 
$$
\mt v_i= a_i + \mt\psi_i b_i = \mt a_i + \psi_i b_i,
$$
where $\mt a_i=a_i+\theta b_i$.
We have $\{v_i\}_{i\in I}$ is independent precisely when
$\{\mt v_i\}_{i\in I}$ is, since they differ by a parameter translation,
but whenever this holds we also have that the $\{\mt a_i\}_{i\in I}$
are independent.  
But we have already proven the lemma in this case, i.e., the case of
$\mt v_i=\mt a_i + \psi_i b_i$, since each independent subset of
$\{\mt v_i\}$ has the corresponding subset of $\{\mt a_i\}$ being
independent.
Hence we can apply the lemma to conclude that there
is a subspace $U$ of ${\cal F}(V)$
of dimension $r'$, namely the span of the $\mt a_i$, such that
$$
\mt a_1,\ldots,\mt a_r,b_1,\ldots,b_r\in U.
$$
But $a_i$ is an $\field$-linear combination of $\mt a_i$ and $b_i$,
so $\mt a_i,b_i\in U$ also implies $a_i\in U$.  Hence, again,
${\rm excess}({\cal F},U)\ge r-r'$.

Let us finish by proving the lemma in general, i.e., without the
assumption that each $v_i$ is critical.  
Again, let $r'$ be the dimension of the span of $v_1,\ldots,v_r$
as above.  
If some element of $v_1,\ldots,v_r$ is critical, we may assume it is 
$v_1$; in this case, if some element of $v_2,\ldots,v_r$ is critical for
that set, 
we may assume it is $v_2$; continuing in this fashion, there is an
$s$ such that for all $i<s$, $v_i$ is critical for $v_i,\ldots,v_r$,
and no element of $v_s,\ldots,v_r$ is critical for that set.
Consider the sheaf ${\cal F}'$ which agrees with ${\cal F}$ everywhere
except
that ${\cal F}'(e_i)=0$ for $i<s$
(and so 
${\cal F}$ and ${\cal F}'$ agree at all vertices and all $e_i$ with $i\ge s$).
Then
$\{v_s,\ldots,v_r\}$ is of size $r-s+1$, but also
the span of $\{v_s,\ldots,v_r\}$ is of size $r'-s+1$ (by the criticality
of the $v_i$ with $i<s$), and hence
$h_1^\twist({\cal F}')=r-r'$.
But since no element of $v_s,\ldots,v_r$ is critical for that set,
the lemma holds for the case of ${\cal F}'$ (as shown by the end of the
previous paragraph).
We therefore construct a $U$ such
that ${\rm excess}({\cal F}',U)\ge r-r'$. Since
${\cal F}'(V)\subset{\cal F}(V)$, we can view $U\subset{\cal F}(V)$
and it is clear that $\Gamma_{\rm ht}(U)$ in ${\cal F}'$ is a subset
of $\Gamma_{\rm ht}(U)$ in ${\cal F}$.  Hence
$$
{\rm excess}({\cal F},U)\ge {\rm excess}({\cal F}',U)=r-r'.
$$
\end{proof}

\section{Maximum Excess and Supermodularity}
\label{se:shme}
In this section we prove that pulling back a sheaf
via $\phi$ multiplies the maximum excess by
$\deg(\phi)$.
To prove this we will prove supermodularity of the excess function,
which has a number of important consequences.
Before discussing this, we develop some terminology and simple
observations about what we call ``compartmentalized subspaces;''
this development will be used in this section and
in Section~\ref{se:shequal}. 
We finish this section with some additional remarks about the
maximum excess.

\subsection{Compartmentalized Subspaces}
\label{sb:compartment}

In this subsection we mention a few important definitions, and some
simple theorems we will use regarding these definitions.

\begin{definition}
\label{de:compartment}
Let $W$ be a finite dimensional vector space
over a field, $\field$.  By a {\em decomposition} of $W$
we mean an isomorphism a direct sum of vector spaces with $W$, i.e.,
$$
\pi \from \bigoplus_{s\in S}W_s \to W.
$$
For any $s\in S$ and 
any $v\in W_s$, let 
{\em the extension of $v$ of index $s$ by zero}, denoted
${\rm extend}(v,s)$,
to be the element of $\oplus_{s\in S}W_s$
that is $v$ on $W_s$ and 
zero on $W_q$ with $q\ne s$.
For $s\in S$ and
a subspace $W'\subset W$, let {\em the portion of $W'$ supported in $s$} be
$$
{\rm supportedIn}(s,W') = 
\Bigl\{ v\in W_s \ |\ \pi\bigl( {\rm extend}(v,s)\bigr) \in W' \Bigr\},
$$
and let the {\rm compartmentalization of $W'$} be
$$
(W')_{\rm comp} =
\pi\left( \bigoplus_{s\in S} {\rm supportedIn}(s,W') \right), 
$$
which is a subspace of $W'$.
We say that a subspace $W'$ is {\em compartmentalized} if
$(W')_{\rm comp} = W'$.
We say that $w_1,\ldots, w_m\in W$ are {\em compartmentally distinct}
if for any $s\in S$ there is at most one $j$ between $1$ and $m$
for which the $W_s$ component of $w_j$ is non-zero.
\end{definition}

So $W'\subset W$ as above is compartmentalized iff $W'$ is the image
under $\pi$ of a set of the form
$$
\bigoplus_{s\in S} W_s'.
$$

The intuitive point of the definition of compartmentalized subspaces
is that certain constructions,
such as maximum excess, are performed over the direct
summands of a vector space; in some such constructions,
the compartmentalized subspaces are
the subspaces of key interest.

In this section we will use only these definitions.
In Section~\ref{se:shequal}, we use two simple observations about
the situation of Definition~\ref{de:compartment}.
First, if $w_1,\ldots,w_m$ are compartmentally distinct, then
$w_1,\ldots,w_m$ are linearly independent if (and only if) they
are each non-zero.
Second,
$W'\subset W$ is compartmentalized only if (and if) there exist quotients,
$Q_s$, of $W_s$ for $s\in S$ such that $\pi$ induces an isomorphism
\begin{equation}\label{eq:shquotient_sum}
\bigoplus_{s\in S} Q_s \to W/W'.
\end{equation}
It will be helpful to formally combine these two observations into a
theorem that follows immediately; we will use this theorem
repeatedly in Section~\ref{se:shequal}, in our
proof of Theorem~\ref{th:shmain}.

\begin{theorem}\label{th:shindependent_nonzero}
Let $W$ be a finite dimensional vector space with a decomposition.
Let $w_1,\ldots,w_m$ be compartmentally distinct,
and let $W'\subset W$ be a compartmentalized subspace of $W$.
Then the images of $w_1,\ldots,w_m$ in $W/W'$ are linearly independent 
(in $W/W'$) iff they are nonzero (in $W/W'$).
\end{theorem}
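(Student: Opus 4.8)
The plan is to deduce Theorem~\ref{th:shindependent_nonzero} directly from the two observations stated just before it, so the real work is to verify those observations and then assemble them; I expect no serious obstacle, only a bookkeeping point about how the decomposition descends to the quotient.

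First I would record the ``compartmentally distinct'' independence fact. Suppose $w_1,\dots,w_m$ are compartmentally distinct with respect to a decomposition $\pi\colon\bigoplus_{s\in S}W_s\to W$, and each $w_j\ne 0$. Write $(w_j)_s\in W_s$ for the $s$-component of $\pi^{-1}(w_j)$. A relation $\sum_j c_j w_j=0$ projects, for each $s$, to $\sum_j c_j (w_j)_s=0$ in $W_s$; since at most one index — call it $j(s)$ — has $(w_j)_s\ne 0$, this forces $c_{j(s)}(w_{j(s)})_s=0$, hence $c_{j(s)}=0$. As each $w_j$ is nonzero it has some nonzero component, i.e.\ $j=j(s)$ for some $s$, so every $c_j$ vanishes. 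The converse, that a zero vector makes the list dependent, is trivial, and this also disposes of the ``only if'' half of the theorem.

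Next I would make the quotient structure of a compartmentalized $W'$ explicit. By definition $(W')_{\rm comp}=W'$ means $W'=\pi\bigl(\bigoplus_{s\in S}W'_s\bigr)$, where $W'_s={\rm supportedIn}(s,W')\subset W_s$. Hence $\pi$ induces an isomorphism
$$
\bigoplus_{s\in S}(W_s/W'_s)\;\xrightarrow{\ \sim\ }\;W/W',
$$
so, setting $Q_s=W_s/W'_s$, the space $W/W'$ carries a decomposition indexed by the same set $S$, compatible with $\pi$ in the sense that the $Q_s$-component of the image of any $w\in W$ is the class of $(w)_s$ in $W_s/W'_s$.

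Finally, to prove the ``if'' direction, transport $w_1,\dots,w_m$ to $\bigoplus_{s\in S}Q_s\cong W/W'$ via the map just described. For each $s$, any $w_j$ with $(w_j)_s=0$ has zero $Q_s$-component, so, since at most one $j$ has $(w_j)_s\ne 0$, at most one $j$ has a nonzero $Q_s$-component. Thus the images of $w_1,\dots,w_m$ in $W/W'$ are compartmentally distinct with respect to the decomposition $\{Q_s\}_{s\in S}$, and by the first step they are linearly independent as soon as they are all nonzero — which is exactly the hypothesis. The only delicate point is the compatibility claim in the previous paragraph, that the induced decomposition of $W/W'$ sends the image of $w$ to the tuple $\bigl((w)_s\bmod W'_s\bigr)_s$; once that is checked, compartmental distinctness is inherited automatically, because a nonzero component downstairs can only arise from a nonzero component upstairs.
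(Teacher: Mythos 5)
Your proof is correct and takes exactly the approach the paper intends: it verifies the two observations stated just before the theorem (compartmentally distinct nonzero vectors are independent, and a compartmentalized $W'$ yields an induced decomposition of $W/W'$ with $Q_s = W_s/W'_s$), then observes that the images of the $w_j$ remain compartmentally distinct in the quotient. The paper merely asserts the theorem ``follows immediately'' from those observations; you have supplied the bookkeeping that makes that assertion precise.
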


Compartmentalization is a key to our definition of maximum excess.
Indeed, for a sheaf, ${\cal F}$, on a digraph, $G$, both
${\cal F}(V)$ and ${\cal F}(E)$ are defined as direct sums, and hence
come with natural decompositions.
The head/tail neighbourhood is a compartmentalized
space by its definition in equation~(\ref{eq:shhead/tail}); this is crucial
to the resulting definition of excess and maximum excess, in
Definition~\ref{de:excess}.  
Note that $d_h,d_t$ (but not $d$ in
general) are 
``compartmentalized morphisms'' in that they take vectors supported in
one component of ${\cal F}(E)$ to those supported in one component
of ${\cal F}(V)$.
This means that with our definition of head/tail neighbourhood,
for any $U\subset{\cal F}(V)$ and any twist, $\psi$, on $G$, 
the twisted differential, $d_{{\cal F}^\psi}$
takes 
$\Gamma_{\rm ht}(U)\otimes_{\field}\field(\psi)$ 
to $U\otimes_{\field}\field(\psi)$.

\subsection{Supermodularity and Its Consequences}

First we make some simple remarks on the maximum excess.
For any sheaf, ${\cal F}$, we have
$$
{\rm excess}({\cal F},0)= 0,
\quad
{\rm excess}({\cal F},{\cal F}(V))=-\chi({\cal F}),
$$
and hence
$$
{\rm m.e.}({\cal F}) \ge \max\bigl(0,-\chi({\cal F})\bigr).
$$

% Say that $U\subset {\cal F}(V)$ is {\em compartmentalized} if
% there exist $U_v\subset{\cal F}(v)$ for every $v\in V_G$ such that
% $$
% U = \bigoplus_{v\in V_G} U_v \subset 
% \bigoplus_{v\in V_G} {\cal F}(v) = {\cal F}(V).
% $$

We now show that if $U$ achieves the maximum excess of ${\cal F}$, then
$U$ must be compartmentalized.  

\begin{theorem} Let the maximum excess of a sheaf, ${\cal F}$, on a
digraph, $G$, be achieved on a space $U\subset{\cal F}(V)$.  Then
$U$ is compartmentalized with respect to the identification $\pi$ given by
$$
\pi\from \bigoplus_{v\in V_G} {\cal F}(v) \to {\cal F}(V).
$$
\end{theorem}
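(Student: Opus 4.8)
The plan is to show that compartmentalizing $U$ can only (weakly) increase the excess, so that any maximizer must already be compartmentalized. Concretely, I would compare ${\rm excess}({\cal F},U)$ with ${\rm excess}({\cal F},(U)_{\rm comp})$, where $(U)_{\rm comp}$ is the compartmentalization of $U$ relative to the decomposition $\pi\from\bigoplus_{v\in V_G}{\cal F}(v)\to{\cal F}(V)$, and then conclude that equality of the two excesses forces $U=(U)_{\rm comp}$.

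The one thing to observe is that the head/tail neighbourhood $\Gamma_{\rm ht}(U)$ is unchanged when $U$ is replaced by $(U)_{\rm comp}$. Since $d_h$ and $d_t$ are compartmentalized morphisms (each carries ${\cal F}(e)$ into the single summand ${\cal F}(he)$, respectively ${\cal F}(te)$, of ${\cal F}(V)$), for $w\in{\cal F}(e)$ the condition $d_h(w)\in U$ is equivalent to $d_h(w)\in{\rm supportedIn}(he,U)$, and likewise $d_t(w)\in U$ is equivalent to $d_t(w)\in{\rm supportedIn}(te,U)$. Hence
$$
\Gamma_{\rm ht}(U) = \bigoplus_{e\in E_G}\{\, w\in{\cal F}(e) \;|\;
d_h(w)\in{\rm supportedIn}(he,U),\ d_t(w)\in{\rm supportedIn}(te,U)\,\}.
$$
Because ${\rm supportedIn}(v,(U)_{\rm comp})={\rm supportedIn}(v,U)$ for every $v\in V_G$ --- immediate from $(U)_{\rm comp}=\pi(\bigoplus_{v}{\rm supportedIn}(v,U))$ --- the right-hand side is unchanged on replacing $U$ by $(U)_{\rm comp}$, so $\Gamma_{\rm ht}((U)_{\rm comp})=\Gamma_{\rm ht}(U)$.

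Finally, $(U)_{\rm comp}\subseteq U$ gives $\dim((U)_{\rm comp})\le\dim(U)$, so
$$
{\rm excess}({\cal F},(U)_{\rm comp})
= \dim\bigl(\Gamma_{\rm ht}(U)\bigr)-\dim\bigl((U)_{\rm comp}\bigr)
\ge \dim\bigl(\Gamma_{\rm ht}(U)\bigr)-\dim(U)
= {\rm excess}({\cal F},U).
$$
If $U$ achieves the maximum excess, this inequality must be an equality, whence $\dim((U)_{\rm comp})=\dim(U)$; together with $(U)_{\rm comp}\subseteq U$ this yields $(U)_{\rm comp}=U$, i.e.\ $U$ is compartmentalized. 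I expect essentially no obstacle here: the only points needing care are the compartmentalized nature of $d_h,d_t$ (recorded at the end of Subsection~\ref{sb:compartment}) and the trivial identity ${\rm supportedIn}(v,(U)_{\rm comp})={\rm supportedIn}(v,U)$; everything else is bookkeeping with dimensions.
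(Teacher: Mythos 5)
Your argument is correct and is essentially the paper's proof: both rest on the observation that $\Gamma_{\rm ht}$ is determined solely by the spaces ${\rm supportedIn}(v,U)$, so $\Gamma_{\rm ht}(U)=\Gamma_{\rm ht}\bigl((U)_{\rm comp}\bigr)$, and then a dimension count forces $U=(U)_{\rm comp}$ at a maximizer. (The paper's phrasing of the final inequality has the sense reversed, evidently a typo; your version states it correctly.)
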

\begin{proof}
For $e\in E_G$
and $w\in{\cal F}(e)$, if we have $d_t w\in U$, then
$$
d_t w = \pi\Bigl( {\rm extend}\bigl({\cal F}(t,e)w,te \bigr)\Bigr)
\in U_{\rm comp};
$$
similarly if $d_h w\in U$, then $d_h w\in U_{\rm comp}$.  Hence,
in view of equation~(\ref{eq:shhead/tail}), we have
$$
\Gamma_{\rm ht}(U_{\rm comp}) = \Gamma_{\rm ht}(U).
$$
Hence, if $U_{\rm comp}$ is a proper subspace of $U$, then
$$
{\rm excess}({\cal F},U_{\rm comp}) < 
{\rm excess}({\cal F},U) .
$$
So if $U$ maximizes the excess, then $U_{\rm comp}=U$; i.e., $U$ is
compartmentalized.
\end{proof}

The main results in this section stem from the following easy theorem.
\begin{theorem} 
\label{th:shsupermodular}
Let ${\cal F}$ be a sheaf on a graph, $G$.  Then the
excess, as a function of $U\subset {\cal F}(V)$, is supermodular, i.e.,
\begin{equation}\label{eq:shsupermodular}
{\rm excess}(U_1) + {\rm excess}(U_2) \le 
{\rm excess}(U_1\cap U_2) + {\rm excess}(U_1+U_2)
\end{equation}
for all $U_1,U_2\subset {\cal F}(V)$.
It follows that the maximizers of the excess function of ${\cal F}$,
$$
{\rm maximizers}({\cal F})=\{ U\subset{\cal F}(V)
\ | \  {\rm excess}(U)={\rm m.e.}({\cal F}) \},
$$
is a sublattice of the set of subsets of ${\cal F}(V)$,
i.e., is closed under intersection and sum
(and therefore has a unique maximal element and a unique minimal element).
Finally, if $U_1,U_2$ are maximizers of the excess function of ${\cal F}$,
then
$$
\Gamma_{\rm ht}(U_1+U_2) = 
\Gamma_{\rm ht}(U_1) + 
\Gamma_{\rm ht}(U_2) .
$$
\end{theorem}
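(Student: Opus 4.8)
The plan is to reduce the supermodularity inequality~(\ref{eq:shsupermodular}) to a statement about the head/tail neighbourhood alone, exploiting that the dimension function is itself modular. For an edge $e\in E_G$ let $\gamma_e(U)$ denote the $e$-component of $\Gamma_{\rm ht}(U)$ in the direct-sum formula~(\ref{eq:shhead/tail}), so that
$$
\gamma_e(U)=\{w\in{\cal F}(e)\ :\ d_h w\in U\mbox{ and }d_t w\in U\}
=\bigl(d_h|_{{\cal F}(e)}\bigr)^{-1}(U)\cap\bigl(d_t|_{{\cal F}(e)}\bigr)^{-1}(U),
$$
and $\dim\Gamma_{\rm ht}(U)=\sum_{e}\dim\gamma_e(U)$. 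The first step is to observe, using only that $\phi^{-1}(A\cap B)=\phi^{-1}(A)\cap\phi^{-1}(B)$ and $\phi^{-1}(A)+\phi^{-1}(B)\subset\phi^{-1}(A+B)$ for a linear map $\phi$, that for all $U_1,U_2\subset{\cal F}(V)$ and every $e$ one has $\gamma_e(U_1\cap U_2)=\gamma_e(U_1)\cap\gamma_e(U_2)$ and $\gamma_e(U_1)+\gamma_e(U_2)\subset\gamma_e(U_1+U_2)$. Applying the dimension identity $\dim(A+B)+\dim(A\cap B)=\dim A+\dim B$ with $A=\gamma_e(U_1)$, $B=\gamma_e(U_2)$ and summing over $e$ then gives
$$
\dim\Gamma_{\rm ht}(U_1\cap U_2)+\dim\Gamma_{\rm ht}(U_1+U_2)\ \ge\ \dim\Gamma_{\rm ht}(U_1)+\dim\Gamma_{\rm ht}(U_2).
$$

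Next I would combine this with the \emph{exact} identity $\dim(U_1\cap U_2)+\dim(U_1+U_2)=\dim U_1+\dim U_2$; subtracting it from the previous display and using ${\rm excess}({\cal F},U)=\dim\Gamma_{\rm ht}(U)-\dim U$ gives~(\ref{eq:shsupermodular}) at once. The crucial bookkeeping point is that the $\dim U$ terms cancel with no slack, so supermodularity is \emph{equivalent} to the displayed inequality about $\Gamma_{\rm ht}$ — and this is also what will make the last claim fall out. For the sublattice statement, if $U_1,U_2\in{\rm maximizers}({\cal F})$ then
$$
2\,{\rm m.e.}({\cal F})={\rm excess}(U_1)+{\rm excess}(U_2)\le{\rm excess}(U_1\cap U_2)+{\rm excess}(U_1+U_2)\le 2\,{\rm m.e.}({\cal F}),
$$
forcing all inequalities to be equalities, so $U_1\cap U_2$ and $U_1+U_2$ are maximizers; closure under $\cap$ and $+$ together with $\dim{\cal F}(V)<\infty$ then yields a unique maximal maximizer (one of largest dimension; any two such have sum a maximizer, hence coincide) and, dually, a unique minimal one.

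Finally, for $\Gamma_{\rm ht}(U_1+U_2)=\Gamma_{\rm ht}(U_1)+\Gamma_{\rm ht}(U_2)$ with $U_1,U_2$ maximizers: the chain of equalities just obtained shows~(\ref{eq:shsupermodular}) holds with equality, hence — the $\dim U$ terms balancing exactly — the $\Gamma_{\rm ht}$-inequality above also holds with equality. That inequality is a sum over $e$ of the inequalities $\dim\gamma_e(U_1\cap U_2)+\dim\gamma_e(U_1+U_2)\ge\dim\gamma_e(U_1)+\dim\gamma_e(U_2)$, so equality of the sums forces equality for each $e$; plugging back $\gamma_e(U_1\cap U_2)=\gamma_e(U_1)\cap\gamma_e(U_2)$ and the dimension identity yields $\dim\gamma_e(U_1+U_2)=\dim\bigl(\gamma_e(U_1)+\gamma_e(U_2)\bigr)$ for each $e$, whence $\gamma_e(U_1+U_2)=\gamma_e(U_1)+\gamma_e(U_2)$ because one containment is already known. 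Taking the direct sum over $e$ finishes it.

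I do not expect a serious obstacle here; the only delicate point is the bookkeeping that the $\dim U$ contributions cancel exactly, so that the reduction of supermodularity to a statement purely about $\Gamma_{\rm ht}$ is tight and the equality case descends cleanly from the global inequality to each individual edge.
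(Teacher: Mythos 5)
Your proof is correct and follows essentially the same approach as the paper's: both hinge on modularity of dimension, the identity $\Gamma_{\rm ht}(U_1\cap U_2)=\Gamma_{\rm ht}(U_1)\cap\Gamma_{\rm ht}(U_2)$, and the inclusion $\Gamma_{\rm ht}(U_1)+\Gamma_{\rm ht}(U_2)\subset\Gamma_{\rm ht}(U_1+U_2)$, with the sublattice and equality claims falling out of the chain of equalities. The edge-wise decomposition into the $\gamma_e$'s is a slightly more granular packaging (the paper argues directly with $\Gamma_{\rm ht}$, deducing $\Gamma_{\rm ht}(U_1+U_2)=\Gamma_{\rm ht}(U_1)+\Gamma_{\rm ht}(U_2)$ from equality of dimensions plus the known inclusion, rather than descending to each $e$), but the content is the same.
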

\begin{proof}
We use the fact that if $A_1,A_2$ are any subspaces of 
an $\field$-vector space,
then
$$
\dim(A_1)+\dim(A_2) = \dim(A_1\cap A_2)+\dim(A_1+A_2).
$$
In particular, for $U_1,U_2\subset{\cal F}(V)$ we have
\begin{equation}\label{eq:shincexc}
\dim(U_1)+\dim(U_2) = \dim(U_1\cap U_2)+\dim(U_1+U_2).
\end{equation}
On the other hand
$$
\Gamma_{ht}(U_1\cap U_2) = \Gamma_{ht}(U_1)\cap \Gamma_{ht}(U_2)
$$
and
\begin{equation}\label{eq:shsupset}
\Gamma_{ht}(U_1 + U_2) \supset \Gamma_{ht}(U_1) + \Gamma_{ht}(U_2);
\end{equation}
hence
\begin{equation}\label{eq:shincexc2}
\dim\bigl( \Gamma_{ht}(U_1) \bigr) + \dim\bigl( \Gamma_{ht}(U_2) \bigr)
\le
\dim\bigl( \Gamma_{ht}(U_1\cap U_2) \bigr) +
\dim\bigl( \Gamma_{ht}(U_1 + U_2) \bigr) .
\end{equation}
Combining equations~(\ref{eq:shincexc}) and (\ref{eq:shincexc2}) yields
equation~(\ref{eq:shsupermodular}).
It follows that if $U_1$ and $U_2$ are maximizers of the excess function
of ${\cal F}$, then
so are $U_1\cap U_2$ and $U_1 + U_2$, and 
equations~(\ref{eq:shincexc2}) and hence 
(\ref{eq:shsupset}) must hold with equality.
\end{proof}

The supermodularity has a number of important consequences.  We list
two such theorem below.

\begin{theorem} 
\label{th:shme_scale}
Let $\phi\from G'\to G$ be a covering map of graphs, and
let ${\cal F}$ be a sheaf on $G$.  Then
\begin{equation}\label{eq:shmescale}
{\rm m.e.}(\phi^*{\cal F}) = \deg(\phi)\ {\rm m.e.}({\cal F}).
\end{equation}
Furthermore, if the maximum excess of ${\cal F}$ is achieved at
$U\subset{\cal F}(V_G)$, then the maximum excess of $\phi^*{\cal F}$
is achieved at $\phi^{-1}(U)$.
\end{theorem}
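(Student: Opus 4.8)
The plan is to prove ${\rm m.e.}(\phi^*{\cal F}) = \deg(\phi)\,{\rm m.e.}({\cal F})$ in three stages: an elementary ``$\ge$'' inequality valid for every covering map, obtained by pulling back an excess‑maximizer of ${\cal F}$; the reverse inequality for \emph{Galois} covers, where the symmetry of the Galois action combined with the lattice structure of Theorem~\ref{th:shsupermodular} lets one straighten an arbitrary maximizer into a pulled‑back one; and finally the general case, reduced to a Galois cover by the Normal Extension Theorem.

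First I would record the computation underlying the ``$\ge$'' direction. Given a compartmentalized subspace $U=\bigoplus_{v\in V_G}U_v$ of ${\cal F}(V_G)$ (recall that every subspace at which the maximum excess of ${\cal F}$ is attained is compartmentalized, by the theorem immediately preceding Theorem~\ref{th:shsupermodular}), define $\phi^{-1}(U):=\bigoplus_{v'\in V_{G'}}U_{\phi(v')}$ inside $(\phi^*{\cal F})(V_{G'})=\bigoplus_{v'}{\cal F}(\phi v')$. Then $\dim\bigl(\phi^{-1}(U)\bigr)=\deg(\phi)\dim(U)$, and since $\phi$ is a graph morphism (so $h(\phi e')=\phi(he')$ and $t(\phi e')=\phi(te')$) and $U$ is compartmentalized, one checks that the fibre of $\Gamma_{\rm ht}(\phi^*{\cal F},\phi^{-1}(U))$ over an edge $e'\in E_{G'}$ equals the fibre of $\Gamma_{\rm ht}({\cal F},U)$ over $\phi(e')$; hence $\dim\bigl(\Gamma_{\rm ht}(\phi^*{\cal F},\phi^{-1}(U))\bigr)=\deg(\phi)\dim\bigl(\Gamma_{\rm ht}({\cal F},U)\bigr)$, so ${\rm excess}(\phi^*{\cal F},\phi^{-1}(U))=\deg(\phi)\,{\rm excess}({\cal F},U)$. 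Applying this with $U$ a maximizer of ${\cal F}$ yields ${\rm m.e.}(\phi^*{\cal F})\ge\deg(\phi)\,{\rm m.e.}({\cal F})$ for every covering map $\phi$.

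Next I would treat the reverse inequality when $\pi\from K\to G$ is Galois with group $\cal G$. Since $\pi^*{\cal F}$ is pulled back from $G$, each $g\in\cal G$ induces an automorphism of the sheaf $\pi^*{\cal F}$: it permutes the summands of $(\pi^*{\cal F})(V_K)=\bigoplus_{v'}{\cal F}(\pi v')$ along its action on $V_K$, acting by the identity on each value (legitimate because $\pi(gv')=\pi(v')$), and it commutes with $d_h$ and $d_t$ (the restriction maps of $\pi^*{\cal F}$ at $e'$ and $ge'$ are literally the same maps ${\cal F}(h,\pi e')$, ${\cal F}(t,\pi e')$). Hence $g$ carries excess‑maximizers to maximizers. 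Given a maximizer $\tilde U$, Theorem~\ref{th:shsupermodular} (closure of the maximizer set under sums, applied over the finite group $\cal G$) shows $\sum_{g\in\cal G}g\tilde U$ is again a maximizer; it is $\cal G$‑invariant by construction and, being a maximizer, compartmentalized. A $\cal G$‑invariant compartmentalized subspace $\bigoplus_{v'}\tilde W_{v'}$ must have $\tilde W_{v'}$ depending only on $\pi(v')$, because $\cal G$ acts transitively on each fibre of $\pi$; writing $\tilde W_{v'}=U_{\pi(v')}$ identifies it as $\pi^{-1}(U)$ for $U:=\bigoplus_{v}U_v\subseteq{\cal F}(V_G)$. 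Then the first stage gives ${\rm m.e.}(\pi^*{\cal F})={\rm excess}(\pi^*{\cal F},\pi^{-1}(U))=\deg(\pi)\,{\rm excess}({\cal F},U)\le\deg(\pi)\,{\rm m.e.}({\cal F})$, so equality holds in the Galois case.

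Finally, for general $\phi\from G'\to G$, I would apply Theorem~\ref{th:shnormal_extension_theorem} to $\phi$ to obtain $\mu\from K\to G'$ with $\pi:=\phi\mu\from K\to G$ Galois; using $\mu^*(\phi^*{\cal F})=\pi^*{\cal F}$, the Galois case for $\pi$, and the first‑stage inequality for $\mu$ and for $\phi$,
\begin{align*}
\deg(\mu)\deg(\phi)\,{\rm m.e.}({\cal F}) &= \deg(\pi)\,{\rm m.e.}({\cal F}) = {\rm m.e.}(\pi^*{\cal F}) = {\rm m.e.}\bigl(\mu^*(\phi^*{\cal F})\bigr) \\
&\ge \deg(\mu)\,{\rm m.e.}(\phi^*{\cal F}) \ge \deg(\mu)\deg(\phi)\,{\rm m.e.}({\cal F}),
\end{align*}
forcing all terms to be equal; dividing by $\deg(\mu)>0$ gives ${\rm m.e.}(\phi^*{\cal F})=\deg(\phi)\,{\rm m.e.}({\cal F})$. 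The ``furthermore'' assertion is then immediate: if $U$ achieves ${\rm m.e.}({\cal F})$ it is compartmentalized, and the first stage gives ${\rm excess}(\phi^*{\cal F},\phi^{-1}(U))=\deg(\phi)\,{\rm m.e.}({\cal F})={\rm m.e.}(\phi^*{\cal F})$, so $\phi^{-1}(U)$ is a maximizer. I expect the Galois step to be the main obstacle — in particular, verifying carefully that averaging a maximizer over $\cal G$ (valid thanks to supermodularity) yields a subspace that is simultaneously $\cal G$‑invariant and compartmentalized, and that such a subspace is exactly a pullback $\pi^{-1}(U)$; the bookkeeping of how $\cal G$ permutes the summands of $(\pi^*{\cal F})(V_K)$ is where the care is needed.
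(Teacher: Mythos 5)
Your overall strategy is the same as the paper's: prove the easy ``$\ge$'' direction by pulling back a compartmentalized maximizer, use the $\cg$-symmetry of $\pi^*{\cal F}$ together with the lattice structure from Theorem~\ref{th:shsupermodular} to get the reverse inequality in the Galois case, and then pass to the general case via the Normal Extension Theorem. That said, you diverge from the paper in two small but pleasant ways. In the Galois step, the paper invokes the \emph{uniqueness} of the maximal element of the lattice of maximizers (guaranteed by supermodularity) to conclude it is $\cg$-invariant, whereas you instead form $\sum_{g\in\cg} g\tilde U$ from an arbitrary maximizer $\tilde U$ and use closure of the maximizer set under sums; the two are interchangeable, but your version makes the invariance true ``by construction'' rather than by a uniqueness argument, at the price of having to iterate the pairwise-sum closure over the finite group. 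In the final reduction, the paper asserts in addition that the intermediate cover $\nu\from L\to G'$ is itself Galois (a true but uncited fact about intermediate covers of a Galois cover) and applies the Galois case twice; you avoid this entirely with a squeeze: the Galois case for $\pi=\phi\mu$ plus the already-established ``$\ge$'' inequality for $\mu$ and for $\phi$ forces equality everywhere. That sidesteps needing to know anything about $\mu$ beyond it being a covering map, which is a genuine, if small, simplification. Both proofs are correct, and your ``furthermore'' clause follows exactly as you describe.
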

\begin{proof}  Our proof uses Theorem~\ref{th:shsupermodular}
and Galois theory.  
Let ${\cal F}'=\phi^*{\cal F}$.
If $T\subset{\cal F}(V)$ is compartmentalized, $T=\oplus_{v\in V_G} T_v$, let
$$
\phi^{-1}(T) = \bigoplus_{v'\in V_{G'}} T_{\phi(v')} 
\subset {\cal F}'(V_{G'}).
$$
Since $\phi$ is a covering map, the number of preimages of any element
of $V_G\amalg E_G$ is $\deg(\phi)$, and hence
\begin{equation}\label{eq:shexcess_lift}
{\rm excess}\bigl({\cal F}',\phi^{-1}(T)\bigr) = \deg(\phi)
\,{\rm excess}({\cal F},T).
\end{equation}
Taking $T$ to maximize the excess of ${\cal F}$ we get 
\begin{equation}\label{eq:sheasy}
{\rm m.e.}({\cal F}') \ge
\deg(\phi)\, {\rm m.e.}({\cal F}).
\end{equation}
It remains to prove the reverse inequality in order to establish
equation~(\ref{eq:shmescale});  note that if we do so, then the second
statement of the theorem follows from equation~(\ref{eq:shexcess_lift}).

First let us assume that $\phi$ is Galois, with Galois group
${\rm Gal}(\phi)$. 
Each $g\in{\rm Gal}(\phi)$ is a morphism $g\from K\to K$.
Let ${\cal F}'=\phi^*{\cal F}$.  There is a natural map
$\iota_g\from g^*{\cal F}'\to{\cal F}'$, since for every $P\in V_{G'}\amalg
E_{G'}$ we have ${\cal F}'(P)={\cal F}'(Pg)$ (note that this really
is equality of vector spaces; they both equal ${\cal F}(\phi(P))$, by
definition).
So $\iota_g$ gives automorphism on ${\cal F}'(E_{G'})$ and 
${\cal F}'(V_{G'})$.
For any $U\subset(\phi^*{\cal F})(V)$, 
any element of ${\rm Gal}(\phi)$ preserves $\dim(U)$ and 
$\dim(\Gamma_{ht}(U))$, and hence the excess.  It follows that
for all $g\in{\rm Gal}(\phi)$, $\iota_g$ takes
${\rm maximizers}(\phi^*{\cal F})$ to itself.
Hence
if $W$ is the unique maximal element of the maximizers, then
$W$ is invariant under $\iota_g$ for all $g\in{\rm Gal}(\phi)$; this
means that if $W=\oplus_{v'\in V(G')}W_{v'}$ and 
$$
\mt W = \bigoplus_{v\in V_G} \left( \sum_{v'\in\phi^{-1}(v)} W_{v'}\right),
$$
then ($W_{v'}=W_{v''}$ if $\phi(v')=\phi(v'')$ and)
$W=\phi^{-1}(\mt W)$.  Hence
\begin{eqnarray*}
{\rm m.e.}({\cal F}')&=&{\rm excess}(W)
\\
&=& \deg(\phi) \ {\rm excess}_{\cal F}(\mt W) 
\le \deg(\phi) \ {\rm m.e.}({\cal F}).
\end{eqnarray*}
In summary,
$$
{\rm m.e.}({\cal F}') \le \deg(\phi) \ {\rm m.e.}({\cal F}).
$$
From equation~(\ref{eq:sheasy}), it follows that
the above inequality holds with equality.

It remains to prove the equality when $\phi\from G'\to G$ is
not Galois. 
By the Normal Extension Theorem of Galois graph theory 
(i.e., Theorem~\ref{th:shnormal_extension_theorem}), there exists
a
$\nu\from L\to G'$ be such that $\phi\nu$ (and hence
$\nu$) is Galois.  Since $\phi\nu$ is Galois, we have
$$
{\rm m.e.}(\nu^*\phi^*{\cal F}) = \deg(\phi\nu)\,{\rm m.e.}({\cal F}),
$$
and since $\nu$ is Galois we have
$$
{\rm m.e.}(\nu^*(\phi^*{\cal F})) = \deg(\nu)\,{\rm m.e.}(\phi^*{\cal F}).
$$
It follows that
$$
{\rm m.e.}(\phi^*{\cal F}) =
\deg(\phi)\,{\rm m.e.}({\cal F}).
$$
\end{proof}

\subsection{Additional Remarks on the Maximum Excess}

Here we make some additional remarks on the maximum excess, either
for later use or to provide some more intuition about it.

We mention that ${\rm m.e.}({\cal F})+\chi({\cal F})$ can be viewed as
a generalization of
the ``number of acyclic components'' of a graph; for example,
for the sheaf $\underline\field$ on $G$ we have
$$
{\rm m.e.}(\underline{\field})+\chi(\underline{\field}) 
=
\rho(G) + |V_G|-|E_G|
= h_0^{\rm acyclic}(G)
$$
equals the number of ``acyclic components'' of $G$, i.e. the number
of connected components of $G$ that have no cycles, i.e.,
that are isolated vertices or trees.
A similar remark holds for $\underline\field$ replaced by
$\underline\field_K$ and $G$ replaced by $K$,
for any map $K\to G$.

We shall make use of the following
alternate interpretation of the maximum excess.

\begin{theorem}\label{th:me_as_subsheaf}
For any sheaf, ${\cal F}$, on a digraph, $G$, the
maximum excess of ${\cal F}$ is the same as
$$
\max_{{\cal F}'\subset{\cal F}} -\chi({\cal F}'),
$$
i.e., the maximum value of minus the Euler characteristic over all
subsheaves, ${\cal F}'$, of ${\cal F}$.
\end{theorem}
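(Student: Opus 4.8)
The plan is to prove ${\rm m.e.}({\cal F})=\max_{{\cal F}'\subset{\cal F}}-\chi({\cal F}')$ by establishing the two inequalities separately, using only the definitions of $\Gamma_{\rm ht}$, excess, and subsheaf. For the bound ${\rm m.e.}({\cal F})\ge\max_{{\cal F}'\subset{\cal F}}-\chi({\cal F}')$, I would take an arbitrary subsheaf ${\cal F}'\subset{\cal F}$ and set $U={\cal F}'(V)\subset{\cal F}(V)$. The key point is that for each $e\in E_G$ and each $w\in{\cal F}'(e)\subset{\cal F}(e)$ we have $d_h(w)={\rm extend}({\cal F}(h,e)w,\,he)$ with ${\cal F}(h,e)w={\cal F}'(h,e)w\in{\cal F}'(he)$, so $d_h(w)\in U$, and likewise $d_t(w)\in U$. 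Hence ${\cal F}'(e)\subset\{w\in{\cal F}(e)\mid d_h(w),d_t(w)\in U\}$ for every $e$, so ${\cal F}'(E)\subset\Gamma_{\rm ht}(U)$, and therefore ${\rm excess}({\cal F},U)=\dim\Gamma_{\rm ht}(U)-\dim U\ge\dim{\cal F}'(E)-\dim{\cal F}'(V)=-\chi({\cal F}')$. Taking the maximum over ${\cal F}'$ gives this inequality.

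For the reverse bound I would start from any $U\subset{\cal F}(V)$; replacing $U$ by $U_{\rm comp}$ does not decrease the excess, since $\Gamma_{\rm ht}(U_{\rm comp})=\Gamma_{\rm ht}(U)$ (the computation already used to show that excess-maximizers are compartmentalized) while $\dim U_{\rm comp}\le\dim U$, so it suffices to treat compartmentalized $U$, say $U=\bigoplus_{v}U_v$ with $U_v\subset{\cal F}(v)$. Define a subsheaf ${\cal F}'$ of ${\cal F}$ by ${\cal F}'(v)=U_v$ and ${\cal F}'(e)=\{w\in{\cal F}(e)\mid d_h(w),d_t(w)\in U\}$, with the restriction maps inherited from ${\cal F}$; the reasoning above, read in reverse, shows these restrictions land in the prescribed values (if $d_h(w)\in U=\bigoplus_v U_v$ then ${\cal F}(h,e)w\in U_{he}$), so ${\cal F}'$ is indeed a subsheaf. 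By construction ${\cal F}'(V)=U$ and ${\cal F}'(E)=\Gamma_{\rm ht}(U)$ (the head/tail neighbourhood is by definition a direct sum over $E_G$), whence $-\chi({\cal F}')=\dim\Gamma_{\rm ht}(U)-\dim U={\rm excess}({\cal F},U)$. Applying this to a $U$ realizing ${\rm m.e.}({\cal F})$ gives $\max_{{\cal F}'\subset{\cal F}}-\chi({\cal F}')\ge{\rm m.e.}({\cal F})$, and combining the two inequalities completes the proof.

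The whole argument is essentially bookkeeping with the maps $d_h$ and $d_t$; the only place requiring care is that an arbitrary $U\subset{\cal F}(V)$ need not be a direct sum of subspaces of the ${\cal F}(v)$ and so does not by itself define a subsheaf, which is precisely why one first passes to $U_{\rm comp}$ (equivalently, uses that a maximizer of the excess is already compartmentalized). No homological or Galois-theoretic input is needed.
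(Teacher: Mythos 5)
Your proof is correct and follows essentially the same approach as the paper's: one direction by taking $U={\cal F}'(V)$ for an arbitrary subsheaf ${\cal F}'$ and observing ${\cal F}'(E)\subset\Gamma_{\rm ht}(U)$, the other by building a subsheaf from a compartmentalized $U$ together with $\Gamma_{\rm ht}(U)$. You spell out the compartmentalization point more explicitly than the paper does (which simply says ``each compartmentalized $U$ $\ldots$ determines a subsheaf,'' relying implicitly on the earlier result that excess-maximizers are compartmentalized), but the underlying argument is identical.
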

\begin{proof} Each compartmentalized $U\subset{\cal F}(V)$ along with
$\Gamma_{\rm ht}(U)$ determines a subsheaf ${\cal F}'$ whose
Euler characteristic is minus the excess of $U$.  Conversely, for
any subsheaf ${\cal F}'\subset{\cal F}$ we have $U={\cal F}'(V)$
satisfies
$$
\dim({\cal F}')=\dim(U), \qquad
{\cal F}'(E)\subset \Gamma_{\rm ht}(H).
$$
Hence the excess of $U$ is at least minus the Euler characteristic 
of ${\cal F}'$.
\end{proof}

The above theorem has a simple graph theoretic analogue, namely that
$$
\rho(G) = \max_{H\subset G} -\chi(H).
$$
One can easily prove this directly (with $\rho(G)=-\chi(H)$ when $H$
consists of all cyclic connected components of $G$) or use
Theorem~\ref{th:me_as_subsheaf}.

We remark that it is easy to give a direct proof 
that the maximum excess satisfies some of the properties of a first
quasi-Betti number.  For example, it is immediate that 
for sheaves ${\cal F}_1,{\cal F}_2$ on a graph, $G$, we have
$$
{\rm m.e.}({\cal F}_1\oplus{\cal F}_2) =
{\rm m.e.}({\cal F}_1) +
{\rm m.e.}({\cal F}_2) .
$$
As another example, if ${\cal F}_1\to{\cal F}_2$ is an injection, then
Theorem~\ref{th:me_as_subsheaf} shows that
$$
{\rm m.e.}({\cal F}_1) \le {\rm m.e.}({\cal F}_2).
$$
It is quite conceivable that all of the ``first quasi-Betti number''
properties of the maximum excess have simple, direct proofs that
avoid using Theorem~\ref{th:shmain}.
However, we find that Theorem~\ref{th:shmain}, that implies that the
maximum excess is a limiting twisted Betti number, is extremely useful
in providing intuition about the maximum excess.

\section{$h_1^\twist$ and the Universal Abelian Covering}
\label{se:shabel}
For a digraph, $G$, we will study its maximum Abelian covering,
$\pi\from G[\integers]\to G$, which is an infinite graph,
and show that for a sheaf ${\cal F}$, on $G$, we have
$H_1^\twist({\cal F})$ is non-zero iff there is a non-zero element of
$H_1(\pi^*{\cal F})$ that is of finite support.
This is crucial to our proof of Theorem~\ref{th:shmain}.
We shall illustrate these theorems on the unhappy $4$-bundle, which
gives great insight into our proof of Theorem~\ref{th:shmain} that we
give in Section~\ref{se:shequal}.

Let $\integers$ be the set of integers, and let
$\integers_{\ge 0}$ be the set of non-negative integers.
For a set, $S$, we use $\integers^S$ to denote the set of functions
from $S$ to $\integers$.
We define the {\em rank} of an $n\in\integers^S$ to be
$$
{\rm rank}(n) = \sum_{s\in S} n(s)
$$
(in this paper $S$ will always be
finite, so the summation makes sense).

Given a digraph, $G$, let $G[\integers]$ be the infinite digraph with
$$
V_{G[\integers]} = V_{G}\times \integers^{E_G},\quad
E_{G[\integers]} = E_{G}\times \integers^{E_G},
$$
with heads and tails maps given for each $e\in E_G$ and 
$n\in\integers^{E_G}$ by
$$
h_{G[\integers]}(e,n) = (h_G e,n), \quad
t_{G[\integers]}(e,n) = (t_G e,n+\delta_e),
$$
where $\delta_e\in\integers^{E_G}$ is $1$ at $e$ and $0$ elsewhere.
Projection onto the first component gives an infinite degree covering
map $\pi\from G[\integers]\to G$.
For a vertex, $(v,n)$, or an edge, $(e,n)$,
of $G[\integers]$, we define its {\em rank}
to be the rank of $n$.

\begin{definition} For a digraph, $G$, we define the {\em universal
Abelian covering} of $G$ to be $\pi\from G[\integers]\to G$ described
in the previous paragraph.
\end{definition}
It is not important to us, but easy to verify, that $\pi$ factors
uniquely through any connected
Abelian covering of $G$.  Abelian coverings have been studied in
numerous works, including
\cite{friedman_tillich_generalized,friedman_murty_tillich}.

We similarly define $G[\integers_{\ge 0}]$, with $\integers_{\ge 0}$
replacing $\integers$ everywhere; $G[\integers_{\ge 0}]$ can be viewed 
as a subgraph of $G[\integers]$.

Our approach to Theorem~\ref{th:shmain} involves the properties of the graphs
$G[\integers_{\ge 0}]$, so let us consider some examples.
If $B_d$ denotes the bouquet of $d$ self-loops, i.e., the digraph with
one vertex and $d$ edges, then $B_d[\integers_{\ge 0}]$ is just the usual
$d$-dimensional non-negative
integer lattice, depicted in Figures~\ref{fg:b2} and \ref{fg:b2diag}.
\begin{figure}[ht]
  \begin{center}
    \epsfxsize=2in\epsffile{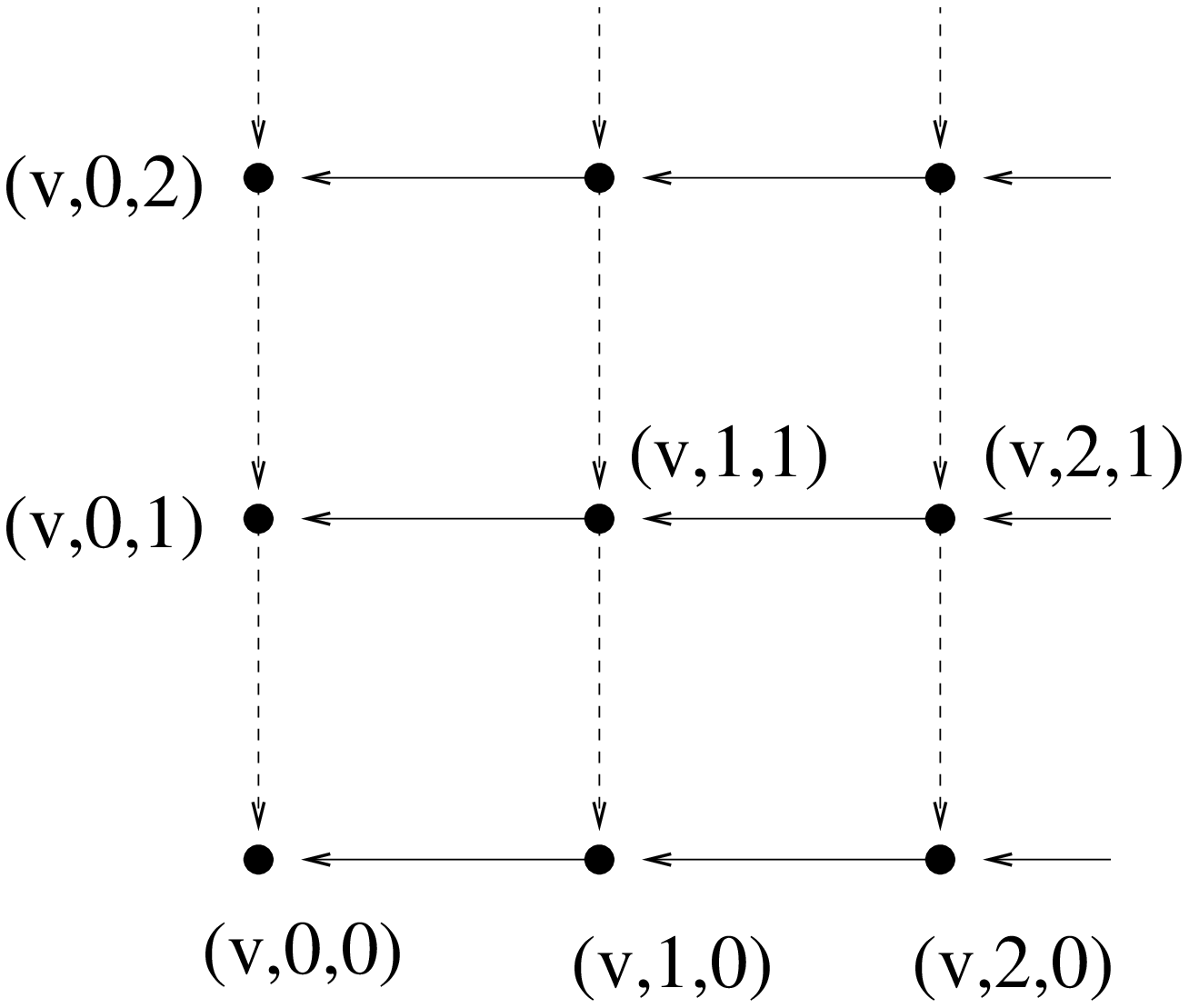}
    \caption{$B_2[\integers_{\ge 0}]$.}
    \label{fg:b2}
  \end{center}
\end{figure}
If $G'\to G$ is a covering map of degree $d$, then
$G'[\integers]\to G[\integers]$ and $G'[\integers_{\ge 0}]\to
G[\integers_{\ge 0}]$ are both covering maps.
\begin{figure}[ht]
\begin{center}
\begin{diagram}[nohug,height=3em,width=3em,tight]
&&&&(v,0,0) &&&& \\
&&&\ruTo && \luDashto &&& \\
&& (v,1,0) &&&& (v,1,0) && \\
&\ruTo && \luDashto && \ruTo && \luDashto & \\
(v,2,0) &&&& (v,1,1) &&&& (v,0,2) \\
\end{diagram}
\caption{First part of $B_2[\integers_{\ge 0}]$. Notice the cycle of
length four.}
\label{fg:b2diag}
\end{center}
\end{figure}
However, for $d>1$ and $|E_G|\ge 1$, we have $|E_{G'}|> |E_{G}|$,
and the covering will be of infinite degree.

Now consider $G'[\integers_{\ge 0}]$, where $\phi\from G'\to B_2$ 
is the degree two
cover of $B_2$ discussed with the unhappy $4$-bundle in 
Subsection~\ref{sb:unhappy} (just beneath equation~(\ref{eq:shme_scale})).
\begin{figure}[ht]
\begin{diagram}[nohug,height=3.2em,width=3.5em,tight]
&&&&&(v_1,0,0,0,0) &&&&& \\
&&&&\ruTo(3,2) && \luDashto(3,2) &&&& \\
&& (v_1,1,0,0,0) &&&&&& (v_1,0,0,1,0) && \\
&\ruTo && \luDashto &&&& \ruTo && \luDashto & \\
(v_1,2,0,0,0)&&&&(v_1,1,0,1,0)\quad &&\quad (v_1,0,1,1,0) &&&& (v_1,0,0,2,0) \\
\end{diagram}
\caption{First part of $G'[\integers_{\ge 0}]$ near $(v,\vec 0)$. 
No cycles of length four. 
The four $(\integers_{\ge 0})^{E_{G'}}$ coordinates are, in order,
$e_1^1,e_1^2,e_2^1,e_2^2$ where $e_i^j$ lies over $e_i\in E_{B_2}$ and
are described in the last equations
of Subsection~\ref{sb:unhappy} that give the $\nu_i^j$.}
\label{fg:u}
\end{figure}
As we see, and illustrated in Figure~\ref{fg:u}, 
$G'[\integers_{\ge 0}]$ has no cycle of length four.
As we shall see, the fact that $h_1^\twist({\cal U})=1$ is a
result, in a sense,
of the cycles
of length four in $B_2[\integers_{\ge 0}]$; the fact that these cycles
``open up'' to non-closed walks in $G'[\integers_{\ge 0}]$ is partly why
$h_1^\twist(\phi^*{\cal U})=0$.
% Another very important aspect of these graphs and Figures~\ref{fg:b2diag}
% and \ref{fg:u} is the following subtle point:
% in $B_2[\integers_{\ge 0}]$ one can walk from $(v,\vec 0)=(v,0,0)$
% back along a black edge (i.e., an edge over $e_1$), then back along a
% dotted edge, and then forward along a black edge and remain in
% $B_2[\integers_{\ge 0}]$; however,
% such a walk in $G'[\integers_{\ge 0}]$ at $(v_1,\vec 0)$ would take
% one outside of $G'[\integers_{\ge 0}]$, to the vertex
% $(v_1,1,-1,1,0)$ of $G'[\integers]$.
% Similarly for any covering map $G''\to B_2$, a neighbourhood of any
% vertex $(v'',\vec 0)$ of $G''[\integers_{\ge 0}]$ up to a fixed
% distance, $d$,
% of $(v'',\vec 0)$ is a tree, provided that the girth of $G''$ is 
% sufficiently large.

% An important example comes from our unhappy $4$-bundle; in Figure~\ref{fg:b2}
% we depict $B_2[\integers_{\ge 0}]$ and
% $G'[\integers_{\ge 0}]$, where $G'$ is the cover with self-loops in $G'$
% mapping to the first edge of $G$, and edges joining the two vertices in $G'$
% mapping to the second edge.  Notice how the cycles of length four in
% $B_2[\integers_{\ge 0}]$ (or $B_2[\integers]$) are not present when
% $B_2$ is replaced with $G'$.

Now we define homology groups on graphs of the form
$G[\integers]$ and $G[\integers_{\ge 0}]$, and, more generally, any
infinite graph.
If $K$ is a infinite graph that is locally finite (i.e., each vertex is
incident upon a finite number of edges), we can still define a sheaf
(of finite dimensional vector spaces over a field, $\field$) just as before.
Hence a sheaf,
${\cal F}$, on $K$ as a collection of a finite dimensional $\field$-vector
space, ${\cal F}(P)$ for each $P\in V_K\amalg E_K$, along with
restriction maps ${\cal F}(h,e)$ and ${\cal F}(t,e)$ for each $e\in E_K$.
We shall define
$$
{\cal F}^\oplus(V)=\bigoplus_{v\in V_G} {\cal F}(v),\quad\mbox{and}\quad
{\cal F}^\Pi(V)=\prod_{v\in V_G} {\cal F}(v),
$$
which generally differ, ${\cal F}^\oplus(V)$ being the subset of
${\cal F}^\Pi(V)$ of elements $\{f_v\}_{v\in V_G}$ that are supported
(i.e., nonzero)
on only finitely many $v$.  Similarly we define ${\cal F}^\oplus(E)$
and ${\cal F}^\Pi(E)$.
Then $d=d_h-d_t$ can be viewed as a map ${\cal F}^\Pi(E)\to{\cal F}^\Pi(V)$
or, respectively, ${\cal F}^\oplus(E)\to{\cal F}^\oplus(V)$,
and their cokernels and kernels are respectively denoted
$H_i^\Pi({\cal F})$ and $H_i^\oplus({\cal F})$ for $i=0,1$.

If ${\cal F}$ is a sheaf on $G$, and $\pi\from G[\integers]\to G$ the
universal Abelian covering, then $\pi^*{\cal F}$ is a sheaf on
$G[\integers]$.

The following simple but important observation explains our
interest in
the universal Abelian covering.

\begin{lemma}
\label{lm:habelian}
Let ${\cal F}$ be a sheaf on $G$, and 
$\pi\from G[\integers]\to G$ the
universal Abelian covering.
Then $H_1^\twist({\cal F})$ is non-trivial iff
$H_1^\oplus(\pi^*{\cal F})$ is non-trivial.
If so, there is a non-zero $w\in H_1^\oplus(\pi^*{\cal F})$
that is supported on $G[\integers_{\ge 0}]$.
\end{lemma}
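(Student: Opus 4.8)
The plan is to connect twisted homology over $\field(\psi)$ directly with finitely supported cycles on the universal Abelian cover $G[\integers]$, treating the indeterminates $\psi(e)$ essentially as ``formal multi-degree markers'' recording how many times each edge $e$ is traversed.

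First I would set up the dictionary between the twisted differential $d_{{\cal F}^\psi}$ and the differential on $\pi^*{\cal F}$ over $G[\integers]$. A vertex of $G[\integers]$ is a pair $(v,n)$ with $n\in\integers^{E_G}$, and an element $w\in(\pi^*{\cal F})^\oplus(E)$ is a finitely supported family $\{w_{(e,n)}\}$ with $w_{(e,n)}\in{\cal F}(e)$. Grouping by the second coordinate, I would write $w=\sum_n w_n\,\psi^n$ where $w_n=\{w_{(e,n)}\}_{e\in E_G}\in{\cal F}(E)$ and $\psi^n=\prod_e \psi(e)^{n(e)}$ is a monomial (allowing negative exponents over $\integers$, i.e. a Laurent monomial); this identifies ${\cal F}^\oplus_{G[\integers]}(E)$ with ${\cal F}(E)\otimes_\field \field[\psi^{\pm 1}]$, the Laurent polynomial ring in the $\psi(e)$. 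Under the head/tail maps of $G[\integers]$, $(e,n)$ has head $(h_Ge,n)$ and tail $(t_Ge,n+\delta_e)$, so $d_h$ acts fibrewise (no shift) while $d_t$ shifts $n\mapsto n+\delta_e$, i.e. multiplies by $\psi(e)$. Hence under the identification above, the differential $d$ on $G[\integers]$ is exactly $d_h-d_t$ with the tail maps multiplied by $\psi(e)$: this is precisely $d_{{\cal F}^\psi}$ viewed as a matrix with entries in $\field[\psi^{\pm 1}]$. So $H_1^\oplus(\pi^*{\cal F})=\kernel\bigl(d_{{\cal F}^\psi}\colon {\cal F}(E)\otimes\field[\psi^{\pm1}]\to{\cal F}(V)\otimes\field[\psi^{\pm1}]\bigr)$.

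Now the equivalence ``$H_1^\twist({\cal F})\ne 0$ iff $H_1^\oplus(\pi^*{\cal F})\ne 0$'' becomes the statement: a matrix $M=d_{{\cal F}^\psi}$ with entries in $\field[\psi^{\pm1}]$ has a nonzero kernel vector over the fraction field $\field(\psi)$ iff it has a nonzero kernel vector over $\field[\psi^{\pm1}]$ itself. The $\Leftarrow$ direction is trivial. For $\Rightarrow$, take any nonzero $v\in\kernel(M)$ over $\field(\psi)$, clear denominators by multiplying by a common denominator in $\field[\psi^{\pm1}]$ (this is a domain, so the result is still nonzero and still in the kernel), obtaining a nonzero kernel vector with Laurent-polynomial entries. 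This is the crux, and it is genuinely easy once the dictionary is in place — the whole point of the lemma is to reduce a ``generic dimension'' statement to an honest-module statement via this clearing-of-denominators move.

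For the last sentence — that we can find such a $w$ supported on $G[\integers_{\ge 0}]$ — I would multiply the Laurent-polynomial kernel vector just obtained by a single monomial $\psi^m=\prod_e\psi(e)^{m(e)}$ with $m$ large enough (coordinatewise) that every monomial appearing in every entry acquires a nonnegative exponent in each $\psi(e)$. Multiplication by $\psi^m$ corresponds, under the dictionary, to translating the support in $G[\integers]$ by $m\in\integers^{E_G}$; it is an automorphism of the chain groups commuting with $d$, so it preserves the kernel, and after translation the support sits inside $V_G\times\integers_{\ge 0}^{E_G}=V_{G[\integers_{\ge 0}]}$ and $E_G\times\integers_{\ge 0}^{E_G}$. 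Since $G[\integers_{\ge 0}]$ is a subgraph of $G[\integers]$ on which $d$ agrees, this $w$ lies in $H_1^\oplus(\pi^*{\cal F}|_{G[\integers_{\ge 0}]})$ as desired. The main obstacle, such as it is, is purely bookkeeping: being careful that the identification of chain complexes respects the head/tail conventions (in particular which of $d_h,d_t$ carries the shift, matching the sign/placement of $\psi$ in the definition of ${\cal F}^\psi$), and that ``finitely supported'' on the cover corresponds exactly to ``Laurent polynomial, not formal Laurent series.'' No deep input is needed beyond $\field[\psi^{\pm1}]$ being an integral domain.
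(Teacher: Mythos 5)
Your proof is correct and is essentially the paper's proof: the paper likewise identifies a finitely supported chain on $G[\integers]$ with a polynomial in the $\psi(e)$ via $w\mapsto\sum_n w_n\psi^n$, verifies that the head/tail maps on the cover correspond to $d_h$ and $\psi(e)d_t$, and passes from a $\field(\psi)$-kernel vector to an integral one by clearing denominators. Your formulation in terms of the Laurent-polynomial ring $\field[\psi^{\pm1}]$ acting by translation, with a final monomial shift to land in $G[\integers_{\ge 0}]$, is a slightly cleaner packaging of the same bookkeeping, but the content is identical.
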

\begin{proof} 
For each $e\in E_G$, let ${\cal F}(e)$ be of dimension $d_e$ and
have basis $f_{e,1},\ldots,f_{e,d_e}$.
Let 
$$
a_{e,i} = {\cal F}(h,e)f_{e,i} \in {\cal F}(he), \quad
b_{e,i} = {\cal F}(t,e)f_{e,i} \in {\cal F}(te).
$$
We have
$h_1^\twist(\mu^*{\cal F})\ge 1$ iff
the vectors
$$
a_{e,i} + \psi(e) b_{e,i} 
$$
are linear dependent over $\field(\psi)$, where $\psi$ is a collection
of indeterminates indexed on $E_{G}$.  This holds iff there
are rational functions $c_{e,i}\in \field(\psi)$ for each $e\in E_G$
and $i=1,\ldots,d_e$ such that
\begin{equation}\label{eq:shlinear_psi}
\sum_{e\in E_{G}} \ \sum_{i=1}^{d_{e}} c_{e,i}(\psi)
(a_{e,i} + \psi(e) b_{e,i} ) = 0,
\end{equation}
where not all $c_{e,i}$ are zero.
We may multiply the denominators
of the $c_{e,i}(\psi)$ to assume that they are polynomials, not all
zero.
We may write
$$
c_{e,i}(\psi) = \sum_{n\in(\integers_{\ge 0})^{E_G}} c_{e,i,n} \psi^n,
$$
where $c_{e,i,n}\in{\field}$ and
$$
\psi^n = \prod_{e\in E_G} \psi^{n(e)}(e).
$$

In summary, we see that $h_1^\twist({\cal F})\ne 0$ iff there 
exist $c_{e,i,n}\in\field$, with $c_{e,i,n}=0$ for all but finitely
many $n$, such that
\begin{equation}\label{eq:shbigsum}
\sum_{n\in (\integers_{\ge 0})^{E_G}}
\ \sum_{e,i} \psi^n c_{e,i,n} (a_{e,i} + \psi_{e} b_{e,i} ) = 0
\end{equation}
and not all the $c_{e,i,n}=0$.
But equation~(\ref{eq:shbigsum}) is equivalent to saying that
$$
w_{(e,n)} = \sum_{i=1}^{d_e} c_{e,i,n}f_{e,i}
$$ 
is a non-zero element of $H_1^\oplus(\pi^*{\cal F})$.
Hence $h_1^\twist({\cal F})\ne 0$ iff
$H_1^\oplus(\pi^*{\cal F})\ne 0$.

\end{proof}

The following is a simple graph theoretic definition that is crucial
to our proof of Lemma~\ref{lm:equalifzero}.

\begin{definition} The {\em Abelian girth} of a digraph graph, $G$, is
the girth of $G[\integers]$.
\end{definition}

Since $G[\integers]\to G$ is a covering map, 
the girth of $G[\integers]$, which is the Abelian girth of $G$,
is at least the girth of $G$.
Note also that $B_1$, the digraph with one vertex and one edge
(a self-loop), has girth one but infinite Abelian girth, i.e.,
$G[\integers]$ is a two-sided infinite path and has no cycles.
Similarly $B_2$, the digraph with one vertex and two edges,
has girth one but Abelian girth four.

% \section{Proof of the Main Theorem}
\section{Proof of {Theorem~\ref{th:shmain}}}
\label{se:shequal}
We begin with the following lemma that is one of the (if not the)
technical core of this chapter.

\begin{lemma}
\label{lm:equalifzero}
Let ${\cal F}$ be a sheaf on a digraph, $G$.  Let
$\mu\from G'\to G$ be a covering map such that $G'$ is of 
Abelian girth greater than 
$$
2 \Bigl( \dim\bigl({\cal F}(V)\bigr) +
\dim\bigl({\cal F}(E)\bigr) \Bigr).
$$
Then $h_1^\twist(\mu^*{\cal F})>0$ implies that
${\rm m.e.}({\cal F})>0$.
\end{lemma}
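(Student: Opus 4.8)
The plan is to work on the universal Abelian cover and exploit the girth hypothesis to ``localize'' a twisted homology class. By Lemma~\ref{lm:abelian} (applied to the Abelian cover $\mu$, after passing to a field that is Fourier for the relevant group) or directly by Lemma~\ref{lm:habelian}, the hypothesis $h_1^\twist(\mu^*{\cal F})>0$ yields a nonzero element $w\in H_1^\oplus(\pi^*\mu^*{\cal F})$ of finite support, where $\pi\from G'[\integers]\to G'$ is the universal Abelian cover of $G'$; moreover by the last sentence of Lemma~\ref{lm:habelian} we may take $w$ supported inside $G'[\integers_{\ge 0}]$. Write $K=G'[\integers]$ and ${\cal G}=\pi^*\mu^*{\cal F}$, a sheaf on $K$ whose values at any vertex or edge have dimension bounded by $\dim({\cal F}(V))$ or $\dim({\cal F}(E))$ respectively. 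So we are reduced to: given a finitely supported nonzero $w\in\ker(d_{\cal G}\from{\cal G}^\oplus(E_K)\to{\cal G}^\oplus(V_K))$ on an infinite graph $K$ of girth greater than $2(\dim{\cal F}(V)+\dim{\cal F}(E))$, produce a subspace $U$ of ${\cal F}(V_G)$ with positive excess, i.e.\ with $\dim\Gamma_{\rm ht}(U)>\dim U$.

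First I would consider the finite subgraph $H\subseteq K$ spanned by the support of $w$ (the edges $e$ with $w_e\neq 0$, together with their endpoints), and let $w$ be regarded as an element of $H_1({\cal G}_H)=H_1(\underline\field_H\otimes\dots)$ — concretely $w$ is a cycle in the incidence complex of the sheaf ${\cal G}$ restricted to $H$. The key structural point is that, because $K$ has girth exceeding $2(\dim{\cal F}(V)+\dim{\cal F}(E))$ and the total dimension of the sheaf at any point is at most that bound, the support $H$ cannot ``wrap around'' a short cycle: any reduced closed walk in $H$ has length greater than $2(\dim{\cal F}(V)+\dim{\cal F}(E))$. Then I would prune $H$: repeatedly delete any edge $e$ of $H$ at whose endpoint the vectors $d_h,d_t$ of the remaining nonzero $w$-components force linear independence (using that $d_h,d_t$ are compartmentalized morphisms, Subsection~\ref{sb:compartment}, and Theorem~\ref{th:shindependent_nonzero}), reducing to a core on which $w$ is ``balanced'' at every vertex. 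On this core, push $w$ down via $\pi\circ\mu\from K\to G$: the image data define, for each vertex $v$ of $G$, the subspace $U_v\subseteq{\cal F}(v)$ spanned by the restrictions of the $w$-components of edges of the core that meet a $\pi\mu$-preimage of $v$, and one sets $U=\bigoplus_v U_v\subseteq{\cal F}(V_G)$.

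The crux — and the step I expect to be the main obstacle — is the counting argument showing ${\rm excess}({\cal F},U)>0$, i.e.\ that the core contributes strictly more ``room'' in $\Gamma_{\rm ht}(U)$ than it costs in $\dim U$. This is exactly where the girth hypothesis, with its factor $2(\dim{\cal F}(V)+\dim{\cal F}(E))$, is used: the large girth of $K$ guarantees that the core, when projected to $G$, does not collapse dimensions — locally (within any ball of radius $\dim{\cal F}(V)+\dim{\cal F}(E)$) the map $K\to G$ is an isomorphism onto its image, so the naive edge/vertex count on $K$ transfers to a genuine lower bound for $\dim\Gamma_{\rm ht}(U)-\dim U$ on $G$. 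Quantitatively: a nonzero cycle $w$ lives on a subgraph with $h_1\ge 1$, hence with at least as many edges as vertices in its cyclic part; each edge in the core contributes an independent vector into $\Gamma_{\rm ht}(U)$ (by compartmental distinctness of the $w_e$ as elements of ${\cal G}^\oplus(E)$, Theorem~\ref{th:shindependent_nonzero}), while the $U_v$'s are built from at most $\dim U$ independent vectors at the vertices; the girth bound prevents the projection $K\to G$ from identifying two edges of the core or two basis vectors of the $U_v$'s within the relevant radius, so the strict inequality $h_1(\text{core})\ge 1$ survives the pushforward. The remaining routine verification is that this $U$ is compartmentalized and that $\Gamma_{\rm ht}(U)$ genuinely contains the pushed-forward $w$-components with the claimed dimension, after which ${\rm excess}({\cal F},U)\ge h_1(\text{core})\ge 1>0$, completing the proof; the translation to $G[\integers_{\ge 0}]$-support is a harmless bookkeeping device to make the ``pruning from the boundary'' terminate.
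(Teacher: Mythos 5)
Your proposal starts on the same footing as the paper (Lemma~\ref{lm:habelian} to extract a finitely supported, nonzero $w\in H_1^\oplus(\pi^*\mu^*{\cal F})$ in $G'[\integers_{\ge 0}]$), but the counting argument that is supposed to carry the proof has two serious gaps.

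First, the claim that ``a nonzero cycle $w$ lives on a subgraph with $h_1\ge 1$'' is false for sheaves. Unlike the constant sheaf, a general sheaf ${\cal F}$ has restriction maps ${\cal F}(h,e),{\cal F}(t,e)$ with possibly nontrivial kernels, so the support of a nonzero element of $\ker(d)$ can be a forest, or even a single edge (take $d_h,d_t$ both zero on an edge). The inequality $h_1(\text{core})\ge 1$ that drives your strict excess estimate therefore does not hold in general, and your pruning-to-a-core step cannot be made to terminate in a cyclic graph. The actual mechanism for producing strict slack is linear-algebraic, not graph-topological: at each vertex $v$ of rank $\rho$ in the phase construction, the balance relation $\sum_{he=v}d_h\overline{w_e}=\sum_{te=v}d_t\overline{w_e}$ produces exactly one dependency among compartmentally distinct vectors, and this is what the paper tracks via its $A,B,C$ subspaces.

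Second, the claim that the girth bound ``prevents the projection $K\to G$ from identifying two edges of the core or two basis vectors of the $U_v$'s within the relevant radius'' is not correct. The girth hypothesis constrains $G'[\integers]$, not $G$; the girth of $G$ is unrestricted, and the support of $w$ inside $G'[\integers_{\ge 0}]$ can be arbitrarily large, so the covering map $\mu\pi\from G'[\integers_{\ge 0}]\to G$ certainly does identify far-apart edges of the core. What is true (and what the paper uses) is only the local statement that $\mu\pi$ is injective on each single star, which yields compartmental distinctness of $\{\overline{w_e}\}$ as $e$ ranges over one star (Theorem~\ref{th:shindependent_nonzero}). The girth hypothesis enters the paper's proof for a different and more modest purpose: to guarantee, via Lemma~\ref{lm:no_cycles}, that the star unions $U_i$ built during the phases stay acyclic for as long as the process runs (at most $\dim{\cal F}(V)+\dim{\cal F}(E)$ steps, by the increasing-dimension property), which is exactly what makes the capacity bookkeeping of Lemma~\ref{lm:capacity} close the argument. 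Your plan, as written, needs a replacement for the $h_1\ge 1$ step and a correct account of what the girth bound actually buys; in effect it needs to be rebuilt along the lines of the paper's moseying-sequence/phase argument or some genuine alternative to it.
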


In Subsection~\ref{sb:main_theorem}, the last
subsection of this section, we use this lemma to
prove Theorem~\ref{th:shmain}.
The rest of the subsections of this section will be devoted
to proving the lemma; our proof, whose basic idea is fairly simple,
requires a lot of new notation
and definitions.

\subsection{Outline of the Proof of Lemma~\ref{lm:equalifzero}}
\label{sb:outline}

Consider the hypotheses of Lemma~\ref{lm:equalifzero}.
Let $\pi\from G'[\integers]\to G'$ be the universal Abelian cover
of $G'$, and let ${\cal F}'=\mu^*{\cal F}$.  
We assume $h_1^\twist({\cal F}')\ge 1$, and we wish to prove that
${\rm m.e.}({\cal F})\ge 1$.
According to
Lemma~\ref{lm:habelian}, there exists a nonzero
$w\in H_1^\oplus(\pi^*{\cal F}')$ supported in $G'[\integers_{\ge 0}]$;
fix such a $w$.

Let us introduce some notation to explain the idea behind the proof.
For $e\in E_G$, we may identify ${\cal F}(e)$ 
with the subspace of ${\cal F}(E)$ supported in $e$, i.e.,
consisting of vectors
whose ${\cal F}(e')$ component vanishes for $e'\ne e$ (this subspace is
the image of ${\cal F}(e)$ under $u\mapsto {\rm extend}(u,e)$).
If $f\in E_{G'[\integers]}$, then we let $w_f$ be the $f$-component of
$w$ (as done in the proof of
Lemma~\ref{lm:habelian}), so $w_f\in
(\pi^*{\cal F}')(f)$; but $(\pi^*{\cal F}')(f)$ equals
${\cal F}(\mu\pi f)$, and can therefore be 
identified with the subset of ${\cal F}(E)$ supported
in $\mu\pi f$; 
let
$\overline{w_f}$ be the element of ${\cal F}(E)$ corresponding to $w_f$.
For $F\subset E_{G'[\integers]}$, set
$$
C(F) = {\rm span}\{ \overline{w_f}\ |\ f\in F\} \subset {\cal F}(E),
$$
$$
A(F) = {\rm span}\{ d_{{\cal F},h}
\overline{w_f}\ |\ f\in F\} 
= d_{{\cal F},h} C(F)
\subset {\cal F}(V),
$$
and
$$
B(F) = {\rm span}\{ d_{{\cal F},t}
\overline{w_f}\ |\ f\in F\} 
= d_{{\cal F},t} C(F)
\subset {\cal F}(V).
$$
Our idea is to
construct an increasing sequence of subgraphs, 
$U_1\subset\cdots\subset U_r= U$, of $G[\integers_{\ge 0}]$,
and set $F_i=E_{U_i}$, so that $F=F_r$
satisfies
\begin{equation}\label{eq:shABC}
\dim\bigl( A(F)+B(F) \bigr) \le \dim\bigl( C(F) \bigr) -1.
\end{equation}
At this point we have
$$
{\rm excess}\bigl({\cal F},A(F)+B(F)\bigr) \ge 1
$$
and the lemma is established.

The subgraphs $U_1,\ldots,U_r$ will be selected in ``phases.''
In the first phase we choose $U_1,\ldots,U_{k_1}$ for some integer 
$k_1\ge 1$.  We will show that
\begin{equation}\label{eq:shfirst_phase_summ}
\dim\bigl( A(F_{k_1}) \bigr) \le 
\dim\bigl( C(F_{k_1}) \bigr) - k_1.
\end{equation}
This inequality is worse than equation~(\ref{eq:shABC}) because it doesn't
involve $B(F_{k_1})$; however, it is possibly better, in that the 
right-hand-side has a $-k_1$ and we may have $k_1>1$.

The $i$-th phase will select $U_{k_{i-1}+1},U_{k_{i-1}+2},\ldots,U_{k_i}$
for some integer $k_i\ge k_{i-1}$.  (Hence we set $k_0=0$ for
consistency and convenience.)
The third, fifth, and all odd numbered
phases will be called C-phases, for a reason that will become clear
(see equations~(\ref{eq:shfirst_gathering}) and (\ref{eq:shith_C-phase}) 
and nearby discussion); the C-phases
select their $U_i$ in a similar way.
The second phase will be called a B-phase; in this phase we choose
$U_{k_1+1},\ldots,U_{k_2}$ to derive an equality akin to
equation~(\ref{eq:shfirst_phase_summ}) that involves $B(F_{k_1})$
(namely equation~(\ref{eq:shtail_finish}));
unfortunately, the inequality no longer involves $A(F_{k_1})$ and
$C(F_{k_2})$, rather it involves $A(F_{k_2})$ and $C(F_{k_2})$.
The fourth, sixth, and all even numbered phases will be called B-phases,
because of the way in which their $U_i$ are selected
(see equation~(\ref{eq:shith_B-phase})).

After the first two phases, i.e., the first C-phase and first B-phase,
each subsequent phase, alternating between C-phases and B-phases, allows us
to write an inequality akin to equation~(\ref{eq:shABC}) or
(\ref{eq:shfirst_phase_summ}).
% involving $A(F_{k_{2i}})$, $B(F_{k_{2i-1}})$,
% and $C(F_{k_{2i}})$ for some integer $i$.  Because these spaces are increasing
% in $i$, at some point we will have
% $$
% B(F_{k_{2i}})\subset A(F_{k_{2i}})+B(F_{k_{2i-1}}),
% $$
% at which point the $2i$-th phase, which is also the $i$-th B-phase, will be
% empty, i.e., $k_{2i-1}=k_{2i}$; 
The inequality after the $i$-th phase will involve the values of $A,B,C$
at $F_{k_i},F_{k_{i-1}},F_{k_{i-2}}$; roughly speaking, as $i$ gets larger,
the values of $A$, $B$, or $C$ on
$F_{k_i},F_{k_{i-1}},F_{k_{i-2}}$ must ``converge,'' since these are subspaces
of finite dimensional spaces ${\cal F}(V)$ and ${\cal F}(E)$.
At the point of ``convergence''
(more precisely, when either equation~(\ref{eq:shphases_endC}) 
or (\ref{eq:shphases_endB}) hold)
our phases end after completing the $i$-th phase, whereupon
taking $r=k_{i}$ we 
will have that $F=F_r$
satisfies equation~(\ref{eq:shABC}) and we are done.

Now we give the details.  The construction of the $U_i$ and the
inequalities we prove involve definitions of what we call
``stars'' and ``star union data,'' given in Subsection~\ref{sb:star_union}.
We shall describe the first and second phase, respectively, in detail in
Subsections~\ref{sb:first_phase} and \ref{sb:second_phase}, respectively.
In Subsection~\ref{sb:mosey} we state and prove a number of facts used in
Subsections~\ref{sb:first_phase} and \ref{sb:second_phase} in greater
generality; we hope that this greater generality will clarify the proofs.
In Subsection~\ref{sb:finish_lemma} we finish the proof of
Lemma~\ref{lm:equalifzero}.
As mentioned before, in Subsection~\ref{sb:main_theorem}, we use
Lemma~\ref{lm:equalifzero} to prove Theorem~\ref{th:shmain}.

\subsection{Star Union Data}
\label{sb:star_union}

We now fix some graph theoretic notions to describe the $U_i$, $F_i$, and
related concepts.
For a vertex, $u$, of $G'[\integers_{\ge 0}]$, let the {\em star at
$v$}, denoted ${\rm Star}(u)$,
be the subgraph of $G'[\integers_{\ge 0}]$ consisting of those
edges of $G'[\integers_{\ge 0}]$ whose head is $u$ and of those vertices
that are the endpoints of these edges
(the star at $u$ is easily seen to be a tree, since
$G'[\integers_{\ge 0}]$ has no self-loops or multiple edges).

\begin{definition}
\label{de:star_union}
For any sequence $v=(v_1,\ldots,v_j)$ of vertices of 
$G'[\integers_{\ge 0}]$, we define the {\em star union}
of $v$ to be the union of the stars at $v_1,\ldots,v_j$.  Furthermore, to
any such sequence $v=(v_1,\ldots,v_j)$ we associate the following
data,
$(U_i,F_i,I_i,X_i)_{i=1,\ldots,j}$,
that we call {\em star union data}: for positive integer $i\le j$
we associate 
\begin{enumerate}
\item the {\em $i$-th star union}, $U_i$, which is the star
union of $(v_1,\ldots,v_i)$;
\item the {\em $i$-th edge set}, $F_i=E_{U_i}$;
\item the {\em $i$-th interior edge set}, $I_i\subset F_i$, the set of
edges in $U_i$ whose tail is one of $v_1,\ldots,v_i$;
\item the {\em $i$-th interior vertex set}, $\{v_1,\ldots,v_i\}$;
and
\item the {\em $i$-th exterior vertex set}, $X_i=V_{U_i}\setminus 
\{v_1,\ldots,v_i\}$.
\end{enumerate}
\end{definition}

N.B.:
Throughout the rest of this section, the variables $U_i,F_i,I_i,X_i$
and terminology of Definition~\ref{de:star_union}, will refer to 
star union data with respect to the variable $v=(v_1,\ldots,v_j)$, 
where $j$ will change during the section.
Our goal is to construct $v=(v_1,\ldots,v_r)$ such that $F=F_r$ satisfies
equation~(\ref{eq:shABC}), but to do so will construct $v$ in phases,
and during any part of any phase the variables $U_i,F_i,I_i,X_i$ refer
to the portion of $v$ constructed so far (which limits $i$ to be at most
$j$ for the current value of $j$)

\subsection{The First C-Phase}
\label{sb:first_phase}

We remind the reader that, as explained at the end
of Subsection~\ref{sb:star_union},
$U_i,F_i,I_i,X_i$ are assumed to refer to star union data derived from
a sequence $v=(v_1,v_2,\ldots)$, at any stage of its construction.

Choose any edge, $e_1$,
of minimal rank with $\overline{w_{e_1}}\ne 0$ and let
$v_1=he_1$ and let $\rho={\rm rank}(v_1)$.
We claim 
$$
\dim(A(F_1))+1\le\dim(C(F_1)); 
$$
indeed, if $v_1$ is the tail of
an edge, $f$, then
$\overline{w_f}=0$, by the minimal rank of $e_1$.
Hence
\begin{equation}\label{eq:shnotails}
\sum_{e\ {\rm s.t.}\ he=v_1} d_h \overline{w_e} 
\ =\sum_{e\ {\rm s.t.}\ te=v_1} d_t \overline{w_e} 
\ = 0.
\end{equation}
Consider the set
$$
E^1 = \{ e\ |\ \mbox{$he=v_1$ and $\overline{w_e}\ne 0$} \}
\subset E_{G'[\integers]_{\ge 0}}.
$$
We claim that 
\begin{equation}\label{eq:shcount_cf1}
\dim\bigl( C(F_1) \bigr) = |E^1|;
\end{equation}
indeed
$$
{\cal F}(E)=\bigoplus_{e\in E_G} {\cal F}(e),
$$
and since $\mu\pi\from G'[\integers_{\ge 0}]\to G$ is a covering
map, for each $f\in E_G$ there is at most 
one $e\in E_{G'[\integers_{\ge 0}]}$ such
that $\mu e=f$ and $he=v_1$.  Hence each nonzero $w_e$ with $e\in F_1$
is taken to its own component of ${\cal F}(E)$.  So in the terminology of
Subsection~\ref{sb:compartment}, the nonzero $w_e$ are compartmentally
distinct, and hence independent, by Theorem~\ref{th:shindependent_nonzero}.
Hence equation~(\ref{eq:shcount_cf1}) holds.
By contrast, equation~(\ref{eq:shnotails}) shows that the 
$d_h\overline{w_e}$ with $e\in E^1$ sum to zero and are therefore
dependent; 
hence
$$
\dim\bigl(A(F_1)\bigr) \le |E^1|-1,
$$
and so
\begin{equation}\label{eq:shfirst_case}
\dim\bigl(A(F_1)\bigr) \le \dim\bigl(C(F_1)\bigr)-1.
\end{equation}

Assume that there is an $e_2\in E_{G'[\integers_{\ge 0}]}$
for which ${\rm rank}(e_2)=\rho$ and
$\overline{w_{e_2}}\notin C(F_1)$.  In this case the first phase
continues; we fix any such $e_2$,
set $v_2=he_2$. 
We claim that
\begin{equation}\label{eq:shsecond_step}
\dim\bigl( A(F_2)/A(F_1)\bigr) \le \dim\bigl( C(F_2)/C(F_1) \bigr)-1.
\end{equation}
Indeed, 
let $E^2$ be
the number set of $e$ such that $he=v_2$ and $\overline{w_e}
\notin C(F_1)$ (i.e., $\overline{w_e}$ is non-zero modulo $C(F_1)$).
Note that $C(F_1)$ is compartmentalized.
Also, the $\overline{w_e}$ with $e\in E^2$ are compartmentally
distinct (by the same argument as used for $E^1$, which is true when $e$
ranges over the edges of any star).
Hence,
by
Theorem~\ref{th:shindependent_nonzero},
the $\overline{w_e}$ with $e\in E^2$ are 
linearly independent in ${\cal F}(E)/C(F_1)$.
Hence
$$
\dim\bigl( C(F_2)/C(F_1) \bigr) = |E^2|.
$$
However, as with $E^1$ we have
$$
\sum_{e\in E^2} d_h \overline{w_e} = 0,
$$
since $v_2$ has rank $\rho$ (so $\overline{w_e}=0$ for all $e$ with
$te=v_2$).  But if $he=v_2$ and
$e\notin E^2$, then $\overline{w_e}\in C(F_1)$ and so
$A(\{e\})\in A(F_1)$.  Hence
$$
\sum_{e\in E^2} d_h \overline{w_e} \in A(F_1),
$$
It follows that 
$$
\dim\bigl( A(F_2)/A(F_1) \bigr) \le |E^2|-1.
$$
This establishes equation~(\ref{eq:shsecond_step}), and adding that equation
to equation~(\ref{eq:shfirst_case}) gives
$$
\dim\bigl( A(F_2) \bigr) \le \dim\bigl( C(F_2) \bigr) - 2.
$$

If there is an $e_3$ such that ${\rm rank}(e_3)=\rho$ and
$\overline{w_{e_3}}\notin C(F_2)$, then the first phase continues,
with $v_3=he_3$, and we have
$$
\dim\bigl( A(F_3) \bigr) \le \dim\bigl( C(F_3) \bigr) - 3.
$$
We similarly find $e_i$ and set $v_i=he_i$ 
for each positive integer
$i$ for which there is an $e_i$ of rank $\rho$
with $\overline{w_{e_i}}\notin C(F_{i-1})$; for any such $i$ we have
\begin{equation}\label{eq:shfirst_AC}
\dim\bigl( A(F_i) \bigr) \le \dim\bigl( C(F_i) \bigr) - i.
\end{equation}
But for any such $i$ we have 
\begin{equation}\label{eq:shk_1bound}
\dim\bigl( C(F_i) \bigr) \ge i;
\end{equation}
hence for any such $i$ we have $i\le \dim({\cal F}(E))$, and so
for some $k_1\le\dim({\cal F}(E))$ this process stops at $i=k_1$, i.e.,
we construct $e_1,\ldots,e_{k_1}$ of rank $\rho$ with 
$\overline{w_{e_i}}\notin C(F_{i-1})$ 
for $i=2,\ldots,k_1$, but $C(F_{k_1})$ contains all
$\overline{w_e}$ for ${\rm rank}(e)=\rho$.
This is the end of the first phase.

A concise way to describe the first phase is that 
we choose any minimal
$v_1,\ldots,v_{k_1}$ 
of rank $\rho$ such that
\begin{equation}\label{eq:shfirst_gathering}
\forall \mbox{$e\in E_{G[\integers_{\ge 0}]}$ of rank $\rho$}, \quad
\overline{w_e}\in C(F_{k_1}),
\end{equation}
where minimal means that if we discard any $v_i$ from
$v_1,\ldots,v_{k_1}$ then equation~(\ref{eq:shfirst_gathering}) does not
hold.
We call this a C-phase because the equation~(\ref{eq:shfirst_gathering})
involves a ``C,'' as will all odd numbered phases.
Notice that equation~(\ref{eq:shfirst_AC}) is somewhat similar to
our desired equation~(\ref{eq:shABC}); one big difference is that 
equation~(\ref{eq:shfirst_AC}) makes no mention of $B$, but only
of $A$ and $C$.

\subsection{Moseying Sequences}
\label{sb:mosey}

Before describing the second phase, i.e., the first B-phase,
we wish to organize the inequalities we will need into a number of
lemmas.  Furthermore, we will usually state these lemmas in a slightly
more general context; this will help illustrate exactly what assumptions
are being used.  

We consider the setup and notation of the first two paragraphs of
Subsection~\ref{sb:outline}, which fixes ${\cal F}$, $\mu\from G'\to G$,
$\pi\from G'[\integers_{\ge 0}]\to G'$,
$w\in H_1^\oplus(\pi^*\mu^*{\cal F})$, and defines 
$\overline{w_f}$ for any $f\in E_{G'[\integers_{\ge 0}]}$, and defines
$A(F),B(F),C(F)$ for any $F\subset E_{G'[\integers_{\ge 0}]}$.

We will work with a sequence of vertices,
$v=(v_1,\ldots,v_s)$, of $G'[\integers_{\ge 0}]$, but we will
not assume the $v_i$ are constructed by our phases.  Instead, we will
be careful to write down our assumptions on the $v_i$ in a way that will
make clear which of their properties is used when and how.
Our central definition in this general context will
be that of a ``moseying sequence.''  

\begin{definition} 
By a {\em moseying sequence of length
$s$ for $G'$}
we mean a sequence 
$v=(v_1,\ldots,v_s)$ of distinct vertices of $G'[\integers]$ 
for which
${\rm rank}(v_{i+1})-{\rm rank}(v_i)$ is $0$ or $1$ for each $i$; if this
difference is $1$ we say that {\em $v$ jumps at $i$}.
We define star union data, $U_i,F_i,I_i,X_i$
as in Subsection~\ref{sb:star_union}.
For ease of notation we define $U_0,F_0,I_0,X_0$ to be
empty (i.e., $U_0$ is the empty graph,
$F_0,I_0,X_0$ the empty set).
\end{definition}
Moseying sequences are our basic object of study.

\begin{definition}
A moseying sequence, $v$, of length $s$ is {\em of increasing dimension} if
the integers
$$
n_i = \dim\bigl(C(F_i)\bigr) + \dim\bigl( B(I_i) \bigr)
$$
satisfy
$$
0=n_0<n_1<n_2<\cdots<n_s.
$$
\end{definition}

\begin{lemma}\label{lm:no_cycles} 
Let $v$ be a moseying sequence of length $s$
of increasing dimension for a digraph,
$G'$.  
Then
$$
s\le \dim\bigl( {\cal F}(E) \bigr) + \dim\bigl( {\cal F}(V) \bigr).
$$
Furthermore,
for any $i\le s$, $U_i$ has no cycles provided
that the girth of $G'[\integers]$ is at least $2i+1$.
\end{lemma}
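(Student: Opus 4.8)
The plan is to prove the two assertions separately, each by a short counting argument. For the first inequality, I would note that $C(F_i)$ is by definition a subspace of ${\cal F}(E)$ and $B(I_i) = d_{{\cal F},t} C(I_i)$ a subspace of ${\cal F}(V)$, so that $n_i = \dim(C(F_i)) + \dim(B(I_i)) \le \dim({\cal F}(E)) + \dim({\cal F}(V))$ for every $i$, and in particular for $i = s$. On the other hand $F_0 = I_0 = \emptyset$ forces $n_0 = 0$, and the increasing-dimension hypothesis says that $0 = n_0 < n_1 < \cdots < n_s$ is a strictly increasing chain of integers, whence $n_s \ge s$. Chaining these two facts gives $s \le n_s \le \dim({\cal F}(E)) + \dim({\cal F}(V))$, which is the first claim.

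For the acyclicity of $U_i$ I would fix $i \le s$ and argue by contradiction. The structural fact to exploit is that $U_i$ is the union of the $i$ stars ${\rm Star}(v_1), \ldots, {\rm Star}(v_i)$, and that, by the definition of a star, every edge of ${\rm Star}(v_j)$ has head $v_j$; hence every edge of $U_i$ has its head in the set $\{v_1, \ldots, v_i\}$, a set of size at most $i$. Suppose $U_i$ contained a simple cycle $C$ of length $\ell$. Then $C$ has $\ell$ edges and $\ell$ vertices, each vertex of $C$ is incident to exactly two edges of $C$, and the head of each edge of $C$ lies in $\{v_1, \ldots, v_i\}$. Sending each edge of $C$ to its head therefore defines a map from a set of size $\ell$ into $V_C \cap \{v_1, \ldots, v_i\}$ all of whose fibres have size at most two, so $\ell \le 2\,|V_C \cap \{v_1, \ldots, v_i\}| \le 2i$. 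But $U_i$ is a subgraph of $G'[\integers_{\ge 0}]$, hence of $G'[\integers]$, so $C$ is a cycle of $G'[\integers]$ and $\ell$ is at least the girth of $G'[\integers]$, which by hypothesis is at least $2i + 1$; this contradicts $\ell \le 2i$. Therefore $U_i$ has no cycles.

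I do not anticipate a real obstacle here; the only point needing care is the edge-count in the cycle argument, which uses both that every edge of $U_i$ points into $\{v_1, \ldots, v_i\}$ and that a simple cycle, being $2$-regular, contains at most two edges with any given head. (Should one prefer not to invoke simplicity of the cycle directly, one could take $C$ to be a shortest non-backtracking closed walk, which is automatically a simple cycle; but this is unnecessary, as girth already refers to shortest cycles.) I would also remark that the moseying property and the increasing-dimension hypothesis enter only in the first inequality: the acyclicity of $U_i$ uses nothing about the sequence $v$ beyond the fact that $U_i$ is a union of $i$ stars.
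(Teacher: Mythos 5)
Your proof is correct and takes essentially the same approach as the paper's: the first inequality follows from the same containment-plus-strict-monotonicity counting (which the paper dismisses as ``clear''), and the cycle bound rests on the same observation that every edge of $U_i$ has its head among $\{v_1,\ldots,v_i\}$, repackaged as a two-to-one edge-to-head count rather than the paper's equivalent remark that no two consecutive cycle vertices can both lie outside $\{v_1,\ldots,v_i\}$.
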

\begin{proof}
The first statement is clear.  For the second statement,
assume, to the contrary, that $U_i$ has a cycle.
$U_i$ is the union of stars, which are trees of diameter two.  If $c$ is
a cycle in $U_i$ of minimal length, then it traverses each vertex at most
once.  But every vertex of $c$ not appearing in $v$ must be a leaf 
(i.e., tail of an edge) of a
star, and hence followed by (and preceded by) a vertex in $v$.  Hence
the length of $c$ is at most twice $i$.  Hence $G'[\integers]$ has a cycle
of length at most $2i$, contradicting the hypotheses of the lemma.
\end{proof}

The inequality in
equation~(\ref{eq:shfirst_AC}), derived after the first C-phase, will
be built up along further phases to eventually give
equation~(\ref{eq:shABC}).  However, to express these later phase inequalities,
we shall need some graph theoretic notions, such as the ``overdegree''
and ``capacity'' that we now define.

\begin{definition} 
Let $v$ be a moseying sequence of length $s$ for $G'$.
For any $u\in V_{G'[\integers]}$ we define the 
{\rm stable outdegree
of $u$}, denoted ${\rm sod}(u)$, to be the outdegree of $u$ in $U_s$.
(If $v$ is not a vertex of $U_s$, we define its outdegree in $U_s$ to be
zero.)
\end{definition}
Note that the outdegree of $u$ in $U_{j-1}$, viewed as a function of $j$,
does not change as soon as
${\rm rank}(v_j)\ge {\rm rank}(u)$; indeed, the edges that affect the
outdegree of $u$ are the edges of rank equal to ${\rm rank}(u)-1$,
and such edges come from stars about vertices of ${\rm rank}(u)-1$.
Hence, for any $j$ with $1\le j\le s$, we have
\begin{equation}\label{eq:shsod}
{\rm rank}(v_j)\ge {\rm rank}(u) \implies
{\rm sod}(u) = {\rm outdeg}(U_{j-1},u),
\end{equation}
where ${\rm outdeg}(G,w)$ denotes the outdegree of $w$ in $G$.
In particular,
$$
{\rm sod}(v_j)={\rm outdeg}(U_{j-1},v_j)
$$
for all $j=1,\ldots,s$.

\begin{definition} 
Let $v$ be a moseying sequence of length $s$ for $G'$.
By the overdegree of $U_i$, for an integer, $i$ with $1\le i\le s$,
we mean
$$
{\rm Over}(U_i) = \sum_{v\in X_i}
\bigl( {\rm outdeg}(U_i,v) -1 \bigr),
% \bigl( {\rm sod}(v) \bigr) -1,
$$
\end{definition}
Notice that for any $i$, the overdegree of $U_i$ is non-negative,
since each exterior vertex of $U_i$ is the tail of some edge in $U_i$,
and hence has outdegree at least one.

\begin{definition}
Let $v$ be a moseying sequence of length $s$ for $G'$.
For non-negative integer, $i\le s$, we define the {\em capacity of $U_i$}
to be
$$
{\rm Cap}(U_i) = h_0(U_i)+{\rm Over}(U_i).
$$
\end{definition}
Note that for $i\ge 1$, 
$h_0(U_i)\ge 1$, since $U_i$ is nonempty, and ${\rm Over}(U_i)\ge 0$;
hence for $i\ge 1$ we have
${\rm Cap}(U_i)\ge 1$.
Our
fundamental inequalities will use the capacity.

\begin{lemma}\label{lm:capacity}
Let $v$ be a moseying sequence of length $s$ for $G'$.
% Assume that for some $j>i$ we have that $i+1,\ldots,j-1$ are not
% jumps for $v$ (n.b. $v$ can jump at $i$).  
Assume that $U_j$ has no cycles for some $j\le s$.  Then for any
non-negative integers $i\le j$ we have
$$
{\rm Cap}(U_j) = {\rm Cap}(U_i) - \sum_{m=i+1}^j
% \bigl( {\rm outdeg}(U_i,v_m) - 1 \bigr)
\bigl( {\rm sod}(v_m) - 1 \bigr)
$$
\end{lemma}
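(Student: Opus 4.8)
The plan is to establish the closed-form identity
$$
{\rm Cap}(U_m) = m - \sum_{\ell=1}^{m} {\rm sod}(v_\ell)
\qquad\mbox{for all $m$ with $0\le m\le j$,}
$$
and then obtain the lemma by subtracting the $m=i$ instance of this identity from the $m=j$ instance:
$$
{\rm Cap}(U_j)-{\rm Cap}(U_i)
= (j-i)-\sum_{\ell=i+1}^{j}{\rm sod}(v_\ell)
= -\sum_{m=i+1}^{j}\bigl({\rm sod}(v_m)-1\bigr).
$$
(For $m=0$ the identity reads ${\rm Cap}(U_0)=0$, which holds since $U_0$ is the empty graph; for $m\ge 1$ it is proved as follows.)

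First I would do some elementary bookkeeping on $U_m$. The vertices $v_1,\dots,v_m$ are distinct and each lies in its own star, so $\{v_1,\dots,v_m\}\subset V_{U_m}$ and $|X_m|=|V_{U_m}|-m$. Since every edge of $U_m$ has exactly one tail, summing outdegrees over all vertices of $U_m$ counts each edge once; splitting that sum into the interior vertices $v_1,\dots,v_m$ and the exterior vertices $X_m$ gives
$$
\sum_{v\in X_m}{\rm outdeg}(U_m,v)
= |E_{U_m}| - \sum_{\ell=1}^{m}{\rm outdeg}(U_m,v_\ell),
$$
and hence, using $|X_m|=|V_{U_m}|-m$,
$$
{\rm Over}(U_m)
= |E_{U_m}| - \sum_{\ell=1}^{m}{\rm outdeg}(U_m,v_\ell) - |V_{U_m}| + m.
$$
Next, because $U_m$ is a subgraph of $U_j$ and $U_j$ has no cycles, $U_m$ is a forest, so $h_0(U_m)=|V_{U_m}|-|E_{U_m}|$. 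Adding this to the displayed expression for ${\rm Over}(U_m)$, the $|V_{U_m}|$ and $|E_{U_m}|$ terms cancel, leaving
$$
{\rm Cap}(U_m) = m - \sum_{\ell=1}^{m}{\rm outdeg}(U_m,v_\ell).
$$
Finally I would replace ${\rm outdeg}(U_m,v_\ell)$ by ${\rm sod}(v_\ell)$ for each $\ell\le m$. By the special case of equation~(\ref{eq:shsod}) noted immediately after it, ${\rm sod}(v_\ell)={\rm outdeg}(U_{\ell-1},v_\ell)$, while ${\rm sod}(v_\ell)={\rm outdeg}(U_s,v_\ell)$ by definition. Since $U_{\ell-1}\subset U_m\subset U_s$ and outdegree is monotone under subgraph inclusion, ${\rm outdeg}(U_m,v_\ell)$ is squeezed between two equal quantities, so ${\rm outdeg}(U_m,v_\ell)={\rm sod}(v_\ell)$. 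This proves the identity, and the lemma follows.

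Essentially all of this is routine counting; the one step carrying content is the last, the identification ${\rm outdeg}(U_m,v_\ell)={\rm sod}(v_\ell)$. It uses nothing about acyclicity, only the rank structure of moseying sequences (non-decreasing ranks, and an edge of rank $r$ joining a vertex of rank $r$ to one of rank $r+1$), which is exactly what equation~(\ref{eq:shsod}) packages; once that is invoked the argument is immediate. As an alternative one could instead prove the per-step relation ${\rm Cap}(U_m)={\rm Cap}(U_{m-1})-({\rm sod}(v_m)-1)$ directly (adding ${\rm Star}(v_m)$ contributes exactly the in-edges of $v_m$, all of which are new to $U_{m-1}$ since no edge of $U_{m-1}$ has head $v_m$) and then telescope; but the Euler-characteristic computation above avoids keeping track of how many tails of the new star already belonged to $U_{m-1}$, and seems cleaner.
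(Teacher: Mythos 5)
Your proof is correct and takes a genuinely different route from the paper's. The paper proves the case $j=i+1$ directly and then telescopes by induction on $j-i$: it counts how many vertices of ${\rm Star}(v_{i+1})$ already lie in $U_i$ (say $p_0$ of rank $\rho$ and $p_1$ of rank $\rho+1$), deduces $h_0(U_{i+1})=h_0(U_i)+1-p$ and ${\rm Over}(U_{i+1})={\rm Over}(U_i)+p-{\rm sod}(v_{i+1})$ by inclusion-exclusion, and adds. You instead prove the closed formula ${\rm Cap}(U_m)=m-\sum_{\ell\le m}{\rm sod}(v_\ell)$ for each $m\le j$ and subtract, obtaining that formula from a single Euler-characteristic computation on the forest $U_m$ (sum outdegrees to count edges, split interior from exterior, cancel against $h_0(U_m)=|V_{U_m}|-|E_{U_m}|$), together with the sandwich ${\rm outdeg}(U_{\ell-1},v_\ell)\le{\rm outdeg}(U_m,v_\ell)\le{\rm outdeg}(U_s,v_\ell)$ whose two ends both equal ${\rm sod}(v_\ell)$. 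Your route dispenses with the $p_0,p_1$ bookkeeping entirely, localizes the use of acyclicity to one line, and renders the $i=0$ case (which the paper invokes with a parenthetical reassurance in the first B-phase) visibly correct from the formula itself; what the paper's per-step argument buys in exchange is an explicit local description of how adjoining each new star changes ${\rm Cap}$, which sits naturally alongside the surrounding per-phase combinatorics.
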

\begin{proof}
It suffices
to prove the lemma for $j=i+1$, for then the general lemma follows
by induction on $j-i$.

So assume $j=i+1$, and set $\rho={\rm rank}(v_{i+1})$.  
Let $p_0$ and $p_1$, respectively,
be the number of vertices of rank $\rho$ and $\rho+1$, respectively,
in which the
star of $v_{i+1}$ intersects $U_i$; so $p_0$ is $1$ or $0$ according
to whether or not $v_{i+1}\in V_{U_i}$, and $p_1$ is the number of tails
of edges in ${\rm Star}(v_{i+1})$ that lie in $U_i$; let $p=p_0+p_1$. 
First, note that since $U_{i+1}=U_i\cup{\rm Star}(v_{i+1})$, we have
$$
\chi(U_{i+1}) = \chi(U_i)+\chi\bigl({\rm Star}(v_{i+1})\bigr) - 
\chi\bigl(U_i\cap
{\rm Star}(v_{i+1})\bigr);
$$
since $U_i$, $U_{i+1}$, and any star have $h_1=0$, in the above equation we
may replace each $\chi$ with $h_0$, and conclude that
$$
h_0(U_{i+1}) = h_0(U_i)+h_0\bigl({\rm Star}(v_{i+1})\bigr) - 
h_0\bigl(U_i\cap
{\rm Star}(v_{i+1})\bigr);
$$
since $U_i\cap {\rm Star}(v_{i+1})$ contains no edges, it has
$p$ connected components ($p$ isolated vertices), and hence
\begin{equation}\label{eq:shph0}
h_0(U_{i+1})=h_0(U_i)+1-p.
\end{equation}
Second, note that each of the $p_1$ tails of
edges of the star adds one to its degree in $U_{i+1}$ over that of
$U_i$; the remaining tails of star edges have degree one in $U_{i+1}$.
This means that $U_{i+1}$ gains $p_1$ over $U_i$
in the overdegree contribution from vertices of rank $\rho+1$.
Third, note that $p_0=1$ iff $v_{i+1}\in V_{U_i}$ iff $v_{i+1}$ contributes
$$
{\rm outdeg}(U_i,v_{i+1})-1={\rm sod}(v_{i+1})-1
$$
to the overdegree 
of $U_i$; if so, this contribution
is lost in $U_{i+1}$, since $v_{i+1}$ becomes an interior vertex.
Hence if $p_0=0$ we have
$$
{\rm Over}(U_{i+1})={\rm Over}(U_i)+p_1
$$
and if $p_0=1$ we have
$$
{\rm Over}(U_{i+1})={\rm Over}(U_i)+p_1-({\rm sod}(v_{i+1})-1);
$$
in both cases we may write
$$
{\rm Over}(U_{i+1})={\rm Over}(U_i)+p-{\rm sod}(v_{i+1}).
$$
Combining this with equation~(\ref{eq:shph0}) yields
$$
{\rm Cap}(U_{i+1})={\rm Cap}(U_i)+1-{\rm sod}(v_{i+1}),
$$
which proves the lemma for $j=i+1$ and therefore, as explained earlier,
for all $j>i$.
\end{proof}

\begin{lemma} 
\label{lm:C-phase_est}
Let $v$ be a moseying sequence of length $s$ for $G'$.
Assume that $v$ jumps at an integer $i<s$, but not at
$i+1,i+2,\ldots,k$ for some integer $k\le s$.
(We adopt the convention that $v$ jumps at $i$ if $i=0$.)
Assume that for each edge, $e$, of $G'[\integers]$ of rank at most
${\rm rank}(v_i)$ we have $\overline{w_e}\in C(F_i)$.
Then for any $j$ with $i+1\le j\le k$ we have
\begin{equation}\label{eq:shC-phase_ineq}
\dim\Bigl( A(F_k)/\bigl( A(F_j)+B(F_i) \bigr) \Bigr) 
\le
\dim\bigl( C(F_k)/C(F_j) \bigr) - (k-j).
\end{equation}
\end{lemma}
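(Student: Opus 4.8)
The plan is to reduce Lemma~\ref{lm:C-phase_est} to a one-vertex-at-a-time estimate, generalizing the bound in equation~(\ref{eq:shfirst_AC}) from the first C-phase by keeping track of the correction term $B(F_i)$, and then to telescope. The structural fact I would establish first is that each $C(F_{m-1})$ is a \emph{compartmentalized} subspace of ${\cal F}(E)=\bigoplus_{f\in E_G}{\cal F}(f)$: since $\mu\pi\from G'[\integers_{\ge 0}]\to G$ is \'etale, the edges sharing a common head map injectively to $E_G$, so $C(E_{{\rm Star}(u)})={\rm span}\{\overline{w_e}\mid he=u\}$ meets each summand ${\cal F}(f)$ in a coordinate line and is therefore compartmentalized; as a finite sum of such subspaces, $C(F_{m-1})=\sum_{a<m}C(E_{{\rm Star}(v_a)})$ is compartmentalized as well. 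This is precisely what is needed to invoke Theorem~\ref{th:shindependent_nonzero}.

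Next comes the single-step estimate. Fix $m$ with $i+1\le m\le k$ and set $E^m=\{e\mid he=v_m,\ \overline{w_e}\notin C(F_{m-1})\}$, which is nonempty by the minimality built into the C-phase construction. Since the edges with head $v_m$ are compartmentally distinct and $C(F_{m-1})$ is compartmentalized, Theorem~\ref{th:shindependent_nonzero} gives $\dim\bigl(C(F_m)/C(F_{m-1})\bigr)=|E^m|$; moreover, modulo $A(F_{m-1})+B(F_i)$ the space $A(F_m)=d_h C(F_m)$ is spanned by the $|E^m|$ classes of $d_h\overline{w_e}$, $e\in E^m$, because the remaining star edges satisfy $\overline{w_e}\in C(F_{m-1})$ and hence $d_h\overline{w_e}\in A(F_{m-1})$. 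The key relation comes from evaluating $dw=0$ at the vertex $v_m$, namely $\sum_{he=v_m}d_h\overline{w_e}=\sum_{tf=v_m}d_t\overline{w_f}$: every edge $f$ with $tf=v_m$ has rank ${\rm rank}(v_m)-1={\rm rank}(v_i)$, so the hypothesis of the lemma forces $\overline{w_f}\in C(F_i)$ and the right-hand side lies in $B(F_i)$; discarding the terms with $e\notin E^m$ (whose $d_h\overline{w_e}$ lie in $A(F_{m-1})$) then yields $\sum_{e\in E^m}d_h\overline{w_e}\in A(F_{m-1})+B(F_i)$. This is a nontrivial linear relation among the $|E^m|$ classes spanning $\bigl(A(F_m)+B(F_i)\bigr)/\bigl(A(F_{m-1})+B(F_i)\bigr)$, so that quotient has dimension at most $|E^m|-1=\dim\bigl(C(F_m)/C(F_{m-1})\bigr)-1$.

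Finally I would telescope. The inclusions $F_j\subset F_{j+1}\subset\cdots\subset F_k$ induce filtrations $A(F_j)+B(F_i)\subset\cdots\subset A(F_k)+B(F_i)$ and $C(F_j)\subset\cdots\subset C(F_k)$, so summing the single-step bound over $m=j+1,\dots,k$ gives $\dim\bigl((A(F_k)+B(F_i))/(A(F_j)+B(F_i))\bigr)\le\dim\bigl(C(F_k)/C(F_j)\bigr)-(k-j)$, and since $A(F_j)\subset A(F_k)$ the left-hand side is the quantity $\dim\bigl(A(F_k)/(A(F_j)+B(F_i))\bigr)$ appearing in the lemma, completing the proof. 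I expect the main obstacle to be the single-step estimate: one must check that the tail edges of $v_m$ have rank \emph{exactly} ${\rm rank}(v_i)$, so that the hypothesis applies and contributes only the ``$B(F_i)$'' term and nothing coarser, and one must use the \'etale/compartmentalization structure to turn the ``nonzero implies independent'' statement of Theorem~\ref{th:shindependent_nonzero} into the exact count $\dim(C(F_m)/C(F_{m-1}))=|E^m|$ rather than a mere inequality. A secondary but genuinely necessary point, so that the $-1$'s telescope correctly, is that each $E^m$ is nonempty for $i<m\le k$; this is guaranteed by the C-phase construction.
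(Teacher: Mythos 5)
Your proof reproduces the paper's argument in its essentials: establish the single-step bound for $k=j+1$ using compartmentalization of $C(F_{m-1})$ (so that nonzero residues modulo $C(F_{m-1})$ are independent, hence $\dim\bigl(C(F_m)/C(F_{m-1})\bigr)=|E^m|$) together with the harmonicity relation $dw=0$ at $v_m$, which by the hypothesis on rank-$\le\rho$ edges pushes the tail contributions into $B(F_i)$; then telescope along the filtrations. You are right to flag that each $E^m$ must be nonempty for the $-1$'s to accumulate---the paper's proof silently assumes this when it speaks of the $d_h\overline{w_e}$ over $e\in E'$ being ``linearly dependent''---but your attribution of the nonemptiness purely to ``the C-phase construction'' is a bit off: the paper also invokes the lemma with $i=j=k_1$, $k=k_2$, where the intermediate $v_m$ are B-phase vertices, so the nonemptiness there needs a separate (and, in the paper, unarticulated) justification. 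This shared looseness is between your proposal and the original rather than a defect of your reconstruction relative to the paper's own argument.
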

We remark that the assumptions of this lemma are highly restrictive;
to apply this to our phases, $i+1$ (or $v_{i+1}$)
will have to be the beginning
of a B-phase, and $k$ (or $v_k$) will lie either in that B-phase or
the C-phase immediately thereafter.  Also, if $v$ jumps somewhere between
$i+1$ and $k$, then we cannot expect equation~(\ref{eq:shC-phase_ineq}) to
hold unless $B(F_i)$ is replaced with $B(F_{i'})$ for an $i'>i$.
\begin{proof}
For $j=k$ the lemma is immediate.  Let us first establish
the case $k=j+1$; the general case will then easily follow by induction
on $k-j$.  Let $\rho={\rm rank}(v_i)$.

Consider that
$$
\sum_{te=v_{j+1}} d_t\overline{w_e}
= \sum_{he=v_{j+1}} d_h\overline{w_e} .
$$
We have $d_t\overline{w_e}\in B(F_{i})$ for all $e$ with 
$te=v_{j+1}$, and, more generally, for any $e$ of rank $\rho$,
since $\overline{w_e}\in C(F_{i})$.
Hence
\begin{equation}\label{eq:shinbfi}
\sum_{he=v_{j+1}} d_h\overline{w_e} \in B(F_i).
\end{equation}
Now, as before, let
$E'$ be those $e$ with $he=v_{j+1}$ and $\overline{w_e}\notin C(F_j)$,
and let $E''$ be the same but with $\overline{w_e}\in C(F_j)$.
We have
$$
\dim\bigl( C(F_{j+1})/ C(F_{j}) \bigr)  = |E'|,
$$
since $C(F_j)$ is 
a compartmentalized subspace of ${\cal F}(E)$; yet for
$e\in E''$ we have $d_h\overline{w_e}\in A(F_j)$ and hence
$$
\sum_{e\in E''}d_h\overline{w_e} \in A(F_j),
$$
which implies, along with equation~(\ref{eq:shinbfi}) that
$$
\sum_{e\in E'}d_h\overline{w_e} =
\sum_{he=v_{j+1}} d_h\overline{w_e} - 
\sum_{e\in E''}d_h\overline{w_e} \in B(F_i)+A(F_j).
$$
Hence the $d_h\overline{w_e}$ ranging over $e\in E'$ are linearly
depedent modulo $A(F_j)+B(F_i)$, and so
$$
\dim\Bigl( A(F_{j+1})\bigm/ \bigl(A(F_{j})+B(F_{i})\bigr) \Bigr) \le |E'|-1.
$$
Hence 
\begin{equation}\label{eq:shbase_case}
\dim\Bigl( A(F_{j+1})\bigm/ \bigl(A(F_{j})+B(F_{i})\bigr) \Bigr) \le 
% \dim\bigl( A(F_{j+1})/ (A(F_{j})+B(F_{i})) \bigr) \le
\dim\bigl( C(F_{j+1})/ C(F_{j}) \bigr)  - 1.
\end{equation}
This establishes the case $k=j+1$ of the lemma.

The general case of the lemma now follows from the fact that
$F_j$ and hence $C(F_j)$ are increasing in $j$, and hence
$$
\dim\bigl( C(F_{k})/ C(F_{j}) \bigr) = 
\sum_{m=j}^{k-1}\dim\bigl( C(F_{m+1})/ C(F_{m}) \bigr) ;
$$
similarly the spaces $A(F_j)$ modulo $B(F_i)$, i.e., viewed as subspaces
of ${\cal F}(V)/B(F_i)$, are increasing in $j$, and hence
$$
\dim\Bigl( A(F_{k})\bigm/ \bigl(A(F_{j})+B(F_{i})\bigr) \Bigr) =
\sum_{m=j}^{k-1}\dim\Bigl( A(F_{m+1})\bigm/ 
\bigl(A(F_{m})+B(F_{i})\bigr) \Bigr) .
$$
Hence applying equation~(\ref{eq:shbase_case}) with $m$ replacing $j$
and $m$ over the range $j,j+1,\ldots,k-1$ yields the lemma.
\end{proof}

\begin{lemma}
\label{lm:B-phase_est}
Let $v$ be a moseying sequence of length $s$ for $G'$.
Then for non-negative integers $i\le j\le s$ we have
$$
\dim\bigl( B(I_j)/B(I_i) \bigr)
\le
\sum_{m=i+1}^j {\rm sod}(v_m).
$$
\end{lemma}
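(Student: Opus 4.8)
The plan is to telescope. Since the interior edge sets of a moseying sequence are nested, $I_0\subseteq I_1\subseteq\cdots\subseteq I_s$ (with $I_0=\emptyset$ by convention), we get $B(I_i)\subseteq B(I_{i+1})\subseteq\cdots\subseteq B(I_j)$, and therefore
$$
\dim\bigl(B(I_j)/B(I_i)\bigr)=\sum_{m=i+1}^{j}\dim\bigl(B(I_m)/B(I_{m-1})\bigr).
$$
So it suffices to prove the single-step estimate $\dim\bigl(B(I_m)/B(I_{m-1})\bigr)\le{\rm sod}(v_m)$ for every $m$ with $i+1\le m\le j$, and then sum.

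For the single-step estimate, the key point is to pin down $I_m\setminus I_{m-1}$ exactly; I claim it equals $\{\,e\in F_{m-1}\ :\ t(e)=v_m\,\}$. For the forward inclusion, take $e\in I_m\setminus I_{m-1}$. If $e\in F_{m-1}$, then $e\notin I_{m-1}$ forces $t(e)\notin\{v_1,\ldots,v_{m-1}\}$, while $e\in I_m$ gives $t(e)\in\{v_1,\ldots,v_m\}$, so $t(e)=v_m$. If instead $e\notin F_{m-1}$, then $e$ lies in ${\rm Star}(v_m)$, hence has head $v_m$ and its tail is a leaf of that star; by the definition of the head and tail maps on $G'[\integers]$, this leaf has rank ${\rm rank}(v_m)+1$. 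But $v$ is moseying, so ${\rm rank}(v_\ell)\le{\rm rank}(v_m)<{\rm rank}(v_m)+1$ for all $\ell\le m$, so $t(e)$ is none of $v_1,\ldots,v_m$, contradicting $e\in I_m$. Conversely, any $e\in F_{m-1}$ with $t(e)=v_m$ lies in $I_m$ and, since the $v_\ell$ are distinct, not in $I_{m-1}$. The edges of $U_{m-1}$ with tail $v_m$ are precisely the out-edges of $v_m$ in $U_{m-1}$, so $|I_m\setminus I_{m-1}|={\rm outdeg}(U_{m-1},v_m)={\rm sod}(v_m)$ by equation~(\ref{eq:shsod}) (this also reads correctly when $v_m\notin V_{U_{m-1}}$, both sides being $0$). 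Since $B(I_m)=B(I_{m-1})+{\rm span}\{\,d_{{\cal F},t}\overline{w_e}\ :\ e\in I_m\setminus I_{m-1}\,\}$, the quotient $B(I_m)/B(I_{m-1})$ is spanned by at most $|I_m\setminus I_{m-1}|={\rm sod}(v_m)$ vectors, which is the single-step estimate.

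Summing the single-step estimate over $m=i+1,\ldots,j$ and combining with the telescoping identity above yields the lemma. The one delicate point is the identification of $I_m\setminus I_{m-1}$: this is exactly where the moseying hypothesis (ranks increasing by $0$ or $1$) and the distinctness of the vertices $v_\ell$ are used, ruling out that a freshly adjoined star edge could already be an interior edge at the same stage $m$. Everything else is routine monotonicity bookkeeping for the nested subspaces $B(I_m)$.
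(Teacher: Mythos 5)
Your proof is correct and takes essentially the same approach as the paper's. The paper bounds $\dim\bigl(B(I_j)/B(I_i)\bigr)$ directly by $|I_j\setminus I_i|$ and characterizes $I_j\setminus I_i$ all at once, whereas you telescope and identify each $I_m\setminus I_{m-1}$ one step at a time; but the key combinatorial observation---that the edges contributing to the quotient have tails among $v_{i+1},\ldots,v_j$, counted via equation~(\ref{eq:shsod}) by stable outdegrees---is the same.
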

\begin{proof}
Clearly $B(I_j)/B(I_i)$ is at most the size of $I_j\setminus I_i$.
But an edge, $e$, of $G'[\integers]$, lies in
$I_j\setminus I_i$ (viewing $I_i\subset I_j$ as subsets
of $E_{G'[\integers]}$) precisely when $te=v_m$ for some $m$ between
$i+1$ and $j$; furthermore,
for each such $m$, the number of $e$ with $te=v_m$ in $U_j$
is ${\rm outdeg}(U_j,v_m)$.  Hence
$$
\dim\bigl( B(I_j)/B(I_i) \bigr)
\le
\sum_{m=i+1}^j {\rm outdeg}(U_j,v_m).
$$
But ${\rm outdeg}(U_j,v_m)
={\rm sod}(v_m)$, either by definition, if $j=s$ or, if $j<s$,
in view of equation~(\ref{eq:shsod}) and the fact
that ${\rm rank}(v_{j+1})\ge
{\rm rank}(v_m)$.
Hence the lemma follows.
\end{proof}

\subsection{The First B-Phase}
\label{sb:second_phase}

At this point we have finished the first C-phase, having constructed
$v_1,\ldots,v_{k_1}$.
If 
\begin{equation}\label{eq:shend_first_C}
B(F_{k_1})\subset A(F_{k_1}),
\end{equation}
then we are done, for then $F=F_{k_1}$
satisfies equation~(\ref{eq:shABC}), in view of
equation~(\ref{eq:shfirst_AC}) with $i=k_1$.  In this case we end our phases,
and Lemma~\ref{lm:equalifzero} is finished in
this case.
Otherwise $B(F_{k_1})$ is not entirely contained in $A(F_{k_1})$. 
At this point
we enter the second
phase; the rough idea is to generate an inequality similar to
equation~(\ref{eq:shfirst_AC}), but which involves
$B(F_{k_1})$;
this will come at the
expense of making the $A$ and $C$ terms involve $F_{k_2}$ as opposed to
$F_{k_1}$.

We will choose $v_{k_1+1},\ldots,v_{k_2}$ minimal with
\begin{equation}\label{eq:shB-phase}
B(F_{k_1})\subset A(F_{k_1})+B(I_{k_2}),
\end{equation}
which we do as follows: choose any $e\in F_{k_1}$ with
$d_t\overline{w_e}\notin A(F_{k_1})$, and set $v_{k_1+1}=te$;
then $d_t\overline{w_e}\in B(I_{k_1+1})$; 
then choose any $e'\in F_{k_1}$ with
$d_t\overline{w_{e'}}\notin A(I_{k_1})+B(I_{k_1+1})$ and take
$v_{k_1+2}=te'$ if such an $e'$ exists; continuing on in this fashion
we generate a new vertices $v_i$ until we reach a vertex $v_{k_2}$
such that
$$
\forall e\in F_{k_1},\quad d_t\overline{w_e}\in A(F_{k_1})+B(I_{k_2});
$$
such a point is reached, since we have
proper containments
\begin{equation}\label{eq:shcontainments}
A(F_{k_1}) \subset A(F_{k_1})+B(I_{k_1+1}) \subset
A(F_{k_1})+B(I_{k_1+2}) \subset \cdots
\end{equation}
which are subsets of the finite dimensional space ${\cal F}(V)$.
Hence this point is reached with
$$
k_2-k_1 \le \dim\bigl( {\cal F}(V) \bigr),
$$
and since $k_1\le \dim( {\cal F}(V) )$ (see equation~(\ref{eq:shk_1bound})
and the discussion below it), we have
\begin{equation}\label{eq:shk_2bound}
k_2 \le \dim\bigl( {\cal F}(V) \bigr) + \dim\bigl( {\cal F}(E) \bigr).
\end{equation}
The choice of $v_{{k_1}+1},\ldots,v_{k_2}$
comprises the second phase; we call this a (the first)
B-phase because of the prominence of the letter ``B'' in
equation~(\ref{eq:shB-phase}).
Now we combine a number of inequalities from Subsection~\ref{sb:mosey}
to prove a sequel to equation~(\ref{eq:shfirst_AC}).

First, Lemma~\ref{lm:capacity} with $j=v_{2k}$ and $i=0$ (for which the
lemma is still valid) shows that
\begin{equation}\label{eq:shfirst_tail}
{\rm Cap}(U_{k_2})= k_2 - \sum_{m=1}^{k_2} {\rm sod}(v_m)
\end{equation}
(note that $U_{k_2}$ has no cycles, using Lemma~\ref{lm:no_cycles}).
Second, Lemma~\ref{lm:C-phase_est} with 
$k=k_2$ and $i=j=k_1$ yields
\begin{equation}\label{eq:shsecond_tail}
\dim\bigl( A(F_{k_2})/ (A(F_{k_1})+B(F_{k_1})) \bigr) \le
\dim\bigl( C(F_{k_2})/ C(F_{k_1}) \bigr)  - (k_2-k_1).
\end{equation}
Third, 
we have $I_{k_1}=\emptyset$ since $v_1,\ldots,v_{k_1}$ are all of
rank $\rho$.  
Hence
Lemma~\ref{lm:B-phase_est} with $j=k_2$ and $i=k_1$ gives
\begin{equation}\label{eq:shthird_tail}
\dim\bigl( B(I_{k_2}) \bigr) =
\dim\bigl( B(I_{k_2})/B(I_{k_1}) \bigr) \le
\sum_{i=k_1+1}^{k_2} {\rm sod}(v_i).
\end{equation}

We have now established three inequalities in
equations~(\ref{eq:shfirst_tail}), 
(\ref{eq:shsecond_tail}), and
(\ref{eq:shthird_tail}).
We now establish a simple
inequality to describe the end of the first B-phase.

Equations~(\ref{eq:shthird_tail}) and (\ref{eq:shfirst_AC}) with $i=k_1$
imply that
$$
\dim\bigl( A(F_{k_1}) + B(I_{k_2}) \bigr) \le \dim\bigl( C(F_{k_1}) \bigr) 
-k_1
+\sum_{i=k_1+1}^{k_2} {\rm sod}(v_i),
$$
and in view of equation~(\ref{eq:shB-phase}) this implies that
$$
\dim\bigl( A(F_{k_1}) + B(F_{k_1}) \bigr) \le \dim\bigl( C(F_{k_1}) \bigr) 
-k_1
+\sum_{i=k_1+1}^{k_2} {\rm sod}(v_i),
$$

Equation~(\ref{eq:shsecond_tail}) added to this gives
$$
\dim\bigl( A(F_{k_2}) + B(F_{k_1}) \bigr) 
$$
$$
\le \dim\bigl( C(F_{k_2}) \bigr) 
-k_2+
\sum_{i=k_1+1}^{k_2} {\rm sod}(v_i)
$$
$$
= \dim\bigl( C(F_{k_2}) \bigr) 
-k_2+
\sum_{i=1}^{k_2} {\rm sod}(v_i)
$$
(since ${\rm sod}(v_i)=0$ for $i=1,\ldots,k_1$)
$$
= \dim\bigl( C(F_{k_2}) \bigr) - 
\sum_{i=1}^{k_2} \bigl({\rm sod}(v_i) -1 \bigr).
$$
Then using equation~(\ref{eq:shfirst_tail}) we get
\begin{equation}\label{eq:shtail_finish}
\dim\bigl( A(F_{k_2}) + B(F_{k_1}) \bigr) 
\le \dim\bigl( C(F_{k_2}) \bigr) 
-{\rm Cap}(U_{k_2}).
\end{equation}
This equation is all we need to know about the B-phase we have just
finished.

If 
\begin{equation}\label{eq:shend_first_B}
B(F_{k_2})\subset A(F_{k_2}) + B(F_{k_1}),
\end{equation}
then our phases are
over and we easily establish Lemma~\ref{lm:equalifzero}: indeed, we have
$$
\dim\bigl( A(F_{k_2}) + B(F_{k_2}) \bigr)
=\dim\bigl( A(F_{k_2}) + B(F_{k_1}) \bigr)
\le \dim\bigl( C(F_{k_2}) \bigr) -
1
$$
since ${\rm Cap}(U_{k_2})\ge 1$ 
(indeed, $h_0(U_{k_2})\ge 1$ and the overdegree is non-negative).
Hence we have established equation~(\ref{eq:shABC}) with
$F=F_{k_2}$ and we
are done.

Otherwise we undergo a second C-phase, possibly a second B-phase,
possibly a third C-phase, etc.
So for $i=2,3,\ldots$, the $(2i-1)$-th phase, or $i$-th C-phase,
adds vertices $v_{k_{2i-2}+1},\ldots,v_{k_{2i-1}}$ of rank $\rho+i-1$
so that
\begin{equation}\label{eq:shith_C-phase}
\forall \mbox{$e\in E_{G[\integers_{\ge 0}]}$ of rank $\rho+i-1$}, \quad
\overline{w_e}\in C(F_{k_{2i-1}})
\end{equation}
(for $j \ge k_{2i-1}+1$ we
successively add a vertex $v_j$ which is the head of an edge, $e$, of
rank $\rho+i-1$ for
which $\overline{w_e}\notin C(F_j)$, augmenting $j$ until no such edges exist);
the $(2i)$-th phase, or the $i$-th B-phase, adds
$v_{k_{2i-1}+1,\ldots,k_{2i}}$ so that
\begin{equation}\label{eq:shith_B-phase}
B(F_{k_{2i-1}}) \subset A(F_{k_{2i-1}})+B(I_{k_{2i}});
\end{equation}
as in the first B-phase, the $i$-th B-phase selects its vertices
by choosing an $e\in F_{k_{2i-1}}$ for which
$$
d_t\overline{w_e}\notin A(F_{k_{2i-1}})+B(I_{k_{2i-1}}),
$$
setting $v_{k_{2i-1}+1}=te$; then choosing an $e'\in F_{k_{2i-1}}$ for which
$$
d_t\overline{w_{e'}}\notin A(F_{k_{2i-1}})+B(I_{k_{2i-1}+1}),
$$
setting $v_{k_{2i-1}+2}=te'$; then repeating this procedure until
reaching $v_{k_{2i}}$ such that for all $e\in F_{k_{2i-1}}$ we have
$$
d_t\overline{w_e}\in A(F_{k_{2i-1}})+B(I_{k_{2i}}),
$$
whereupon equation~(\ref{eq:shith_B-phase}) holds (minimally, i.e., it would
fail to hold if we omitted any vertex, $v_m$, added during this phase).

The phases end either at the end of a C-phase or B-phase as follows:
the phases end at the $j$-th C-phase for $j\ge 1$ when 
\begin{equation}\label{eq:shphases_endC}
B(F_{k_{2j-1}}) \subset A(F_{k_{2j-1}})+B(F_{k_{2j-3}})
\end{equation}
(with $k_{-1}=0$ and so $F_{k_{-1}}=\emptyset$ for the case $j=1$),
which restricts to equation~(\ref{eq:shend_first_C}) for $j=1$;
the phases end at the $j$-th B-phase for $j\ge 1$ when 
\begin{equation}\label{eq:shphases_endB}
B(F_{k_{2j}}) \subset A(F_{k_{2j}})+B(F_{k_{2j-1}}),
\end{equation}
which restricts to equation~(\ref{eq:shend_first_B}) for $j=1$.
In the next subsection
show that one of these two conditions
eventually holds for some finite $j$, and that
$F=F_r$ with $r={k_{2j}}$ satisfies equation~(\ref{eq:shABC}).
We already have all the main inequalities needed to prove this,
and just need to apply them to the phases beyond the second phase.

\subsection{End of the Proof of Lemma~\ref{lm:equalifzero}}
\label{sb:finish_lemma}

\begin{proof}[Proof of Lemma~\ref{lm:equalifzero}]
Now we 
claim that, for all $i\ge 1$, at the end of the $i$-th C-phase we have
\begin{equation}\label{eq:shtail_induction_odd}
\dim\bigl( A(F_{k_{2i-1}}) + B(F_{k_{2i-3}}) \bigr) 
\le \dim\bigl( C(F_{k_{2i-1}}) \bigr) - 
{\rm Cap}(U_{k_{2i-2}})-(k_{2i-1}-k_{2i-2})
\end{equation}
(for $i=1$ we understand that
$k_{-1}=k_0=0$ and $F_{0}=\emptyset$),
and that, for all $i\ge 1$, at the end of the $i$-th B-phase we have
\begin{equation}\label{eq:shtail_induction_even}
\dim\bigl( A(F_{k_{2i}}) + B(F_{k_{2i-1}}) \bigr) 
\le \dim\bigl( C(F_{k_{2i}}) \bigr) - 
{\rm Cap}(U_{k_{2i}}).
\end{equation}
We shall prove these by induction.
To do so, first note that after $i$ phases we produce a sequence
$v=(v_1,\ldots,v_{k_{i}})$ that is of increasing dimension, since each
$v_m$ of a C-phase increases $\dim(C(F_m))$ by at least one, and
each $v_m$ of a B-phase increases $\dim(B(F_m))$ by at least one.
Hence, according to Lemma~\ref{lm:no_cycles}, 
\begin{equation}\label{eq:shk2i_bound}
k_{i} \le \dim\bigl( {\cal F}(V) \bigr) +
\dim\bigl( {\cal F}(E) \bigr) ,
\end{equation}
and
$U_{k_{i}}$ contains
no cycles, using the hypotheses of Lemma~\ref{lm:equalifzero}.

Let us also note that the phases eventually end.  Indeed, if
$k_{2j}=k_{2j-1}$, then according to equation~(\ref{eq:shphases_endB})
we finish.  Hence, we are not done by the $j$-th B-phase we have
$$
k_{2j}> k_{2j-1}\ge k_{2j-2}> k_{2j-3} \ge \cdots\ge k_2>k_1\ge 1,
$$
so $k_{2j}\ge j+1$; in view of equation~(\ref{eq:shk2i_bound}), the total
number of phases is less than
$$
2\Bigl( \dim\bigl( {\cal F}(V) \bigr) +
\dim\bigl( {\cal F}(E) \bigr) \Bigr).
$$

Equation~(\ref{eq:shtail_induction_even}) has been established for
$i=1$ in equation~(\ref{eq:shtail_finish}).  So let us first
show that
equation~(\ref{eq:shtail_induction_even}) implies
equation~(\ref{eq:shtail_induction_odd}) with $i$ replaced by $i+1$.

So assume equation~(\ref{eq:shtail_induction_even}) for some $i\ge 1$.  
By Lemma~\ref{lm:C-phase_est}, since $v$ jumps at $k_{2i-1}$ but does
not jump thereafter until $k_{2i+1}$,  we have
$$
\dim\bigl( A(F_{k_{2i+1}})/ (A(F_{k_{2i}})+B(F_{k_{2i-1}})) \bigr) 
$$
$$
\le
\dim\bigl( C(F_{k_{2i+1}})/ C(F_{k_{2i}}) \bigr)  - (k_{2i+1}-k_{2i}).
$$
Adding this to equation~(\ref{eq:shtail_induction_even}) yields
$$
\dim\bigl( A(F_{k_{2i+1}}) + B(F_{k_{2i-1}}) \bigr) 
\le \dim\bigl( C(F_{k_{2i+1}}) \bigr) - 
{\rm Cap}(U_{k_{2i}})-(k_{2i+1}-k_{2i}).
$$
This is equation~(\ref{eq:shtail_induction_odd}), with $i$ replaced by
$i+1$.

Finally assume equation~(\ref{eq:shtail_induction_odd}) for some value of
$i\ge 1$; we shall
conclude that equation~(\ref{eq:shtail_induction_even}) holds for the same
value of $i$.
By Lemma~\ref{lm:B-phase_est} we have
$$
\dim\bigl( B(I_{k_{2i}})/B(I_{k_{2i-2}}) \bigr)
\le
\sum_{m=k_{2i-2}+1}^{k_{2i}} {\rm sod}(v_m).
$$
This implies that
\begin{equation}\label{eq:shB_quotient}
\dim\Bigl( \bigl( A(F_{k_{2i-1}})+B(I_{k_{2i}}) \bigr) \bigm/
\bigl( A(F_{k_{2i-1}})+B(I_{k_{2i-2}}) \bigr) \Bigr) \le
\sum_{m=k_{2i-2}+1}^{k_{2i}} {\rm sod}(v_m).
\end{equation}

In view of equation~(\ref{eq:shith_B-phase}), and since
$I_{k_{2i}}\subset F_{k_{2i-1}}$, we have
\begin{equation}\label{eq:shfirstAB}
A(F_{k_{2i-1}})+B(F_{k_{2i-1}}) = A(F_{k_{2i-1}})+B(I_{k_{2i}});
\end{equation}
similarly we have
$$
A(F_{k_{2i-3}})+B(F_{k_{2i-3}}) = A(F_{k_{2i-3}})+B(I_{k_{2i-2}})
$$
and therefore
\begin{equation}\label{eq:shsecondAB}
A(F_{k_{2i-1}})+B(F_{k_{2i-3}}) = A(F_{k_{2i-1}})+B(I_{k_{2i-2}})
\end{equation}
Given equations~(\ref{eq:shfirstAB}) and (\ref{eq:shsecondAB}),
equation~(\ref{eq:shB_quotient}) can be rewritten as
\begin{equation}\label{eq:shB_quotient2}
\dim\Bigl( \bigl( A(F_{k_{2i-1}})+B(F_{k_{2i-1}}) \bigr) \bigm/
\bigl( A(F_{k_{2i-1}})+B(F_{k_{2i-3}}) \bigr) \Bigr) \le
\sum_{m=k_{2i-2}+1}^{k_{2i}} {\rm sod}(v_m).
\end{equation}
Adding this to equation~(\ref{eq:shtail_induction_odd}) gives
$$
\dim\bigl( A(F_{k_{2i-1}})+B(F_{k_{2i-1}}) \bigr) 
$$
$$
\le \dim\bigl( C(F_{k_{2i-1}}) \bigr) - 
{\rm Cap}(U_{k_{2i-2}})-(k_{2i-1}-k_{2i-2})
+
\sum_{m=k_{2i-2}+1}^{k_{2i}} {\rm sod}(v_m)
$$
$$
= \dim\bigl( C(F_{k_{2i-1}}) \bigr) -
{\rm Cap}(U_{k_{2i}}) + (k_{2i}-k_{2i-1})
$$
in view of Lemma~\ref{lm:capacity} with $i,j$ respectively set to
$k_{2i-2},k_{2i}$.
Adding this to Lemma~\ref{lm:C-phase_est} with $i,j,k$ respectively replaced
with $k_{2i-2},k_{2i-1},k_{2i}$ yields
$$
\dim\bigl( A(F_{k_{2i}})+B(F_{k_{2i-1}}) \bigr) \le
\dim\bigl( C(F_{k_{2i}}) \bigr) -
{\rm Cap}(U_{k_{2i}}).
$$
This proves equation~(\ref{eq:shtail_induction_even}).

At this point we have established
equations~(\ref{eq:shtail_induction_odd}) and
(\ref{eq:shtail_induction_even}), and the fact that the phases
eventually end.  Now we claim that Lemma~\ref{lm:equalifzero}
easily follows.  Indeed, if our phases end at the $j$-th B-phase, then
$$
B(F_{k_{2j}})\subset A(F_{k_{2j}})+B(F_{k_{2j-1}}),
$$
and so equation~(\ref{eq:shtail_induction_even}) gives
$$
\dim\bigl( A(F_{k_{2j}}) + B(F_{k_{2j}}) \bigr)
\le \dim\bigl( C(F_{k_{2j}}) \bigr) -
{\rm Cap}(U_{k_{2j}}).
$$
Since $U_{k_{2j}}$ is non-empty, its capacity is at least one, and
hence $F=F_r$ with $r=k_{2j}$ satisfies equation~(\ref{eq:shABC}).
Similarly, if our phases end at the $j$-th C-phase, then
$$
B(F_{k_{2j-1}})\subset A(F_{k_{2j-1}})+B(F_{k_{2j-3}}),
$$
and so equation~(\ref{eq:shtail_induction_odd}) gives
$$
\dim\bigl( A(F_{k_{2j-1}}) + B(F_{k_{2j-1}}) \bigr)
\le \dim\bigl( C(F_{k_{2j-1}}) \bigr) - 1,
$$
since 
$$
{\rm Cap}(U_{k_{2j-2}})+(k_{2j-1}-k_{2j-2}) \ge 1
$$
(for $j=1$ this follows since $k_1>0$, and for $j\ge 2$ this follows
since $U_{k_{2j-2}}$ is nonempty).
Hence, similarly,
$F=F_r$ with $r=k_{2j-1}$ satisfies equation~(\ref{eq:shABC}).
\end{proof}

\subsection{Proof of Theorem~\ref{th:shmain}}
\label{sb:main_theorem}

\begin{proof}[Proof (of Theorem~\ref{th:shmain})]
First we will verify Theorem~\ref{th:shmain} in some special cases.

Lemma~\ref{lm:equalifzero} establishes
Theorem~\ref{th:shmain} in the case where
${\rm m.e.}({\cal F})=0$.

\begin{definition} A sheaf, ${\cal E}$, on a digraph, $G$, is
{\em edge supported} if ${\cal E}(V)=0$.
\end{definition}
For an edge supported sheaf, ${\cal E}$, it is immediate that
for any covering map $\phi\from G'\to G$ we have
$$
h_1^\twist(\phi^*{\cal E})={\rm m.e.}(\phi^*{\cal E})
=\deg(\phi)\dim({\cal E}(E)).
$$
This establishes Theorem~\ref{th:shmain} in the case where ${\cal F}$ is
edge supported and $\phi$ is any covering map.

Next we introduce a type of sheaf which will be an important tool.
\begin{definition} A sheaf, ${\cal F}$, on a graph $G$, is said to be
{\em tight} if the maximum excess of ${\cal F}$ occurs at and only
at ${\cal F}(V)$.
\end{definition}
\begin{lemma}
\label{lm:tight1} 
For any sheaf, ${\cal F}$, on a digraph, $G$, there is a tight sheaf,
${\cal F}'$, that is a subsheaf of ${\cal F}$, such that
${\rm m.e.}({\cal F}')={\rm m.e.}({\cal F})$.
Furthermore, let ${\cal F}'\subset{\cal F}$ be sheaves on a graph, $G$,
with $-\chi({\cal F}')={\rm m.e.}({\cal F})$ (which includes the 
situation in the previous sentence); then we have
${\rm m.e.}({\cal F}/{\cal F}')=0$.
\end{lemma}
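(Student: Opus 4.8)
The plan is to deduce both statements from Theorem~\ref{th:me_as_subsheaf}, which identifies ${\rm m.e.}({\cal F})$ with $\max_{{\cal F}'\subset{\cal F}}-\chi({\cal F}')$, together with the supermodularity results of Theorem~\ref{th:shsupermodular}.

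For the first statement, I would take $U$ to be the \emph{minimal} element of ${\rm maximizers}({\cal F})$, which exists by Theorem~\ref{th:shsupermodular} and which is compartmentalized since every subspace achieving the maximum excess is. Define ${\cal F}'\subset{\cal F}$ by declaring ${\cal F}'(V)=U$ (so that ${\cal F}'(v)$ is the $v$-component of $U$ in the decomposition ${\cal F}(V)=\bigoplus_v{\cal F}(v)$), ${\cal F}'(e)=\{w\in{\cal F}(e)\mid d_h w,d_t w\in U\}$ (so that ${\cal F}'(E)=\Gamma_{\rm ht}(U)$), and inheriting all restriction maps from ${\cal F}$; one checks immediately that this is a subsheaf. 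The key observation is that for every subspace $U'\subset U$ one has $\Gamma_{\rm ht}^{{\cal F}'}(U')=\Gamma_{\rm ht}^{{\cal F}}(U')$, because for $w\in{\cal F}(e)$ the constraint ``$w\in{\cal F}'(e)$'' is automatic once $d_h w,d_t w\in U'\subset U$; hence ${\rm excess}({\cal F}',U')={\rm excess}({\cal F},U')$ for all $U'\subset U$. Taking $U'=U$ gives ${\rm excess}({\cal F}',{\cal F}'(V))=-\chi({\cal F}')={\rm m.e.}({\cal F})$, and taking the maximum over $U'\subset U={\cal F}'(V)$ shows ${\rm m.e.}({\cal F}')={\rm m.e.}({\cal F})$, with the maximizers of ${\cal F}'$ being precisely those maximizers of ${\cal F}$ contained in $U$; by minimality of $U$ this forces ${\cal F}'(V)$ to be the only maximizer of ${\cal F}'$, i.e.\ ${\cal F}'$ is tight.

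For the second statement, let ${\cal F}'\subset{\cal F}$ with $-\chi({\cal F}')={\rm m.e.}({\cal F})$, write $\pi\from{\cal F}\to{\cal F}/{\cal F}'$ for the quotient, and let ${\cal G}\subset{\cal F}/{\cal F}'$ be an arbitrary subsheaf. I would form the preimage subsheaf ${\cal H}=\pi^{-1}({\cal G})\subset{\cal F}$, taking $\pi^{-1}$ vertexwise and edgewise; this is a subsheaf because $\pi$ is a morphism of sheaves. Then $0\to{\cal F}'\to{\cal H}\to{\cal G}\to0$ is a short exact sequence, so additivity of the Euler characteristic on values gives $\chi({\cal H})=\chi({\cal F}')+\chi({\cal G})$, whence $-\chi({\cal G})=-\chi({\cal H})-\bigl(-\chi({\cal F}')\bigr)=-\chi({\cal H})-{\rm m.e.}({\cal F})$. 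Since ${\cal H}\subset{\cal F}$, Theorem~\ref{th:me_as_subsheaf} gives $-\chi({\cal H})\le{\rm m.e.}({\cal F})$, so $-\chi({\cal G})\le0$. As ${\rm m.e.}$ of any sheaf is $\ge0$, applying Theorem~\ref{th:me_as_subsheaf} to ${\cal F}/{\cal F}'$ then yields ${\rm m.e.}({\cal F}/{\cal F}')=0$. The only points needing care, and the closest thing to an obstacle, are the routine verifications that the two ad hoc constructions ($\Gamma_{\rm ht}(U)$ as the edge part of ${\cal F}'$, and $\pi^{-1}({\cal G})$) are genuine subsheaves with the inherited restriction maps, and the realization that in part one one must choose the \emph{minimal} maximizer, not an arbitrary one, to get tightness rather than merely that the maximum is attained at ${\cal F}'(V)$.
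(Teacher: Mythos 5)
Your proposal is correct, and I want to flag that the two halves relate to the paper's proof quite differently. For the first sentence you take exactly the paper's route: pass to the minimal excess maximizer $U$, build ${\cal F}'$ with ${\cal F}'(V)=U$ and ${\cal F}'(E)=\Gamma_{\rm ht}(U)$, and observe that the excess functions of ${\cal F}$ and ${\cal F}'$ agree on subspaces of $U$, so minimality of $U$ delivers tightness; you merely make explicit the compatibility of the head/tail neighbourhoods that the paper leaves to the reader.

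For the second sentence your argument genuinely diverges from the paper's and is cleaner. The paper proceeds by contradiction: assuming ${\rm m.e.}({\cal F}/{\cal F}')\ge 1$, it chooses a compartmentalized pair $(U,W)$ witnessing this, manually lifts them to preimages $(U',W')$ in ${\cal F}$, verifies by a diagram chase that $W'\subset\Gamma_{\rm ht}(U')$ and that the preimages remain compartmentalized, and then computes ${\rm excess}({\cal F},U')\ge {\rm m.e.}({\cal F})+1$. You instead package all of this into two clean facts the paper has already established: the subsheaf characterization ${\rm m.e.}({\cal F})=\max_{{\cal F}''\subset{\cal F}}(-\chi({\cal F}''))$ of Theorem~\ref{th:me_as_subsheaf}, and additivity of $\chi$ along the short exact sequence $0\to{\cal F}'\to\pi^{-1}({\cal G})\to{\cal G}\to 0$. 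This replaces the element-level bookkeeping (and in particular the explicit compartmentalization checks) with one invocation of Theorem~\ref{th:me_as_subsheaf} applied to both ${\cal F}$ and ${\cal F}/{\cal F}'$, and turns a proof by contradiction into a direct one. The paper's route is self-contained at the level of $\Gamma_{\rm ht}$ and excesses; yours is shorter and makes the role of the hypothesis $-\chi({\cal F}')={\rm m.e.}({\cal F})$ completely transparent. Both are valid, and since Theorem~\ref{th:me_as_subsheaf} is proved earlier in the paper there is no circularity.
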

\begin{proof}
Let ${\cal F}$ be a sheaf on $G$, and let $U\subset{\cal F}(V)$ be the
minimum subspace of ${\cal F}(V)$  on which the maximum excess occurs.
Let ${\cal F}'$ be the subsheaf of ${\cal F}$ such
that ${\cal F}'(V)=U$ and ${\cal F}'(E)=\Gamma_{\rm ht}(U)$.  
We have that ${\rm m.e.}({\cal F}')={\rm m.e.}({\cal F})$ and the maximum
excess of ${\cal F}'$ occurs at and only at ${\cal F}'(V)$ (by the minimality
of $U$).  This establishes the first sentence in the lemma.
In particular 
$$
{\rm m.e.}({\cal F}) ={\rm m.e.}({\cal F}') = -\chi({\cal F}') .
$$

For the second sentence of the lemma, we claim that
${\cal F}/{\cal F}'$ has maximum excess zero, for if not then we have
compartmentalized
$$
U \subset {\cal F}(V)/{\cal F}'(V), \quad
W \subset {\cal F}(E)/{\cal F}'(E)
$$
with $d_h W,d_t W\subset U$ and $\dim(U)<\dim(W)$.  So let
$U'$ be the inverse image of $U$ in ${\cal F}(V)$ (under the map
${\cal F}(V)\to {\cal F}(V)/{\cal F}'(V)$), and $W'$ that of $W$
in ${\cal F}(E)$.  
We have that $U'$ and $W'$ are compartmentalized.
If $w'\in W'$, we claim that 
$d_{h,{\cal F}}w'$ must lie in
$U'$; indeed, $[w']$, the class of $w'$ in
${\cal F}(V)/{\cal F}'(V)$, is taken to $U$ via $d_{h,{\cal F}/{\cal F}'}$,
and we have a commutative diagram
% \begin{diagram}[nohug,height=2em,width=5em,tight]
\begin{diagram}[nohug,height=2em,width=3.5em]
{\cal F}(E) & \rTo & {\cal F}(E)/{\cal F}'(E) \\
\dTo && \dTo \\
{\cal F}(V) & \rTo & {\cal F}(V)/{\cal F}'(V) \\
\end{diagram}
and particular elements
\begin{diagram}[nohug,height=2em,width=5em]
w'\in W' & \rMapsto & [w']\in W \\
\dMapsto && \dMapsto \\
d_{h,{\cal F}}w' &\rMapsto 
& [d_{h,{\cal F}}w']=d_{h,{\cal F}/{\cal F}'}[w']\in U  \\
\end{diagram}
Hence $[d_{h,{\cal F}}w']$, the class of $d_{h,{\cal F}}w'$ in
${\cal F}(V)/{\cal F}'(V)$, lies in $U$ and hence
$d_{h,{\cal F}}w'$ lies in $U'$.
Similarly $d_{t,{\cal F}}w'$ lies in $U'$, and hence
$W'\subset \Gamma_{\rm ht}(U')$.
Since $U',W'$ are compartmentalized, it follows that
\begin{eqnarray*}
{\rm excess}({\cal F},U') & \ge &\dim(W')-\dim(U') \\
&=&\dim(W)+\dim({\cal F}'(E))
-\dim(U)-\dim({\cal F}'(V)).
\end{eqnarray*}
Since $\dim({\cal F}'(E))-\dim({\cal F}'(V))=-\chi({\cal F}')
={\rm m.e.}({\cal F}')={\rm m.e.}({\cal F})$, the above displayed equation
implies that
$$
{\rm excess}({\cal F},U') \ge 
\dim(W)-\dim(U)+{\rm m.e.}({\cal F})
\ge 1+{\rm m.e.}({\cal F}) 
$$
which is a contradiction.
\end{proof}

Returning to the proof of Theorem~\ref{th:shmain}, we claim that it suffices
to establish it for tight sheaves; indeed, consider an arbitrary sheaf,
${\cal F}$, and apply Lemma~\ref{lm:tight1} to obtain a sheaf
tight sheaf, ${\cal F}'$, as described in the lemma.
For any map $\phi\from G'\to G$, we have an exact sequence
$$
0\to \phi^*{\cal F}'\to \phi^*{\cal F}\to 
\phi^*({\cal F}/{\cal F}') \to 0.
$$
We have that ${\cal F}/{\cal F}'$ has maximum excess zero, and hence
so does $\phi^*({\cal F}/{\cal F}')$; by Lemma~\ref{lm:equalifzero},
$$
h_1^\twist\bigl(\phi^*({\cal F}/{\cal F}')\bigr) = 0
$$
provided that $\phi$ is a covering map with the Abelian girth of $G'$ at 
least
\begin{eqnarray*}
&& 2 \Bigl( \dim\bigl(({\cal F}/{\cal F}')(V)\bigr) +
\dim\bigl(({\cal F}/{\cal F}')(E)\bigr) \Bigr) + 1
\\
& \le& 2 \Bigl( \dim\bigl({\cal F}(V)\bigr) +
% \dim\bigl({\cal F}(E)\bigr) -{\rm m.e.}({\cal F}) \Bigr) + 1.
\dim\bigl({\cal F}(E)\bigr) \Bigr) + 1.
\end{eqnarray*}
In this case 
we get in the long exact sequence beginning
$$
0\to H_1^\twist( \phi^*{\cal F}' ) \to H_1^\twist( \phi^*{\cal F}) 
\to H_1^\twist(\phi^*({\cal F}/{\cal F}'))\to \cdots
$$
amounts to
$$
0\to H_1^\twist( \phi^*{\cal F}' ) \to H_1^\twist( \phi^*{\cal F}) \to 0,
$$
or
$$
H_1^\twist( \phi^*{\cal F}' ) \isom H_1^\twist( \phi^*{\cal F}).
$$
Hence to prove Theorem~\ref{th:shmain} for all ${\cal F}$
of a given maximum excess, it suffices to prove it for those of the
${\cal F}$ that are tight.

We finish the proof by induction on ${\rm m.e.}({\cal F})$
via a second exact sequence.  

\begin{lemma}
\label{lm:tight2}
Let ${\cal F}$ be a tight sheaf on a graph, $G$, of
maximum excess at least one.  Then there exists a subsheaf, ${\cal F}''$,
of ${\cal F}$, such that 
$$
{\rm m.e.}({\cal F}'')=-\chi({\cal F}'')={\rm m.e.}({\cal F})-1,
$$
and such that ${\cal F}/{\cal F}''$ is edge supported and
$\dim(({\cal F}/{\cal F}'')(E))=1$.
\end{lemma}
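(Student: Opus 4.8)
The plan is to take ${\cal F}''$ to be ${\cal F}$ with the value at a single edge cut down by a hyperplane. Since ${\cal F}$ is tight, ${\rm m.e.}({\cal F})={\rm excess}({\cal F},{\cal F}(V))=-\chi({\cal F})=\dim({\cal F}(E))-\dim({\cal F}(V))\ge 1$, so $\dim({\cal F}(E))\ge 1$ and there is an edge $e_0$ with ${\cal F}(e_0)\ne 0$. Choose any codimension-one subspace $H\subset{\cal F}(e_0)$ and let ${\cal F}''$ agree with ${\cal F}$ at every vertex and at every edge $e\ne e_0$, with ${\cal F}''(e_0)=H$, and with the restriction maps inherited from ${\cal F}$; this is automatically a subsheaf, since the restriction maps only map \emph{out} of the edge values, so no condition on $H$ is needed. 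Then ${\cal F}/{\cal F}''$ is zero at every vertex and at every edge $\ne e_0$ and one-dimensional at $e_0$, hence it is edge supported with $\dim\bigl(({\cal F}/{\cal F}'')(E)\bigr)=1$; and $\chi({\cal F}'')=\chi({\cal F})+1$, so $-\chi({\cal F}'')=-\chi({\cal F})-1={\rm m.e.}({\cal F})-1$ by tightness.

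It remains to check ${\rm m.e.}({\cal F}'')={\rm m.e.}({\cal F})-1$. The inequality ${\rm m.e.}({\cal F}'')\ge-\chi({\cal F}'')={\rm m.e.}({\cal F})-1$ holds for every sheaf, being the value of the excess at the full vertex space. For the reverse I would argue by contradiction: suppose the maximum excess of ${\cal F}''$ is attained at some $U\subset{\cal F}''(V)={\cal F}(V)$ with ${\rm excess}({\cal F}'',U)={\rm m.e.}({\cal F})$. Because ${\cal F}''(e)\subseteq{\cal F}(e)$ for every $e$ and the restriction maps agree, $\Gamma_{\rm ht}({\cal F}'',U)\subseteq\Gamma_{\rm ht}({\cal F},U)$, hence ${\rm excess}({\cal F},U)\ge{\rm excess}({\cal F}'',U)={\rm m.e.}({\cal F})$; with the definition of maximum excess this forces ${\rm excess}({\cal F},U)={\rm m.e.}({\cal F})$. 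Since ${\cal F}$ is tight, the only such $U$ is ${\cal F}(V)$ itself. But ${\rm excess}({\cal F}'',{\cal F}''(V))=\dim({\cal F}''(E))-\dim({\cal F}''(V))=-\chi({\cal F}'')={\rm m.e.}({\cal F})-1<{\rm m.e.}({\cal F})$, a contradiction. Hence ${\rm m.e.}({\cal F}'')<{\rm m.e.}({\cal F})$, and combined with the lower bound we get ${\rm m.e.}({\cal F}'')=-\chi({\cal F}'')={\rm m.e.}({\cal F})-1$, as required.

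As for where the work lies: this lemma requires essentially no computation — the dimension bookkeeping for ${\cal F}/{\cal F}''$ and $\chi({\cal F}'')$ is immediate, and tightness of ${\cal F}$ enters exactly once, precisely to rule out that removing a hyperplane from one edge value leaves the maximum excess unchanged. The one point to state with a little care is the monotonicity ${\rm excess}({\cal F},U)\ge{\rm excess}({\cal F}'',U)$ for a subsheaf ${\cal F}''\subseteq{\cal F}$ with the same vertex values, which follows directly from the definition of the head/tail neighbourhood (and is, globally, a special case of Theorem~\ref{th:me_as_subsheaf}). So I do not anticipate a serious obstacle; the main task is to present the construction so that each of the three required properties — ${\cal F}''$ a subsheaf, ${\cal F}/{\cal F}''$ edge supported of edge-dimension one, and $-\chi({\cal F}'')={\rm m.e.}({\cal F}'')={\rm m.e.}({\cal F})-1$ — is visibly satisfied.
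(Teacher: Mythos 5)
Your proof is correct and uses essentially the same ideas as the paper's: you cut one edge value by a hyperplane so that ${\cal F}''(V)={\cal F}(V)$ and $\dim{\cal F}''(E)=\dim{\cal F}(E)-1$, note the excess at the full vertex space drops by exactly one, and invoke tightness together with the monotonicity $\Gamma_{\rm ht}({\cal F}'',U)\subseteq\Gamma_{\rm ht}({\cal F},U)$ to rule out any $U$ of excess $\ge{\rm m.e.}({\cal F})$. The only cosmetic difference is that you phrase the final step as a contradiction, where the paper states the same inequality chain ${\rm excess}({\cal F}'',U)\le{\rm excess}({\cal F},U)\le{\rm m.e.}({\cal F})-1$ directly for every proper $U\subsetneq{\cal F}(V)$; and your case analysis technically only rules out equality ${\rm m.e.}({\cal F}'')={\rm m.e.}({\cal F})$, so you should either note that ${\rm m.e.}({\cal F}'')\le{\rm m.e.}({\cal F})$ is automatic for a subsheaf (which you do mention afterward), or just give the direct argument.
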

\begin{proof}
Let ${\cal F}''$ be any subsheaf such that
${\cal F}''(V)={\cal F}(V)$ and ${\cal F}''(E)$ is a codimension one
subspace of ${\cal F}(E)$.  Then ${\cal F}/{\cal F}''$ is edge supported
with the dimension of $({\cal F}/{\cal F}'')(E)$ equal one.
We claim that, furthermore,
the maximum excess of ${\cal F}''$ is ${\rm m.e.}({\cal F})-1$; indeed
this excess is achieved by ${\cal F}''(V)={\cal F}(V)$; furthermore, for
any $U$ properly contained in ${\cal F}''(V)={\cal F}(V)$
we have
$$
{\rm excess}({\cal F}'',U)\le {\rm excess}({\cal F},U)\le {\rm m.e.}({\cal F})
-1.
$$
\end{proof}

We now prove Theorem~\ref{th:shmain} by induction upon ${\rm m.e.}({\cal F})$.
The base case, ${\rm m.e.}({\cal F})=0$, was established in
Lemma~\ref{lm:equalifzero}.
Assume that we have established 
that Theorem~\ref{th:shmain} holds whenever ${\rm m.e.}({\cal F})\le k$
for some integer $k\ge 0$.  
We wish to prove Theorem~\ref{th:shmain} for all ${\cal F}$ of maximum
excess $k+1$, and we know it suffices to do so when ${\cal F}$ is tight.
So let ${\cal F}$ be a tight sheaf of maximum excess of $k+1$, and
let ${\cal F}''$ be any subsheaf as in Lemma~\ref{lm:tight2}.
Then Theorem~\ref{th:shmain} holds for ${\cal F}''$, since ${\cal F}''$
has maximum excess $k$; so for
$\phi\from G'\to G$ of girth greater than
\begin{eqnarray*}
&&2 \Bigl( \dim\bigl({\cal F}''(V)\bigr) +
\dim\bigl({\cal F}''(E)\bigr) \Bigr)
\\
&\le &2 \Bigl( \dim\bigl({\cal F}(V)\bigr) +
\dim\bigl({\cal F}(E)\bigr) \Bigr)
\end{eqnarray*}
we have
\begin{equation}\label{eq:shtwist_F''}
h_1^\twist(\phi^*{\cal F}'')={\rm m.e.}(\phi^*{\cal F}'') 
= \deg(\phi)k .
\end{equation}
Since, by the construction of ${\cal F}''$ in Lemma~\ref{lm:tight2}, we have
$$
\chi({\cal F}'') = \chi({\cal F})+1;
$$
by tightness of ${\cal F}$ we have $\chi({\cal F})=-k-1$ and hence
$$
-\chi({\cal F}'') = k = {\rm m.e.}({\cal F}'');
$$
hence
$$
h_0^\twist(\phi^*{\cal F}'') = \chi(\phi^*{\cal F}'') +
h_1^\twist(\phi^*{\cal F}'') = \deg(\phi)(-k) + {\rm m.e.}(\phi^*{\cal F}'')
$$
$$
= \deg(\phi)(-k) + \deg(\phi)(k) = 0.
$$
We have a short exact sequence
$$
0\to \phi^*{\cal F}'' \to \phi^*{\cal F} \to \phi^*({\cal F}/{\cal F}'')\to 0,
$$
which yields the long exact sequence
$$
0\to H_1^\twist( \phi^*{\cal F}'' ) \to H_1^\twist( \phi^*{\cal F}) 
\to H_1^\twist(\phi^*({\cal F}/{\cal F}''))\to 0,
$$
since $h_0^\twist(\phi^*{\cal F}'')=0$.
Hence
\begin{equation}\label{eq:shtwist_sum}
h_1^\twist( \phi^*{\cal F}) = 
h_1^\twist( \phi^*{\cal F}'' ) +
h_1^\twist(\phi^*({\cal F}/{\cal F}'')).
\end{equation}
But according to Lemma~\ref{lm:tight2}, ${\cal F}/{\cal F}''$
is edge supported, and we therefore know that Theorem~\ref{th:shmain}
holds for ${\cal F}/{\cal F}''$ for any covering map, $\phi$, and hence
$$
h_1^\twist(\phi^*({\cal F}/{\cal F}'')) = \deg(\phi)
{\rm m.e.}({\cal F}/{\cal F}'') = \deg(\phi).
$$
Therefore 
equations~(\ref{eq:shtwist_F''}) and (\ref{eq:shtwist_sum}) shows that 
$$
h_1^\twist( \phi^*{\cal F}) = \deg(\phi)(k+1) = {\rm m.e.}(\phi^*{\cal F})
$$
This establishes Theorem~\ref{th:shmain} for all tight ${\cal F}$ with
${\rm m.e.}({\cal F})=k+1$.

Hence, by induction on the maximum excess of ${\cal F}$, Theorem~\ref{th:shmain}
holds for all sheaves, ${\cal F}$, on $G$.

\end{proof}

\section{Concluding Remarks}
\label{se:shconclude}

In this section we conclude with 
a few remarks about the results in this chapter and
ideas for further research.

We would like to know how much we can prove about the maximum
excess without appealing to homology theory.  Our main application
of homology theory to the maximum excess was
Theorem~\ref{th:shmain}, which implies that the maximum excess
is a first quasi-Betti number.
But part of the proof of Theorem~\ref{th:shmain}, namely
Subsection~\ref{sb:main_theorem}, involved a lot of direct reasoning
about the maximum excess and short exact sequences.
While we believe that the interaction between twisted homology and
maximum excess is interesting, we also think that a treatment of maximum
excess without homology might give some new insights
into the maximum excess.

The maximum excess gives an interpretation of the limit of
$$
h_i^\twist(\phi^*{\cal F})/\deg(\phi)
$$ 
over covering maps
$\phi\from G'\to G$ for a sheaf, ${\cal F}$, of $\field$-vector spaces
on a digraph $G$.  It would be interesting
to have an interpretation of
$$
\lim_\phi \frac{\dim({\rm Ext}^i(\phi^*{\cal F},\phi^*{\cal G}))}{\deg(\phi)}
$$
for any sheaves ${\cal F},{\cal G}$; the maximum excess gives the
interpretation in the special case where ${\cal G}$ is the structure
sheaf, $\underline{\field}$, in which case the Ext groups reduce to
(duals of) homology groups.
We would also be interesting in generalizations of this to a wider
class of settings, such as an arbitrary finite category, or an interesting
subclass such as semitopological categories (defined as categories where
any morphism of an object
to itself must be the identity morphism; see
\cite{friedman_cohomology}).

We would also be interested in knowing if there is a good algorithm for
computing the maximum excess of a sheaf exactly, or even
just giving interesting upper and lower
bounds on it.  This would also be interesting for certain types of 
sheaves.
For example, it would be interesting to know classes of sheaves
for which the first twisted Betti number equals the maximum excess,
in addition to edge simple sheaves of Theorem~\ref{th:shedge_simple}.
% The twisted homology gives an upper bound, and, as 
% Theorem~\ref{th:sh, and the rational part of
% twisted cohomology gives a lower bound.  Note that the dimension of the
% rational part
% agrees with the maximum excess for the unhappy $4$-bundle, but not
% for all sheaves, as the second example of Section~\ref{se:shgen} shows.

Notice that if $G$ is an undirected graph, all the discussion in this 
chapter
goes through.
Either one can orient each edge and use the notation in this chapter,
or just rewrite the notation in this chapter without reference to heads or
tails.  We see that the distinction between heads and tails is never
essential.  For example, rather than having twists at the tails of edges,
we can have them at the heads and tails of edges.  Rather than define a
canonical $d=d_{\cal F}$ to define homology, we simply define homology as
$$
{\rm Ext}^i({\cal F},\underline{\field})^\vee,
$$
which, by the injective resolution of $\underline{\field}$, becomes the
homology groups of
$$
\cdots\to 0\to \oplus_e {\cal F}(e) \to \oplus_v {\cal F}(v)\to 0,
$$
where each ${\cal F}(e)$ is really
$$
({\cal F}(e))^2 / \Delta_e
$$
where $\Delta_e$ is the diagonal in $({\cal F}(e))^2$
(see the discussion regarding equation~(\ref{eq:shbetter_than_standard})
that appears just below
equation~(\ref{eq:shbetter_than_standard2})).
Choosing an
identification of $({\cal F}(e))^2/\Delta_e$ with ${\cal F}(e)$ via
$(a,b)\mapsto a-b$ or $(a,b)\mapsto b-a$ amounts to choosing an
orientation for $e$.  The price of giving a ``canonical'' treatment of
the undirected case, i.e., avoiding edge orientations, is that one
has to work with $({\cal F}(e))^2/\Delta_e$ instead of ${\cal F}(e)$.

\chapter{The Hanna Neumann Conjecture}

\section{Introduction}

Howson, in \cite{howson54}, showed that if $\cH,\cK$ are 
nontrivial, finitely 
generated subgroups of a 
free group, $\cF$, then $\cH\cap \cK$ is finitely
generated, and moreover that
\begin{equation}\label{eq:howson}
{\rm rank}(\cH\cap \cK)-1 \le 2\; {\rm rank}(\cH)\;{\rm rank}(\cK) -
{\rm rank}(\cH) - {\rm rank}(\cK).
\end{equation}
Hanna Neumann, in \cite{hanna,hanna_add} improved this bound to 
what is now called the Hanna Neumann Bound,
\begin{equation}\label{eq:hanna_bound}
{\rm rank}(\cH\cap \cK)-1 \le 2\; \bigl( {\rm rank}(\cH) - 1 \bigr)
\;\bigl( {\rm rank}(\cK) - 1 \bigr);
\end{equation}
furthermore, she conjectured 
that one can remove the factor of $2$ in this bound, i.e., that
\begin{equation}\label{eq:hanna}
{\rm rank}(\cH\cap \cK)-1 \le \bigl( {\rm rank}(\cH) - 1 \bigr)
\;\bigl( {\rm rank}(\cK) - 1 \bigr);
\end{equation}
this conjecture is now known as the Hanna Neumann Conjecture
(or HNC).  One goal of this chapter is to prove the HNC.  Moreover, we
shall prove a strengthened form of the conjecture, first studied by
Walter Neumann in \cite{walter90}, known as the Strengthened Hanna
Neumann Conjecture (or SHNC);
we will state the strengthened conjecture in the next section.

\begin{theorem} 
\label{th:shnc}
The Hanna Neumann Conjecture and the Strengthened
Hanna Neumann Conjecture hold.
\end{theorem}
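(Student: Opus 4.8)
The plan is to reduce the Strengthened Hanna Neumann Conjecture (which implies the HNC) to the vanishing of the maximum excess of certain sheaves, via the graph-theoretic reformulation of the SHNC. Recall that, by Stallings' theory, finitely generated subgroups $\cH,\cK$ of a free group $\cF$ correspond to graphs $G_1,G_2$ immersed in a bouquet $B$ of loops (one per free generator), and that the intersection subgroups $\cH\cap\cK^{\,x}$ over double cosets correspond to the connected components of the fibre product $G_1\times_B G_2$. The SHNC then becomes the inequality $\rho(G_1\times_B G_2)\le \rho(G_1)\,\rho(G_2)$, where $\rho$ is the reduced cyclicity of equation~(\ref{eq:shrho}). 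So the first step is to pass entirely to this finite graph statement.

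The heart of the argument is ``Galois graph theory.'' First I would apply the Normal Extension Theorem (Theorem~\ref{th:shnormal_extension_theorem}) to build a common Galois cover; more precisely, $G_1$ and $G_2$ admit natural maps to a common Cayley graph $C$ of a finite quotient group, so that both $G_1$ and $G_2$ can be viewed as sheaves over $C$ (much as an open subset of a space carries an associated sheaf). The key observation is that, although there is no graph surjection $G_1\to G_2$, viewed as sheaves $\underline\field_{G_1}$ and $\underline\field_{G_2}$ over $C$ there is a surjection of sheaves; its kernel is a sheaf we call a $\rho$-kernel. Using the long exact sequence of Theorem~\ref{th:shshort_long} together with the fact (from the maximum-excess theory, in particular Theorem~\ref{th:shSHNC_main} and Theorem~\ref{th:shedge_simple}) that the relevant Betti-type quantities behave triangularly, the SHNC is implied by the vanishing of the maximum excess of an appropriate collection of $\rho$-kernels. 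Here one also uses the ``contagious vanishing'' machinery (Theorems~\ref{th:open_contagious} and \ref{th:etale_contagious}): it suffices to prove vanishing of the maximum excess of a single ``universal'' $\rho$-kernel, since tensoring with sheaves $\nu_!\underline\field$ for \'etale $\nu$ then propagates the vanishing to all cases of the SHNC.

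It remains to show that the maximum excess of a generic $\rho$-kernel vanishes. The main idea is a symmetry argument: the excess-maximizing subspace (which by Theorem~\ref{th:shsupermodular} forms a lattice, hence has a canonical minimal element) must be invariant under the Galois group $\cg$ associated to the Cayley graph $C$. Combined with the scaling property of Theorem~\ref{th:shme_scale}, namely ${\rm m.e.}(\mu^*{\cal F})=\deg(\mu)\,{\rm m.e.}({\cal F})$, this forces the maximum excess of a generic $\rho$-kernel to be a multiple of $|\cg|$. Thus if it does not vanish, it is at least $|\cg|$, which is ``large''; one then derives a contradiction by two inductive arguments (on the size of the groups involved and on the structure of the cover), exploiting the freedom to choose $\cg$ and the genericity of the $\rho$-kernel.

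The main obstacle I expect is precisely this last step: proving that the generic maximum excess is forced to be a multiple of $|\cg|$ and then ruling out the ``large'' case. The symmetry of the $\rho$-kernels under $\cg$ is constantly needed but must be set up carefully, since a $\rho$-kernel is the kernel of a sheaf surjection that is \emph{not} induced by a graph morphism, so the $\cg$-equivariance is not automatic and depends on choosing the surjection compatibly with the Galois action; and some $\rho$-kernels (kernels of the ``wrong'' surjections) genuinely fail to have vanishing maximum excess, so the construction of the right $\rho$-kernel is delicate. The inductive elimination of the large case is combinatorially involved and is where most of the real work lies; everything before it is, in comparison, bookkeeping with sheaves, exact sequences, and the already-established properties of the maximum excess.
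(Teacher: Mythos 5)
You have the right broad outline — reduce to the graph-theoretic SHNC, pass to a Cayley bigraph $G$ on a group $\cg$ via Galois/covering theory, construct $\rho$-kernels as kernels of sheaf surjections, and prove vanishing of their generic maximum excess by a $\cg$-symmetry argument plus induction. But the key constructions are misdescribed in ways that would derail an actual proof. First, the $\rho$-kernel is \emph{not} the kernel of a surjection $\underline\field_{G_1}\to\underline\field_{G_2}$; such a surjection does not exist in general. Rather, it is the kernel of a surjection $\underline M\iota\from\underline\field_L\cg\to\underline\field^{\rho(L)}$ built from \emph{one} graph $L\subset G$ (summing over all $\cg$-translates $\underline\field_{Lg}$) together with a matrix $M\in\field^{\rho(L)\times\cg}$ applied to the constant sheaf $\underline\field^\cg$; the second graph $L'$ enters only afterwards, by tensoring this exact sequence with $\underline\field_{L'}$ and applying the contagious-vanishing theorems (Theorem~\ref{th:etale_contagious}) — this is precisely what Theorem~\ref{th:motivate} encapsulates.

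Second, your divisibility argument cannot work as stated. A single $\rho$-kernel $\ck_M$ is \emph{not} $\cg$-invariant; the symmetry is only $\ck_M g\isom\ck_{Mg}$ (Theorem~\ref{th:symmetry_isomorphism}), so the excess-maximizer of a fixed $\ck_M$ has no reason to be $\cg$-invariant and the lattice structure from Theorem~\ref{th:shsupermodular} does not by itself buy you divisibility by $|\cg|$. What gives divisibility is that genericity over $M\in\field^{\rho(L)\times\cg}$ is $\cg$-invariant (Corollary~\ref{cr:group_action}), so the unique maximal and minimal \emph{generic} dimension profiles are $\cg$-invariant (Theorem~\ref{th:generic_divisible}), and hence have $\cg$-divisible Euler characteristic. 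The scaling property of Theorem~\ref{th:shme_scale} plays a different role (e.g.\ in reducing to subgraphs of Cayley graphs via Theorem~\ref{th:sub_bi_enough}) and is not the source of divisibility. Finally, the two inductions are not on ``size of groups'' or ``structure of the cover''; they are (a) on $\rho(L)$, removing one edge of $L$ at a time via Lemma~\ref{lm:remove_one} and Theorem~\ref{th:L'}, and (b) on the power $k$ of the $k$-th power kernel for fixed $L$ (Theorem~\ref{th:increase}), the latter driven by the ``redundancy'' observation (Theorem~\ref{th:redundancy}) that a generic new row of $M$ destroys part of the minimal excess-maximizer. The combination of divisibility by $|\cg|$ and the strict-increase of Theorem~\ref{th:increase} is what delivers the contradiction against the base case $\rho(L)=0$, where $\ck_M=\underline\field_L\cg$ has maximum excess $|\cg|\rho(L)=0$.
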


These conjectures have received 
considerable attention (see
\cite{burns71,imrich76,imrich77,
servatius83,gersten83,stallings83,walter90,tardos92,dicks94,
tardos96,ivanov99,ar00,dicks01,ivanov01,khan02,meakin02,
jitsukawa03,walter07,everitt08,mineyev10}).  However, our proof
uses very different methods from the previous papers.

The main new idea in our approach to the SHNC is to reduce it to
the vanishing maximum excess of a type of sheaf we call a
{\em $\rho$-kernel}.
Although this was described in the introduction to this paper, we
can be a bit more precise here in view of the developments in
Chapter~1.
The SHNC has a well-known reformulation in terms of an inequality
involving the reduced
cyclicity of graphs; 
we shall reformulate this in terms of two graphs,
the inequality now saying that the reduced cyclicity
of one graph is less than that of another.
This would follow if we can (1) realize both graphs as sheaves over some
base graph, $G$, (2) find a surjection of the first onto the second, and
(3) show that the kernel (a sheaf) has vanishing maximum excess, in view
of the fact that the maximum excess is a first quasi-Betti number than
reduces to the reduced cyclicity on a sheaves associated to graphs.
We shall use Galois graph theory to carry this out; then the
base graph, $G$, will be a Cayley graph;
the resulting kernels (for the SHNC) will be called $\rho$-kernels.
It is interesting to note the surjections we use for the SHNC
don't generally exist
as surjections of graphs, rather only as surjections of sheaves; hence
in representing graphs as sheaves, the ``additional morphisms''
we get are crucial to the construction of $\rho$-kernels and
hence to our proof of the SHNC.

It turns out that some $\rho$-kernels have nonvanishing maximum excess
(at least if one defines $\rho$-kernels in a broad sense).
However, any graph, $L$, of interest to us in the SHNC, will have a
family of associated $\rho$-kernels, and we will prove that the 
generic maximum excess in this family is zero, for each $L$.
To do this we shall use Galois theory and symmetry to argue that
if this generic maximum excess does not vanish then it is large,
i.e., a multiple
of the order of the associated Galois group.
Then we will give an inductive argument, showing that if the
generic maximum excess of $\rho$-kernels for $L$ is positive, then the
same is true when we remove some edge from $L$, provided that $L$ has
positive reduced cyclicity.
The base case of the induction, when $L$ has vanishing reduced cyclicity,
is trivial to establish.
Hence each $L$ of interest in the SHNC has a $\rho$-kernel of
vanishing maximum excess, and this establishes in SHNC.

We emphasize that our proof of the SHNC also uses
Theorem~\ref{th:shmain},
that implies that the ``maximum
excess'' is first quasi-Betti number.
However, 
it is quite possible that one can prove the inequalities we need
for the SHNC for without Theorem~\ref{th:shmain}.
Furthermore, in the conclusion to this chapter we will give a slight
variant of our proof of the SHNC that will avoid any
reference to Theorem~\ref{th:shmain} or any homology theories
(although this would require
the lengthy combinatorial argument of Appendix~\ref{ap:elem}).
However, regardless of how we present the proof, we shall explain in
the conclusion that the
homology theory can provide valuable insight into the maximum excess.

This paper shows that the SHNC is not merely
an attempt to improve an inequality by a factor of two; 
our study of the SHNC has lead to new ideas in sheaves on graphs that
can be applied to graph theory.
This came as a surprise to us at first, 
although it is perhaps less surprising in retrospect, for a number
of reasons.
First, the HNC and SHNC seem to describe a fairly fundamental question in
group theory (of how rank behaves under intersection).
Second, the SHNC can be viewed as a graph theory question involving
the reduced cyclicity,
which is an interesting graph invariant (e.g. it scales under
covering maps).
Third, the SHNC, viewed in terms of the Galois graph theory,
has a simple homological explanation, namely the vanishing of a limit
homology group.  The vanishing of (co)homology groups has a vast literature
and importance; the SHNC is an interesting and seemingly
difficult result in the
family of homology group vanishing theorems.
Fourth, Lior Silberman has pointed out to us that the reduced cyclicity
is the discrete analogue of
$L^2$ Betti numbers; the $L^2$ Betti number was defined first by
Atiyah (\cite{atiyah}), and has been the subject of much surrounding the
``Atiyah conjecture'' (see \cite{luck02}).  Mineyev's article, 
\cite{mineyev10},
also makes a connection between the SHNC and $L^2$ Betti numbers.

At this point we can give more motivation for the use 
sheaf theory in this chapter, i.e., why we do not just use graphs and
their homology.
Our reformulation of the SHNC begins by 
searching for a morphism involving 
the graphs of interest to the SHNC.  
In order for this morphism to exist, to be surjective, and to have
a kernel, we must work with more general objects than graphs.
In many topological situations, the topological spaces are sufficiently
``robust'' that one does not have to generalize the objects.
However, in non-Hausdorff spaces, such as graphs
or those in algebraic geometry, many geometric notions, such as
``connect two points with a path,'' ``form a cone,'' etc.,
don't make sense or are very awkward to implement.
So for graphs we use sheaf theory, which is 
a simple (co)homology theory that
is adapted to our spaces, but general and expressive enough for appropriate
surjections and kernels to exist.
Of course, it is possible that there are other reasonable frameworks that
one could use instead of sheaves.

The rest of this chapter is organized as follows.
In Section~\ref{se:meshnc} we describe the SHNC and previous work on the
HNC and SHNC, including some resolved special cases of the SHNC.
In Section~\ref{se:megraph} we give a common graph theoretic reformulation
of the SHNC.
In Section~\ref{se:megalois} we describe applications of 
``graph Galois theory'' to simplifying the SHNC in a way that leads
to the construction of $\rho$-kernels; this builds on some of the Galois
graph theory described ealier, in Section~\ref{se:shgalois}.
In Section~\ref{se:menewrhokernel} we construct $\rho$-kernels and prove that
if their maximum excesses vanish then the SHNC holds; we also describe
what we call ``$k$-th power kernels,'' which generalize $\rho$-kernels
and which will be necessary to prove our main theorems about
the generic maximum excess of $\rho$-kernels.
In Section~\ref{se:mesymmetry} we use symmetry to prove that the generic
maximum excess of a certain
type of $k$-th power kernel is divisible by the order of
a group associated to the class of kernels.
In Section~\ref{se:mevar} we prove some comparison theorems about how
the maximum excesses of different classes of $k$-th power kernels compare;
the main theorem that we prove shows that if $\rho$-kernels associated
to a graph, $L$, have positive generic maximum excess, then the same
is true of a graph $L'$ that consist of $L$ with one edge discarded,
provided that $\rho(L')=\rho(L)-1$.
In Section~\ref{se:meproof} we briefly combine a number of theorems of previous
sections to argue that if the SHNC
does not hold, then for some $L$ we have the class of $\rho$-kernels
associated to $L$ have positive generic maximum excess, which by the
results of Section~\ref{se:mevar} means that the same is true for some
$L$ with $\rho(L)=0$, which we easily show is impossible.  This establishes
the SHNC.
In Section~\ref{se:meconclude} we make some concluding remarks, including
a variant of our proof that avoids Theorem~\ref{th:shmain} and any reference
to homology theory.  Such a proof will require
Appendix~\ref{ap:elem}, where we show that 
vanishing maximum excess of enough $\rho$-kernels implies the SHNC, but
we do so just using elementary graph theory; this shows that
a lot of the sheaf theory can be translated into direct graph theoretic terms;
it also shows that a simple sheaf theoretic
calculation may translate into a much longer graph theoretic
calculation.

\section{The Strengthened Hanna Neumann Conjecture}
\label{se:meshnc}

In this section we state the SHNC and comment on previous work on the
HNC and SHNC, including
some established special cases of these conjectures.

Walter Neumann, in
\cite{walter90}, 
showed that the Hanna Neumann Bound, i.e., equation~(\ref{eq:hanna_bound}),
could be strengthened to
$$
\sigma(\cH,\cK) \le 2\ {\rm rk}_{-1}(\cH)\ {\rm rk}_{-1}(\cK),
$$
where ${\rm rk}_{n}(\cG)$ denotes $\max({\rm rank}(\cG)+n,0)$, and
where
$$
\sigma(\cH,\cK) =
\sum_{\cH x\cF\in \cH\backslash  \cF/\cK}\ {\rm rk}_{-1}(\cH\cap x^{-1}\cK x),
$$
the summation being over the
double coset, $\cH\backslash \cF/\cK$, representatives, 
$x$; taking $x$ be the identity in the summation shows that 
$$
{\rm rk}_{-1}(\cH \cap \cK) \le \sigma(\cH,\cK),
$$
so that Walter Neumann's above bound strengthens the Hanna Neumann Bound.
Walter Neumann further formulated the conjecture that
\begin{equation}\label{eq:strengthened}
\sigma(\cH,\cK) \le {\rm rk}_{-1}(\cH)\ {\rm rk}_{-1}(\cK),
\end{equation}
now known as the Strengthened Hanna Neumann Conjecture (or SHNC).
For the rest of this section we review previous work on the HNC and SHNC.

One collection of results on the problem involves general bounds on
$\sigma(\cH,\cK)$ or ${\rm rk}_1(\cH\cap\cK)$.  It turns out that
all general bounds we know for the HNC, i.e., on ${\rm rk}_1(\cH\cap\cK)$,
also are known to hold for $\sigma(\cH,\cK)$.  Also, all
bounds we know are of the form
$$
\sigma(\cH,\cK)\le
2\ {\rm rank}(\cH)\ {\rm rank}(\cK) + c_1\; {\rm rank}(\cH) +
c_2\; {\rm rank}(\cK) +c_3
$$
for ranks $\cH,\cK$ sufficiently large, where $c_1,c_2,c_3$ are constants
depending on the bound; thus all improvements of Howson's original bound
are in the lower order terms, i.e., in the $c_i$'s.
The improved bounds on $\sigma(\cH,\cK)$ after
\cite{howson54,hanna,hanna_add} include the bound
$$
2\ {\rm rk}_{-1}(\cH)\ {\rm rk}_{-1}(\cK) - \min({\rm rk}_{-1}(\cH),
{\rm rk}_{-1}(\cK))
$$
of Burns in \cite{burns71}\footnote{
	Bounds appearing before
	\cite{walter90} are stated as bounds on 
	${\rm rk}_{-1}(\cH\cap\cK)$, but
	actually give bounds on $\sigma(\cH,\cK)$ as well.
}, the bound
$$
{\rm rk}_{-1}(\cH)\ {\rm rk}_{-1}(\cK) + \max( 
{\rm rk}_{-2}(\cH)\ {\rm rk}_{-2}(\cK) -1 ,0)
$$
of Tardos \cite{tardos92,tardos96}, and, what is the best bound prior
to ours,
\begin{equation}\label{eq:df}
{\rm rk}_{-1}(\cH)\ {\rm rk}_{-1}(\cK) +
{\rm rk}_{-3}(\cH)\ {\rm rk}_{-3}(\cK)
\end{equation}
of Dicks and Formanek in \cite{dicks01}.  

Another collection of results concerns special cases of the HNC and SHNC
that are resolved.  To be precise,
say that the ``HNC holds
for $(\cH,\cK)$'' if equation~(\ref{eq:hanna}) holds, and say that
$\cH$ is {\em universal for the HNC} if for any $\cK$,
the HNC holds for $(\cH,\cK)$.  
Similarly for the SHNC and equation~(\ref{eq:strengthened}).
Similar to before, all results we know that resolve special cases of the
HNC also resolve those cases of the SHNC.
Note that any finitely generated free group,
$\cF$,
is a subgroup of $\cF_2$, the free group on two generators, so we are
free to assume that $\cF=\cF_2$ in the HNC and SHNC.
Here are some results on special cases
of the SHNC that are easy to describe in group
theoretic terms:
\begin{enumerate}
\item $\cH$ is universal for the SHNC if it is of rank at most 
three (\cite{dicks01}), in view of
equation~(\ref{eq:df}), with rank two settled earlier by
Tardos (\cite{tardos92});
\item $\cH$ is universal for the SHNC if it is 
positively generated
(see \cite{khan02,meakin02,walter07});
\item $\cH$ is universal for the SHNC for ``most'' $\cH$
(see \cite{walter90,jitsukawa03});
\item the SHNC holds
either for $(\cH,\cK)$ or for $(\cH,\cK')$ 
for any $\cH,\cK$ that are
subgroups of $\cF_2$,
where $\cK'$
is obtained from $\cK$ by the map taking each generator
of $\cF_2$ to its inverse
(see \cite{jitsukawa03}).
\end{enumerate}
The result of item~(3) 
on ``most'' groups, of Walter Neumann (\cite{walter90}), and some
additional results on the SHNC, such as
Corollary~3.2 of \cite{meakin02}, are easier to describe using a
graph theoretic formulation of the SHNC that we give in the next section.
It is also known that the SHNC is related to the coherence problem in
one-relator groups (\cite{wise05}).

\section{Graph Theoretic Formulation of the SHNC}
\label{se:megraph}

The goal of this section is to describe an equivalent 
formulation
of the HNC and SHNC in graph theoretic terms involving fibre products;
this formulation is implicit in
\cite{howson54}, but more explicit in
\cite{imrich76,imrich77,gersten83,stallings83,walter90} and 
other references
in \cite{dicks94}.
There is another equivalent reformulation of the SHNC by
Dicks in \cite{dicks94}, known as the ``amalgamated graph conjecture,'' which
we do not discuss here.

By a {\em bicoloured digraph}, or simply a {\em bigraph}, we mean a directed
graph,
$G$, such that each edge is coloured (or labelled) either ``1'' or ``2.''
It is also equivalent to giving a directed graph homomorphism
$\nu\from G\to B_2$, where $B_2$ is the graph with one vertex and two
self-loops, one coloured ``1'' and the other ``2.''
If, moreover, $\nu$ is \'etale, we call $\nu$ or (somewhat abusively)
$G$ an \'etale bigraph, which means that $G$ is a bigraph such that
no vertex has
two incident edges, both incoming or both outgoing, of the same colour.

Given a digraph, $G$, 
recall the definition of the reduced cyclicity from
equation~(\ref{eq:shrho}), where ${\rm conn}(G)$ 
denotes the connected components of $G$; 
set
$$
\rho'(G) = \max_{X \in {\rm conn}(G)} \Bigl( \max\bigl(0,h_1(X)-1\bigr)\Bigr).
$$

The HNC is 
equivalent to
\begin{equation}\label{eq:graphHNC}
\rho'(K\times_{B_2} L)\le \rho(K)\rho(L)
\end{equation}
for all \'etale bigraphs $K$ and $L$; the SHNC
is equivalent to
\begin{equation}\label{eq:graphSHNC}
\rho(K\times_{B_2} L)\le \rho(K)\rho(L)
\end{equation}
for all \'etale bigraphs $K$ and $L$
(see \cite{howson54,imrich76,imrich77,gersten83,stallings83,walter90,dicks94}).
We shall work with this form of the SHNC.
Again, we say the HNC or SHNC, respectively,
holds for a pair of \'etale bigraphs,
$(K,L)$, if equation~(\ref{eq:graphHNC}) or (\ref{eq:graphSHNC}),
respectively,
holds; and again,
we say that $K$ is {\em universal} for
the HNC or SHNC, respectively, if for any $L$ the same conjecture
holds for $(K,L)$.

Let us briefly explain the connection between the group theoretic
formulations of the HNC and SHNC and the graph theoretic formulations.
Given generators, $g_1,g_2$, for the free group, $\cF_2$, for each
subgroup $\cH\subset\cF_2$, there is a canonically
associated \'etale bigraph,
$K$; $K$ is given by constructing the Schreier coset graph, 
${\rm Sch}(\cF_2,\cH,
\{g_1,g_2\})$, and letting $K$ be the ``core'' of 
${\rm Sch}(\cF_2,\cH,\{g_1,g_2\})$, i.e., its smallest
subgraph containing all reduced loops based at the
vertex $\cH$ (see \cite{meakin02} or the references in the 
previous paragraph);
${\rm Sch}(\cF_2,\cH,\{g_1,g_2\})$ with directed edges 
labelled either $g_1$ or $g_2$
is a (typically infinite degree)
covering of $B_2$, and $K$, a finite subgraph of
${\rm Sch}(\cF_2,\cH,\{g_1,g_2\})$, is therefore an \'etale bigraph.
If $\cH,\cK$ are subgroups of $\cF_2$, and 
$K,L$ the corresponding \'etale bigraphs, then
each component of $K\times_{B_2}L$ corresponds to the graph associated
to 
$\cH\cap x^{-1} \cK x$ ranging over double coset representatives, $x$.

Theorem~\ref{th:shnc} will be proven by the following equivalent
theorem.

\begin{theorem} 
\label{th:main}
The Strengthened Hanna Neumann Conjecture holds.  That is,
if $K\to {B_2}$ and $\kone\to {B_2}$ are two 
\'etale bigraphs over ${B_2}$, then
\begin{equation}\label{eq:shnc}
\rho(K\times_{B_2} \kone)\le \rho(K)\rho(\kone).
\end{equation}
\end{theorem}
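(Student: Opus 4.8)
The plan is to deduce the SHNC from the machinery developed in Chapter~1, following the strategy outlined in the introduction to Chapter~2: realize the two \'etale bigraphs $K,L$ as sheaves over a suitable Cayley graph $G$, exhibit a surjection of sheaves whose kernel is what we call a $\rho$-kernel, and then show the maximum excess of (an appropriate family of) such $\rho$-kernels vanishes. Concretely, first I would use graph Galois theory (Section~\ref{se:shgalois}) to pass from the bigraphs $K\to B_2$ and $L\to B_2$ to a common normal cover: by the Normal Extension Theorem, after pulling back we may arrange that the relevant covers of $B_2$ are Galois, with some finite Galois group $\cg$, and the Cayley graph $G$ of $\cg$ (with two generators corresponding to the two self-loops of $B_2$) becomes the natural base over which both $K$ and $L$ live as sheaves. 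The key point, emphasized in the introduction, is that although there is generally no graph morphism $K\to L$, there is a morphism of sheaves $\underline\field_K\to\underline\field_L$ over $G$ that is surjective; its kernel $\cK_\rho$ is the $\rho$-kernel. Using that $\rho=h_1^\twist(\underline\field)$ is a first quasi-Betti number that reduces to reduced cyclicity on sheaves of the form $\underline\field_{K}$ (Theorem~\ref{th:shSHNC_main} and the twisted homology computations of Section~\ref{se:shtwist}), the short exact sequence $0\to\cK_\rho\to\underline\field_K\to\underline\field_L\to 0$ gives the triangular inequality that yields $\rho(K\times_{B_2}L)\le\rho(K)\rho(L)$ once we know ${\rm m.e.}(\cK_\rho)=0$, after tensoring with sheaves of the form $\nu_!\underline\field$ and invoking the contagious vanishing theorem (Theorem~\ref{th:etale_contagious}).

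The second and harder half of the plan is to prove that the maximum excess of the relevant $\rho$-kernels indeed vanishes. Since not every $\rho$-kernel has vanishing maximum excess, I would work instead with a whole family of $\rho$-kernels associated to a graph $L$ (built from varying the surjection, or equivalently varying certain parameters in the Galois construction), and prove that the \emph{generic} member of this family has maximum excess zero. The first step here is a symmetry argument: the family carries an action of the Galois group $\cg$ (coming from the Cayley graph structure and the symmetry of the $\rho$-kernels stressed in the introduction), and using Theorem~\ref{th:shme_scale} together with the fact that an ``excess maximizer'' can be averaged over $\cg$ (supermodularity, Theorem~\ref{th:shsupermodular}, makes the maximizers into a lattice with a unique maximal, hence $\cg$-invariant, element), one shows the generic maximum excess must be a multiple of $|\cg|$. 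So if it is nonzero it is \emph{large}. To generalize and make an induction work, I would also need to introduce ``$k$-th power kernels'' extending $\rho$-kernels, and establish comparison theorems relating the generic maximum excess of different families.

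The induction that finishes the proof runs on the graph $L$ (equivalently on $\rho(L)$): the base case $\rho(L)=0$ is immediate since then the $\rho$-kernels are trivial enough to have vanishing maximum excess; for the inductive step one shows that if the family of $\rho$-kernels attached to $L$ had positive (hence large, by the symmetry step) generic maximum excess, then deleting a suitable single edge from $L$ to get $L'$ with $\rho(L')=\rho(L)-1$ would force the family for $L'$ to still have positive generic maximum excess (this is the main comparison theorem, proved in the ``variation'' section). Iterating down to $\rho=0$ gives a contradiction, so the generic maximum excess of $\rho$-kernels attached to $L$ is zero for every $L$, which by the first paragraph establishes equation~(\ref{eq:shnc}) and hence Theorem~\ref{th:main}.

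The step I expect to be the main obstacle is the inductive comparison: controlling how the generic maximum excess of the $\rho$-kernel family behaves when one deletes an edge from $L$ while $\rho$ drops by exactly one. Keeping track of the surjection, the $\cg$-symmetry, and the maximum excess simultaneously through this edge-deletion — and showing the ``large or zero'' dichotomy from the symmetry step survives — is the delicate combinatorial/linear-algebraic heart of the argument, and is where the $k$-th power kernel generalization is needed so that the induction hypothesis is strong enough to close the loop. The appeal to Theorem~\ref{th:shmain} (that maximum excess is a first quasi-Betti number) enters only to get the triangular inequalities; as remarked, a variant avoiding it is possible at the cost of the elementary combinatorial analysis of the appendix.
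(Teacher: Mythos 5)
Your overall three-stage architecture — (a) reduce via Galois graph theory to pairs of subgraphs of a common Cayley bigraph, (b) reduce the SHNC to the vanishing maximum excess of suitable $\rho$-kernels, and (c) prove that vanishing by a $\cg$-symmetry/divisibility argument plus an induction on $\rho(L)$ using $k$-th power kernels — is exactly the paper's approach, and paragraphs two through four are sound as a high-level plan. But paragraph one contains a genuine misconstruction of the $\rho$-kernel, and this breaks the bridge between stage (c) and the actual SHNC inequality.

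You take the surjection to be $\underline\field_K\to\underline\field_L$, with $K$ and $L$ the two graphs of the SHNC pair, and claim a short exact sequence $0\to\cK_\rho\to\underline\field_K\to\underline\field_L\to 0$. This cannot be right on two grounds. First, there is no reason such a surjection should exist for an arbitrary pair of subgraphs of $G$. Second, and more decisively, even if it did exist and $\cK_\rho$ had vanishing maximum excess, the triangle inequality would yield $\rho(K)\le\rho(L)$, and after tensoring with $\underline\field_{L''}$ it would yield $\rho(K\times_{B_2}L'')\le\rho(L\times_{B_2}L'')$ — neither of which is the SHNC. What the paper actually uses (Theorem~\ref{th:motivate}) is a surjection
\[
\underline\field_L\,\cg \;=\;\bigoplus_{g\in\cg}\underline\field_{Lg}\;\longrightarrow\;\underline\field^{\rho(L)},
\]
i.e., from the $|\cg|$ Galois translates of a single subgraph $L$ onto a constant sheaf of rank $\rho(L)$ (this is the "new morphism" of sheaves that has no graph-theoretic analogue; it lives over the whole Cayley graph, not just over $L$). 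Tensoring this short exact sequence with $\underline\field_{L''}$ and using $\underline\field_L\cg\otimes\underline\field_{L''}\cong\underline\field_{L\times_{B_2}L''}$ (Corollary~\ref{cr:twisted_product}) on one side and the fact that $\underline\field^{\rho(L)}_{L''}$ has maximum excess $\rho(L)\rho(L'')$ on the other is precisely what produces the product bound $\rho(L\times_{B_2}L'')\le\rho(L)\rho(L'')$. So the target of the surjection must be a constant sheaf of rank $\rho(L)$ — hence the name $\rho$-kernel — not the sheaf of the second bigraph. Your stages (b) and (c) depend on this: the "$k$-th power kernels" are kernels of $\underline\field_L\cg\to\underline\field^k$ for $k$ varying, and the induction in stage (c) is precisely an induction that lowers $k$ (via Theorem~\ref{th:increase}) in tandem with deleting an edge of $L$ (via Theorem~\ref{th:L'}). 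With the surjection you wrote down, neither the "$\rho$" in the name nor the $k$-indexed family has any role, so the induction has nothing to grip. Fix the construction of the $\rho$-kernel as above and the rest of your plan goes through as in the paper.

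A smaller point on the symmetry step: Theorem~\ref{th:shme_scale} concerns pullback under covering maps and is not the tool used in Chapter~2. The divisibility-by-$|\cg|$ argument (Theorem~\ref{th:generic_divisible}) relies instead on the identity $\ck_M\,g\cong\ck_{Mg}$ and on the uniqueness of the maximal/minimal dimension profile among the generically-achieved ones; the excess maximizer of an individual $\ck_M$ is not itself $\cg$-invariant, since $\ck_M$ is not $\cg$-invariant unless $M$ is. The supermodularity/lattice structure you cite is the right ingredient, but it is applied to the profiles that occur generically over $M$, not to a fixed kernel.
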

Equation~(\ref{eq:shnc}) is tight in that
if either $K$ or $L$ is a 
covering of $B_2$ (i.e., has all vertices of degree four), then the
inequality is satisfied with equality.

\section{Galois and Covering Theory in the SHNC}
\label{se:megalois}

In this section develop more aspects of Galois theory, in addition to
those given in Section~\ref{se:shgalois}.
This will later lead us to
sheaves we call $\rho$-kernels.
Let us now give definitions and state the main theorems to be
developed in this section.

\begin{definition} By the 
{\em Cayley bigraph on a group, $\cg$, with generators
$g_1$ and $g_2$}, denoted
$G={\rm Cayley}({\cal G};g_1,g_2)$, we mean the \'etale bigraph, $G$, where
$V_G=\cg$ and $E_G=\cg\times\{1,2\}$ (as sets), 
such that for each $g\in\cg$ and $i=1,2$, the edge $(g,i)$ has colour $i$,
tail $g$, and
head $g_ig$.
\end{definition}

We reduce the SHNC to the special case of subgraphs of a Cayley graph, as
follows.

\begin{theorem}\label{th:sub_bi_enough}
To prove Theorem~\ref{th:main}, the SHNC, it suffices verify the SHNC
on all
pairs, $(L,L')$, such that $L,L'$ are subgraphs of the same Cayley bigraph.
In particular, to prove the SHNC it suffices 
to show that
any subgraph of a Cayley bigraph is universal for
the SHNC.
\end{theorem}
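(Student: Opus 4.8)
The plan is to reduce an arbitrary instance of the SHNC to one in which the two bigraphs are subgraphs of a common Galois cover of $B_2$. By the description of Galois covers through Galois coordinates in Subsection~\ref{sb:galois_coordinates}, such a cover is exactly a Cayley bigraph, and the scaling law $\rho(\tilde G)=\deg(\phi)\,\rho(G)$ for a covering $\phi\from\tilde G\to G$ (Theorem~\ref{th:shremarkable_rho}) lets us push the desired inequality up to the cover and pull the conclusion back down.

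First I would reduce to \emph{connected} $K$ and $L$: if $K=\coprod_i K_i$ and $L=\coprod_j L_j$ are the decompositions into connected components, then $K\times_{B_2}L=\coprod_{i,j}K_i\times_{B_2}L_j$, and since $\rho$ is additive over connected components, $\rho(K\times_{B_2}L)=\sum_{i,j}\rho(K_i\times_{B_2}L_j)$; if the SHNC is known for each connected pair this is at most $\sum_{i,j}\rho(K_i)\rho(L_j)=\bigl(\sum_i\rho(K_i)\bigr)\bigl(\sum_j\rho(L_j)\bigr)=\rho(K)\rho(L)$. So it suffices to treat connected $K,L$.

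Given connected \'etale bigraphs $K,L$, I would use the elementary completion lemma: a finite \'etale bigraph embeds as a subgraph of a finite covering of $B_2$. Indeed, for each colour $i$ the set of vertices of $K$ lacking an outgoing $i$-edge has the same size as the set lacking an incoming $i$-edge (both equal $|V_K|$ minus the number of $i$-coloured edges), so one may add $i$-edges matching them up; the resulting $\hat K\supseteq K$ is a covering of $B_2$, connected since $K$ is, of degree $|V_K|$. Form $\hat L$ likewise, set $M:=\hat K\times_{B_2}\hat L$ (a covering of $B_2$, being a fibre product of coverings), and apply the Normal Extension Theorem (Theorem~\ref{th:shnormal_extension_theorem}) to $M\to B_2$ to get a covering $\mu\from\tilde M\to M$ with $\tilde M\to B_2$ Galois; let $\cg$ be its Galois group and $n=|\cg|=[\tilde M:B_2]$, so $\tilde M$ is a Cayley bigraph on $\cg$. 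Let $p\from\tilde M\to\hat K$ and $q\from\tilde M\to\hat L$ be the composites of $\mu$ with the two projections of $M$; these are coverings, and since $\hat K,\hat L$ are connected they have constant degrees $d_K=n/|V_K|$ and $d_L=n/|V_L|$. Then $\tilde K:=p^{-1}(K)$ and $\tilde L:=q^{-1}(L)$ are subgraphs of the Cayley bigraph $\tilde M$, the restricted maps $\tilde K\to K$ and $\tilde L\to L$ are coverings of degrees $d_K,d_L$, and from the canonical identification $\tilde K\times_{B_2}\tilde L\cong\tilde K\times_K(K\times_{B_2}L)\times_L\tilde L$ the natural map $\tilde K\times_{B_2}\tilde L\to K\times_{B_2}L$ is a covering of degree $d_Kd_L$. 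Theorem~\ref{th:shremarkable_rho} then gives
\[
\rho(\tilde K)=d_K\rho(K),\qquad\rho(\tilde L)=d_L\rho(L),\qquad\rho(\tilde K\times_{B_2}\tilde L)=d_Kd_L\,\rho(K\times_{B_2}L).
\]
If the SHNC holds for the pair $(\tilde K,\tilde L)$ of subgraphs of $\tilde M$, then $\rho(\tilde K\times_{B_2}\tilde L)\le\rho(\tilde K)\rho(\tilde L)$, which after substituting the three identities and dividing by $d_Kd_L>0$ is exactly $\rho(K\times_{B_2}L)\le\rho(K)\rho(L)$. The ``in particular'' clause then follows at once: if every subgraph of every Cayley bigraph is universal for the SHNC, then the SHNC holds for every pair of subgraphs of a common Cayley bigraph, hence by the above for all \'etale bigraphs.

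The only non-formal ingredient is the completion lemma together with the need to keep every covering degree constant, which is precisely why the reduction to connected $K$ and $L$ is made at the outset (for disconnected $\hat K$ the covering $\tilde M\to\hat K$ could have different degrees over different components, breaking the $\rho$-scaling computation). I expect that degree bookkeeping to be the only place where real care is required; the remainder is routine manipulation of fibre products, the Normal Extension Theorem, Galois coordinates, and the scaling identity for $\rho$.
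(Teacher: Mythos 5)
Your proof is correct and follows the same route as the paper: factor (complete) the \'etale bigraphs to coverings of $B_2$, form the fibre product and pass to a Galois cover of it (which is a Cayley bigraph), pull back the original subgraphs along the resulting covering maps, and then use the scaling $\rho(\mt X)=\deg\cdot\rho(X)$ of Theorem~\ref{th:shremarkable_rho} to transfer the SHNC inequality both ways. The one place you depart from the paper is the initial reduction to connected $K,L$, and that is a worthwhile refinement: the paper's proof applies the $\rho$-scaling identity to the base-changed coverings $G'\to G$, $K'\to K$, which implicitly uses that these coverings have constant degree; over a disconnected base the paper's definition of $[K\colon G]$ does not automatically apply, since a Galois cover $S\to B_2$ can restrict to a covering of a disconnected $\widetilde G$ with different degrees on different components. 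Your reduction to connected $K,L$ (together with the observation that $\rho$ and $\times_{B_2}$ both distribute over disjoint unions) guarantees that the base $\hat K,\hat L$ is connected, hence all the covering degrees you divide by are well-defined constants, cleanly sidestepping that issue. Everything else — the completion lemma specialized to $B_2$, the Normal Extension Theorem, the identification of Galois covers of $B_2$ with Cayley bigraphs, and the fibre-product manipulations giving the degree $d_Kd_L$ — matches the paper's argument in substance.
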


The following simplifications of the SHNC on subgraphs
of Cayley graphs will help solidify the connection between the
SHNC and $\rho$-kernels of the next section.

\begin{theorem}\label{th:sub_bi_shnc}
Let $L$ be a subgraph of a Cayley bigraph, $G$, on a group, $\cg$.
Then
\begin{enumerate}
\item $L$ is universal for the SHNC if for any \'etale $L'\to G$ we
have $(L,L')$ satisfies the SHNC (with $L'$ inheriting the edge colouring
from $G$, i.e., from
the composition $L'\to G$ followed by $G\to B_2$);
\item for any \'etale $L'\to G$ we have
$$
L\times_{B_2} L' \isom (L\cg)\times_G L',
$$
where
$$
L\cg = \coprod_{g\in\cg} Lg.
$$
\end{enumerate}
\end{theorem}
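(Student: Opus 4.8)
The plan is to deduce both parts from the single observation that $G\to B_2$ is a Galois covering with group $\cg$. Writing $V_G=\cg$ and $E_G=\cg\times\{1,2\}$, the group $\cg$ acts on $G$ over $B_2$ on the right by $v\mapsto vg'$ on vertices and $(v,i)\mapsto(vg',i)$ on edges; one checks directly this is a digraph automorphism commuting with $G\to B_2$, and the action is simply transitive on each vertex and each edge fibre, so $G\to B_2$ is Galois with group $\cg$ in the sense of Section~\ref{se:shgalois}. Part~(2) will be a base-change computation, and part~(1) will then follow formally from part~(2) together with the scaling of $\rho$ under covering maps (Theorem~\ref{th:shremarkable_rho}) and the additivity of $\rho$ over connected components.

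For part~(2) I would first record the graph analogue of the Galois isomorphism $k'\otimes_k k'\isom\bigoplus_{\mathrm{Aut}(k'/k)}k'$: for a Galois covering $G\to B$ with group $\cg$ one has $G\times_B G\isom\coprod_{g\in\cg}G$, where the copy indexed by $g$ maps isomorphically to the second factor $G$ via the automorphism $x\mapsto xg$; concretely $(x,y)\mapsto$ ``$x$ in the copy indexed by $x^{-1}y$'' is the required isomorphism, and one verifies an edge $((x,i),(y,i))$ stays within the copy indexed by $x^{-1}y$ because $(g_ix)^{-1}(g_iy)=x^{-1}y$. Restricting this to the subgraph $L\times_{B_2}G\subseteq G\times_{B_2}G$, the copy indexed by $g$ restricts precisely to the translate $Lg\subseteq G$, giving $L\times_{B_2}G\isom L\cg$ as graphs over $G$. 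Then for any $\phi\from L'\to G$, associativity of fibre products yields $L\times_{B_2}L'\isom L\times_{B_2}(G\times_G L')\isom(L\times_{B_2}G)\times_G L'\isom(L\cg)\times_G L'$, which is part~(2). As a sanity check, one can also match vertices $V_L\times V_{L'}$ and edges $\{(e,e')\in E_L\times E_{L'} : e,e'\ \text{same colour}\}$ on both sides directly, using that for fixed $v\in V_L$, $v'\in V_{L'}$ there is a unique $g$ with $vg=\phi(v')$.

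For part~(1), let $L''$ be an arbitrary \'etale bigraph over $B_2$; I want $\rho(L\times_{B_2}L'')\le\rho(L)\rho(L'')$. Set $L'=G\times_{B_2}L''$, which carries an \'etale map to $G$, being the pullback of the \'etale map $L''\to B_2$; so by hypothesis $\rho(L\times_{B_2}L')\le\rho(L)\rho(L')$. I then evaluate both sides. Since $G\to B_2$ is a covering of degree $|\cg|$, so is $L'\to L''$, and Theorem~\ref{th:shremarkable_rho} gives $\rho(L')=|\cg|\,\rho(L'')$. On the other hand $L\times_{B_2}L'\isom(L\cg)\times_G L'\isom(L\cg)\times_{B_2}L''=\coprod_{g\in\cg}\bigl(Lg\times_{B_2}L''\bigr)$; since $\cg$ acts by automorphisms over $B_2$, each translate $Lg$ is isomorphic to $L$ as a bigraph, so additivity of $\rho$ over components gives $\rho(L\times_{B_2}L')=|\cg|\,\rho(L\times_{B_2}L'')$. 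Substituting into the hypothesis and dividing by $|\cg|$ gives the claim, so $L$ is universal for the SHNC.

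I do not expect a genuine obstacle here: the content is bookkeeping, and the points needing care are exactly those used above, namely that the pullback of an \'etale (resp.\ covering) map is \'etale (resp.\ a covering of the same degree), that the translates $Lg$ are isomorphic to $L$ as bigraphs, and that $\rho$ is additive over disjoint unions and scales by the degree under covering maps. If anything is subtle it is pinning down the isomorphism $G\times_{B_2}G\isom\coprod_{g\in\cg}G$ compatibly with the two projections, which is why I would verify that identification explicitly rather than merely invoke it.
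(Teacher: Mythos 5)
Your proof is correct and takes essentially the same approach as the paper: part (2) is the Galois decomposition of $G\times_{B_2}G$ (Theorem~\ref{th:decompose}) together with fibre-product associativity, which is precisely the content of Corollary~\ref{cr:twisted_product}, and part (1) is the base-change reduction of an arbitrary \'etale bigraph over $B_2$ to its pullback over $G$ combined with the scaling of $\rho$ under covering maps (Theorem~\ref{th:remarkable_rho}). The only cosmetic difference is in part (1), where you extract $\rho(L\times_{B_2}L')=|\cg|\,\rho(L\times_{B_2}L'')$ by unpacking the disjoint-union decomposition from part (2), while the paper simply notes that $L\times_{B_2}L''\to L\times_{B_2}L'$ is itself a covering of degree $|\cg|$ and invokes the scaling formula directly.
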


Before giving Galois theory we quickly describe the remarkable
reason for the strong
connection between the SHNC and covering and Galois theory.
Since its proof is so short, we give it here as well.

\begin{theorem} 
\label{th:remarkable_rho}
For any covering map $\pi\from K\to G$ of degree $d$,
we have $\chi(K)=d\chi(G)$ and $\rho(K)=d\rho(G)$.
\end{theorem}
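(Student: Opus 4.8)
The plan is to prove this exactly as the copy of it stated earlier in the excerpt as Theorem~\ref{th:shremarkable_rho}; the statement and proof are identical, so I would simply repeat that short argument.

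\medskip

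First I would reduce to the connected case. Since a covering map $\pi\from K\to G$ restricts to a covering map over each connected component of $G$, and since both $\chi$ and $\rho$ are additive over connected components (recall $\rho(G)=\sum_{X\in{\rm conn}(G)}\max(0,h_1(X)-1)$ and $\chi(G)=\sum_X\chi(X)$), it suffices to prove the identities when $G$ is connected. Having done that, the preimage $\pi^{-1}(X)$ of a connected component $X$ of $G$ is a disjoint union of connected components of $K$, each covering $X$, with the degrees summing to $d$; so by the same additivity it suffices to handle the case where $K$ is also connected. (For $\chi$ this reduction is not even needed: one argues directly that $d=|V_K|/|V_G|=|E_K|/|E_G|$, which is the definition of the degree of a covering map onto a connected base, and hence $\chi(K)=|V_K|-|E_K|=d(|V_G|-|E_G|)=d\,\chi(G)$.)

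\medskip

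With $K$ and $G$ both connected, the claim on $\rho$ follows from the claim on $\chi$ together with the fact that for a connected graph $Y$ we have $h_0(Y)=1$ and hence $\rho(Y)=\max(0,h_1(Y)-1)$. If $h_1(G)\ge 1$ then $\chi(G)=1-h_1(G)\le 0$, so $\chi(K)=d\,\chi(G)\le 0$ as well, giving $h_1(K)=1-\chi(K)=1-d\,\chi(G)\ge 1$; therefore
$$
\rho(K)=h_1(K)-1=-\chi(K)=-d\,\chi(G)=d\bigl(h_1(G)-1\bigr)=d\,\rho(G).
$$
If instead $h_1(G)=0$, i.e. $G$ is a tree, then $\chi(G)=1$ and $\chi(K)=d$, so $h_1(K)=1-d\le 0$, forcing $d=1$ (a connected cover of a tree of positive degree is the tree itself), whence $\rho(K)=0=d\,\rho(G)$. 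In every case $\rho(K)=d\,\rho(G)$, completing the proof.

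\medskip

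There is essentially no obstacle here: the only mild subtlety is the bookkeeping in the reduction to the connected case (making sure the local degrees over different components of $G$ are each well-defined and sum correctly to $d$), and the edge case where $G$ is a tree, where one must note that a positive-degree connected covering of a tree has degree one. Everything else is the one-line computation $\rho=-\chi$ for connected graphs of positive first Betti number together with multiplicativity of $\chi$ under covering maps.
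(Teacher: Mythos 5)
Your proof is correct and follows the same line as the paper's: reduce to the case where both $K$ and $G$ are connected by additivity of $\chi$ and $\rho$ over connected components, then use $\rho=-\chi$ for connected graphs with $h_1\ge 1$, with the $\chi$ identity following from $|V_K|=d|V_G|$ and $|E_K|=d|E_G|$. You are actually somewhat more careful than the paper's version, which silently writes $\rho(G)=h_1(G)-1$ (valid only when $h_1(G)\ge 1$) and whose displayed chain contains a typo ($-d\chi(K)$ where $-d\chi(G)$ is meant, and $\rho(G)=d\rho(K)$ where $\rho(K)=d\rho(G)$ is intended); your explicit handling of the tree case $h_1(G)=0$ (forcing $d=1$) closes the gap cleanly. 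You are also right that this theorem is verbatim the same as the earlier Theorem~\ref{th:shremarkable_rho}, repeated in Chapter~2 so that that chapter can be read independently.
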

\begin{proof} The claim on $\chi$ follows since $d=|V_K|/|V_G|=|E_K|/|E_G|$.
To show the claim on $\rho$, it suffices to consider the case of $G$
connected, the general case obtained by summing over connected components; 
but similarly it suffices to consider the case of $K$ connected.
In this case
$$
\rho(G)=h_1(G)-1 = -\chi(G) = -d\chi(K) = d\bigl(h_1(K)-1\bigr)=d\rho(K).
$$
\end{proof}

From this theorem it follows that if $\mt K\to K$ and
$\mt{L}\to L$ are covering maps of \'etale bigraphs, then
$$
\rho(\mt K\times_{B_2}\mt L)-\rho(\mt K)\rho(\mt L) =
[\mt K \colon K]\ [\mt L\colon L]
\bigl( \rho(K\times_{B_2}L)-\rho(K)\rho(L) \bigr);
$$
hence $(K,L)$ satisfy the SHNC iff $(\mt K,\mt L)$ do.
This means that to study the SHNC, one can always pass to covers of the
bigraphs of interest.

For the rest of this section we describe a number of aspects of what we
call Galois graph
theory and use it for prove Theorems~\ref{th:sub_bi_enough}
and \ref{th:sub_bi_shnc}.

\subsection{Galois Theory of Graphs}

Here we further develop the Galois theory of graphs discussed
in Section~\ref{se:shgalois}.
We remind the reader that in this 
article Galois group actions, when written
multiplicatively (i.e., not viewed as functions or morphisms)
will be written
on the right, since our Cayley
graphs are written with its generators acting on the left.

The following fact is an analogue of a 
standard and surprisingly useful fact
in descent theory (as in \cite{sga4.5}); it is also surprisingly useful
for the SHNC, despite the fact that it 
is trivial.

\begin{theorem}\label{th:decompose}
Let $\pi\from K\to G$ be Galois.  Then
$$
K\times_G K = \coprod_{\sigma\in{\rm Aut}(K/G)} K_\sigma
$$ 
where $K_\sigma$ is 
the subgraph of $K\times_G K$
given via
$$
V_{K_\sigma} = \{(v,vg) \;|\; v\in V_K,\;g\in{\rm Aut}(K/G) \}, 
$$
$$
E_{K_\sigma} = \{(e,eg) \;|\; e\in E_K,\;g\in{\rm Aut}(K/G) \}.
$$
Each $K_\sigma$ is isomorphic to $K$.
\end{theorem}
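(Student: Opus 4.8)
The plan is to exploit the single defining property of a Galois covering: the group $\cg={\rm Aut}(K/G)$ --- or, in the disconnected case, the distinguished subgroup of ${\rm Aut}(K/G)$ declared to be the Galois group --- acts simply transitively on each vertex fibre and each edge fibre of $\pi$. First I would recall that a vertex of $K\times_G K$ is by definition a pair $(v_1,v_2)$ with $v_1,v_2\in V_K$ and $\pi(v_1)=\pi(v_2)$, an edge is a pair $(e_1,e_2)$ with $\pi(e_1)=\pi(e_2)$, and that head and tail act coordinatewise. For each such vertex pair there is a \emph{unique} $\sigma\in\cg$ with $v_1\sigma=v_2$, this being exactly the simply-transitive action on the fibre $\pi^{-1}(\pi(v_1))$; likewise each edge pair $(e_1,e_2)$ determines a unique $\tau\in\cg$ with $e_1\tau=e_2$.

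The crucial compatibility step is to check that the element attached to an edge pair agrees with the elements attached to its head and tail. Since $\tau$ is a digraph automorphism, $h(e_1\tau)=h(e_1)\tau$, so $h(e_2)=h(e_1)\tau$, which says that the head pair $(h(e_1),h(e_2))$ is indexed by the same $\tau$; similarly for tails. Hence the assignment $P\mapsto\sigma_P$ is constant on each edge together with its two endpoints, and it partitions $K\times_G K$ into the subgraphs $K_\sigma$ consisting of all pairs $(v,v\sigma)$ and $(e,e\sigma)$.

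Then I would verify the three points that make this a genuine decomposition. (i) Each $K_\sigma$ is a subgraph: the head of $(e,e\sigma)$ in $K\times_G K$ is $(h(e),h(e\sigma))=(h(e),h(e)\sigma)$, which is a vertex of $K_\sigma$, and similarly for tails. (ii) The $K_\sigma$ are pairwise disjoint: if $(v,v\sigma)=(v',v'\tau)$ then $v=v'$ and $v\sigma=v\tau$, so $\sigma\tau^{-1}$ fixes a vertex and hence is the identity by freeness of the action, i.e.\ $\sigma=\tau$. (iii) Every vertex and edge of $K\times_G K$ lies in some $K_\sigma$, by transitivity of the action on fibres. This yields $K\times_G K=\coprod_{\sigma\in\cg}K_\sigma$. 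Finally, $K_\sigma\isom K$ via the first-coordinate projection $(v,v\sigma)\mapsto v$, $(e,e\sigma)\mapsto e$: it is the restriction of the projection $K\times_G K\to K$, hence a digraph morphism, and it is bijective with inverse $v\mapsto(v,v\sigma)$, $e\mapsto(e,e\sigma)$, which is a morphism precisely because $\sigma$ is.

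I do not expect a real obstacle --- the authors themselves call the statement trivial, and all the content is the simply-transitive action on fibres. The one point that genuinely needs care is the disconnected case: there ``Galois'' carries the extra datum of a chosen subgroup $\cg\subseteq{\rm Aut}(K/G)$, and the argument must invoke that it is $\cg$ (not the possibly larger ${\rm Aut}(K/G)$) which acts simply transitively, so that the coproduct is indexed by $\cg$ and the blocks $K_\sigma$ exhaust $K\times_G K$ exactly.
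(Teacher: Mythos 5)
The paper gives no proof of this theorem; it only points the reader to \cite{friedman_geometric_aspects} and to the analogous formula for fields in SGA $4\frac12$. Your argument is a correct, self-contained direct proof: you identify the unique $\sigma\in\cg$ carrying the first coordinate to the second (simple transitivity on fibres), check that this index is compatible with heads and tails because $\sigma$ is a digraph automorphism, verify that the resulting $K_\sigma$ are subgraphs, pairwise disjoint, and exhaustive, and note that first-coordinate projection gives the isomorphism $K_\sigma\isom K$. Your closing remark is also well taken: in the disconnected case, the index set must be the distinguished subgroup $\cg$ that acts simply transitively, not the possibly larger ${\rm Aut}(K/G)$; the theorem as displayed (and its use of $g$ rather than $\sigma$ inside the set-builder for $V_{K_\sigma}$ and $E_{K_\sigma}$) is loosely typeset, and you have read it the way it must be intended.
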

(See \cite{friedman_geometric_aspects}, and 
compare with the identical formula for fields in \cite{sga4.5}, Section~I.5.1).

\begin{corollary}\label{cr:twisted_product}
In Theorem~\ref{th:decompose}, let us further assume
that we have morphisms $K_1\to K$ and $K_2\to K$.  Then
$$
K_1\times_G K_2 \isom \coprod_{\sigma\in{\rm Aut}(K/G)} K_1\times_K(K_2\sigma).
$$
\end{corollary}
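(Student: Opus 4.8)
The plan is to deduce Corollary~\ref{cr:twisted_product} directly from Theorem~\ref{th:decompose} by a base-change (fibre-product) manipulation, using nothing more than associativity and commutativity of fibre products and the fact that fibre product distributes over disjoint union. First I would record the general identity that for morphisms $K_1\to K$ and $K_2\to K$ of digraphs over $G$, one has
$$
K_1\times_G K_2 \isom (K_1\times_K K)\times_G (K\times_K K_2)\isom K_1\times_K (K\times_G K)\times_K K_2,
$$
where the first isomorphism uses $K_i\times_K K\isom K_i$ and the second is associativity of the fibre product (all fibre products of digraphs being taken in the category of digraphs, which has them, as recalled in Subsection~\ref{sb:sheaf_def}). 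This is the only real content; it is a routine diagram chase on vertex and edge sets, since fibre products of digraphs are computed setwise on vertices and edges.

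Next I would substitute the decomposition of Theorem~\ref{th:decompose},
$$
K\times_G K = \coprod_{\sigma\in{\rm Aut}(K/G)} K_\sigma,
$$
into the middle factor. Since fibre product commutes with disjoint unions in each argument (again immediate from the setwise description: $V_{A\times_C(\coprod_i B_i)}=\coprod_i V_{A\times_C B_i}$, and likewise for edges), this yields
$$
K_1\times_G K_2 \isom \coprod_{\sigma\in{\rm Aut}(K/G)} K_1\times_K K_\sigma \times_K K_2.
$$
It then remains to identify $K_1\times_K K_\sigma\times_K K_2$ with $K_1\times_K(K_2\sigma)$. Here I would use the explicit description of $K_\sigma$ from Theorem~\ref{th:decompose}: the two projections $K_\sigma\to K$ are, respectively, $(v,v\sigma)\mapsto v$ (which is the identification $K_\sigma\isom K$ used on the left-hand factor $K_1\times_K K_\sigma$) and $(v,v\sigma)\mapsto v\sigma$, i.e.\ the isomorphism $K_\sigma\isom K$ followed by $\sigma$. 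So $K_\sigma$, viewed as an object over $K$ via the \emph{second} projection, is exactly $K$ with its structure map twisted by $\sigma$; tensoring (fibre-multiplying) $K_2$ over $K$ against this twisted copy is by definition $K_2\sigma$ — that is, $K_2$ with its map to $K$ post-composed with $\sigma$. Hence $K_\sigma\times_K K_2\isom K_2\sigma$ as objects over $K$ via the first projection, and therefore $K_1\times_K K_\sigma\times_K K_2\isom K_1\times_K(K_2\sigma)$, giving the claimed formula.

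I do not expect a genuine obstacle here — the corollary really is, as the text says of Theorem~\ref{th:decompose}, essentially formal. The one point requiring a little care is bookkeeping the two projections out of $K\times_G K$ (equivalently, out of each $K_\sigma$) and checking that the twist by $\sigma$ lands on the correct factor, so that one gets $K_2\sigma$ rather than $K_1\sigma$ or $(K_2\sigma)$ on the wrong side; since ${\rm Aut}(K/G)$ is being summed over, replacing $\sigma$ by $\sigma^{-1}$ merely reindexes the coproduct, so the statement is insensitive to this choice, which removes even that mild worry. Thus the proof is: (1) the associativity identity above; (2) substitute Theorem~\ref{th:decompose} and distribute over the coproduct; (3) identify each summand using the explicit projections of $K_\sigma$.
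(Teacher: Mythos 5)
Your proposal is correct and follows essentially the same route as the paper's proof: rewrite $K_1\times_G K_2$ as $K_1\times_K(K\times_G K)\times_K K_2$, substitute the decomposition of Theorem~\ref{th:decompose}, distribute the fibre product over the disjoint union, and identify each summand $K_1\times_K K_\sigma\times_K K_2$ with $K_1\times_K(K_2\sigma)$. The only difference is that you spell out the last identification (which projection of $K_\sigma$ carries the twist) in more detail than the paper, which leaves it implicit.
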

\begin{proof} 
$$
K_1\times_G K_2 \isom (K_1\times_K K)\times_G (K\times_K K_2)
\isom K_1\times_K (K\times_G K)\times_K K_2
$$
$$
\isom \coprod_{\sigma} \bigl( K_1\times_K K_\sigma \times_K K_2 \bigr)
\isom \coprod_{\sigma} \bigl( K_1\times_K (K_2\sigma) \bigr).
$$
\end{proof}

There are many extensions to this basic theory.
We mention one interesting example.

Assume, for simplicity, that $G$ is connected.
If $K\to G$ is Galois
and factors as $K\to K'\to G$, then $K\to K'$ is Galois, with Galois
group being the subgroup of ${\rm Aut}(K/G)$ fixing
any vertex or edge fiber of $K\to K'$; hence ${\rm Aut}(K/K')$ is a 
subgroup of ${\rm Aut}(K/G)$.  Conversely, a subgroup of
${\rm Aut}(K/G)$ divides the vertices and edges of $K$ into orbits,
giving a graph $K'$ (whose vertices and edges are these orbits) and
a factorization $K\to K'\to G$.
Furthermore, for an intermediate cover $K\to K'\to G$, $K\to K'$ is
always Galois (since ${\rm Aut}(K/K')$ has the right cardinality), 
and $K'\to G$ is Galois iff the subgroup of ${\rm Aut}(K/G)$ fixing
$K\to K'$ fibers
is a normal subgroup of ${\rm Aut}(K/G)$.
See \cite{st1} for details.

If $K\to G$ is Galois and factors as $K\to K'\to G$, 
\begin{equation}\label{eq:fancy}
K\times_G K' = \coprod_{g\in{\rm Aut}(K/G)/{\rm Aut}(K/K')} K_g,
\end{equation}
where
$$
V_{K_g} = \{(v,[v]g) \ |\  v\in V_K\}, \quad
E_{K_g} = \{(e,[e]g) \ |\  e\in E_K\},
$$
where $[v],[e]$ respectively
denote the images of $v,e$, respectively, in $K'$; each $K_g$ is
isomorphic to $K$.  Special cases of this statement include
the trivial case $K'=G$
and the case $K'=K$ stated earlier.

\subsection{Base Change}
\label{sb:base_change}

There are a number of easy ``stability under
base change'' results; these say that
in a digram arising from arbitrary digraph morphisms $L\to B$ and
$M\to B$,
\begin{diagram}[nohug,height=2.0em,width=2.5em,tight]
L\times_{B}M & \rTo & M \\
\dTo & & \dTo \\
L & \rTo & B  \\
\end{diagram}
if $L\to B$ has a certain property, then so does $L\times_B M\to M$.
Just from the construction of the fibre product, we easily see that
the following classes of morphisms
are stable under base change: 
\'etale morphisms, covering morphism, and Galois morphisms (and many others
that we won't need, such as open inclusions, morphisms that are $d$--to--$1$
for some fixed $d$, etc.).

\subsection{Etale Factorization}

In this subsection
we shall prove that any \'etale map factorizes as an open inclusion
followed by a covering map.  This will easily establish
Theorem~\ref{th:sub_bi_enough}.

We define an {\em open inclusion} to be any inclusion $H\to G$
of a subgraph, $H$, in a graph, $G$.
We say the inclusion is {\em dense} 
if $V_H=V_G$; this agrees with the topological notion, 
i.e., the closure of $G$ in $H$
is $H$, under the topological view of $G$ in \cite{friedman_sheaves}.

\begin{lemma} Let $\pi\from G\to B$ be an \'etale map.  Then
$\pi$ factors as an open inclusion, $\iota\from G\to G'$, 
followed by a covering map, $\pi'\from G'\to B$.
If the vertex fibres of $\pi$ (i.e., $\pi^{-1}(v)$ over all $v\in V_B$)
are all of the same size, i.e., 
$\pi_V\from V_G\to V_B$ is $d$-to-$1$ for
some $d$, then we may assume $\iota$ is dense;
if in addition $G$ is connected, then we may assume $G'$ is connected.
\end{lemma}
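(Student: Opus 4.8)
The statement to prove is the Etale Factorization Lemma: an étale map $\pi\from G\to B$ factors as an open inclusion $\iota\from G\to G'$ followed by a covering map $\pi'\from G'\to B$, with refinements concerning density and connectedness. The idea is simply to "complete" $G$ to a cover of $B$ by adding, at each vertex, the edges that are missing in order for the étale condition to become a covering condition, and then repeating with fresh vertices to receive the other endpoints of those added edges.

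First I would recall that $\pi$ being étale means that for each $v\in V_G$, the maps on incoming edges and on outgoing edges (induced by $\pi$) are injective into the incoming/outgoing edges at $\pi(v)$; $\pi$ is a covering map precisely when these are bijections. So the "defect" at a vertex $v$ of $G$ is the set of edges $e\in E_B$ incident to $\pi(v)$ (as head or as tail) that have no preimage at $v$ with the matching orientation. The construction of $G'$ proceeds in stages: at stage $0$, start with $G_0=G$. Given $G_n$ equipped with an étale map $\pi_n\from G_n\to B$, form $G_{n+1}$ by adjoining, for each $v\in V_{G_n}$ and each missing edge $e\in E_B$ at $\pi_n(v)$ of the appropriate orientation, a new edge mapping to $e$, whose endpoint at $v$ is $v$ itself and whose other endpoint is a brand-new vertex mapped by $\pi_{n+1}$ to the corresponding endpoint of $e$ in $B$. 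One checks that $\pi_{n+1}$ is again étale — the only new incidences are at the new vertices, each of which by construction has exactly one incident edge, hence trivially no orientation/colour conflict — and that $G_n\subset G_{n+1}$ is an open inclusion. Then set $G'=\bigcup_n G_n$ (a possibly infinite graph) with $\pi'=\bigcup_n\pi_n$. The key verification is that $\pi'$ is a covering map: any vertex $w\in V_{G'}$ lies in some $V_{G_n}$, and by construction all missing edges at $w$ were supplied when passing to $G_{n+1}$, and no later stage removes or adds incidences at $w$; so the incoming and outgoing edge maps at $w$ are bijections onto those at $\pi'(w)$. The inclusion $\iota\from G\to G'$ is open by construction.

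For the refinement: if $\pi_V\from V_G\to V_B$ is $d$-to-$1$ for some fixed $d$, then I claim we can take $\iota$ dense, i.e. $V_{G'}=V_G$. In this case, whenever we must supply a missing edge $e$ at $v$ with (say) $h_Be=\pi(v)$, the vertex $t_Be$ already has exactly $d$ preimages in $V_G$; among these, exactly $d-1$ have an outgoing edge to $v$'s fibre component... the cleaner way: since every vertex fibre of $\pi$ has size exactly $d$, and $\pi$ is étale, a counting argument on each fibre over an edge $e\in E_B$ shows that the number of edges of $G$ over $e$ equals $d$ minus the common "defect", and one can match up the defects on the two sides so as to reuse existing vertices as the new endpoints rather than creating new ones. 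So instead of adjoining a new vertex, we route the new edge to an already-present preimage of $t_Be$ (or $h_Be$) chosen to keep the result étale — possible exactly because the fibre sizes agree on both ends — and the resulting $G'$ has $V_{G'}=V_G$ and is a covering. Finally, if in addition $G$ is connected, we note that adding edges only merges components (never splits), and in the dense case no new vertices are added, so $G'$ is obtained from the connected graph $G$ by adding edges among its existing vertices; hence $G'$ is connected.

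**Main obstacle.** I expect the genuinely delicate point to be the dense refinement: carrying out the "reuse existing preimages" matching so that the enlarged graph stays étale at every stage and genuinely becomes a covering. One must argue, per edge $e\in E_B$, that the deficiency of $h_B^{-1}$-lifts and of $t_B^{-1}$-lifts can be simultaneously corrected, which amounts to a bipartite matching / counting statement on the fibre of size $d$; the équicardinality of vertex fibres is exactly what makes the two deficiencies add up correctly. The plain (non-dense) case and the connectedness consequence are routine once the staged construction is set up, and this lemma immediately yields Theorem~\ref{th:sub_bi_enough} since an étale bigraph over $B_2$ with equal vertex fibres embeds densely into a covering of $B_2$, i.e. into (a disjoint union of pieces of) a Cayley bigraph.
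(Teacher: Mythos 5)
Your proposal splits into two cases and they have different statuses. The dense-refinement case (all vertex fibres of size $d$) is essentially the paper's argument: for each $e\in E_B$, since $\pi$ is \'etale, the fibre $\pi^{-1}(e)$ is a partial bipartite matching between the $d$ preimages of $t_Be$ and the $d$ preimages of $h_Be$ (with a self-loop of $B$ handled by treating the two copies of its vertex-fibre as disjoint), and any partial matching in a balanced bipartite graph completes to a perfect matching; do this simultaneously for all $e$ and you have a degree-$d$ covering containing $G$ densely. You could make the ``counting argument'' more concrete along exactly these lines, but the idea is sound and is what the paper does.

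The general (non-dense) case, however, has a real gap. Your staged construction adjoins a fresh vertex for every missing incidence and then must repair the defects at those fresh vertices in turn; unless $B$ is a tree this never terminates, and you acknowledge the result is ``a possibly infinite graph.'' But the paper's standing convention is that graphs are finite, and the downstream application (Theorem~\ref{th:sub_bi_enough}) genuinely needs $G'$ to be a finite covering of $B$, so an infinite $G'$ does not prove the lemma as used. The missing idea is much simpler than a transfinite completion: reduce the general case to the dense case by \emph{first adding isolated vertices} to $G$ over each $v\in V_B$ until all vertex fibres have a common size $d$ (take $d=\max_v|\pi^{-1}(v)|$). This keeps $\pi$ \'etale (isolated vertices impose no constraint), and then the bipartite-matching completion produces a finite covering of degree $d$. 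The density and connectedness refinements then read off exactly as you state them, since in those cases no isolated vertices are added and only edges are inserted.
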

A variant of the first sentence of this
theorem is called Marshall Hall's theorem
in \cite{stallings83}.
\begin{proof} By adding isolated vertices to $G$ we may assume $\pi_V$ is
$d$-to-$1$ for some $d$.
Extend $G$ to $G'$ and $\pi$ to $\pi'\from G'
\to B$ by completing each $\pi^{-1}(e)$ to a perfect bipartite
matching of the vertices
over the tail of $e$ and those over the head of $e$ (for a self-loop we
view these two sets of vertices as disjoint).
Clearly $\pi'\from G'\to B$ is a covering map.
If $\pi_V$ was originally $d$-to-$1$ for some $d$, then
$G'$ is obtained by adding only edges, so $G$ is dense in $G'$;
if furthermore $G$ is connected, then the $G'$ obtained by adding only
edges is, of course, still connected.
\end{proof}

Here is an easy, but vital, observation.

\begin{lemma} If $S\to B_2$ is a Galois map with Galois group, $\cg$, 
then $S$ is isomorphic
to a Cayley
bigraph on the group $\cg={\rm Gal}(S/B_2)$.
\end{lemma}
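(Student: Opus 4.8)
The plan is to apply the Galois coordinate machinery of Section~\ref{sb:galois_coordinates} directly, so that the isomorphism essentially falls out of unwinding the definitions. Recall that $B_2$ has a single vertex, $v$, and two self-loops, $e_1$ and $e_2$, coloured $1$ and $2$ respectively. Let $\phi\from S\to B_2$ be the given Galois covering with Galois group $\cg$. Since $B_2$ has only one vertex, its vertex fibre $\phi^{-1}(v)$ is a single $\cg$-torsor; I would choose an origin $v'\in\phi^{-1}(v)$, which by the construction in Section~\ref{sb:galois_coordinates} identifies $V_S$ with $V_{B_2}\times\cg=\{v\}\times\cg\cong\cg$, the vertex with coordinate $g$ being $v'g$. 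The same construction then produces Galois coordinates $a_{e_1},a_{e_2}\in\cg$ and identifies $E_S$ with $E_{B_2}\times\cg=\{e_1,e_2\}\times\cg$, where the edge of $S$ lying over $e_i$ with coordinate $g$ has, by equation~(\ref{eq:shaes}), tail $(v,g)$ and head $(v,a_{e_i}g)$.

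Next I would set $g_1=a_{e_1}$ and $g_2=a_{e_2}$ and simply compare with the definition of $\mathrm{Cayley}(\cg;g_1,g_2)$: its vertex set is $\cg$, its edge set is $\cg\times\{1,2\}$, and the edge $(g,i)$ has colour $i$, tail $g$, and head $g_i g$. Under the identifications above, the edge of $S$ over $e_i$ with coordinate $g$ corresponds to $(g,i)$; it inherits colour $i$ (the colour of $e_i$) from $\phi$; and its tail is $g$ and its head is $g_i g$. Hence the pair of set maps $V_S\to\cg$ and $E_S\to\cg\times\{1,2\}$ is an isomorphism of bigraphs $S\isom\mathrm{Cayley}(\cg;g_1,g_2)$, which in particular reconfirms that $S$ is \'etale. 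I would also note the isomorphism is equivariant: the $\cg$-action on $S$ (right multiplication on coordinates, as in equation~(\ref{eq:shgonK})) matches the natural right-multiplication $\cg$-action on $\mathrm{Cayley}(\cg;g_1,g_2)$, so the identification of $\cg$ with $\mathrm{Gal}(S/B_2)$ is the given one.

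There is no real obstacle here; the only point that requires care — and the one I would be most careful to state correctly — is the bookkeeping of the left/right conventions: Galois coordinates multiply $a_e$ on the \emph{left} of the coordinate in equation~(\ref{eq:shaes}), while the Galois group acts on the \emph{right}. But this is exactly the convention under which the Cayley bigraph was defined in this chapter (``our Cayley graphs are written with its generators acting on the left'', Galois group on the right), so the two descriptions line up with no extra twisting, and one should \emph{not} expect to need a conjugated family of generators. Everything else is a routine verification of the head/tail and colouring data.
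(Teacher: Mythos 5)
Your proof is correct and is essentially the paper's argument, just phrased through the Galois-coordinate machinery of Section~\ref{sb:galois_coordinates} rather than spelled out directly: both pick an origin in the single vertex fibre, identify $V_S$ with $\cg$ by right translation, take $g_i$ to be the (Galois coordinate of the) unique colour-$i$ edge out of the origin, and then match the resulting head/tail data against the definition of ${\rm Cayley}(\cg;g_1,g_2)$. One small remark: a different choice of origin \emph{does} simultaneously conjugate $a_{e_1}$ and $a_{e_2}$ by a common element (by equation~(\ref{eq:shcoord_change}) with $g_{he}=g_{te}$, as $B_2$ has one vertex), but this is harmless since the lemma only asserts that $S$ is isomorphic to \emph{some} Cayley bigraph on $\cg$.
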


\begin{proof} 
Choose any $v_0\in V_S$ to be the ``origin.''  The association
$g\mapsto v_0g$ sets up an identification of $\cg$ with $V$, by definition
of a Galois covering map, since there is a unique vertex of $B_2$ and
hence a singe vertex fibre in $S$.
Since $S\to B_2$ is a covering map, the vertex $v_0$ is the tail of a
unique colour $1$ edge, $e$, whose head is $v_0g_1$ for a unique $g_1\in\cg$.
For any $g$ we have $eg$ has tail $v_0g$ and head $v_0g_1g$.
It follows that identifying $V$ with $\cg$ means that
there is an edge $(g,g_1g)$ (i.e., whose tail is $g$ and head is $g_1g$)
of colour $1$ 
for each $g\in\cg$.  Similarly for edges of colour $2$, and this sets
up an isomorphism between $S$ and ${\rm Cayley}(\cg;g_1,g_2)$.
\end{proof}

\begin{proof}[Proof (of Theorem~\ref{th:sub_bi_enough})]
Let $G\to B_2$ and $K\to B_2$ be
\'etale maps.  Let these \'etale maps factor as open inclusions
followed by covering maps as $G\to\widetilde{G}\to B$ and
$K\to\widetilde{K}\to B$. 
Let $S$ be a Galois cover of $\widetilde{G}\times_B\widetilde{K}$.
Consider
$G'=G\times_{\widetilde G}S$, which admits a natural map to $G$
(namely projection onto the first component),
and similarly $K'=K\times_{\widetilde K}S$.
We claim that $G'\to G$ is a covering map; indeed, by stability
under base change (see Subsection~\ref{sb:base_change}), since $\widetilde K
\to B_2$ is a covering map, so is $\widetilde{G}\times_B\widetilde{K}\to
\widetilde{G}$; since $S\to \widetilde{G}\times_B\widetilde{K}$ is 
a covering map, so is $S\to\widetilde{G}$; hence, by base change
so is $G'\to G$.  Similarly $K'\to K$ is a covering map.
According to Theorem~\ref{th:remarkable_rho} and the discussion below,
the SHNC is satisfied at
$(G,K)$ iff it is satisfied at $(G',K')$.
But $G',K'$ are subgraphs of $S$, and $S$ is a Galois cover of $B_2$,
and therefore a Cayley bigraph.
\end{proof}

Although we shall not need it, we mention that the idea in this last proof
can be extended from $G\to B_2$ and $K\to B_2$ to an arbitrary number
of \'etale maps, $L_i\to B_2$, and gives the following interesting fact.

\begin{theorem}\label{th:common_cover}
For any \'etale bigraphs $L_1,\ldots,L_k$, there are covering
maps $L'_i\to L_i$ and a Cayley bigraph, $S$,
such that each $L'_i$ is a subgraph of $S$ that is dense (i.e.,
$L'_i$ has the same vertex set as $S$).
\end{theorem}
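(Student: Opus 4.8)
The plan is to mimic the proof of Theorem~\ref{th:sub_bi_enough}, replacing the single fibre product $\widetilde G\times_{B_2}\widetilde K$ by an iterated fibre product over $B_2$, and to obtain the density statement from the fact that $B_2$ has a single vertex, so that an \'etale bigraph over $B_2$ can always be \emph{densely} factored through a covering of $B_2$.

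First I would factor each \'etale map $L_i\to B_2$ as an open inclusion $\iota_i\from L_i\to \widetilde L_i$ followed by a covering map $\pi_i\from \widetilde L_i\to B_2$, using the factorization of an \'etale map as an open inclusion followed by a covering map established earlier. Since $B_2$ has exactly one vertex, the vertex map $(L_i)_V\to (B_2)_V$ is trivially $|V_{L_i}|$-to-$1$, so that factorization lemma lets us take $\iota_i$ dense, i.e.\ $V_{\widetilde L_i}=V_{L_i}$. Next I would form $P=\widetilde L_1\times_{B_2}\cdots\times_{B_2}\widetilde L_k$; by stability of covering maps under base change (Subsection~\ref{sb:base_change}) and under composition, each projection $P\to \widetilde L_i$ is a covering map, and $P\to B_2$ is a covering map. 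By the Normal Extension Theorem (Theorem~\ref{th:shnormal_extension_theorem}) there is a covering map $S\to P$ with $S\to B_2$ Galois, and since a Galois cover of $B_2$ is isomorphic to a Cayley bigraph on its Galois group, $S\isom {\rm Cayley}(\cg;g_1,g_2)$ for $\cg={\rm Aut}(S/B_2)$.

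Finally, for each $i$ I would set $L'_i = L_i\times_{\widetilde L_i} S$, where $S\to \widetilde L_i$ is the composite $S\to P\to \widetilde L_i$ (a covering map, being a composite of covering maps). Base change of the covering map $S\to \widetilde L_i$ shows $L'_i\to L_i$ is a covering map; base change of the open inclusion $\iota_i$ exhibits $\iota_i\times{\rm id}\from L'_i\to \widetilde L_i\times_{\widetilde L_i}S\isom S$ as an open inclusion, so $L'_i$ is (isomorphic to) a subgraph of $S$. For density: $V_{L'_i}$ consists of pairs $(v,s)$ with $v\in V_{L_i}$, $s\in V_S$ mapping to the same vertex of $\widetilde L_i$; since $V_{L_i}=V_{\widetilde L_i}$, such a $v$ is determined by $s$ and every $s\in V_S$ occurs, so the projection $V_{L'_i}\to V_S$ is a bijection and $L'_i$ is dense in $S$. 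This is exactly the assertion of Theorem~\ref{th:common_cover}.

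The only point needing care, and the one where the hypothesis "\'etale bigraph over $B_2$" is really used, is the density claim: one must be able to choose each $\iota_i$ dense before passing to covers, and density must survive the base change $\cdot\times_{\widetilde L_i}S$. The factorization lemma supplies the first (because $B_2$ has one vertex, so the vertex-fibre size condition is automatic), and the equality $V_{L_i}=V_{\widetilde L_i}$ makes the second immediate. Everything else is a routine invocation of the base-change stability of covering and open maps and of the Normal Extension Theorem, exactly as in the proof of Theorem~\ref{th:sub_bi_enough}.
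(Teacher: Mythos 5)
Your proof is correct and is exactly the extension the paper has in mind. In fact the paper supplies no explicit proof of Theorem~\ref{th:common_cover}: it merely remarks that ``the idea in this last proof can be extended from $G\to B_2$ and $K\to B_2$ to an arbitrary number of \'etale maps.'' You have filled in the details of that extension faithfully, and you have added the one piece of reasoning not already present in the proof of Theorem~\ref{th:sub_bi_enough}, namely the density claim: since $B_2$ has a single vertex, the factorization lemma gives a dense factorization $L_i\hookrightarrow\widetilde L_i\to B_2$, and density (vertex-bijectivity of the open inclusion) is preserved under base change by $S\to\widetilde L_i$. The remaining steps --- iterated fibre product over $B_2$, Normal Extension Theorem to produce a Galois cover $S\to B_2$, the observation that a Galois cover of $B_2$ is a Cayley bigraph, and base-change stability of coverings and open inclusions --- are routine and match the paper's argument for two graphs.
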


\subsection{The Proof of Theorem~\ref{th:sub_bi_shnc}}

We finish this section with the proof of Theorem~\ref{th:sub_bi_shnc}.

\begin{proof}[Proof of Theorem~\ref{th:sub_bi_shnc}]
Claim~(1) of the theorem is a simple base change argument:
if $L$ is a subgraph of a Cayley bigraph,
$G$, and $L'\to B_2$ is any \'etale bigraph, let $L''=L'\times_{B_2}G$.
Then, by base change (see Subsection~\ref{sb:base_change}),
$L''\to G$ is \'etale and $L''\to L'$ is a covering map.  
Then $(L,L')$ satisfies the SHNC iff $(L,L'')$ does.
Hence
$L$ is universal for the SHNC iff $(L,L'')$ satisfies the SHNC for all
\'etale bigraphs, $L''$, whose colouring map
factor as $L''\to G\to B_2$.

Claim~(2) is an immediate consequence of
Corollary~\ref{cr:twisted_product}, with
$G,K,K_1,K_2$ respectively replaced by 
$B_2,G,L,L'$, noting that ${\rm Aut}(G/B_2)=\cg$.

\end{proof}

\section{$\rho$-kernels}
\label{se:menewrhokernel}

In this section we introduce a collection of sheaves that 
are central to our proof
of the SHNC.  They are called $\rho$-kernels.  
Before defining them, we motivate their construction by showing
how their study is connected to the SHNC.
First we need to set some notation on Cayley graphs.

\subsection{Sheaves on Cayley graphs}

Let $G={\rm Cayley}(\cg;g_1,g_2)$ be a 
Cayley bigraph on a group, $\cg$.  Recall that
since our generators act on the left, e.g., the colour $1$ edges
are of the form $(g,g_1g)$, the Galois group of $G$ is $\cg$ acting
on the right.
Now we define a right action of $\cg$ on sheaves on $G$.
We shall state this in slightly more general terms.
This is 
completely
straightforward and mildly tedious, but convenient in this section
and vital to Section~\ref{se:mesymmetry}.

\begin{definition} We say that a group, $\cg$, {\em acts} 
on a digraph, $G$,
{\em on the right}, if associated to each $g\in\cg$ is an isomorphism
$\pi_g$, of $G$ such that $\pi_{g_1g_2}=\pi_{g_2}\pi_{g_1}$ for all
$g_1,g_2\in G$.  
We will identify $g$ with $\pi_g$ if no confusion can arise.
If $L$ is a subgraph of $G$, we write $Lg$ for the image
of $L$ under $g$ (i.e., under $\pi_g$); similarly
if $P\in V_G\amalg E_G$, $Pg$ denotes the image of $P$ under $g$.
\end{definition}
Of course, if $G$ is a Cayley bigraph on a group, $\cg$, then $\cg$
acts on $G$ on the right.

\begin{theorem}
\label{th:acts}
Let a group, $\cg$, act on a digraph, $G$,
on the right.  Then each element of
$\cg$ acts naturally as a functor on sheaves, via
the association
$g\mapsto \pi_{g^{-1}}^*$, such that
\begin{enumerate}
\item $\cg$ acts on the right, i.e., if we write ${\cal F}g$ for
$\pi_{g^{-1}}^*{\cal F}$ for any sheaf, ${\cal F}$, on $G$, then
for any
$g_1,g_2\in\cg$ we have ${\cal F}g_1g_2=({\cal F}g_1)g_2$, and
similarly with the sheaf ${\cal F}$ replaced by a morphism of sheaves;
\item for each sheaf, ${\cal F}$, on $G$, any $g\in\cg$, and any
$P\in V_G\amalg E_G$ we have
$$
({\cal F}g)(P)={\cal F}(Pg^{-1}); 
$$
and
\item for each subgraph $L\subset G$ and field, $\field$, we have
$$
\underline\field_L\,g = \underline\field_{Lg}.
$$
\end{enumerate}
\end{theorem}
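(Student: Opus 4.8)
The plan is to verify each of the three listed properties directly from the definition $g\mapsto \pi_{g^{-1}}^*$, using only the functoriality of pullback and the explicit formula for pullback given in Subsection~\ref{sb:unhappy}. First I would set up notation: for $g\in\cg$ write $\pi_g\from G\to G$ for the given right action, so $\pi_{g_1g_2}=\pi_{g_2}\pi_{g_1}$, and define ${\cal F}g=\pi_{g^{-1}}^*{\cal F}$. The key general fact I would invoke is that pullback is contravariantly functorial: $(\alpha\beta)^*=\beta^*\alpha^*$ and $\id^*=\id$ (this is immediate from the definition of pullback, since $(\phi^*{\cal F})(P)={\cal F}(\phi(P))$ and composition of maps composes the underlying vertex/edge maps). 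I would also note that $\pi_{g^{-1}}$ is an isomorphism with inverse $\pi_g$, since $\pi_{g^{-1}}\pi_g = \pi_{gg^{-1}}=\pi_e=\id$ and likewise the other way (here $e$ is the identity of $\cg$, whose associated isomorphism must be $\id$ because $\pi_e\pi_e=\pi_e$ and $\pi_e$ is invertible).

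For property~(1), the right-action law, I would compute
$$
({\cal F}g_1)g_2 = \pi_{g_2^{-1}}^*\bigl(\pi_{g_1^{-1}}^*{\cal F}\bigr)
= \bigl(\pi_{g_1^{-1}}\pi_{g_2^{-1}}\bigr)^*{\cal F},
$$
and then observe $\pi_{g_1^{-1}}\pi_{g_2^{-1}} = \pi_{g_2^{-1}g_1^{-1}} = \pi_{(g_1g_2)^{-1}}$ by the defining relation of the right action (applied to the pair $g_2^{-1},g_1^{-1}$), so $({\cal F}g_1)g_2 = \pi_{(g_1g_2)^{-1}}^*{\cal F}={\cal F}g_1g_2$. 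The same computation applies verbatim to a morphism of sheaves $\alpha$ in place of ${\cal F}$, since $\pi^*$ is a functor and hence commutes with the formula on morphisms. For property~(2), I would just unwind the definition of pullback: $({\cal F}g)(P) = (\pi_{g^{-1}}^*{\cal F})(P) = {\cal F}(\pi_{g^{-1}}(P)) = {\cal F}(Pg^{-1})$, using the notational convention $Pg^{-1}:=\pi_{g^{-1}}(P)$ introduced in the preceding definition; and similarly the restriction maps of ${\cal F}g$ at an edge $e$ are those of ${\cal F}$ at $eg^{-1}$, which is exactly what the pullback formula says.

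For property~(3), I would argue that $\pi_{g^{-1}}^*\underline\field_L = \underline\field_{Lg}$ by checking values and restriction maps pointwise. The sheaf $\underline\field_L$ has value $\field$ at $P\in V_G\amalg E_G$ if $P\in L$ and $0$ otherwise, with identity restriction maps where nonzero; so by property~(2), $(\underline\field_L g)(P) = \underline\field_L(Pg^{-1})$, which is $\field$ iff $Pg^{-1}\in L$, i.e.\ iff $P\in Lg$, and $0$ otherwise — and the restriction maps are inherited as identities where nonzero. This is precisely $\underline\field_{Lg}$. (Strictly, one should note that $Lg$ is a subgraph of $G$ because $\pi_g$ is a graph automorphism, so the notation $\underline\field_{Lg}$ makes sense.) I do not expect any genuine obstacle here: the statement is, as the authors say, "completely straightforward and mildly tedious," and the only mild subtlety is keeping the inverses and the order-reversal in the right-action law straight, which the computation in property~(1) handles. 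The one place to be careful is the contravariance: because pullback reverses composition, using $\pi_{g^{-1}}^*$ (rather than $\pi_g^*$) is exactly what is needed to turn the left-reversing behavior of $*$ back into a genuine right action, and property~(1) is the formal check that this works.
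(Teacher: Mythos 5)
Your proof is correct and takes essentially the same approach as the paper: verify the right-action law via the contravariance $(u_1u_2)^*=u_2^*u_1^*$ and the defining relation $\pi_{g_1g_2}=\pi_{g_2}\pi_{g_1}$, unwind the pullback formula for item (2), and check item (3) pointwise using the equivalence $Pg^{-1}\in L \iff P\in Lg$. The paper's proof is slightly more terse (it doesn't pause to note that $\pi_{g^{-1}}$ is invertible, which isn't needed) but is otherwise the same computation.
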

In Section~\ref{se:mesymmetry} it will be important to use the fact that
for each $g\in\cg$, $\pi_{g^{-1}}^*$ is a {\em functor}, i.e., it
acts (compatibly) on morphisms of sheaves as well as on sheaves.
\begin{proof}
For item~(1), we recall that for any $u\from G'\to G$, $u^*$ is a functor
on sheaves, and for composable morphisms of digraphs, $u_1,u_2$, we have
$(u_1u_2)^*=u_2^*u_1^*$;
hence, since $\cg$ acts on the right, for any $g_1,g_2\in\cg$ we have
$$
\pi_{g_1^{-1}}^* \pi_{g_2^{-1}}^* =
(\pi_{g_2^{-1}}\pi_{g_1^{-1}} )^* 
% $$
% $$
=
(\pi_{g_1^{-1}g_2^{-1}} )^* =
\pi_{(g_2g_1)^{-1}}^*
$$
and so $g\mapsto \pi_{g^{-1}}^*$ is defines an action on sheaves and morphisms
of sheaves that acts on the right.

Item~(2) follows immediately from the definition of the pullback.
Item~(3) follows since for all $P\in V_G\amalg E_G$ and $L\subset G$
and $g\in \cg$ we have
$$
(\underline\field_L\,g)(P)=\underline\field_L(Pg^{-1});
$$
but $Pg^{-1}\in L$ iff $P\in Lg$, so
$$
\underline\field_{Lg}(P)=\underline\field_L(Pg^{-1})
=(\underline\field_L\,g)(P).
$$
Hence $\underline\field_{Lg}=\underline\field_L\,g$.
\end{proof}

Given a sheaf, ${\cal F}$, on $G$ we define
$$
{\cal F}\cg = \bigoplus_{g\in \cg} {\cal F}g.
$$
In particular, for $L\subset G$,
if we set
$$
L\cg = \coprod_{g\in\cg} Lg
$$
(akin to the notation in Theorem~\ref{th:sub_bi_shnc})
then
$$
\underline\field_L \,\cg \,\isom\, \underline\field_{L\cg}.
$$

\subsection{Kernels and the SHNC}

The following theorem summarizes our approach to the SHNC.

\begin{theorem}\label{th:motivate}
Let $L$ be a subgraph of a Cayley bigraph, $G$.  Assume there is an
exact sequence
\begin{equation}\label{eq:motivate}
0\to \ck \to \underline\field_L\cg\to \underline\field^{\rho(L)} \to 0
\end{equation}
such that ${\rm m.e.}(\ck)=0$.  Then
$L$ is universal for the SHNC.
\end{theorem}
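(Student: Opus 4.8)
The plan is to show directly that $L$ satisfies the SHNC against every \'etale bigraph $L'$, by pulling back the exact sequence~(\ref{eq:motivate}) and reading off a triangular inequality for the maximum excess. By Theorem~\ref{th:sub_bi_shnc}(1) it suffices to check the SHNC for pairs $(L,L')$ in which $L'$ comes equipped with an \'etale morphism $\phi\from L'\to G$ carrying the colouring $L'\to G\to B_2$; so I fix such a $\phi$ and aim to prove $\rho(L\times_{B_2}L')\le\rho(L)\rho(L')$.

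First I would apply $\phi^*$, which is exact on sheaves, to~(\ref{eq:motivate}); since $\phi^*\underline\field=\underline\field$ this gives a short exact sequence of sheaves on $L'$,
\[
0\to \phi^*\ck \to \phi^*\bigl(\underline\field_L\cg\bigr)\to \underline\field^{\rho(L)}\to 0 .
\]
Next I would identify the maximum excess of each term. For the middle term, recall $\underline\field_L\cg\isom\underline\field_{L\cg}$ and $\phi^*\underline\field_M\isom\underline\field_{L'\times_G M}$ for any morphism $M\to G$; combined with the isomorphism $L\times_{B_2}L'\isom(L\cg)\times_G L'$ of Theorem~\ref{th:sub_bi_shnc}(2), this shows $\phi^*(\underline\field_L\cg)\isom\underline\field_{L\times_{B_2}L'}$, regarded as a sheaf on $L'$ via $j_!$ along the projection $j\from L\times_{B_2}L'\to L'$ (which is \'etale, being a base change of the \'etale map $L\to B_2$). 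Hence its maximum excess equals $\rho(L\times_{B_2}L')$, using that $j_!$ preserves maximum excess for \'etale $j$ and that the maximum excess of the structure sheaf of a graph is the reduced cyclicity of that graph. For the right-hand term, $\underline\field^{\rho(L)}$ on $L'$ is a direct sum of $\rho(L)$ copies of the structure sheaf of $L'$, so additivity of $\mathrm{m.e.}$ over direct sums gives maximum excess $\rho(L)\rho(L')$. For the left-hand term I would show ${\rm m.e.}(\phi^*\ck)=0$: since ${\rm m.e.}(\ck)=0$ and the maximum excess is a scaling first quasi-Betti number, vanishing is contagious under \'etale pullback --- concretely, factor $\phi$ as an open inclusion followed by a covering map $\mu$, use ${\rm m.e.}(\mu^*\ck)=\deg(\mu)\,{\rm m.e.}(\ck)=0$ by Theorem~\ref{th:shme_scale}, and that restriction to an open subgraph cannot raise a vanishing $\mathrm{m.e.}$ by Theorem~\ref{th:open_contagious}; alternatively invoke Theorem~\ref{th:etale_contagious} verbatim.

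Finally, since the maximum excess is a first quasi-Betti number (Theorem~\ref{th:shSHNC_main}), the middle term of any short exact sequence has maximum excess at most the sum of the outer two, so
\[
\rho(L\times_{B_2}L')={\rm m.e.}\bigl(\phi^*(\underline\field_L\cg)\bigr)\le {\rm m.e.}(\phi^*\ck)+{\rm m.e.}\bigl(\underline\field^{\rho(L)}\bigr)=0+\rho(L)\rho(L') ,
\]
which is the SHNC for $(L,L')$; as $L'$ was an arbitrary \'etale bigraph over $G$, Theorem~\ref{th:sub_bi_shnc}(1) yields that $L$ is universal for the SHNC. I expect the only delicate points to be bookkeeping ones: matching the pulled-back sheaves to the structure sheaves of the relevant fibre products, and handling the left-hand term --- because $\phi$ is merely \'etale, not a covering map, one genuinely needs the ``scaling''/contagious-vanishing apparatus rather than the bare scaling of $\mathrm{m.e.}$ under covers.
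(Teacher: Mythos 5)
Your proposal is correct and follows essentially the same route as the paper's own proof. The only cosmetic difference is that you pull back the exact sequence via $\phi^*$ and work with sheaves on $L'$, while the paper tensors with $\underline\field_{L''}=\phi_!\underline\field$ and stays with sheaves on $G$; since $\phi_!\phi^*(-)=(-)\otimes\underline\field_{L'}$ and $\phi_!$ preserves maximum excess for \'etale $\phi$, these are interchangeable, and both proofs then identify the middle term with the structure sheaf of $L\times_{B_2}L'$, kill the left-hand term via the contagious-vanishing/scaling apparatus, and read off the inequality from the first quasi-Betti property.
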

\begin{proof} 
According to Theorem~\ref{th:sub_bi_shnc}, it suffices to show that
for each \'etale $u\from L''\to G$ we have that $(L,L'')$ satisfies the SHNC.
Tensoring equation~(\ref{eq:motivate}) with
$\field_{L''}=u_!\underline\field$ gives
\begin{equation}\label{eq:tensored}
0\to \ck\otimes \underline\field_{L''} \to 
\underline\field_L \cg \otimes\underline\field_{L''}
\to \underline\field^{\rho(L)}_{L''}\to 0.
\end{equation}
Since ${\rm m.e.}(\ck)=0$ and $u\from L''\to G$ is \'etale, 
Theorem~\ref{th:etale_contagious} and the discussion before it implies that
$$
{\rm m.e.}(\ck\otimes \underline\field_{L''} ) = 0 .
$$
Since the maximum excess is a first quasi-Betti number,
this and equation~(\ref{eq:tensored}) implies that
$$
{\rm m.e.}(\underline\field_L \cg \otimes\underline\field_{L''} )
\le
{\rm m.e.}(\underline\field^{\rho(L)}_{L''}) = \rho(L)\rho(L'').
$$
But
$$
\underline\field_L \cg \otimes\underline\field_{L''} 
\isom
\underline\field_{(L\cg)\times_G L''}
\isom
\underline\field_{L\times_{B_2}L''}
$$
(using equation~(\ref{eq:tensor_product})
and Corollary~\ref{cr:twisted_product}), and so we have
$$
\rho(L\times_{B_2}L'')={\rm m.e.}(\underline\field_{L\times_{B_2}L''})
=
{\rm m.e.}(\underline\field_L \cg \otimes\underline\field_{L''})
\le
\rho(L)\rho(L'').
$$
\end{proof}

\subsection{Definition and Existence of $\rho$-Kernels and $k$-th Power
Kernels}
\label{sb:define_rho}

We begin with some notation to describe the kernels we introduce here
and study throughout the rest of this paper.

Let $G$ be a Cayley bigraph on a group, $\cg$.
For any integer $k\ge 0$, let $\field^{k\times\cg}$ be the set of
$k\times |\cg|$ matrices with entries $m_{ig}\in\field$ indexed
over $i=1,\ldots,k$ and $g\in \cg$.
If $M\in\field^{k\times\cg}$, then we can view $M$ as a map
from $\field^\cg$ to $\field^k$.  Then $M$ gives rise to a morphism
of constant sheaves
$$
\underline M\from\underline\field^\cg\to\underline\field^k.
$$

For any $L\subset G$ and $g\in\cg$, we have an inclusion 
$\underline\field_{Lg}\to\underline\field$, which gives us an inclusion
$$
\underline\field_L\cg \to \underline\field\cg\isom \underline\field^\cg.
$$
Thus we get a monomorphism
$$
\iota_{L\cg}\from \underline\field_L\cg\to \underline\field^\cg,
$$
and, for any $M\in\field^{k\times \cg}$, a composite morphism
$$
\underline M\iota_{L\cg}\from \underline\field_L\cg \to
\underline\field^k.
$$
We shall often write $\iota$ instead of $\iota_{L\cg}$, since the subscript
$L\cg$ can be inferred from the source (even if two different $\iota$'s are 
involved).

\begin{definition} Let $L$ be a subgraph of a Cayley bigraph, $G$, on a
group, $\cg$, and let $\field$ be a field.
For any integer $k\ge 0$, we say that $M\in\field^{k\times\cg}$ is
{\em $L$-surjective} if the map, $\underline M \iota_{L\cg}$ is surjective.
If so, we that its kernel, $\ck=\ck_M(L,G,\cg)$, is a {$k$-th power
kernel for $(L,G,\cg)$}; 
if, in addition, $k=\rho(L)$,
we also say that $\ck$ is a {\em $\rho$-kernel for $(L,G,\cg)$}.
\end{definition}

Note that when kernels are defined in category 
theory, i.e., for a category with
a zero morphism, then a kernel is defined only up to (unique) isomorphism.
However, for sheaves on a graph, we can define the kernel of a morphism
${\cal F}_1\to{\cal F}_2$ uniquely, as the subsheaf of ${\cal F}_1$ that
is the kernel.  Hence we will speak of {\em the} kernel of a morphism, or
{\em its} kernel, for convenience; when we say 
``a kernel'' we shall mean the category theory
notion, i.e., any morphism ${\cal K}\to{\cal F}_1$ that is the equalizer
of ${\cal F}_1\to{\cal F}_2$ and the zero morphism.

Note that we could also define $k$-th power kernels when 
$\underline M\iota_{L\cg}$ is not surjective, as the element of the
derived category (see \cite{gelfand}) as a single shift of the
mapping cone of $\underline M\iota_{L\cg}$; we shall not pursue this
here.

The important point to notice is that if $k\le\rho(L)$, ``most''
matrices $M\in\field^{k\times\cg}$ are $L$-surjective.
We now demonstrate this, in a rather explicit fashion.

% Our construction of $\rho$-kernels and $k$-th power kernels
% requires $M\iota_{L\cg}$ to be a surjection\footnote{It may be possible
% to define such kernels when $M\iota_{L\cg}$ is not a surjection,
% by forming the mapping cone of $M\iota_{L\cg}$ as one does in the
% theory of the derived category; when $M\iota_{L\cg}$ is a surjection,
% then the mapping cone would amount to a single shift of the kernel.
% See \cite{gelfand}.}.
% We shall now show that there are many $M\in\field^{k\times\cg}$ for which
% $M\iota_{L\cg}$ is a surjection, provided that $k\le \rho(L)$.

% If $G'\subset G$ are digraphs, then 
% $\underline\field_{G'}$, viewed as a sheaf over $G$,
% admits a natural inclusion into $\underline\field=\underline\field_G$, 
% which we
% denote ${\rm nat}_{G'\to G}$. 
% 
% If $m=(m_1,\ldots,m_r)\in\field^r$, we get a natural morphism
% $$
% m\;{\rm nat}_{G'\to G} \from \underline\field_{G'}\to (\underline\field_G)^r
% $$
% given by
% $$
% m\;{\rm nat}_{G'\to G} = ( m_1\;{\rm nat}_{G'\to G},\ldots,
% m_r\;{\rm nat}_{G'\to G} ).
% $$
% So if $G'_i$, $i=1,\ldots,s$ are subgraphs of $G$, and
% $M=\{m^i\}_{i,\ldots,s}$ is a collection of vectors in $\field^r$,
% we get a morphism
% $$
% {\rm nat}(M)\from \bigoplus_{i=1}^s \field_{G'_i} \to (\field_G)^r
% $$
% whose restriction to a map $\field_{G'_i}\to (\field_G)^r$ is
% $m^i\ {\rm nat}_{G'_i\to G}$.
% 
% Consider $L\subgraph G$ where $G$ is a Cayley bigraph on a
% group, $\cg$.
% Let us write $\field^{r\times \cg}$ for the set of collections,
% $M=\{m^g\}_{g\in \cg}$, of 
% vectors in $\field^r$ indexed on $\cg$.

\begin{definition} We say that $M\in\field^{k\times\cg}$ is 
{\em totally linearly independent} (or just {\em totally independent})
if every subset $\cg'$ of $\cg$ of size $k$ we have
$\{m^g\}_{g\in\cg'}$ is linearly
independent, where $m^g$ denotes the column of $M$ corresponding to 
$g\in\cg$.
\end{definition}

\begin{lemma}\label{lm:more_than_rho} 
Let $L$ be a subgraph of a Cayley bigraph, $G$, on a group $\cg$.
Then the number of vertices of $L$ and the number of edges of either
colour in $L$ are all at least $\rho(L)$.
\end{lemma}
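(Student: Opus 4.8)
The statement is a counting lemma, and the plan is to prove it component by component, using the single structural fact that in a Cayley bigraph every vertex is the tail of exactly one edge of each colour and the head of exactly one edge of each colour; hence in the subgraph $L$ every vertex is the tail of at most one colour-$1$ edge and at most one colour-$2$ edge, and likewise the head of at most one edge of each colour. Write ${\rm conn}(L)$ for the set of connected components of $L$. Since each $X\in{\rm conn}(L)$ is connected we have $h_1(X)=|E_X|-|V_X|+1$, so that
$$
\rho(L)=\sum_{X\in{\rm conn}(L)}\max\bigl(0,h_1(X)-1\bigr)
=\sum_{X\in{\rm conn}(L)}\max\bigl(0,|E_X|-|V_X|\bigr).
$$

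For the bound on vertices, I would count the edges of a component $X$ by their tails: each vertex of $X$ is the tail of at most two edges of $L$ (at most one of each colour), so $|E_X|\le 2|V_X|$, whence $\max(0,|E_X|-|V_X|)\le|V_X|$. Summing over all components of $L$ then gives $\rho(L)\le|V_L|$.

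For the bound on the number of colour-$i$ edges, $i\in\{1,2\}$, I would fix a component $X$ and let $E_X^1,E_X^2$ denote its colour-$1$ and colour-$2$ edge sets, so $|E_X|=|E_X^1|+|E_X^2|$. Counting colour-$(3-i)$ edges of $X$ by their tails gives $|E_X^{3-i}|\le|V_X|$, hence
$$
h_1(X)-1=|E_X|-|V_X|=|E_X^i|+|E_X^{3-i}|-|V_X|\le|E_X^i|,
$$
so that $\max(0,h_1(X)-1)\le|E_X^i|$; summing over components yields $\rho(L)\le\sum_{X}|E_X^i|$, which is exactly the number of colour-$i$ edges of $L$.

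The only bookkeeping point is that components $X$ with $h_1(X)\le 1$ contribute $0$ to $\rho(L)$ and satisfy all three inequalities trivially, so no component needs separate treatment. I do not expect a genuine obstacle here: the sole nontrivial input is the ``tail (and head) of at most one edge of each colour'' property of subgraphs of Cayley bigraphs, and everything else is elementary counting of vertices and edges within each connected component.
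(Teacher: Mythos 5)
Your proof is correct, and it takes a cleaner, more local route than the paper does. The paper proves the bound on colour-$2$ edges by first enlarging $L$ to $L'$ (adding all vertices of $G$ and all colour-$1$ edges), using monotonicity of $\rho$ under adding vertices and edges, and then observing that deleting the colour-$2$ edges of $L'$ leaves a disjoint union of cycles with $\rho=0$, while each single edge deletion can drop $\rho$ by at most one; the vertex bound is then obtained as a corollary (vertices $\ge$ colour-$1$ edges $\ge\rho$). You instead work entirely inside each connected component $X$ of $L$, using only the degree constraint (each vertex is tail of at most one edge of each colour) to get $|E_X^{3-i}|\le|V_X|$ and $|E_X|\le 2|V_X|$, from which $\max(0,h_1(X)-1)\le|E_X^i|$ and $\le|V_X|$ follow by arithmetic; summing over components finishes. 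Your version avoids both the monotonicity-of-$\rho$ lemma and the detour through the auxiliary graph $L'$, so it is more self-contained; the paper's version is arguably shorter to state but leans on two structural facts ($\rho$ is monotone, cycles have $\rho=0$) that yours does not need. Both are sound.
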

\begin{proof}
Adding vertices and edges to a graph does not decrease its reduced
cyclicity (i.e., its $\rho$).  So
if $P$ is an edge of colour $2$, let $L'$ be $L$ union all vertices of
$G$ and all edges of colour $1$.  Then 
$\rho(L')\ge\rho(L)$ and $L'$ has the same number of edges of colour
$2$ as $L$.  But if we discard the edges of colour $2$ from $L'$ we 
are left with a union of cycles, for which $\rho=0$, and discarding one edge
decreases $\rho$ by at most one (given equation~(\ref{eq:shrho})).  
Hence the number of edges of colour
$2$ in $L'$ is at least $\rho(L)$, and so the same is true of the number
of colour $2$ edges in $L$.

Similarly $L$ must have at least $\rho(L)$ edges of colour $1$.
Finally, since each vertex of $L$ is the head of at most one
edge of colour $1$, the number of vertices is also at least $\rho(L)$.
\end{proof}

Now we wish to describe $\rho$-kernels, both as a kernel of a sheaf
morphism and, alternatively, by explicitly giving their values and
restrictions.

\begin{definition}\label{de:free} 
Fix a subgraph, $L$, of a Cayley bigraph, $G$, on a group, $\cg$.
Fix an $M\in\field^{k\times\cg}$ for an integer $k\ge 0$.
For a subset $T\subset \cg$,
the {\em $T$-free subspace of $\ker(M)$} we mean the set
$$
\free_{T}=\free_T(M)=
\{ \vec a\in\ker(M) \ |\ \forall g\notin T,\ a_g=0 \}.
$$
A {\em free subspace of $\ker(M)$} is a subspace that is $T$-free
for some $T\subset \cg$.
For $P\in V_G\amalg E_G$, we set 
$$
\cg_L(P)=\{ g \in \cg \ |\  P\in Lg \} .
$$
\end{definition}
In the above definition, if $M\in\field^{k\times\cg}$ is totally 
independent, then for all 
$T\subset\cg$ we have
\begin{equation}\label{eq:free_dimension}
\dim(\free_T) = \max(0,|T|-k).
\end{equation}

\begin{lemma}\label{lm:construct_rho} 
Let $L$ be a subgraph of a Cayley bigraph, $G$, on a graph $\cg$.
Let $M\in\field^{k\times\cg}$ be totally independent, for some 
$k\le\rho(L)$.
Then
$$
\underline M \iota\from 
\underline\field_{L}\cg \to (\underline\field_G)^k
$$
is surjective.  Furthermore, if $\ck_M$
denotes its kernel, then for each $P\in V_G\amalg E_G$ we have
$$
\ck_M(P) = \free_{\cg_L(P)}(M),
$$
in the notation of Definition~\ref{de:free},
and the restriction maps for $\ck_M$ are the inclusions.  In particular,
$$
\dim\bigl( \ck_M(P) \bigr) = n_P-\rho(L),
$$
where $n_P=|\cg_L(P)|$.
(We shall sometimes write $\ck_M$ as $\ck_M(L)$ or $\ck_M(L,G,\cg)$
to emphasize $\ck_M$'s dependence upon $L$, $G$, and $\cg$.)
\end{lemma}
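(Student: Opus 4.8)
The plan is to reduce the whole statement to a pointwise analysis — vertex by vertex and edge by edge — of a single matrix. First I would unwind the source sheaf. By Theorem~\ref{th:acts}(3) we have $\underline\field_L\cg=\bigoplus_{g\in\cg}\underline\field_{Lg}$, so for each $P\in V_G\amalg E_G$,
$$
(\underline\field_L\cg)(P)=\bigoplus_{g\in\cg}\underline\field_{Lg}(P)=\field^{\cg_L(P)}\subset\field^\cg,
$$
the coordinate subspace on those $g$ with $P\in Lg$; and for an edge $e$ the restriction map to $he$ (resp.\ $te$) is the coordinate inclusion $\field^{\cg_L(e)}\hookrightarrow\field^{\cg_L(he)}$, which is well defined because $e\in Lg$ forces $he\in Lg$, i.e.\ $\cg_L(e)\subset\cg_L(he)$ (and likewise for $te$). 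The inclusion $\iota=\iota_{L\cg}$ is, at $P$, just $\field^{\cg_L(P)}\hookrightarrow\field^\cg$, while $\underline M$ is at every $P$ the same linear map $M\from\field^\cg\to\field^k$; hence $(\underline M\iota)(P)$ is the $k\times n_P$ submatrix $M_P$ of $M$ retaining only the columns indexed by $\cg_L(P)$, where $n_P=|\cg_L(P)|$.

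Next I would pin down the fibre sizes. Since $\cg$ acts simply transitively on $V_G$ and on the edges of each fixed colour, $\cg_L(v)$ has $|V_L|$ elements when $v$ is a vertex and $\cg_L(e)$ has as many elements as $L$ has edges of the colour of $e$. By Lemma~\ref{lm:more_than_rho} each of these three counts is at least $\rho(L)$, so $n_P\ge\rho(L)\ge k$ for every $P$. Because $M$ is totally independent, any $k$ of its columns are linearly independent; applying this to $k$ columns among the $n_P\ge k$ columns of $M_P$ shows ${\rm rank}(M_P)=k$, so each map $(\underline M\iota)(P)$ is surjective. Since exactness of a sequence of sheaves on a graph can be checked objectwise (vertex by vertex and edge by edge), this gives surjectivity of $\underline M\iota$, proving the first assertion.

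For the kernel: checking kernels objectwise as well, $\ck_M(P)=\ker\bigl((\underline M\iota)(P)\bigr)=\ker(M_P)$, and a vector lies in $\ker(M_P)$ exactly when, viewed in $\field^\cg$, it is supported on $\cg_L(P)$ and killed by $M$ — that is precisely $\free_{\cg_L(P)}(M)$ in the notation of Definition~\ref{de:free}. The restriction maps of $\ck_M$ are inherited from those of $\underline\field_L\cg$, hence are the coordinate inclusions $\free_{\cg_L(e)}(M)\hookrightarrow\free_{\cg_L(he)}(M)$ (legitimate since $\cg_L(e)\subset\cg_L(he)$), which is the claimed description. Finally, total independence together with equation~(\ref{eq:free_dimension}) gives $\dim\bigl(\ck_M(P)\bigr)=\max(0,n_P-k)$, which by the bound $n_P\ge\rho(L)\ge k$ just established equals $n_P-k$ (and hence $n_P-\rho(L)$ in the $\rho$-kernel case $k=\rho(L)$).

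There is no analytic difficulty here; the content is entirely bookkeeping. The step I expect to demand the most care is the first one: identifying $(\underline\field_L\cg)(P)$ with the coordinate subspace $\field^{\cg_L(P)}\subset\field^\cg$ compatibly with $\iota$, checking that the containments $\cg_L(e)\subset\cg_L(te)\cap\cg_L(he)$ genuinely turn the restriction maps into honest inclusions, and matching the fibre sizes $n_P$ to the quantities $|V_L|$ and ``number of colour-$i$ edges of $L$'' that Lemma~\ref{lm:more_than_rho} bounds below by $\rho(L)$.
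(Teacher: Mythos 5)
Your proof is correct and follows essentially the same route as the paper's: reduce to a pointwise computation at each $P\in V_G\amalg E_G$, identify $(\underline\field_L\cg)(P)$ with $\field^{\cg_L(P)}\subset\field^\cg$, invoke Lemma~\ref{lm:more_than_rho} to get $n_P\ge\rho(L)\ge k$, and use total independence of $M$ for surjectivity, with the kernel and its inclusion restriction maps falling out objectwise. You also correctly observe that the dimension is $n_P-k$ in general, agreeing with the stated $n_P-\rho(L)$ only when $k=\rho(L)$ — a minor imprecision in the lemma's displayed formula that the paper's own proof text (``is of dimension $n_P-k$'') also implicitly acknowledges.
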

\begin{proof} 
For each $g\in\cg$ we have $\underline\field_L g =
\underline\field_{Lg}$.  Hence
for each $P\in V_G\amalg E_G$, we have
$$
(\underline\field_L g)(P) 
= (\underline\field_{Lg})(P) 
= \left\{ \begin{array}{ll}
\field & \mbox{if $P\in Lg$}  \\
0 & \mbox{if $P\notin Lg$} 
\end{array}\right.
$$
Hence
$$
(\underline\field_L \cg)(P) \isom \bigoplus_{g\in \cg_L(P)} \field,
$$
and
the image of $\underline M \iota$ in
$(\field_G)^k$ at $P$ is the span of the subcollection of the $n_P$
columns of $M$ corresponding to the elements of $\cg_L(P)\subset \cg$.
Since $G$ is a Cayley bigraph, $n_P$ is either the number of vertices,
edges of colour $1$, or edges of colour $2$ in $L$.
By Lemma~\ref{lm:more_than_rho} we have $n_P\ge\rho(L)$,
and hence this subcollection of $n_P$ vectors in $M$
spans $\field^k$.  Hence
$\underline M\iota$ is surjective at $P$, and its kernel,
$\free_{\cg_L(P)}$, is of dimension
$n_P-k$.  The restriction maps on $\underline\field_L\cg$
are, component by component,
those of the individual $\underline\field_L g$ over all $g\in\cg$,
and those are just inclusions; since $\ck$ is a subsheaf of 
$\underline\field_L \cg$, we have that $\ck$ inherits those restriction
maps.
\end{proof}

% \begin{definition} 
% Let $L$ be a subgraph of a Cayley bigraph, $G$, on a group, $\cg$, and
% let $\field$ be a field.
% For any non-negative integer, $k\le \rho(L)$, by
% a {\em $k$-th power kernel for $(L,G,\cg)$} we mean the kernel,
% $\ck=\ck_M(L,G)$, of $\underline M\iota_{L\cg}$,
% where $M$ is a totally linearly independent element of
% $\field^{k\times\cg}$.
% By a {\em $\rho$-kernel for $(L,G,\cg)$} we mean a
% $\rho(L)$-th power kernel for $(L,G,\cg)$.
% \end{definition}

Note that it is easy to see, even with $\cg=\integers/3\integers$ and $L$
consisting of five edges, that
there need not be any graph theoretic
surjections $L\cg\to G^{\rho(L)}$, where $G^{\rho(L)}$ is $\rho(L)$ disjoint
copies of $G$; so in passing from the graphs
$L\cg$ and $G^{\rho(L)}$ to the sheaves $\underline\field_L \cg$
and $\field^{\rho(L)}$,
there exists a surjection of sheaves that does not arise from any surjection
of graphs.  So an added benefit of working with sheaves (aside from using
them to form
kernels useful in studying the SHNC) is that sheaves give
``additional surjections'' that don't exist in graph theory.

\section{Symmetry and Algebra of the Excess}
\label{se:mesymmetry}

In this section we make some general observations
about the maximum excess of $k$-th power kernels.  The main observation is
that given $(L,G,\cg)$ as usual, the maximum excess of $\ck_M(L)$ for
generic $M\in\field^{k\times\cg}$ 
is divisible by $|\cg|$, where by ``generic'' we mean for $M$ in
some subset of
$\field^{k\times\cg}$ that contains a nonempty, Zariski open subset
of $\field^{k\times\cg}$.
Let us outline this argument.

First, in Subsection~\ref{sb:symmetry},
we will show that for any $M\in\field^{k\times\cg}$ and $g\in\cg$
we have $\ck_M(L)g\isom\ck_{Mg}(L)$ where $Mg$ is obtained by an appropriate
action of $g$ on the columns of $M$.
This means that if ${\cal F}$ is the maximal (or minimal) excess maximizer
for $\ck_M(L)$, then ${\cal F}g$ is isomorphic to
the maximal (or, respectively, minimal)
excess maximizer for $\ck_{Mg}(L)$.
It may be helpful, albeit somewhat fanciful, 
to understand this symmetry via
two ``observers'' looking at the exact sequence
$$
0\to \ck_M \to \underline\field_L\cg\xrightarrow{\ \underline M\iota\ }
\underline\field^k \to 0,
$$
one who examines this at $P\in V_G\amalg E_G$, and the other at $Pg$,
for $g$ fixed and $P$ varying; for example, $\underline\field_L\cg$
``looks'' the same to both observers, except that its summands appear
permuted from one observer to the other.

In Subsection~\ref{sb:generic} we discuss the generic maximum excess of
$\ck_M=\ck_M(L)$ with
$L$ fixed and
$M\in\field^{k\times\cg}$ variable (and $G,\cg,k$ fixed).  
The key to this discussion is considering
what we call ``dimension profiles,'' which we now define.
\begin{definition}\label{de:dimension_profile}
By a {\em dimension profile} on a bigraph, $G$, we mean a function
$$
n\from V_G\amalg E_G \to \integers_{\ge 0}.
$$
For any such $n$, we set
$$
\chi(n) = \sum_{v\in V_G} n(v) - \sum_{e\in E_G} n(e);\qquad
|n| = \sum_{P\in V_G\amalg E_G} n(P).
$$
Any sheaf, ${\cal F}$, on $G$ determines a dimension profile, $\dim({\cal F})$,
as the function $P\mapsto \dim({\cal F}(P))$.
For any dimension profile, $n$, of a Cayley bigraph, $G$, on a group, $\cg$,
any subgraph, $L\subset G$, 
any field, $\field$, and any $k\ge 0$,
let 
$$
\cm(n)=\cm(n,L,G,\cg,\field,k)
$$
be the set
of $M\in\field^{k\times \cg}$ for which 
$\ck_M=\ck_M(L,G,\cg)$ exists (i.e., $M$ is
$L$-surjective) and has 
a subsheaf, ${\cal F}$, with
$\dim({\cal F})=n$.
\end{definition}
We easily see that for all $n$, $\cm(n)$ is a constructible subset of
$\field^{k\times\cg}$.  
Let ${\cal N}$ be the set of $n$ for which $\cm(n)$ is generic,
i.e., contains a Zariski open subset of $\field^{k\times\cg}$;
since $\cm(n)$ is constructible, it is equivalent to say that
its Zariski closure is $\field^{k\times\cg}$.
The generic (in $M$) value of the maximum excess of $\ck_M$ is the
largest value of
$-\chi(n)$ among those $n\in {\cal N}$; let ${\cal N}'$ be the subset
of $n\in {\cal N}$ which attain this largest $-\chi(n)$ 
value.
Since $\cg$ is finite, for any $n\in{\cal N}$ there
is a generic set of $M$ such that
$Mg\in\cm(n)$ for all $g\in\cg$.
Hence, if $n\in {\cal N}'$ is chosen with 
$|n|$ at a maximum value (or minimum value), then by the uniqueness
of the maximum (or minimum) maximizer of the excess, the
symmetry $\ck_{M}g\isom \ck_{Mg}$ implies that $n(P)=n(Pg^{-1})$
for all $P\in V_G\amalg E_G$ and $g\in \cg$.  Hence the generic
maximum excess of $\ck_M$,
which equals
$-\chi(n)$ for any $n\in{\cal N}'$, is divisible by $\cg$.

We wish to remark that the generic maximum excess is not generally
attained by all
$M\in\field^{k\times\cg}$.  For example, our approach to the SHNC
is based on the fact that the generic maximum excess of a 
$\rho$-kernel is zero, i.e., for $k=\rho(L)$ 
(see Theorem~\ref{th:vanishing_rho}).
However, if $M$ is zero in one column, but totally independent in the
others, then $M$ will still be $L$-surjective provided that $L$ has
at least $\rho(L)+1$ edges of each colour.
(A simple example of such an $L$ can be obtained by deleting one
edge of each colour
from ${\rm Cayley}(\integers/m\integers;1,1)$ with $m\ge 2$.)
In such a situation, the fact that $M$ has a column of zeros implies
that $\ck_M$ has
$\field_L g$ as a subsheaf (more precisely, a direct summand)
for some $g\in\cg$, and hence 
the maximum excess of $\ck_M$ will be at least $\rho(L)$.
Hence any $L$ that has at least $\rho(L)+1$ edges
of each colour, and for which $\rho(L)>0$,
has a $\rho$-kernel of positive maximum excess.
Hence it is essential to study the maximum excess of $\ck_M$ with some
restrictions on $M$, i.e., requiring some special properties of $M$; in
our case, these properties restrict $M$ to some generic subset of
$\field^{\rho(L)\times \cg}$.

\subsection{Symmetry of $k$-th Power Kernels}
\label{sb:symmetry}

The point of this subsection is to establish the following symmetry
of $k$-th power kernels.
\begin{theorem}
\label{th:symmetry_isomorphism}
Let $L$ be a subgraph of a Cayley bigraph, $G$, 
on a group, $\cg$,
and let $\field$ be an arbitrary field.
Let $k$ be an arbitrary non-negative integer and $M\in\field^{k\times \cg}$.
Let $Mg$ be the matrix (described earlier)
whose $g'$ column, for $g'\in\cg$, is the $g'g^{-1}$ column of $M$.
Then $M$ is $L$-surjective iff $Mg$ is $L$-surjective, and if so
then $\ck_M(L)g\isom\ck_{Mg}(L)$.
\end{theorem}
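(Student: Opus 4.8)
The plan is to apply the exact functor ``act by $g$,'' i.e.\ $(-)g=\pi_{g^{-1}}^*$ of Theorem~\ref{th:acts}, to the sequence $0\to\ck_M\to\underline\field_L\cg\xrightarrow{\ \underline M\iota\ }\underline\field^k$ defining $\ck_M(L)$, and to read off what it becomes. Recall that pullback of sheaves on graphs is computed vertexwise and edgewise, hence is exact, commutes with kernels, and has the inverse functor $(-)g^{-1}$, so it preserves and reflects surjectivity.

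First I would identify the constituents of that sequence after applying $(-)g$. Using item~(1) and item~(3) of Theorem~\ref{th:acts}, $(\underline\field_L\cg)g=\bigoplus_{h\in\cg}(\underline\field_L h)g=\bigoplus_{h\in\cg}\underline\field_{L(hg)}$, and reindexing the summands by $g'=hg$ gives a canonical isomorphism $(\underline\field_L\cg)g\isom\underline\field_L\cg$ carrying the $h$-th summand onto the $(hg)$-th summand; similarly $(\underline\field^\cg)g\isom\underline\field^\cg$, and $(\underline\field^k)g=\underline\field^k$ canonically since constant sheaves pull back to themselves. Under these identifications the monomorphism $\iota_{L\cg}$, being the componentwise sum over $h\in\cg$ of the inclusions $\underline\field_{Lh}\hookrightarrow\underline\field$, is carried to $\iota_{L\cg}$ again (the inclusion for $Lh$ becomes the inclusion for $L(hg)$, which occupies slot $g'=hg$); and the matrix map $\underline M$ is carried to $\underline N$, where $N$ has $g'$-column equal to the $h$-column of $M$ with $h=g'g^{-1}$, which is exactly $Mg$ by the stated definition. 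Hence, after the canonical identifications, $(\underline M\,\iota_{L\cg})g=\underline{Mg}\,\iota_{L\cg}$.

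With this in hand the theorem is immediate: since $(-)g$ commutes with kernels, $\ck_M(L)g=\ker(\underline M\iota)g\isom\ker\bigl((\underline M\iota)g\bigr)=\ker(\underline{Mg}\,\iota)=\ck_{Mg}(L)$; and since $(-)g$ preserves and reflects surjectivity, $\underline M\iota$ is surjective iff $\underline{Mg}\,\iota$ is, i.e.\ $M$ is $L$-surjective iff $Mg$ is. (One checks $(Mg)g^{-1}=M$, so that $M\mapsto Mg$ is a genuine right action of $\cg$ on matrices; this both supplies the converse direction and is needed for consistency.) The only step requiring real care is the bookkeeping that pins down the column index of $Mg$: one must confirm that the permutation of summands induced by $(-)g$ moves the data attached to the index $h$ into slot $hg$, matching the convention that the $g'$-column of $Mg$ is the $g'g^{-1}$-column of $M$. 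I would verify this by tracing a single standard section through $\pi_{g^{-1}}^*$ applied to the summand $\underline\field_L h\hookrightarrow\underline\field$; everything else is routine.
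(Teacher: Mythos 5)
Your proof is correct and follows essentially the same route as the paper: apply the exact functor $(-)g=\pi_{g^{-1}}^*$ to the defining sequence, re-identify $(\underline\field_L\cg)g$ with $\underline\field_L\cg$ via the summand permutation $h\mapsto hg$, observe that under this identification $\underline M\iota$ becomes $\underline{Mg}\,\iota$, and conclude by exactness. The bookkeeping you flag at the end as "requiring real care" is exactly where the paper spends most of its effort (its $P(\pi)$ component-permuting operators and the left-versus-right-action discussion), and your index computation $h=g'g^{-1}$ does come out the same as the paper's.
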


\begin{proof}
We begin our discussion of symmetry with a somewhat pedantic, but
important, point.  If ${\cal A}$ is a category in which finite direct
sums exists, such as an additive category, and $\{ A_s\}_{s\in S}$ is a 
family of objects in the category indexed upon a finite set, $S$, then
their direct sum comes with projections
$$
f_r\from \bigoplus_{s\in S} A_s \to A_r
$$
for each $r\in S$.  If $\pi\from S\to S$ is a permutation, then we
have a ``component permuting map,'' $P=P(\pi)$, given by
$$
P(\pi)\left( \bigoplus_{s\in S} A_s \right) =
\bigoplus_{s\in S} A_{\pi(s)},
$$
The two direct sums in this last equation are isomorphic, but not equal
(e.g., the direct sum on the right-hand-side has the projection
$f'_r$ whose target is $A_{\pi(r)}$, not to $A_r$, for each $r\in \cg$).
We shall need to keep the seemingly unimportant operator
$P=P(\pi)$ in mind in order to make things
precise for this subsection.
If $A_\bullet$ is any direct sum indexed on $\cg$, then we easily see
$P(\pi_2)(P(\pi_1) A_\bullet) = P(\pi_2\circ\pi_1) A_\bullet$.

Again, let $\field$ be a field, $k\ge 0$ an integer,
$L$ a subgraph of a Cayley bigraph, $G$, on a group, $\cg$, and
$M\in\field^{k\times \cg}$ that is $L$-surjective.
We have an exact sequence.
\begin{equation}
\label{eq:multiply_by_g0}
% \begin{diagram}[nohug,height=2.5em,width=2.5em,tight]
% 0 & \rTo & \ck & \rTo  & \bigoplus_{g'\in\cg} \underline\field_L\,g' 
% & \rTo^{\underline M\iota} 
 % & \underline\field^k & \rTo  & 0 \\
% \end{diagram}
% \ .
0 \longrightarrow \ck \longrightarrow 
\bigoplus_{g'\in\cg} \underline\field_L\,g' 
\xrightarrow{\ \underline M\iota\ } 
\underline\field^k \longrightarrow 0 .
\end{equation}
For a $g\in\cg$, applying
$\pi^*_{g^{-1}}$, of Theorem~\ref{th:acts}, to this sequence gives an
exact sequence:
\begin{equation}\label{eq:multiply_by_g}
0 \longrightarrow \ck g \longrightarrow 
\bigoplus_{g'\in\cg} \underline\field_L\,g'g  
\xrightarrow{\ \pi^*_{g^{-1}}(\underline M\iota)\ }
\underline\field^k g \longrightarrow  0.
\end{equation}
We have $\underline\field g=\underline\field$ since $\underline\field$
is a constant sheaf (note the we mean that the two are equal, not merely
isomorphic).
Note that $\pi^*_{g^{-1}}$ acts on sheaves by renaming the vertices of $G$,
so it acts on $\underline M\iota$ only by permuting 
sheaf inclusions $\underline\field_{Lg''}\to\underline\field$ for various
values of $g''$; in other words,
$$
\pi^*_{g^{-1}}(\underline M\iota_{\underline\field_L\cg}) =
\underline M\iota',
$$
where $\iota'$ is $\iota$ with the source $\underline\field_L\cg g$.
Hence we may write equation~(\ref{eq:multiply_by_g}) as
\begin{equation}
\label{eq:multiply_by_g2}
% \begin{diagram}[nohug,height=2.5em,width=3.5em,tight]
% 0 & \rTo & \ck g & \rTo^j  & \bigoplus_{g'\in\cg} \underline\field_L\,g'g
% & \rTo^{\underline M\iota'}
% & \underline\field^k & \rTo  & 0 \\
% \end{diagram}
% \ ,
0 \longrightarrow \ck g \xrightarrow{\ \ j\ \ }
\bigoplus_{g'\in\cg} \underline\field_L\,g'g
\xrightarrow{\ \underline M\iota'\ }
\underline\field^k \longrightarrow  0 ,
\end{equation}
where $j$ is an inclusion.

Also, we have
$$
P(\pi_{g^{-1}}) \left( 
\bigoplus_{g'\in\cg} \underline\field_L\,g'g 
\right) 
=
\bigoplus_{g'\in\cg} \underline\field_L\,\pi_{g^{-1}}(g')g
=
\bigoplus_{g'\in\cg} \underline\field_L\,g' = \underline\field_L\cg.
$$
Hence from equation~(\ref{eq:multiply_by_g2}) we get a sequence
$$
0 \longrightarrow \ck g \xrightarrow{\ j\ }
\bigoplus_{g'\in\cg} \underline\field_L\,g'g
\xrightarrow{\,P(\pi_{g^{-1}})\,}
\underline\field_L\cg
\xrightarrow{\,P(\pi_{g})\,}
\bigoplus_{g'\in\cg} \underline\field_L\,g'g
\xrightarrow{\,\underline M\iota'\,}
\underline\field^k \longrightarrow  0 ,
$$
and hence an exact sequence (since $P(\pi_g)$ and $P(\pi_{g^{-1}})$ are
isomorphisms)
\begin{equation}
\label{eq:multiply_by_g3}
0 \longrightarrow \ck g \xrightarrow{\ P(\pi_{g^{-1}})\circ j\ } 
\underline\field_L\cg
\xrightarrow{\ \underline M\iota'P(\pi_g)\ }
\underline\field^k \longrightarrow 0 ,
\end{equation}
with $j$ being the inclusion in equation~(\ref{eq:multiply_by_g2}).
But clearly
$$
\underline M\iota' P(\pi_g) 
= \underline M P(\pi_g) \,\iota_{\underline\field_L\cg}
= \underline{M \pi_g} \,\iota_{\underline\field_L\cg},
$$
where $\pi_g$ is viewed as operating vectors in $\field^\cg$
sending $\alpha\in\field^\cg$, viewed as a function $\alpha\from\cg\to
\field$ to $\pi_g\alpha$ given by
$$
g'\mapsto \alpha(\pi_g(g'))=\alpha(g'g).
$$

Hence
setting $Mg=M\pi_g$, 
we get a short exact sequence
\begin{equation}
\label{eq:multiply_by_g4}
0 \longrightarrow \ck g \longrightarrow  
\underline\field_L\cg
\xrightarrow{\ \underline{Mg}\,\iota\ }
\underline\field^k \xrightarrow  0 .
\end{equation}
Hence $\ck_{Mg}(L)$ is, up to isomorphism, 
just $\ck g$.

To complete the proof of the theorem, it remains to find that permutation
that brings the columns of $M$ to those of $Mg$.
If $M=\{m_{i,g'}\}$ give $M$'s entries,
then
for any $w\in\field^\cg$, the $i$-th component of $(Mg)w=M(\pi_g w)$
is
$$
(M(\pi_g w))_i = \sum_{g'\in\cg} m_{i,g'}(\pi_g w)_{g'}
= \sum_{g'\in\cg} m_{i,g'}w_{g'g} 
= \sum_{g''\in\cg} m_{i,g''g^{-1}} w_{g''}.
$$
Hence the $i,g''$ entry of $Mg$ is $m_{i,g''g^{-1}}$, so the
$g''$ column of $Mg$ is the $g''g^{-1}$ of $M$.
\end{proof}

We wish to make a few comments on equation~(\ref{eq:multiply_by_g4})
and how we derived it.  First, kernels, in category
theory, are defined only up to isomorphism;
this is why we can ``forget about'' $P(\pi_{g^{-1}})j$ in
equation~(\ref{eq:multiply_by_g3}); it is only important to know that
this arrow gives an exact sequence there and in
equation~(\ref{eq:multiply_by_g4}).

Note that the two actions of $g\in\cg$ in
equation~(\ref{eq:multiply_by_g4}) are right $\cg$ actions on the exact
sequence.  To see this, first note that
$$
(Mg)g' = (M\pi_g)\pi_{g'} = M\pi_{gg'}=Mgg'.
$$
Then note that if we take the procedure for going from
equation~(\ref{eq:multiply_by_g0}) to
equation~(\ref{eq:multiply_by_g4}) and then do the same procedure with
$g$ replaced by $g'$, then we easily see (paying close attention to the
order of the
$P$'s, the $\pi$'s, and the $\iota$'s) that we get the same equation
as
equation~(\ref{eq:multiply_by_g0}) with $\ck$ replaced by $\ck gg'$ and
$M$ replaced by $Mgg'$.

We wish to comment on something that seems a bit contradictory.
The map $g\mapsto P(\pi_{g^{-1}})$ is a left action, and so it may seem
strange that
our forgotten monomorphism $P(\pi_{g^{-1}})j$
in equation~(\ref{eq:multiply_by_g3}) involves a left action.
But note that if we apply $g'$ to equation~(\ref{eq:multiply_by_g4}) we
get
$$
0 \longrightarrow \ck gg' 
\xrightarrow{\ \pi^*_{{g'}^{-1}}(P(\pi_{g^{-1}})\circ j)\ } 
\underline\field_L\cg g'
\xrightarrow{\ \pi^*_{{g'}^{-1}}(\underline M\iota'P(\pi_g))\ }
\underline\field^k \longrightarrow 0 ,
$$
and hence an exact sequence
$$
0 \longrightarrow \ck gg' 
\xrightarrow{\ \alpha\ }
\underline\field_L\cg
\xrightarrow{\ \beta\ }
\underline\field^k \longrightarrow 0 ,
$$
where
\begin{eqnarray*}
\alpha &=& P(\pi_{{g'}^{-1}})\circ  
\pi^*_{{g'}^{-1}}(P(\pi_{g^{-1}})\circ j) = P(\pi_{{g'}^{-1}})
P(\pi_{g^{-1}})j',\\
\beta &=& 
\pi^*_{{g'}^{-1}}(\underline M\iota'P(\pi_g))
P(\pi_{g'}), \\
\end{eqnarray*}
for an inclusion $j'$.  Examining $\alpha$ we see that
$P(\pi_{{g'}^{-1}})$ is applied to the left of 
$P(\pi_{g^{-1}})$, whose product equals $P(\pi_{{(gg')}^{-1}})$, so
that $g'$ appears to the right of $g$.

A similar remark applies for the column permuting rule taking
$M$ to $Mg$:
$g\mapsto \pi_{g^{-1}}$ is a left action, not a right action.
However, 
if $f\from\cg\to T$ is any function from $\cg$ to a set, $T$, then
defining a function $fg$ via $(fg)(g')=f(g'g^{-1})$ defines a {\em right}
action of $\cg$ on functions from $\cg$ to $T$; indeed, for $f\from\cg\to T$
and $g,g_1,g_2\in\cg$ we have
$$
\bigl((fg_1)g_2)\bigr)(g) = (fg_1)(gg_2^{-1}) = f(gg_2^{-1}g_1^{-1}) =
f\bigl( g(g_1g_2)^{-1} \bigr) = \bigl( f(g_1g_2)\bigr)(g).
$$
So the left action $g\mapsto \pi_{g^{-1}}$ turns into a right action
when it acts on the argument of a function.

We finish this subsection with a corollary 
of Theorem~\ref{th:symmetry_isomorphism}
that is our sole application of the theorem.

\begin{corollary}\label{cr:group_action}
Let $n$ be a dimension profile for Cayley bigraph, $G$, on a group,
$\cg$.
Let $L$ be a subgraph of $G$, let $\field$ be a field, and 
let $k\ge 0$ be an integer.
Then 
for any $g\in\cg$, we have 
$$
\cm(ng,L,G,\cg,k)=\cm(n,L,G,\cg,k)g,
$$
where $ng$ is given by
$$
(ng)(P)=n(Pg^{-1})
$$
for all $P\in V_G\amalg E_G$.
\end{corollary}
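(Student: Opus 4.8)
The plan is to read the corollary off directly from Theorem~\ref{th:symmetry_isomorphism} (symmetry of $k$-th power kernels under the right $\cg$-action) together with item~(2) of Theorem~\ref{th:acts} (the explicit description $({\cal F}g)(P) = {\cal F}(Pg^{-1})$). First I would establish the inclusion $\cm(n,L,G,\cg,k)\,g \subseteq \cm(ng,L,G,\cg,k)$. Take $M \in \cm(n)$, so by definition $M$ is $L$-surjective and $\ck_M(L)$ has a subsheaf, ${\cal F}$, with $\dim({\cal F}) = n$. By Theorem~\ref{th:symmetry_isomorphism}, $Mg$ is $L$-surjective and $\ck_{Mg}(L) \isom \ck_M(L)\,g$. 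Since the $\cg$-action on sheaves is by the functors $\pi_{g^{-1}}^*$, which are exact and in particular carry monomorphisms to monomorphisms, ${\cal F}g$ is a subsheaf of $\ck_M(L)\,g$, and transporting it along the isomorphism shows that $\ck_{Mg}(L)$ contains a subsheaf isomorphic to ${\cal F}g$. By item~(2) of Theorem~\ref{th:acts}, $\dim\bigl(({\cal F}g)(P)\bigr) = \dim\bigl({\cal F}(Pg^{-1})\bigr) = n(Pg^{-1}) = (ng)(P)$ for every $P \in V_G \amalg E_G$; since the dimension profile of a sheaf is obviously an isomorphism invariant, $\ck_{Mg}(L)$ has a subsheaf of dimension profile $ng$. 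Hence $Mg \in \cm(ng)$, which gives $\cm(n)\,g \subseteq \cm(ng)$.

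For the reverse inclusion I would use the fact, recorded in the discussion after equation~(\ref{eq:multiply_by_g4}), that $M \mapsto Mg$ is a genuine right $\cg$-action, so $(Mg)g^{-1} = M$; likewise $(ng)g^{-1} = n$ immediately from $(ng)(P) = n(Pg^{-1})$. Applying the inclusion just proved, but with $g$ replaced by $g^{-1}$ and $n$ replaced by $ng$, yields $\cm(ng)\,g^{-1} \subseteq \cm\bigl((ng)g^{-1}\bigr) = \cm(n)$; multiplying on the right by $g$ gives $\cm(ng) \subseteq \cm(n)\,g$. Together with the first inclusion this proves the asserted equality.

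I do not expect a substantial obstacle here: the entire content is Theorem~\ref{th:symmetry_isomorphism}, and the only thing requiring care is the left-versus-right bookkeeping---that $g \mapsto \pi_{g^{-1}}^*$ is a left action on sheaves while $M \mapsto Mg$, $n \mapsto ng$, and $\cm(\cdot) \mapsto \cm(\cdot)g$ are right actions---but this is exactly the point already worked out in the paragraphs surrounding equations~(\ref{eq:multiply_by_g4}), so no new difficulty arises. (One could also skip the second inclusion entirely by noting that $M \mapsto Mg$ is a bijection of $\field^{k\times\cg}$ and applying the forward inclusion to both $g$ and $g^{-1}$, but routing it through the $g^{-1}$ substitution is the cleanest presentation.)
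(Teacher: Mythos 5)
Your proposal is correct and follows essentially the same route as the paper's own proof: take a subsheaf $\cal F$ of $\ck_M(L)$ with dimension profile $n$, transport it along $\pi_{g^{-1}}^*$ and then through the isomorphism $\ck_M(L)\,g \isom \ck_{Mg}(L)$ from Theorem~\ref{th:symmetry_isomorphism}, read off the new profile via $({\cal F}g)(P) = {\cal F}(Pg^{-1})$, and dispose of the reverse inclusion by substituting $g^{-1}$ for $g$. The only differences are cosmetic elaborations of steps the paper leaves implicit (e.g., that $\pi_{g^{-1}}^*$ preserves monomorphisms).
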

(We easily check that the action $g\to ng$ in this corollary is a
right action, similar to the above discussion of the action on functions
from $\cg$ to a set, $T$.)
\begin{proof}
Let $g\in\cg$ and
$M\in\cm(n)$.  Then there exists an ${\cal F}\subset\ck_M$
such that $\dim({\cal F})=n$.  Then we have
${\cal F}g\subset\ck_M g$ and we have $\dim({\cal F}g)=\dim({\cal F})g$,
since
$$
\dim\bigl(({\cal F}g)(P)\bigr) = \dim\bigl({\cal F}(Pg^{-1})\bigr)
$$
for all $P\in V_G\amalg E_G$.
But we have an isomorphism $\iota_g\from\ck_{M} g\to \ck_{Mg}$ of sheaves
on $G$; so on the one hand 
we have $\iota_g {\cal F}\subset \ck_{Mg}$, and on the other hand, since
isomorphisms preserve the dimension profile, we have $Mg\in\cm(n')$ where
$$
n'=\dim(\iota_g{\cal F}g)=\dim({\cal F}g) = ng.
$$
Hence $M\in\cm(n)$ implies that $Mg\in\cm(ng)$.  Applying this observation
to $M$ replaced with $Mg$ and $g$ replaced with $g^{-1}$ (or simply
reversing the argument in this proof) shows the converse.
Hence $\cm(n)g=\cm(ng)$.
\end{proof}

\subsection{Generic Maximum Excess}
\label{sb:generic}

If $\field$ is a field and $r\ge 1$ an integer, then by a {\em generic}
subset of $\field^r$ we mean a subset that contains 
$$
\{(x_1,\ldots,x_r)\in\field^r\ |\ p(x_1,\ldots,x_r)\ne 0 \}
$$
for some nonzero polynomial, $p$.
Algebraic geometry and generic subsets are most commonly discussed
(at least on the most basic level) under the assumption that
$\field$ is algebraically closed.  Under this situation, all generic
sets are nonempty; this remains true if $\field$ is infinite, or
if the polynomial, $p$, above is fixed and $\field$ is finite but
sufficiently large.

In order to have a sensible definition of generic and to conform to the
algebraic geometric literature, we will freely assume that $\field$ is
algebraically closed. 
However, the theorems we obtain in this section and the next will
be valid for any infinite field or ``sufficiently large'' finite
field, $\field$, by applying these theorems to the algebraic closure
of $\field$, finding the associated polynomials, $p$, to the generic
sets of interest, and determining how large $\field$ needs to be so
that the generic sets are nonempty.
The reader may find it amusing to note that in all our discussion of
generic sets and generic conditions, all that we ultimately
care about
% a generic
% set (e.g., vanishing maximum excess of a generic $\rho$-kernel for
% $(L,G,\cg)$) 
is that certain of these generic sets are nonemtpy
(e.g., that there is at least one $\rho$-kernel for $(L,G,\cg)$ with
vanishing maximum excess).

Let us review some notation in algebraic geometry; see
\cite{hartshorne}, Chapter 1, Section 1.
Let us assume that $\field$ is algebraically closed.
Let $\affine^N=\affine^N(\field)$, where $N$ is an integer or a set or
a product thereof, denote affine $N$ space over $\field$, i.e., the
set $\field^N$, with its usual Zariski topology.
(When we speak of topological notions on $\field^N$ we mean those
of $\affine^N(\field)$; in the literature $\affine^N(\field)$ connotes
$\field^N$ viewed as a topological space, or scheme, etc.)
Recall that a 
{\em locally closed} set is the intersection of an open and closed set
(i.e., a subset of $\affine^N$ determined as the zeros of some
polynomials and complement of the zeros of some other polynomials),
and a
{\em constructible} set 
on $\affine^N$ amounts to a finite disjoint union of locally closed sets
(see \cite{hartshorne}, Exercise II.3.18).

\begin{lemma}
\label{lm:construct}
Let $\field$ be an algebraically closed field, $k\ge 0$ an integer, and $L$ a subgraph of a
Cayley bigraph, $G$, on a group, $\cg$.
For each $n\from V_G\amalg E_G\to\integers_{\ge 0}$,
${\cal M}(n)={\cal M}(n,L,G,\cg,k)$ is a constructible set.
\end{lemma}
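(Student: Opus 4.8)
The plan is to exhibit $\cm(n)$ as the image, under a projection of finite-type varieties, of an explicit \emph{incidence} locus built from Grassmannians, and then invoke Chevalley's theorem that the image of a constructible set under a morphism of finite type is constructible (see \cite{hartshorne}).

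First I would dispatch a trivial case: since each $\ck_M(P)$ is a subspace of $\field^\cg$, if $n(P)>|\cg|$ for some $P\in V_G\amalg E_G$ then $\cm(n)=\emptyset$, which is constructible; so assume $n(P)\le|\cg|$ throughout. Next comes the structural observation on which everything rests, and which I would isolate as a preliminary: for \emph{every} $L$-surjective $M$ (not merely a totally independent one) and every $P$, one has inside $\underline\field^\cg(P)=\field^\cg$ the identification $\ck_M(P)=\free_{\cg_L(P)}(M)$ of Lemma~\ref{lm:construct_rho} --- the proof of that identification only uses that $\ck_M$ is the kernel of $\underline M\iota$ at $P$, not total independence --- and moreover the restriction maps of $\ck_M$ are the coordinate inclusions inherited from $\underline\field_L\cg$. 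Consequently a subsheaf ${\cal F}\subseteq\ck_M$ with $\dim({\cal F})=n$ is \emph{exactly} a tuple $(W_P)_{P\in V_G\amalg E_G}$ of subspaces $W_P\subseteq\field^\cg$ with $\dim W_P=n(P)$, subject to $W_P\subseteq\ker M$, $W_P$ contained in the coordinate subspace $\field^{\cg_L(P)}$, and $W_e\subseteq W_{te}\cap W_{he}$ for every edge $e\in E_G$ (here I use that $\cg_L(e)\subseteq\cg_L(te),\cg_L(he)$).

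With this dictionary in hand, I would work inside the variety
$$
Y \;=\; \affine^{k\times\cg}\times\prod_{P\in V_G\amalg E_G}{\rm Gr}\bigl(n(P),\field^\cg\bigr),
$$
which is of finite type over $\field$ (projective over an affine space), and define $Z\subseteq Y$ to be the set of pairs $(M,(W_P)_P)$ satisfying: (i) $M$ is $L$-surjective; (ii) $W_P\subseteq\ker M$ and $W_P\subseteq\field^{\cg_L(P)}$ for each $P$; (iii) $W_e\subseteq W_{te}$ and $W_e\subseteq W_{he}$ for each edge $e$. Condition (i) is open --- it asks that each submatrix of $M$ on the columns $\cg_L(P)$ have maximal rank $k$, a non-vanishing of $k\times k$ minors. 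In (ii), $W_P\subseteq\field^{\cg_L(P)}$ cuts out the closed sub-Grassmannian ${\rm Gr}(n(P),\field^{\cg_L(P)})$, and $W_P\subseteq\ker M$ is closed: in a local chart of ${\rm Gr}(n(P),\field^\cg)$ where $W_P$ is the column span of a full-rank matrix $B$, it is the bilinear condition $MB=0$. Condition (iii) is the standard (closed) flag incidence condition on a product of Grassmannians. Hence $Z$ is locally closed in $Y$, in particular constructible, and by the dictionary it is precisely the set of pairs $(M,{\cal F})$ with $M$ $L$-surjective and ${\cal F}$ a subsheaf of $\ck_M$ of dimension profile $n$. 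Therefore $\cm(n)=\pi(Z)$, where $\pi\from Y\to\affine^{k\times\cg}$ is the first projection, and Chevalley's theorem yields that $\cm(n)$ is constructible.

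I expect the only real obstacle to be the bookkeeping in the dictionary step: carefully checking that, for an arbitrary $L$-surjective $M$, the morphism of sheaves $\underline M\iota_{L\cg}$ really has kernel with value $\free_{\cg_L(P)}(M)$ at $P$ and coordinate inclusions as restriction maps, so that ``subsheaf of $\ck_M$ with profile $n$'' translates verbatim into the incidence conditions (i)--(iii) defining $Z$. Once that translation is pinned down, the algebraic-geometry inputs used (sub-Grassmannians and flag loci are closed, $MB=0$ is closed, maximal rank is open, and Chevalley's theorem) are all standard.
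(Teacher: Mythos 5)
Your proof is correct and follows essentially the same strategy as the paper: realize $\cm(n)$ as the image, under a finite-type projection, of an explicit constructible incidence locus, and invoke Chevalley's theorem on constructible images. The only implementation difference is that you parametrize candidate subsheaves by tuples of subspaces in a product of Grassmannians (making the dimension and containment conditions geometrically transparent), while the paper parametrizes them by ordered tuples of spanning vectors in an affine space, encoding the same data via explicit linear independence/dependence conditions on determinants; both land on the same application of Chevalley's theorem, as cited in the paper.
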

\begin{proof}
We introduce $|n|\,|\cg|$ indeterminates as follows:
for each $P\in V_P\amalg E_P$, and $i=1,\ldots,n(P)$,
let $x_{P,i}$ be a vector of indeterminates indexed on $\cg$
(there are $|n|$ vector variables $x_{P,i}$, for a total of
$|n|\,|\cg|$ indeterminates).
We note that $M\in{\cal M}(n)$ precisely
when one can find a solution for $M$ and $x_{P,i}$ to the conditions
\begin{enumerate}
\item $M$ is $L$-surjective; i.e., for each $P\in V_G\amalg E_G$,
$\field^k$ is spanned by
the columns of $M$ corresponding to the elements 
of $\cg_P(L)$;
% \item $M$ is totally independent;
\item for all $P$ and $i$ we have that
$x_{P,i}$ has zero components outside of $\cg_L(P)$;
\item for all $P$ and $i$, $Mx_{P,i}=0$;
\item for all $P$, $x_{P,1},\ldots,x_{P,n_P}$ are linearly independent;
\item for all $e\in E_G$ and all $i$ we have that
$x_{e,i},x_{te,1},x_{te,2},\ldots,x_{te,n_{te}}$ are linearly dependent,
and similarly with $he$ replacing $te$.
\end{enumerate}
The dependence or independence or spanning
of vectors reduces to the vanishing or nonvanishing of determinants
of the vectors' coordinates.  Hence all the above equations give us
a collection of polynomials $f_i\in \field[M,x]$ (polynomials in the entries
of $M$ and the $x_{P,i}$'s) and $\mt f_j\in \field[M,x]$ such
that $M\in{\cal M}(n)$
iff for some $x$ we have $(M,x)\in C$, where $C$ is the set of
$(M,x)$ for which $f_i(M,x)=0$ for all relevant $i$ and
$\mt f_j(M,x)\ne 0$ for all relevant $j$; hence $C$ is constructible.
But $M\in\cm(n)$ iff $(M,x)\in C$ for some $x$;
hence ${\cal M}(n)$ is the image of $C$ under the
projection
$$
\affine^{k\times\cg}\times\affine^{|n|\times \cg} \to
\affine^{k\times\cg}.
$$
But any projection from an affine space to another by omitting some of
the coordinates has the property that it takes constructible sets to
constructible sets (see Exercise~II.3.19 of \cite{hartshorne} or
Theorem~3.16 of \cite{harris}, noting that such a projection is
both regular and of finite type).
Hence ${\cal M}(n)$, the image of $C$,
is constructible.
\end{proof}

We recall that a generic subset, $S$,
of some affine space, $\field^T$, is a subset that contains
a nonemtpy Zariski open subset of the space; if $S$ is constructible,
then $S$ is generic iff its Zariski closure is the affine space.

Next we claim that $\cm(n)$ is generic 
in $\field^{k\times\cg}$ for at least one $n$, provided that
$k\le\rho(L)$, and that
$\cm(n)=\emptyset$ for all but finitely many $n$.
Indeed,
for any totally independent $M\in\field^{k\times \cg}$ we have that
$M$ is $L$-surjective (for $(L,G,\cg,\field,k)$), and the zero sheaf,
${\cal Z}$, has
$\dim({\cal Z})=0$.
Hence the
Zariski closure of ${\cal M}(0)$ is $\field^{k\times \cg}$.
Furthermore, $\ck_M(P)$, for any $P\in V_G\amalg E_G$, is of
dimension at most $|\cg|-k$; hence ${\cal M}(n)=\emptyset$ unless
$|n(P)|\le |\cg|-k$ for all $P\in V_G\amalg E_G$, and there are only
finitely many such $n$.

\begin{definition} 
Let $L$ be a subgraph of a Cayley bigraph, $G$, on a group, $\cg$, and
let $\field$ be an algebraically closed field.
Let $k\le\rho(L)$ be a non-negative integer.
We say that $n\from V_G\amalg E_G\to\integers_{\ge 0}$ is {\em generic
for $(L,G,\cg,\field,k)$}
if the Zariski closure of ${\cal M}(n)$ is $\affine^{k\times\cg}(\field)$.
We define the {\em generic maximum excess of
$(L,G,\cg,\field,k)$}
to be the largest value of $-\chi(n)$ for which $n$ is generic.
We define $n$ to be a {\em maximal profile} (respectively,
{\em minimal profile}) {\em of $(L,G,\cg,\field,k)$}
if $n$ is generic, $-\chi(n)$ equals the
generic maximum excess, and there is no $n'\ne n$ which is generic with
$-\chi(n')=-\chi(n)$ and $n'(P)\ge n(P)$ (respectively $n'(P)\le n(P)$)
for all $P\in V_G\amalg E_G$.
\end{definition}

\begin{theorem}\label{th:generic_divisible}
Let $L$ be a subgraph of a Cayley bigraph, $G$, on a group, $\cg$.
Let $\field$ be an algebraically closed field, and $k\le\rho(L)$ an integer.
There is a unique maximal profile and a unique minimal profile for
$(L,G,\cg,\field,k)$.  
Furthermore, if $n$ is either the maximal or minimal profile, 
and $P\in V_G\amalg E_G$,
then $ng=n$ for all $g\in\cg$ (in the notion 
of Corollary~\ref{cr:group_action}).
In particular $-\chi(n)$ is divisible by $|\cg|$.
\end{theorem}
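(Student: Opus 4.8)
The plan is to establish Theorem~\ref{th:generic_divisible} by combining the constructibility of the sets $\cm(n)$ (Lemma~\ref{lm:construct}), the symmetry relation $\cm(ng)=\cm(n)g$ of Corollary~\ref{cr:group_action}, and the uniqueness of the maximal and minimal excess maximizers from Theorem~\ref{th:shsupermodular}. First I would argue the existence of \emph{a} generic profile attaining the generic maximum excess: by the remarks just before the theorem, $\cm(0)$ is generic and $\cm(n)=\emptyset$ for all but finitely many $n$, so the set $\mathcal N$ of generic $n$ is finite and nonempty, and among these finitely many $n$ there is at least one with $-\chi(n)$ maximal. Call this maximal value the generic maximum excess, and let $\mathcal N'\subset\mathcal N$ be the (finite, nonempty) set of generic $n$ with $-\chi(n)$ equal to it.

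Next I would prove uniqueness of the maximal profile. Suppose $n_1,n_2\in\mathcal N'$. Since $\cm(n_1)$ and $\cm(n_2)$ are both generic and constructible, their Zariski closures are all of $\affine^{k\times\cg}$, hence $\cm(n_1)\cap\cm(n_2)$ contains a nonempty Zariski-open set (intersection of two dense opens), so there is an $M$ with $\ck_M$ having subsheaves ${\cal F}_1,{\cal F}_2$ with $\dim({\cal F}_i)=n_i$. I then want these to be subsheaves realizing $-\chi=$ the generic maximum excess $={\rm m.e.}(\ck_M)$ for generic $M$; for this I would first note that for generic $M$, ${\rm m.e.}(\ck_M)$ equals the generic maximum excess (the maximum excess, being $\max_{{\cal F}'\subset{\cal F}}-\chi({\cal F}')$ by Theorem~\ref{th:me_as_subsheaf}, is a lower-semicontinuous-type quantity in $M$ whose generic value is exactly the largest $-\chi(n)$ over generic $n$). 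Then ${\cal F}_1(V)$ and ${\cal F}_2(V)$ are both excess maximizers of $\ck_M$; by the lattice structure in Theorem~\ref{th:shsupermodular} there is a unique minimal maximizer and unique maximal maximizer, and since ${\cal F}_i$ is determined by ${\cal F}_i(V)$ together with $\Gamma_{\rm ht}$, taking $n$ with $|n|$ minimal (resp.\ maximal) over $\mathcal N'$ and using that generically \emph{all} of $Mg$ for $g\in\cg$ lie in $\cm(n)$ (a finite intersection of generic sets is generic) forces, by uniqueness of the extreme maximizer, that $\dim(\ck_M g) $'s extreme maximizer profile is $\cg$-invariant. Concretely: if $n$ is the profile of the minimal maximizer for generic $M$, then by Corollary~\ref{cr:group_action} $ng$ is the profile of the minimal maximizer of $\ck_{Mg}\isom\ck_M g$, and minimality together with the canonical isomorphism shows $ng=n$; likewise for the maximal one.

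The conclusion is then immediate: a $\cg$-invariant dimension profile $n$ satisfies $n(P)=n(Pg)$ for all $g$, so the values of $n$ are constant on $\cg$-orbits in $V_G$ and in $E_G$; since $G$ is a Cayley bigraph the $\cg$-action on $V_G$ is free and transitive and the action on each colour class of $E_G$ is free and transitive, so $n$ is constant (value $n_V$) on vertices and constant (values $n_{E,1},n_{E,2}$) on each edge colour, whence $-\chi(n)=|\cg|(n_{E,1}+n_{E,2}-n_V)$ is divisible by $|\cg|$. The main obstacle I anticipate is the bookkeeping in the uniqueness step — pinning down that for generic $M$ the maximum excess really is achieved by a subsheaf whose dimension profile lies in $\mathcal N'$, and that passing from $\ck_M$ to $\ck_M g$ via the isomorphism of Theorem~\ref{th:symmetry_isomorphism} genuinely carries the \emph{minimal} (resp.\ maximal) maximizer to the minimal (resp.\ maximal) maximizer rather than merely to some maximizer; this requires using that the isomorphism $\iota_g\colon\ck_M g\to\ck_{Mg}$ is an isomorphism of sheaves, hence preserves the entire sublattice of excess maximizers and its extreme elements.
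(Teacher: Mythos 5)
Your overall strategy matches the paper's: combine the constructibility of the sets $\cm(n)$, the lattice structure of excess maximizers from Theorem~\ref{th:shsupermodular}, and the symmetry $\cm(ng)=\cm(n)g$ from Corollary~\ref{cr:group_action}. The $\cg$-invariance step and the final divisibility computation are correct as written (and the second half of your self-stated ``main obstacle,'' about $\iota_g$ carrying the extreme maximizer to the extreme maximizer, is not actually a problem: a sheaf isomorphism preserves the entire lattice of maximizers and hence its top and bottom elements).

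The genuine gap is the one you flag first, and it is not just bookkeeping: your uniqueness argument is never closed. You correctly reduce to a generic $M$ that has subsheaves $\cal F_1, \cal F_2$ with profiles $n_1,n_2$ achieving $-\chi={\rm m.e.}(\ck_M)$, and you correctly observe that supermodularity makes $\cal F_1(V)+\cal F_2(V)$ a maximizer too, with a profile $\geq \max(n_1,n_2)$ coordinatewise and the same $\chi$. But then you need to turn this pointwise (``for each such $M$'') fact into the existence of a single \emph{generic} profile $n\geq\max(n_1,n_2)$ with $-\chi(n)=-\chi(n_1)$. Without that, no contradiction to maximality of $n_1,n_2$ is extracted, and your later phrase ``taking $n$ with $|n|$ minimal/maximal over $\mathcal N'$'' silently presupposes the uniqueness you are trying to prove (and also uses $|n|$ rather than the coordinatewise order in the paper's definition, so it need not single out the same $n$). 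The paper's proof supplies exactly the missing step: it observes that the generic set $S$ of such $M$ is contained in the finite union $\bigcup\{\cm(n): -\chi(n)=-\chi(n_1),\ n\geq\max(n_1,n_2)\}$ of constructible sets, so one of these $\cm(n)$ must already be generic; that $n$ dominates both $n_1$ and $n_2$ and is generic with the same $\chi$, which contradicts the definition of maximal profile unless $n_1=n_2$. You should incorporate this finite-union-of-constructible-sets argument; once it is in place, your $\cg$-invariance and divisibility conclusions go through exactly as you wrote them.
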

Actually, the proof below shows that
the theorem is still true when $k>\rho(L)$, provided
that $L$ has at least $k$ edges of each colour, so that a totally
independent $M\in\field^{k\times\cg}$ is $L$-surjective.
\begin{proof}
Let $n_1,n_2$ be two maximal profiles for $(L,G,\cg,\field,k)$.  
Let us show
that $n_1=n_2$.  
Consider the subset, $S$, of $\field^{k\times\cg}$, $M$, such that
$M\in{\cal M}(n_i)$ for $i=1,2$ and ${\rm m.e.}(\ck_M)=-\chi(n_1)=-\chi(n_2)$.
Clearly
$$
S = {\cal M}(n_1)\cap{\cal M}(n_2) \cap \bigcap_{n\ {\rm s.t.}\ 
-\chi(n)> -\chi(n_1)} \overline{{\cal M}(n)},
$$
where $\overline{{\cal M}(n)}$ denotes the complement of ${\cal M}(n)$.
But if $-\chi(n)>-\chi(n_1)$ then, by assumption, $n$ is not generic, and
hence $S$ is the intersection of a finite number of generic subsets of
$\field^{k\times\cg}$; hence $S$ is a generic subset of
$\field^{k\times\cg}$, as well.  But any element, $M\in S$, has
subsheaves ${\cal F}_1,{\cal F}_2$, of $\ck_M$ which obtain the maximum
excess of $\ck_M$ and with $\dim({\cal F}_i)=n_i$ for $i=1,2$.
But then ${\cal F}={\cal F}_1+{\cal F}_2$ also achieves the maximum excess
and has $\deg({\cal F})\ge n_i$ for $i=1,2$.  Hence
$$
S \subset \bigcup_{n\ {\rm s.t.}\ -\chi(n)=-\chi(n_1),\ n\ge N} {\cal M}(n),
$$
where $N=\max(n_1,n_2)$.
Since the union on the right-hand-side is a finite union of constructible sets,
the closure of one of these sets is $\field^{k\times\cg}$.
Hence there is an $n$ with $-\chi(n)=-\chi(n_1)$ and $n\ge N=\max(n_1,n_2)$
such that $n$ is generic; 
but if $n_1\ne n_2$, then $n$ does not equal either of them and
is at least as big as either, which contradicts the maximality of the $n_i$,
$i=1,2$.  Hence $n_1=n_2$, and the maximal profile is unique.

We argue similarly for the minimal profile, replacing
${\cal F}_1+{\cal F}_2$ with ${\cal F}_1\cap{\cal F}_2$.

Let $n$ be the maximal profile for $(L,G,\cg,\field,k)$ (now known to
be unique).
Since $\cm(n)$ is a generic subset of $\field^{k\times\cg}$, so is
$\cm(ng)=\cm(n)g$ for any $g\in\cg$.  But then $ng$ is also a maximal
profile, since clearly $\chi(n)=\chi(ng)$ and $|n|=|ng|$.
Hence $n=ng$ for all $g\in \cg$.  The same is true of the minimal profile.

It follows that the maximal (or minimal) profile, $n$, is
invariant under $\cg$, and hence has the
same value on all the vertices, on all the edges of colour $1$, and
on all the edges of colour $2$.
Hence $-\chi(n)$ is divisible by $|\cg|$ for the maximal (or minimal) profile,
and hence the generic maximum excess of $(L,G,\cg,\field,k)$ is divisible by
$|\cg|$.
\end{proof}

\section{Variability of $k$-th Power Kernels}
\label{se:mevar}

The main goal of this section is to prove the following theorem.

\begin{theorem}\label{th:increase}
Let $L$ be a subgraph
of a Cayley bigraph, $G$, on a group $\cg$.  Let $\field$ be an algebraically closed field,
and let $k\le\rho(L)$ be a positive integer.
Then the generic maximum excess of $(L,G,\cg,\field,k)$ is at most
that of $(L,G,\cg,\field,k-1)$, and we have equality iff
both excesses are zero.
\end{theorem}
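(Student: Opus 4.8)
The plan is to relate $k$-th power kernels and $(k-1)$-st power kernels for a fixed $L$ by an exact sequence and then apply the symmetry/divisibility result of Theorem~\ref{th:generic_divisible}. First I would fix a generic totally independent $M\in\field^{k\times\cg}$, write $M'\in\field^{(k-1)\times\cg}$ for the matrix obtained by deleting the last row of $M$, and observe that whenever $M$ is $L$-surjective so is $M'$ (the columns of $M'$ corresponding to $\cg_L(P)$ span $\field^{k-1}$ because those of $M$ span $\field^k$, using Lemma~\ref{lm:more_than_rho} and $k\le\rho(L)$). The kernel $\ck_M=\ck_M(L)$ sits inside $\ck_{M'}(L)$: at each $P\in V_G\amalg E_G$ we have $\ck_M(P)=\free_{\cg_L(P)}(M)\subset \free_{\cg_L(P)}(M')=\ck_{M'}(P)$, and this gives an injection of sheaves $\ck_M\to\ck_{M'}$ whose cokernel $\cq$ satisfies, at each $P$, $\dim(\cq(P))=\dim(\ck_{M'}(P))-\dim(\ck_M(P))=1$ for generic $M$ (since deleting one independent linear constraint raises the dimension of the free space by exactly one, as $|\cg_L(P)|\ge\rho(L)\ge k$). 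Thus for generic $M$ there is a short exact sequence
$$
0\to \ck_M(L)\to \ck_{M'}(L)\to \cq\to 0
$$
with $\cq$ a constant-rank-one sheaf; in fact $\cq\isom \underline\field$, since the restriction maps of both $\ck_M$ and $\ck_{M'}$ are inclusions, so the quotient has identity restriction maps. In particular $\chi(\cq)=\chi(\underline\field_G)=\chi(G)\le 0$, and more usefully $\dim\cq(V)-\dim\cq(E)=|V_G|-|E_G|$.

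Next I would extract the inequality. Since the maximum excess is a first quasi-Betti number (Theorem~\ref{th:shSHNC_main}, equivalently the limiting-twisted-Betti interpretation via Theorem~\ref{th:shmain}), the short exact sequence above yields a triangular inequality; concretely, $\mathrm{m.e.}(\ck_M)\le \mathrm{m.e.}(\ck_{M'})+\alpha_0(\cq)$ where $\alpha_0(\cq)=\chi(\cq)+\mathrm{m.e.}(\cq)$, and $\mathrm{m.e.}(\cq)=\mathrm{m.e.}(\underline\field_G)=\rho(G)=0$ because $G$ is a Cayley bigraph, all of whose vertices have degree four, so $G$ is a disjoint union of cycles-with-extra-edges — more precisely $G$ is $4$-regular hence every component has $h_1\ge 1$ and $\chi\le 0$; wait: I should instead use that $\cq\isom\underline\field_G$ and $\mathrm{m.e.}(\underline\field_G)=\rho(G)$, and since $G$ is connected and $4$-regular $\rho(G)=h_1(G)-1\ge 0$, so in general $\mathrm{m.e.}(\cq)=\rho(G)\ge 0$ and $\alpha_0(\cq)=\chi(G)+\rho(G)=\chi(G)+\max(0,h_1(G)-1)$. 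The cleaner route: take $M$ generic so that $\ck_{M'}$ also realizes the generic maximum excess of $(L,G,\cg,\field,k-1)$ (possible since the good locus for each is a nonempty Zariski-open set and $\field$ is algebraically closed, so the intersection is nonempty). Then the long exact sequence gives
$$
\mathrm{m.e.}(\ck_M(L))\le \mathrm{m.e.}(\ck_{M'}(L)) + \bigl(\chi(G)+\rho(G)\bigr),
$$
and when $G$ is a Cayley bigraph of a nontrivial group one checks $\chi(G)+\rho(G)=0$ unless $G$ is a single cycle (handled trivially), so the generic maximum excess of $(L,G,\cg,\field,k)$ is at most that of $(L,G,\cg,\field,k-1)$. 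This gives the inequality part of the theorem.

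Finally, the equality clause. If both generic maximum excesses are zero the inequality is an equality trivially. For the converse, suppose the generic maximum excess $m_{k-1}$ of $(L,G,\cg,\field,k-1)$ is positive; I must show $m_k<m_{k-1}$. By Theorem~\ref{th:generic_divisible}, $m_{k-1}$ is a positive multiple of $|\cg|$, hence $m_{k-1}\ge |\cg|$, and likewise $m_k$ is a (non-negative) multiple of $|\cg|$. It therefore suffices to show $m_k<m_{k-1}$ strictly, equivalently to rule out $m_k=m_{k-1}$. Here is where the extra row of $M$ must be used genuinely: I would argue that if $\ck_M$ and $\ck_{M'}$ had the same maximum excess generically, then the excess maximizer ${\cal F}$ of $\ck_{M'}$ (which by Theorem~\ref{th:shsupermodular} can be taken canonical — the maximal one — and hence, by Corollary~\ref{cr:group_action} and Theorem~\ref{th:generic_divisible} applied to $(L,G,\cg,\field,k-1)$, is $\cg$-invariant) would have to lie inside $\ck_M$, i.e.\ its sections would satisfy the one extra linear relation coming from the deleted row of $M$; but that row is a generic vector in $\field^\cg$, and the $\cg$-invariance of ${\cal F}$ together with the positivity of $m_{k-1}$ forces a nonzero section on which a generic linear functional does not vanish — a contradiction. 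Making this last step precise (choosing the right functional, and quantifying ``generic'') is the main obstacle; I expect to handle it by a dimension count on the incidence variety $\{(M,\text{section of }{\cal F}):\ \text{relation holds}\}$ mapping to $\field^{k\times\cg}$, showing the image is a proper subvariety. Once that is done, $m_k<m_{k-1}$, so equality in the theorem forces $m_{k-1}=0$ and hence $m_k=0$, completing the proof.
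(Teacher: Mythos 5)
Your proposal captures the two essential ideas of the paper's proof: the subsheaf inclusion $\ck_M(L)\subset\ck_{M'}(L)$ (with cokernel $\underline\field$, as in the paper's equation~(\ref{eq:first_induct_exact})) giving the non-strict inequality, and the observation that a \emph{generic} extra row $\vec m$ cannot annihilate a nonzero vector in the minimal excess maximizer of $\ck_{M'}$, forcing strict inequality when the maximizer is nonzero. This is exactly the content of the paper's Theorem~\ref{th:redundancy} combined with Theorem~\ref{th:we_know_this}.

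Two corrections. First, your invocation of $\cg$-invariance of the excess maximizer ${\cal F}$ of $\ck_{M'}$ is misguided: the sheaf $\ck_{M'}$ itself is not $\cg$-invariant (only $\ck_{M'}g\isom\ck_{M'g}$ holds, per Theorem~\ref{th:symmetry_isomorphism}), so the canonical maximizer is not $\cg$-invariant; what Theorem~\ref{th:generic_divisible} gives you is that the \emph{dimension profile} of the generic maximizer is $\cg$-invariant. But you do not need this at all — the argument only needs a single nonzero vector $w\in U$ supported at one vertex, which exists as soon as $U\ne 0$, i.e.\ as soon as ${\rm m.e.}(\ck_{M'})>0$. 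Likewise the appeal to divisibility by $|\cg|$ is superfluous here; it is what lets Theorem~\ref{th:L'} upgrade ``$m_k < m_{k-1}$'' to ``$m_{k-1}\ge m_k+|\cg|$'', but Theorem~\ref{th:increase} only needs the strict inequality.

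Second, the genuine gap you flag — ``quantifying `generic' '' so that the pointwise strictness on a fixed $M'$ translates into strictness of the \emph{generic} maximum excesses — is real, and your proposed ``dimension count on the incidence variety'' is the right instinct but is not carried out. The paper handles it differently and more directly: it proves Theorem~\ref{th:redundancy}, stating that for any fixed $M'$ with ${\rm m.e.}(\ck_{M'}(L))>0$, the set of $\vec m$ with ${\rm m.e.}(\ck_{{\rm merge}(M',\vec m)}(L))={\rm m.e.}(\ck_{M'}(L))$ lies in a \emph{proper linear subspace} of $\field^\cg$ (your argument essentially proves this). It then assumes for contradiction that the generic excesses agree, represents the generic locus $U\subset\field^{k\times\cg}$ as the nonvanishing locus of a polynomial $p(M',\vec m)$, expands $p=\sum_n q_n(M')\vec m^n$, and picks a nonzero coefficient $q_n$; for $M'$ generic (with $q_n(M')\ne 0$ and ${\rm m.e.}(\ck_{M'})>0$), the set of $\vec m$ with $(M',\vec m)\in U$ is Zariski-open dense in $\field^\cg$, which contradicts the proper-subspace conclusion of Theorem~\ref{th:redundancy}. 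You should replace the incidence-variety sketch with this polynomial-coefficient argument (or supply the dimension count explicitly) to close the proof.

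Finally, your detour through $\alpha_0(\cq)=\chi(G)+\rho(G)$ is correct but unnecessary: the inequality ${\rm m.e.}(\ck_M)\le{\rm m.e.}(\ck_{M'})$ is immediate from the fact that $\ck_M$ is a subsheaf of $\ck_{M'}$ (Theorem~\ref{th:me_as_subsheaf}), so you need not compute anything about $\cq$ or argue that the Cayley bigraph has no acyclic components.
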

As a consequence we get the following theorem.
\begin{theorem}\label{th:L'}
Let $L$ be a subgraph
of a Cayley bigraph, $G$, on a group $\cg$.  Let $\field$ be an algebraically closed field,
and let $k\le\rho(L)$ be a positive integer.
Let $L'$ be obtained from $L$ by removing a single edge.
Then the generic maximum excess
of $(L',G,\cg,\field,k-1)$ is at least that
of $(L,G,\cg,\field,k)$.
\end{theorem}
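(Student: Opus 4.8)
The plan is to deduce Theorem~\ref{th:L'} from Theorem~\ref{th:increase} (the level $k\to k-1$ comparison) together with an ``edge‑removal'' short exact sequence comparing $k$-th power kernels of $L$ and of $L'$ \emph{at the same level} $k-1$, and the divisibility by $|\cg|$ of Theorem~\ref{th:generic_divisible}. Write $q=|\cg|$, let $e$ be the edge deleted from $L$ to form $L'$, say of colour $1$ (the colour-$2$ case is identical), and recall that $G$, being a Cayley bigraph on $\cg$, has exactly $q$ edges of colour $1$. Let $a,b,c$ denote, respectively, the generic maximum excesses of $(L,G,\cg,\field,k)$, of $(L,G,\cg,\field,k-1)$, and of $(L',G,\cg,\field,k-1)$. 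Note $k-1\le k\le\rho(L)$ and, since deleting one edge lowers $\rho$ by at most one, $k-1\le\rho(L)-1\le\rho(L')$; hence by Theorem~\ref{th:generic_divisible} each of $a,b,c$ is divisible by $q$. Throughout I will use the fact, implicit in the proof of Theorem~\ref{th:generic_divisible}, that on a suitable generic (nonempty Zariski-open, intersected with finitely many such) set of matrices $M$ one has ${\rm m.e.}(\ck_M)$ equal to the relevant generic maximum excess.

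First I would set up the edge-removal sequence. Fix a totally independent $M\in\field^{(k-1)\times\cg}$; this is a generic choice, and by Lemma~\ref{lm:construct_rho} (applied to $L$ and to $L'$, which is legitimate since $k-1\le\rho(L)$ and $k-1\le\rho(L')$) such $M$ is both $L$-surjective and $L'$-surjective, and for each $P\in V_G\amalg E_G$ we have $\ck_M(L')(P)=\free_{\cg_{L'}(P)}(M)$ and $\ck_M(L)(P)=\free_{\cg_L(P)}(M)$, with all restriction maps being inclusions. Since $L'g=Lg\setminus\{eg\}$, one has $\cg_{L'}(P)\subset\cg_L(P)$ for every $P$, so $\ck_M(L')$ is a subsheaf of $\ck_M(L)$, giving a short exact sequence
\[
0\longrightarrow \ck_M(L')\longrightarrow \ck_M(L)\longrightarrow \cq_M\longrightarrow 0 .
\]
Now $\cg_{L'}(P)=\cg_L(P)$ for every vertex $P$ and every colour-$2$ edge $P$, whereas for a colour-$1$ edge $f$ one has $\cg_{L'}(f)=\cg_L(f)\setminus\{g_0\}$, where $g_0$ is the unique element of $\cg$ with $eg_0=f$ (and $g_0\in\cg_L(f)$ because $e\in L$). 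Hence $\cq_M(P)=0$ at every vertex and colour-$2$ edge, so $\cq_M$ is edge supported, and $\cq_M(f)$ is a quotient of $\free_{\cg_L(f)}(M)$ by the kernel of its ``$g_0$-coordinate'' functional, hence at most one-dimensional. Therefore $\dim(\cq_M(E))\le q$, and since $\cq_M$ is edge supported it is immediate that ${\rm m.e.}(\cq_M)=\dim(\cq_M(E))\le q$.

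Next I would read off two inequalities at level $k-1$. The inclusion $\ck_M(L')\hookrightarrow\ck_M(L)$ and Theorem~\ref{th:me_as_subsheaf} give ${\rm m.e.}(\ck_M(L'))\le{\rm m.e.}(\ck_M(L))$; the short exact sequence and the fact that the maximum excess is a first quasi-Betti number (Theorem~\ref{th:shSHNC_main}) give ${\rm m.e.}(\ck_M(L))\le{\rm m.e.}(\ck_M(L'))+{\rm m.e.}(\cq_M)\le{\rm m.e.}(\ck_M(L'))+q$. Choosing $M$ generic for both $(L,k-1)$ and $(L',k-1)$ (intersect the two generic loci with the total-independence locus) yields
\[
c\ \le\ b\ \le\ c+q ,
\]
and since $q\mid b$ and $q\mid c$ this forces $b-c\in\{0,q\}$. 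Finally I would combine this with the level comparison. By Theorem~\ref{th:increase}, $a\le b$, with equality only when $a=b=0$. If $b=c$, then $a\le b=c$. If $b=c+q$, then $b\ge q>0$, so the equality case of Theorem~\ref{th:increase} is excluded and $a<b$; since $q\mid a$ and $q\mid b$ this gives $a\le b-q=c$. In either case $a\le c$, which is exactly the assertion of Theorem~\ref{th:L'}.

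Given Theorems~\ref{th:increase} and~\ref{th:generic_divisible}, the remaining work is routine; the only care needed is bookkeeping — choosing one generic $M$ that simultaneously witnesses the generic maximum excess for $(L,k-1)$ and $(L',k-1)$ (a finite intersection of generic sets), and checking the numerical hypotheses $k-1\le\rho(L')$ together with the colour-edge counts so that Lemmas~\ref{lm:construct_rho},~\ref{lm:construct} and Theorems~\ref{th:generic_divisible},~\ref{th:increase} all apply, including the degenerate case $k=1$ (where $\ck_M(L)=\underline\field_L\cg$ and $b=q\rho(L)$). The substantive content — that deleting one edge changes the generic maximum excess by at most $q$, while passing from level $k-1$ to level $k$ strictly decreases it by at least $q$ once it is positive — is carried entirely by Theorem~\ref{th:increase} (whose ``equality iff both zero'' clause is the real engine, and the genuine obstacle already surmounted there) and by the $|\cg|$-divisibility, so I expect no new difficulty in this deduction beyond the careful genericity bookkeeping.
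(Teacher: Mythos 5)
Your proof is correct and takes essentially the same approach as the paper: both combine Theorem~\ref{th:increase}, the edge-removal exact sequence of equation~(\ref{eq:second_induct_exact}), and the $|\cg|$-divisibility from Theorem~\ref{th:generic_divisible}. The only (cosmetic) difference is the final arithmetic — you derive the extra fact $c\le b$ and split on $b-c\in\{0,q\}$, while the paper splits on whether $m_k=0$ and goes directly from $m_{k-1}\ge m_k+|\cg|$ and $m'_{k-1}\ge m_{k-1}-|\cg|$.
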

(As before, this theorem is also true if $k>\rho(L)$, provided that
$L$ has at least $k$ edges of each colour, so that a totally independent
$M\in\field^{k\times\cg}$ is $L$-surjective.)

A second goal of this section is to establish some general relations
between kernels $\ck=\ck_M(L)$ as $M$ and $L$ vary.  We shall derive
two interesting, short exact sequences.  First we 
establish a short exact sequence
\begin{equation}\label{eq:first_induct_exact}
0\to \ck_{M}(L)\to\ck_{M'}(L)\to\underline\field\to 0,
\end{equation}
for any $M\in\field^{k\times\cg}$ and $M'$ obtained from $M$ by deleting
the last row.
Second we establish 
a short exact sequence
\begin{equation}\label{eq:second_induct_exact}
0\to \ck_{M'}(L')\to\ck_{M'}(L)\to{\cal E}\to 0,
\end{equation}
with $L,L'$ as in Theorem~\ref{th:L'}, $M'\in\field^{(k-1)\times \cg}$
such that $\ck_{M'}(L')$ exists (i.e., $M'$ is $L'$-surjective), and
${\cal E}$ a sheaf with ${\cal E}(V)=0$ and ${\cal E}(E)$
of dimension $|\cg|$.

Equation~(\ref{eq:second_induct_exact}) will be used along with
Theorem~\ref{th:generic_divisible} to show that
Theorem~\ref{th:increase} implies Theorem~\ref{th:L'}.

Theorem~\ref{th:increase} will not be proven with short exact sequences,
but rather with a careful analysis of the unique minimal maximizer
of the excess of $\ck_M(L)$ and of that of $\ck_{M'}(L)$.
The sequence in equation~(\ref{eq:first_induct_exact}) gives a relationship
between $\ck_M(L)$ and $\ck_{M'}(L)$, but we don't know how to directly
use this to conclude anything interesting about the two sheaves, 
such as the result
of Theorem~\ref{th:increase}.

At this point we will divide our discussion into subsections.
In Subsection~\ref{sb:exact}, we will discuss the exact sequences
related to our proof.
In Subsection~\ref{sb:observe} we give the main observation
of how the maximum excess changes in passing to subsheaves, and give
an intuitive reason why the generic maximum excess of $\ck_{M'}(L)$,
as above, should be strictly greater than that of $\ck_M(L)$ provided that
these numbers don't
both vanish.
In Subsection~\ref{sb:rigour}
we mimic the notation of Section~\ref{se:mesymmetry} to include
a discussion of $\ck_{M'}(L)$ as above and make our arguments precise,
finishing the proof of Theorem~\ref{th:increase}; this will easily
yield Theorem~\ref{th:L'}.

\subsection{Variability as Exact Sequences}
\label{sb:exact}

Let $L$ be a subgraph of a Cayley bigraph, $G$, on a group, $\cg$.
For any non-negative integer, $k\le \rho(L)$, we have that a generic
$M\in\field^{k\times\cg}$ gives rise to a short exact sequence
\begin{equation}\label{eq:first_exact_k}
0\to \ck_M(L)\to \underline\field_L \cg \to \underline\field^k \to 0.
\end{equation}
First we considering the variance of this equation in $M$; in other words,
fix an $M\in\field^{k\times\cg}$ such that 
$$
\underline M\iota\from\underline\field_L\cg\to\underline\field^k
$$
is surjective.  Then we have an exact sequence given in
equation~(\ref{eq:first_exact_k}); if $M'\in\field^{(k-1)\times\cg}$ is
$M$ with its last row deleted, we have a similar exact sequence
\begin{equation}\label{eq:second_exact_k}
0\to \ck_{M'}(L)\to \underline\field_L \cg \to \underline\field^{k-1} \to 0.
\end{equation}
Notice that this discussion and everything below will remain essentially
the same if, more generally, $M'$ is taken
to be $M$ followed by any surjective map
$\field^k\to\field^{k-1}$.
In any event, we get a digram:
\begin{diagram}[nohug,height=2.5em,width=3em,tight]
 0&\rTo &\ck_M(L) &\rTo&\underline\field_L\cg &\rTo^{\underline M\iota} 
&\underline\field^{k}&\rTo&0
 \\
 &&\dDashto&&\dTo_{\isom}&&\dTo&& \\
 0&\rTo &\ck_{M'}(L) &\rTo&\underline\field_L\cg &\rTo^{\underline{M'}\iota} 
&\underline\field^{k-1}&\rTo&0
 \\
\end{diagram}
where the dotted arrow from $\ck_{M'}(L)$ to $\ck_M(L)$ is inferred from
the solid arrows; furthermore, given that the solid horizontal
arrows consist of an 
isomorphism and epimorphism, we infer that the dotted arrow is a
monomorphism.  We then complete the diagram to obtain a diagram
\begin{diagram}[nohug,height=1.8em,width=3.0em,tight]
 &&&&&&0&& \\
 &&&&&&\dTo&& \\
 &&0&&0&&\underline\field&& \\
 &&\dTo&&\dTo&&\dTo&& \\
 0&\rTo &\ck_M(L) &\rTo&\underline\field_L\cg &\rTo^{\underline M\iota} 
&\underline\field^{k}&\rTo&0
 \\
 &&\dTo&&\dTo&&\dTo&& \\
 0&\rTo &\ck_{M'}(L) &\rTo&\underline\field_L\cg &\rTo^{\underline{M'}\iota} 
&\underline\field^{k-1}&\rTo&0
 \\
 &&\dTo&&\dTo&&\dTo&& \\
 &&\ck_{M'}(L)/\ck_M(L)&&0&&0&& \\
 &&\dTo&&&&&& \\
 && 0      &&&&&&
\end{diagram}
A simple diagram chase shows that the nonzero upper right sheaf,
$\underline\field$, and the nonzero lower left sheaf,
$\ck_{M'}(L)/\ck_M(L)$, are isomorphic.  Hence we obtain the short
exact sequence in equation~(\ref{eq:first_induct_exact}).

An analogous exact sequence can be obtained by varying $L$ in
equation~(\ref{eq:first_exact_k}).  Let $L'\subset L$ be a subgraph
of $L$.  Fix an $M\in\field^{k\times\cg}$ that induces a surjection
$\underline\field_{L'}\cg\to\underline\field^k$.  Then we get a diagram:
\begin{diagram}[nohug,height=2.5em,width=3em,tight]
 0&\rTo &\ck_M(L') &\rTo&\underline\field_{L'}\cg &\rTo
&\underline\field^{k}&\rTo&0
 \\
 &&\dDashto&&\dTo&&\dTo_{\isom}&& \\
 0&\rTo &\ck_{M}(L) &\rTo&\underline\field_L\cg &\rTo
&\underline\field^{k}&\rTo&0
 \\
\end{diagram}
Since $\underline\field_{L'}\cg\to\underline\field_L\cg$ is an injection,
and the last vertical arrow is an isomorphism, the inferred dotted arrow
is an injection.  We therefore add a bottom row to the diagram and 
infer from the $3\times 3$ Lemma that
$$
\ck_M(L)/\ck_M(L') \isom (\underline\field_L/\underline\field_{L'})\cg.
$$
In particular, if $L'$ is obtained from $L$ by removing $m$ edges,
then we infer equation~(\ref{eq:second_induct_exact}) (with $M$ here
replaced by
$M'$, and $k$ implicit here replaced by $k-1$), where 
$\ce$ is a sheaf with $\ce(V)=0$ and $\ce(E)$ being of dimension
$m|\cg|$.

\subsection{Maximum Excess and Subsheaves}
\label{sb:observe}

The goal of this section is to explain the main idea
we will use to
prove Theorem~\ref{th:increase}; the formal proof will be given
in the subsection after this one.

The following theorem gives a number of ways of demonstrating whether or
not
a sheaf and one of its subsheaves have the same maximum excess.

\begin{theorem}\label{th:we_know_this} 
Let ${\cal F}'\subset
{\cal F}$ be sheaves on a digraph, $G$.  Let $U\subset{\cal F}(V)$
be the minimal maximizer of the excess of ${\cal F}$,
and let $U'\subset{\cal F}'(V)$
be the minimal maximizer of the excess of ${\cal F}'$.
Then
\begin{equation}\label{eq:we_know_this}
{\rm m.e.}({\cal F}') 
\le {\rm m.e.}({\cal F}),
\end{equation}
with equality iff $U=U'$ and
$$
\Gamma_{\rm ht}({\cal F},U') = \Gamma_{\rm ht}({\cal F}',U').
$$
\end{theorem}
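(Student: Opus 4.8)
The plan is to sandwich ${\rm m.e.}({\cal F}')$ in a two-step chain and read off both the inequality and the exact equality condition from where the chain is tight.

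The starting point is the purely definitional observation that, since ${\cal F}'\subset{\cal F}$, each ${\cal F}'(e)$ lies in ${\cal F}(e)$, each ${\cal F}'(v)$ in ${\cal F}(v)$, and the restriction maps of ${\cal F}'$ are the restrictions of those of ${\cal F}$; hence for any subspace $U\subset{\cal F}'(V)\subset{\cal F}(V)$ one has, directly from~(\ref{eq:shhead/tail}),
$$
\Gamma_{\rm ht}({\cal F}',U) = {\cal F}'(E)\cap\Gamma_{\rm ht}({\cal F},U)\subset\Gamma_{\rm ht}({\cal F},U),
$$
while $\dim(U)$ is the same whether $U$ is viewed inside ${\cal F}'(V)$ or inside ${\cal F}(V)$. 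Applying this with $U=U'$ gives
$$
{\rm m.e.}({\cal F}') = {\rm excess}({\cal F}',U') \le {\rm excess}({\cal F},U') \le {\rm m.e.}({\cal F}),
$$
which is~(\ref{eq:we_know_this}) (this is also the injection case of the remark after Theorem~\ref{th:me_as_subsheaf}, now made concrete).

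Next I would note that equality in~(\ref{eq:we_know_this}) holds iff both of the last two inequalities are equalities. The second equality, ${\rm excess}({\cal F},U')={\rm m.e.}({\cal F})$, says exactly that $U'$ is a maximizer of the excess of ${\cal F}$; the first, since $\Gamma_{\rm ht}({\cal F}',U')\subset\Gamma_{\rm ht}({\cal F},U')$ and both excesses subtract $\dim(U')$, is equivalent to $\Gamma_{\rm ht}({\cal F}',U')=\Gamma_{\rm ht}({\cal F},U')$. So what remains is to show that, under the hypothesis $\Gamma_{\rm ht}({\cal F}',U')=\Gamma_{\rm ht}({\cal F},U')$, the condition ``$U'$ maximizes the excess of ${\cal F}$'' is equivalent to ``$U=U'$''. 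One direction is immediate. For the other, suppose $U'$ maximizes the excess of ${\cal F}$; since $U$ is the \emph{minimal} maximizer of the excess of ${\cal F}$, we get $U\subset U'\subset{\cal F}'(V)$, and then
$$
\Gamma_{\rm ht}({\cal F},U)\subset\Gamma_{\rm ht}({\cal F},U')=\Gamma_{\rm ht}({\cal F}',U')\subset{\cal F}'(E),
$$
so $\Gamma_{\rm ht}({\cal F}',U)={\cal F}'(E)\cap\Gamma_{\rm ht}({\cal F},U)=\Gamma_{\rm ht}({\cal F},U)$. Hence ${\rm excess}({\cal F}',U)={\rm excess}({\cal F},U)={\rm m.e.}({\cal F})={\rm m.e.}({\cal F}')$, so $U$ is a maximizer of the excess of ${\cal F}'$ lying inside ${\cal F}'(V)$; by minimality of $U'$ this forces $U'\subset U$, hence $U=U'$.

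The only (mild) obstacle is bookkeeping about ambient spaces — in particular, that the minimal maximizer $U$ of ${\cal F}$ really does lie in ${\cal F}'(V)$ in the case that matters, which follows for free from $U\subset U'$ by minimality. The small trick that makes everything fit is monotonicity of $\Gamma_{\rm ht}$ in $U$ together with the identity $\Gamma_{\rm ht}({\cal F}',U)={\cal F}'(E)\cap\Gamma_{\rm ht}({\cal F},U)$, which lets the hypothesis imposed on $U'$ be transferred down to the smaller subspace $U$.
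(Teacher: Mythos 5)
Your proof is correct and follows essentially the same route as the paper's: the same chain ${\rm m.e.}({\cal F}')={\rm excess}({\cal F}',U')\le{\rm excess}({\cal F},U')\le{\rm m.e.}({\cal F})$, the same characterization of when each step is tight, and the same use of minimality of $U$ and then of $U'$ to squeeze $U=U'$. The one place you go beyond the paper is in justifying the claim that $U$ is a maximizer for ${\cal F}'$: the paper simply asserts this from $U\subset U'\subset{\cal F}'(V)$, whereas you correctly observe that one still needs $\Gamma_{\rm ht}({\cal F}',U)=\Gamma_{\rm ht}({\cal F},U)$, and you derive it from the identity $\Gamma_{\rm ht}({\cal F}',U)={\cal F}'(E)\cap\Gamma_{\rm ht}({\cal F},U)$ together with monotonicity of $\Gamma_{\rm ht}$ and the already-established hypothesis $\Gamma_{\rm ht}({\cal F},U')=\Gamma_{\rm ht}({\cal F}',U')$. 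That is a genuine (if small) gap in the paper's exposition, and your argument fills it cleanly.
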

We already know equation~(\ref{eq:we_know_this}) is true, 
since the maximum excess is a quasi-Betti number; the novelty 
of this theorem is that
we have a simple condition to characterize when equality holds.
\begin{proof}
Since $U'\subset{\cal F}(V)$ and 
$$
\Gamma_{\rm ht}({\cal F}',U') \subset \Gamma_{\rm ht}({\cal F},U'),
$$
we have that 
$$
{\rm m.e.}({\cal F}') = {\rm excess}({\cal F}',U')
\le {\rm excess}({\cal F},U') \le {\rm m.e.}({\cal F});
$$
hence equality holds in
equation~(\ref{eq:we_know_this}) iff
$$
{\rm excess}({\cal F}',U') = {\rm excess}({\cal F},U')={\rm m.e.}({\cal F}).
$$
The first equality holds iff
$$
\Gamma_{\rm ht}({\cal F},U') = \Gamma_{\rm ht}({\cal F}',U').
$$
The second equality holds iff $U'$ is also a maximizer for ${\cal F}$.
But since $U$ is the minimal maximizer for ${\cal F}$, this implies that
$U\subset U'$; but this means that $U\subset U'\subset{\cal F}'(V)$,
so $U$ is a maximizer for ${\cal F}'$, and hence $U'\subset U$ (since
$U'$ is the minimal maximizer for ${\cal F}'$).  Hence $U=U'$.
\end{proof}

Theorem~\ref{th:we_know_this} gives us a number of ways to
conclude that equation~(\ref{eq:we_know_this}) holds with strict inequality
in certain situations.
For example, consider a subgraph, $L$, of a Cayley bigraph, $G$, on a
group, $\cg$, and consider a value, $k$ for which
$$
{\rm m.e.}\bigl(\ck_M(L)\bigr) > 0
$$
for a generic $M\in\field^{k\times\cg}$.  Let $M'$ be obtained from
$M$ by removing its bottom row, and consider the minimal maximizer, 
$U=U(M')\subset \ck_{M'}(L)(V)$ of the excess of $\ck_{M'}(L)$.
We have that $\ck_M\subset\ck_{M'}$, and hence
$$
{\rm m.e.}\bigl(\ck_M(L)\bigr) = {\rm m.e.}\bigl( \ck_{M'}(L) \bigr)
$$
implies that $U(M')$, which is generically nonzero, lies entirely in
$\ck_M(L)(V)$.  But if $w\in\ck_{M'}(L)(V)$ is any nonzero vector
supported on $v\in V_G$, then we may identify $w$ with the corresponding
element of $\ck_{M'}(L)(v)$, and so
$$
w \in (\underline\field_L\cg)(v) \isom \bigoplus_{g\in \cg_L(v)}\field_g,
$$
where $\field_g$ denotes a copy of $\field$.  In other words,
$w$ is given by its $\cg$ components, which are (zero outside of
$\cg_L(v)$ and are) elements of $\field$.  Hence, if we add a generic
extra row to $M'$ on the bottom, to form $M$, the row, 
$\vec m =(m_g)_{g\in\cg}$ 
will (generically in $\vec m$) satisfy
\begin{equation}\label{eq:redundancy}
\sum_{g\in\cg} w_g m_{g} \ne 0.
\end{equation}
Hence $w\notin \ck_M(L)(V)$ generically, 
and therefore the minimal maximizers for
$\ck_M(L)$ and $\ck_{M'}(L)$ will generically
be different.  Hence, by Theorem~\ref{th:we_know_this}, we have
$$
{\rm m.e.}\bigl( \ck_{M'}(L) \bigr) \ge 1 +
{\rm m.e.}\bigl( \ck_M(L) \bigr) 
$$
for generic $M$ (and $M'$ obtained from $M$ by deleting its bottom row).
This argument will establish Theorem~\ref{th:increase};
all we need to do is to make this rigourous.

\subsection{Proof of Theorems~\ref{th:increase} and \ref{th:L'}}
\label{sb:rigour}

In this subsection we precisely state the idea in the last subsection
as Theorem~\ref{th:redundancy} and use it to prove Theorem~\ref{th:increase}.
We then easily conclude Theorem~\ref{th:L'}.

Let $\field$ be a field, $\cg$ a group, and $k\ge 1$ an integer.
If $M'\in\field^{(k-1)\times\cg}$ and $\vec m\in\field^\cg$, we define
${\rm merge}(M',\vec m)$ to be the element of $\field^{k\times \cg}$ whose
first $k-1$ rows consist of $M'$ and whose $k$-th row consists of
$\vec m$.

\begin{definition}
Let $L$ be a subgraph of a Cayley bigraph, $G$, on a group, $\cg$.  
Let $\field$ be an algebraically closed field.
Let $M'\in\field^{(k-1)\times\cg}$ be a matrix, for some integer $k\ge 1$,
that is $L$-surjective
% induces a surjection $\underline\field_L\cg\to\underline\field^{k-1}$
and whose kernel, $\ck_{M'}(L)$, has nonzero maximum excess.
Define the {\em redundancy} of $M'$, denoted ${\rm redund}(M')$, to be
the subset of $\field^\cg$ consisting of $\vec m$ such that
$M={\rm merge}(M',\vec m)$ 
% induces a surjection 
% $\underline\field_L\cg\to\underline\field^{k}$ such that
is $L$-surjective, and such that
$$
{\rm m.e.}\bigl( \ck_{M'}(L) \bigr) = {\rm m.e.}
\bigl( \ck_M(L) \bigr).
$$
\end{definition}

\begin{theorem}\label{th:redundancy}
Let $L$ be a subgraph of a Cayley bigraph, $G$, on a group, $\cg$.  
Let $\field$ be an algebraically closed field and $k$ a positive integer.
Let $M'\in\field^{(k-1)\times\cg}$ be a matrix
that 
% induces a surjection $\underline\field_L\cg\to\underline\field^{k-1}$
is $L$-surjective, and 
whose kernel, $\ck_{M'}(L)$, has nonzero maximum excess.
Then the redundancy of $M'$ lies in a proper subspace of
$\field^\cg$.
\end{theorem}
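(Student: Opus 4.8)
The plan is to produce a single explicit nonzero linear functional on $\field^\cg$ that annihilates the whole set ${\rm redund}(M')$; its kernel is then the required proper subspace. The functional will be ``inner product against a suitably chosen nonzero vector lying in the minimal maximizer of the excess of $\ck_{M'}(L)$.''

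First I would extract that vector. Since $\ck_{M'}(L)$ is a subsheaf of $\underline\field_L\cg$, and the restriction maps of $\underline\field_L\cg$ are inclusions of coordinate summands, the restriction maps of $\ck_{M'}(L)$ are injective; hence $\Gamma_{\rm ht}(\ck_{M'}(L),0)=0$, so the excess of $\ck_{M'}(L)$ at the zero subspace is $0$. Since ${\rm m.e.}(\ck_{M'}(L))>0$ by hypothesis, the minimal maximizer $U\subseteq\ck_{M'}(L)(V)$ of the excess is nonzero. Using the description of $\ck_{M'}(L)$ from Lemma~\ref{lm:construct_rho} (or rather the evident extension of it to any $L$-surjective matrix), $\ck_{M'}(L)(V)=\bigoplus_{v\in V_G}\free_{\cg_L(v)}(M')$, with $\free_{\cg_L(v)}(M')\subseteq\ker(M')\subseteq\field^\cg$. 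Fix any nonzero $u\in U$, write $u=\bigoplus_v u^{(v)}$ with $u^{(v)}\in\free_{\cg_L(v)}(M')$, and choose a vertex $v$ with $w:=u^{(v)}\neq0$; thus $w$ is a nonzero element of $\field^\cg$, supported on $\cg_L(v)$, with $M'w=0$.

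Next I would run the comparison. Let $\vec m\in{\rm redund}(M')$ and set $M={\rm merge}(M',\vec m)$; by definition $M$ is $L$-surjective and ${\rm m.e.}(\ck_M(L))={\rm m.e.}(\ck_{M'}(L))$. Comparing values pointwise, $\ck_M(L)(P)=\free_{\cg_L(P)}(M)\subseteq\free_{\cg_L(P)}(M')=\ck_{M'}(L)(P)$ for every $P\in V_G\amalg E_G$, so $\ck_M(L)$ is a subsheaf of $\ck_{M'}(L)$. Applying Theorem~\ref{th:we_know_this} with ${\cal F}'=\ck_M(L)$ and ${\cal F}=\ck_{M'}(L)$, equality of maximum excesses forces the minimal maximizer of $\ck_M(L)$ to equal $U$; in particular $u\in U\subseteq\ck_M(L)(V)=\bigoplus_v\free_{\cg_L(v)}(M)$, so $w=u^{(v)}\in\free_{\cg_L(v)}(M)$, i.e.\ $Mw=0$. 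Since $M'w=0$ already, the only new condition is that the appended row satisfy $\sum_{g\in\cg}w_g m_g=0$. Hence ${\rm redund}(M')\subseteq\{\vec m\in\field^\cg:\sum_g w_g m_g=0\}$, a proper subspace since $w\neq0$.

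The main obstacle, such as it is, is the bookkeeping that converts the membership $u\in\ck_M(L)(V)$ into an honest linear equation on $\vec m$: one must keep straight that the vector $w$ is fixed once and for all (as a nonzero vertex-component of a fixed $u$ in a maximizer of $\ck_{M'}(L)$, hence independent of $\vec m$), that $w$ lies in $\ker(M')$, and that the subsheaf relation $\ck_M(L)\subseteq\ck_{M'}(L)$ is literally given vertex by vertex by intersecting the free subspaces with the single extra kernel condition coming from $\vec m$. Granting this, Theorem~\ref{th:we_know_this} supplies all the substance, and the rest is bookkeeping with Lemma~\ref{lm:construct_rho}.
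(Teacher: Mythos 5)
Your proof is correct and follows essentially the same route as the paper: both extract a nonzero vector $w$ supported at a single vertex from the minimal maximizer $U$ of $\ck_{M'}(L)$, then use the characterization of equality in Theorem~\ref{th:we_know_this} to conclude that $\vec m\in{\rm redund}(M')$ forces $U\subset\ck_M(L)(V)$, hence $Mw=0$, hence $\sum_g w_g m_g=0$. The only differences are cosmetic: you justify explicitly that $U\ne 0$ by computing the excess at $0$ (using injectivity of the restriction maps), and you take $w$ as the $v$-component of a fixed $u\in U$ rather than invoking the compartmentalization of $U$ itself; either route lands on the same linear functional $\vec m\mapsto\sum_g w_g m_g$ whose kernel contains ${\rm redund}(M')$.
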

\begin{proof}
This follows the argument of the last subsection.  If $U$ is the minimal
maximizer of $\ck_{M'}(L)$, then $U$ is nonzero because the maximum
excess is nonzero.  Hence there exists a $w\in U$ supported at $v\in V_G$
with $w\ne 0$.  So if ${\rm merge}(M',\vec m)$ 
% induces a surjection
% $\underline\field_L\cg\to\underline\field^{k}$, 
is $L$-surjective, we have
$w\notin\ck_M(L)$ if
equation~(\ref{eq:redundancy}) holds.  Since $w\ne 0$,
equation~(\ref{eq:redundancy}) holds for all $\vec m$ outside of
a proper subspace of $\field^\cg$.
\end{proof}

% Now we generalize the notation of Section~\ref{se:mesymmetry}.  For a
% subgraph, $L$, of a Cayley bigraph, $G$, on a group, $\cg$, an
% $n\from V_G\amalg E_G\to \integers_{\ge 0}$, and a non-negative
% integer, $k$, we let
% $$
% \cm(n,L,G,\cg,k) \subset \field^{k\times\cg}
% $$
% be the set of matrices, $M$ such that $M$ induces a surjection
% $\underline\field_L\cg\to\underline\field^{k}$
% and for which there exists a subsheaf ${\cal F}\subset \ck_M(L)$ of
% dimension $n$.

% The methods of Section~\ref{se:mesymmetry} again show that there exists
% a generic maximum excess of $(L,G,\cg,k)$; it also shows that this
% generic maximum excess is divisible by $|\cg|$, although we shall 
% not need this result.
% \begin{theorem} 
% Let $L$ be a subgraph of a Cayley bigraph, $G$, on a group, $\cg$.
% Let the generic maximum excess of $(L,G,\cg,k)$ or $(L,G\cg,k-1)$
% be nonzero, for some integer $k$ such that $1\le k\le\rho(L)$.  Then
% the generic maximum excess of $(L,G,\cg,k-1)$ is strictly larger
% than that of $(L,G,\cg,k)$.
% \end{theorem}
\begin{proof}[Proof of Theorem~\ref{th:increase}]
If the generic maximum excesses were equal, then for a nonempty
Zariski
open subset, $U$, of $\field^{k\times \cg}$, we would have
for all $M\in U$ 
the maximum excess of $\ck_M(L)$ is the same as that of
$\ck_{M'}(L)$, where
$M'$ is obtained from $M$ by discarding its bottom
row.  Since $U$ is nonempty and Zariski open,
we have a polynomial, $p=p(M',\vec m)$ such that
$$
p(M',\vec m) \ne 0
$$
implies that $(M',\vec m)\in U$.  Write
$$
p(M',\vec m) = \sum_{n\in\integers_{\ge 0}^\cg} q_n(M') \vec m^n,
$$
and fix any $n$ such that $q_n\ne 0$.
Then $q_n(M')\ne 0$ for all $M'\in U'$ for a nonempty
Zariski open subset, $U'$, of $\field^{(k-1)\times \cg}$.
For any fixed
$M'\in U'$ we have $p(M',\vec m)$ is a nonzero polynomial in 
$\vec m$; hence for fixed $M'\in U'$ we have that
$(M',\vec m)\in U$ for a Zariski open subset
of $\vec m$ in $\field^\cg$.  

On the other hand, assuming that the maximum excesses in
Theorem~\ref{th:increase} are not both zero,
the generic maximum excess of $(L,G,\cg,\field,k-1)$
is positive.  Hence
$\ck_{M'}(L)$ has positive 
maximum excess for all $M'$ in some
nonempty, Zariski open subset, $U''$, of $\field^{(k-1)\times\cg}$.
But by Theorem~\ref{th:redundancy}, for any $M'\in U''$ we have that
$(M',\vec m)\notin U$ for $\vec m$ outside of a proper subspace
of $\field^\cg$.
But $U'$ and $U''$ must intersect (being two nonempty, Zariski open subsets
of an irreducible variety), which gives a contradiction.
\end{proof}

\begin{proof}[Proof of Theorem~\ref{th:L'}]
Let the generic maximum excess of $(L,G,\cg,\field,k)$ be $m_k$,
that of $(L,G,\cg,\field,k-1)$ be $m_{k-1}$, and that 
of $(L',G,\cg,\field,k-1)$ be $m'_{k-1}$.  Since
$k\le \rho(L)$ and hence $k-1\le\rho(L')$ (we can see
$\rho(L')\ge \rho(L)-1$ from equation~(\ref{eq:shrho})), 
we have that $m_k,m_{k-1},m'_{k-1}$ are all multiples of $|\cg|$.
The theorem is immediate if $m_k=0$, so we may assume $m_k>0$.
In this case Theorem~\ref{th:increase} implies that $m_{k-1}>m_k$,
and since these numbers are both multiples of $|\cg|$, we have
\begin{equation}\label{eq:greater_by_cg}
m_{k-1} \ge m_k + |\cg|.
\end{equation}
But the exact sequence in equation~(\ref{eq:second_induct_exact}) shows
that for any $M'\in\field^{(k-1)\times\cg}$ we have
\begin{equation}\label{eq:LL'}
{\rm m.e.}\bigl( \ck_{M'}(L') \bigr) \ge 
{\rm m.e.}\bigl( \ck_{M'}(L) \bigr) - |\cg|.
\end{equation}
Let $U,U'$, respectively, 
are the subsets of $M'\in\field^{(k-1)\times\cg}$ at which
$\ck_{M'}(L),\ck_{M'}(L')$, respectively, attain their generic value;
hence $U,U'$ are generic, and therefore so is $U\cap U'$.
Then applying equation~(\ref{eq:LL'}) to any $M'\in U\cap U'$ implies that
$$
m'_{k-1} \ge m_{k-1}- |\cg|.
$$
Combining this with equation~(\ref{eq:greater_by_cg}) gives $m'_{k-1}\ge m_k$,
which proves the theorem.
\end{proof}

\section{Proof of the SHNC}
\label{se:meproof}

In this section we prove the SHNC.  At this point we have all the tools
we need, except for one small detail.

\begin{lemma}\label{lm:remove_one}
Let $L$ be an arbitrary \'etale bigraph with
$\rho(L)>0$.  Then there exists an edge, $e\in E_L$, such that the
graph, $L'$, obtained by removing $e$ from $L$ has $\rho(L')=\rho(L)-1$.
\end{lemma}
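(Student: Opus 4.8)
Recall $\rho(L) = \sum_{X \in {\rm conn}(L)} \max(0, h_1(X) - 1)$, so $\rho(L) > 0$ means some connected component $X$ of $L$ has $h_1(X) \ge 2$. Fix such a component $X$. Since $h_1(X) \ge 2$, there is at least one cycle in $X$, and in fact $X$ is not a tree; I would like to find an edge $e$ of $X$ whose removal decreases $h_1(X)$ by exactly one \emph{without} disconnecting $X$ (i.e. $e$ lies on a cycle) \emph{and} without isolating $X$ into more components that could each still be cyclic in a way that drops $\rho$ by more than one. The key point is: if $e$ lies on a cycle of $X$, then $L' = L \setminus e$ has the same vertex set and one fewer edge, so $\chi(L') = \chi(L) + 1$; and $X \setminus e$ is still connected, hence $h_0(L') = h_0(L)$, so $h_1(L') = h_1(L) - 1$, with all of this ``happening inside $X$''. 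But $\rho$ is not simply $h_1 - h_0$; it is a sum of truncated terms $\max(0, h_1(\cdot) - 1)$. So I must check that decreasing $h_1(X)$ by one decreases $\max(0, h_1(X) - 1)$ by one, which holds precisely because $h_1(X) \ge 2$ guarantees $h_1(X) - 1 \ge 1 > 0$, so the truncation is inactive both before and after: $\max(0, h_1(X) - 1) = h_1(X) - 1$ and $\max(0, h_1(X \setminus e) - 1) = h_1(X) - 2$. All other components of $L$ are untouched, so $\rho(L') = \rho(L) - 1$ exactly.

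\textbf{Carrying it out.} First I would pick a component $X$ with $h_1(X) \ge 2$, which exists since $\rho(L) > 0$. Second, I would produce an edge $e \in E_X$ that lies on some cycle of $X$: since $h_1(X) \ge 1$, the graph $X$ has a cycle (take any spanning tree $T$ of $X$; since $|E_X| > |E_T|$, any edge of $E_X \setminus E_T$ closes a cycle with $T$), so such an $e$ exists. Third, set $L' = L \setminus e$ (same vertices, edge set $E_L \setminus \{e\}$). Then $X \setminus e$ is connected (removing a cycle edge does not disconnect), so ${\rm conn}(L')$ is obtained from ${\rm conn}(L)$ by replacing $X$ with $X \setminus e$ and leaving everything else fixed. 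Fourth, compute $h_1(X \setminus e) = h_1(X) - 1$ (from $h_0(X \setminus e) = h_0(X) = 1$ and $\chi(X \setminus e) = \chi(X) + 1$, using $h_0 - h_1 = \chi$). Fifth, since $h_1(X) \ge 2$, both $\max(0, h_1(X) - 1) = h_1(X) - 1$ and $\max(0, h_1(X \setminus e) - 1) = h_1(X) - 2$, so the contribution of this component to $\rho$ drops by exactly one, while all other contributions are unchanged; hence $\rho(L') = \rho(L) - 1$.

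\textbf{The main obstacle.} There is essentially no deep obstacle here; the only thing that needs care is making sure the truncation $\max(0, \cdot)$ does not cause the drop to be zero (which would happen if $h_1(X)$ were only $1$, but it is at least $2$) and that removing $e$ does not split $X$ into pieces, which is handled by choosing $e$ on a cycle. One tiny subtlety worth stating explicitly: \'etaleness of $L$ plays no role in this lemma — it is purely a statement about the graph $L$ and the combinatorial definition of $\rho$ — so I would not invoke it. The proof is a few lines of bookkeeping with $h_0$, $h_1$, $\chi$, and the definition of $\rho$.
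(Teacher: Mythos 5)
Your proof is correct, and it is a purely graph-theoretic argument, whereas the paper's primary proof proceeds by exact sequences: it observes that removing one edge changes $\rho$ by $0$ or $1$ (via the exact sequence with an edge-supported quotient), then uses the inequality $\rho(L'\cap L'')\ge\rho(L')+\rho(L'')-\rho(L'\cup L'')$ (from a Mayer--Vietoris-type short exact sequence) to show by induction that if no single edge-removal dropped $\rho$, then no set of edge-removals would, which is absurd. The paper does also sketch a graph-theoretic variant in a closing remark, saying it suffices to prune all leaves and take any remaining edge; your version is cleaner on one point. You choose an edge $e$ lying on a cycle, which guarantees $X\setminus e$ stays connected, so $h_1$ drops by one and the $\max(0,\cdot)$ bookkeeping gives $\rho(L')=\rho(L)-1$ immediately. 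The paper's sketch, by contrast, justifies its choice by saying removal ``reduces $h_1$ by one,'' which is not literally true for every edge surviving pruning (a bridge joining two cyclic pieces of a pruned graph is a counterexample: $h_1$ is unchanged, though $\rho$ still drops because the graph splits into two components each with $h_1\ge 1$). Your choice of a cycle edge sidesteps this case analysis entirely. You are also right that \'etaleness of $L$ is irrelevant to the lemma. In short: correct, more elementary and self-contained than the paper's main argument, and slightly tighter than the paper's own sketched alternative.
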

\begin{proof}
For each $F\in E_L$ let $L_F$ denote the subgraph of $L$ obtained by
removing the edges in $F$ from $L$.  It is easy to see 
that for each $e\in E_L$ we have that
$\rho(L_{\{e\}})$ is either $\rho(L)$ or $\rho(L)-1$; this can be seen from
equation~(\ref{eq:shrho}), since removing $e$ from its connected 
component of $L$ leaves $h_1$ the same or reduces it by one;
alternatively, we can see this 
from the exact sequence
$$
0\to \underline\field_{L_{\{e\}}} \to 
\underline\field_L \to 
\underline\field_{L/L_{\{e\}}} \to 0,
$$
using the fact that $L/L_{\{e\}}$ is (edge supported and) of maximum
excess one.

For any two subgraphs, $L',L''$, of $L$ we have an exact sequence
$$
0\to \underline\field_{L'\cap L''}\to
\underline\field_{L'}\oplus\underline\field_{L''}\to
\underline\field_{L'\cup L''} \to 0.
$$
Hence
$$
\rho(L'\cap L'') \ge \rho(L')+\rho(L'')-\rho(L'\cup L'').
$$
Taking $F',F''$ to be disjoint subsets of $E_L$, we see that
if $\rho(L_{F'})=\rho(L_{F''})=\rho(L)$, then setting
$L'=L_{F'}$, $L''=L_{F''}$ yields
$$
\rho(L_{F'\cup F''}) \ge \rho(L),
$$
and so $\rho(L_{F'\cup F''})=\rho(L)$.
Hence, if $\rho(L_F)=\rho(L)$ for all $F\subset E_L$ of size one, then
by induction we can show this holds for $F\subset E_L$ of any size,
which is impossible (since removing all the edges of a graph leaves
it with $\rho=0$).  Hence there is at least one $e\in E_L$ for which
$\rho(L_{\{e\}})=\rho(L)-1$.
\end{proof}
Of course, one can give a purely graph theoretic proof of 
Lemma~\ref{lm:remove_one}; we now sketch such a proof.  
From equation~(\ref{eq:shrho}), it
suffices to show that if $L$ is connected with $h_1(L)\ge 2$ then
we can remove an edge from $L$ and reduce $h_1$ by one.  We claim
that it suffices to take any edge that remains after we repeatedly
prune the leaves of $L$.

\begin{definition} 
Let
$L$ be a subgraph of a Cayley bigraph, $G$ on a group, $\cg$.  
Let $\field$ be an algebraically closed field.
Then
by the {\em generic maximum excess of the $\rho$-kernel of type 
$(L,G,\cg,\field)$}
we mean the generic maximum excess of $(L,G,\cg,\field,\rho(L))$.
\end{definition}

\begin{theorem}\label{th:vanishing_rho}
Let 
$L$ be a subgraph of a Cayley bigraph, $G$ on a group, $\cg$.  
Let $\field$ be an algebraically closed field.
Then
the generic maximum excess of the $\rho$-kernel of type $(L,G,\cg)$
is zero.
\end{theorem}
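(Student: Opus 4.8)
The plan is to prove this by induction on $\rho(L)$, stripping one edge at a time via Lemma~\ref{lm:remove_one} and invoking the variability estimate of Theorem~\ref{th:L'} at each step.

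For the base case $\rho(L)=0$, the $\rho$-kernel is the $0$-th power kernel, attached to the (unique, empty) matrix $M\in\field^{0\times\cg}$; the morphism $\underline M\iota\from\underline\field_L\cg\to\underline\field^0=0$ is vacuously surjective, so $\ck_M(L)=\underline\field_L\cg\isom\underline\field_{L\cg}$ with $L\cg=\coprod_{g\in\cg}Lg$ a disjoint union of $|\cg|$ copies of $L$. Its maximum excess is then ${\rm m.e.}(\underline\field_{L\cg})=\rho(L\cg)=|\cg|\,\rho(L)=0$, so the generic maximum excess of $(L,G,\cg,\field,0)$ vanishes.

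For the inductive step, suppose $\rho(L)=r>0$ and that the theorem is known for every subgraph of every Cayley bigraph whose reduced cyclicity is smaller than $r$. By Lemma~\ref{lm:remove_one} I can choose an edge $e\in E_L$ for which $L'=L\setminus e$ satisfies $\rho(L')=r-1$; observe that $L'$ is a subgraph of the same Cayley bigraph $G$. The $\rho$-kernel of type $(L',G,\cg,\field)$ is precisely $(L',G,\cg,\field,r-1)$, so by the inductive hypothesis its generic maximum excess is $0$. Applying Theorem~\ref{th:L'} with $k=r$ (valid since $1\le r=\rho(L)$), the generic maximum excess of $(L',G,\cg,\field,r-1)$ is at least that of $(L,G,\cg,\field,r)$. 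Since the latter is non-negative and the former is $0$, the generic maximum excess of $(L,G,\cg,\field,\rho(L))$ must be $0$, which closes the induction.

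The only delicate point is the bookkeeping: one must check that the $\rho$-kernel attached to $L'$ is genuinely the $(r-1)$-st power kernel --- which is exactly why Lemma~\ref{lm:remove_one} is engineered so that $\rho$ drops by exactly one --- and that $L'$ continues to sit inside the same Cayley bigraph, so the inductive hypothesis applies verbatim. All the real work lies upstream: Theorem~\ref{th:increase} (the generic maximum excess is strictly decreasing in $k$ unless it is already zero), together with the divisibility of the generic maximum excess by $|\cg|$ from Theorem~\ref{th:generic_divisible}, are what make Theorem~\ref{th:L'} available; granting those, the statement here is this short induction.
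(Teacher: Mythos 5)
Your proof is correct and follows essentially the same route as the paper: induction on $\rho(L)$, with the base case $\rho(L)=0$ handled by identifying the $\rho$-kernel with $\underline\field_L\cg$ and computing its maximum excess as $|\cg|\rho(L)=0$, and the inductive step carried out by combining Lemma~\ref{lm:remove_one} (to drop $\rho$ by exactly one) with Theorem~\ref{th:L'}. The remark that Lemma~\ref{lm:remove_one} is what ensures the $(\rho(L)-1)$-st power kernel of $L'$ really is the $\rho$-kernel of $L'$ is exactly the right thing to check.
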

\begin{proof}
Fix $G$ and $\cg$ and let us prove the theorem for all $L$ by induction
on $\rho(L)$.  The base case $\rho(L)=0$ follows by definition, since the
exact sequence
$$
0\to \ck\to \underline\field_L \cg \to \underline\field^0 \to 0
$$
implies
$$
{\rm m.e.}(\ck) \le {\rm m.e.}(\underline\field_L\cg) = 
\sum_{g\in \cg} \rho(Lg) = 0
$$
(since $\rho(Lg)=\rho(L)=0$ for all $g\in \cg$).
The inductive step of our induction on $\rho(L)$
is immediate from Theorem~\ref{th:L'} applied
to any $L'$ obtained from $L$ by removing a single edge
so that $\rho(L')=\rho(L)-1$; the existence of such an
$L'$ is given by Lemma~\ref{lm:remove_one}.
\end{proof}

\begin{proof}[Proof of Theorem~\ref{th:shnc}, the SHNC]
By the graph theoretic reformulation of the SHNC, it
suffices to show Theorem~\ref{th:main}.
By Theorem~\ref{th:sub_bi_enough}
it suffices to show that any subgraph, $L$, of a Cayley bigraph, $G$,
on a group, $\cg$,
is universal for the SHNC.
But by Theorem~\ref{th:vanishing_rho}, there exists a $\rho$-kernel,
$\ck=\ck_M(L)$ for $(L,G,\cg,\field)$ with vanishing maximum excess, for any
algebraically closed $\field$.
Hence we apply Theorem~\ref{th:motivate} to conclude that $L$
is universal for the SHNC.
\end{proof}

\section{Concluding Remarks}
\label{se:meconclude}

We finish this paper with a few concluding remarks.

In this chapter we have made no explicit reference to homology theories.
In \cite{friedman_sheaves} we have used the twisted homology to
prove that the maximum excess is a first quasi-Betti number; hence
the theorems in this paper ostensibly rely on homology theories.
However, we think it quite possible that one may able to prove
that the maximum excess is a first quasi-Betti number directly, or
give a direct proof of the inequalities we made use of in this paper.
For example, if ${\cal F}'\to{\cal F}$ is a monomorphism, then
since the maximum excess is a first quasi-Betti number we know that
$$
{\rm m.e.}({\cal F}')\le {\rm m.e.}({\cal F}).
$$
But this inequality is clear from the subsheaf formulation of
maximum excess in Theorem~\ref{th:me_as_subsheaf}.

We remark that we first proved the SHNC using twisted homology theory,
and then rewrote our proofs to use only maximum excess.
In fact, twisted homology theory offers some additional intuition
regarding the maximum excess.  Twisted homology theory shows that
(after pulling back appropriately, see \cite{friedman_sheaves}),
the maximum excess can be interpreted as the dimension of a certain
vector space of ``twisted harmonic one-forms'' of the sheaf.
When this dimension is $d>0$, one can impose $d'$
linear conditions on the twisted harmonic one-forms and still 
have a $d-d'$ dimensional space of one-forms.
This is how we view the variance in
$L$ of $\ck_{M'}(L)$, as in the 
exact sequence of equation~(\ref{eq:second_induct_exact}): to take
a space of one-forms on $\ck_{M'}(L)$ and obtain a one-form in
$\ck_{M'}(L')$, one has to impose $|\cg|\,|E_L\setminus E_{L'}|$
conditions on the one-forms, namely the conditions that they vanish
on the edges in $L\cg$ that do not lie in $L'\cg$.
Of course, one has to pullback by an appropriate covering map
to make this rigourous (see \cite{friedman_sheaves}), but
all the edge counts and dimension counts scale appropriately under
any covering.

The $k$-th power kernels in this paper are subsheaves of the 
constant sheaf $\underline\field\cg\isom\underline\field^{\cg}$.
We believe that subsheaves of constant sheaves satisfy some
stronger properties than general sheaves, regarding their homological
invariants and maximum excess.  It would nice to study this further.

Finally, we give a variant of our proof of the SHNC that involves
no homology theory and, in particular, avoids any use of 
Theorem~\ref{th:shmain}.
As before, let $L$ be any subgraph of a Cayley bigraph, $G$, on a group,
$\cg$, and let $\field$ be a field.
First note that using Appendix~\ref{ap:elem}, we can show that if
there is a $\rho$-kernel for $(L,G,\cg)$ with vanishing maximum excess,
then the SHNC holds for all pairs $(L,L')$, with $L'$ any subgraph of
$G$; Appendix~\ref{ap:elem} makes no use of homology or Theorem~\ref{th:shmain}.
(Appendix~\ref{ap:elem} is a bit tedious and long, however avoids use
of applying Theorem~\ref{th:etale_contagious} with $\alpha_1$ being
the maximum excess,
and the proof that the maximum excess is a scaling first quasi-Betti
number used Theorem~\ref{th:shmain}.)
Furthermore,
if the generic maximum excess of $(L,G,\cg,\field,\rho(L))$ were greater
than zero, then it would be at least $|\cg|$.  Then, by induction, for
$n=1,\ldots, \rho(L)$ we have that the generic maximum excess of 
$(L,G,\cg,\field,\rho(L)-n)$ would be at least $|\cg|(1+n)$, in view
of Theorems~\ref{th:generic_divisible}
and \ref{th:increase} (which makes no use of homology or
Theorem~\ref{th:shmain}).  But this is impossible for $n=\rho(L)$, since
the generic maximum excess of $(L,G,\cg,\field,0)$ is $|\cg|\rho(L)$,
because a $0$-th power kernel is plainly just $\underline\field_L\,\cg$,
which has maximum excess $\rho(L)|\cg|$.
Hence the SHNC holds for all pairs of subgraphs of Cayley bigraphs, and
hence holds for all pairs of \'etale bigraphs, by
Theorem~\ref{th:sub_bi_enough}.

\appendix
%    Include appendix "chapters" here.
\chapter{A Direct View of $\rho$-Kernels}
\label{ap:elem}

In this appendix we give a direct combinatorial proof that the SHNC 
follows the vanishing
maximum excess of some $\rho$-kernel for each triple $(L,G,\cg)$.

% \section{An Alternate View of $\rho$-kernels}
% \label{ap:elem}

In this section we give a direct proof that the vanishing generic
maximum excess of $\rho$-kernels for all subgraphs, $L$, of any
Cayley graph, $G$, implies the SHNC.  We shall not use exact sequences.
We shall require a few definitions, and some calculations to follow.
While this gives some extra intuition about $\rho$-kernels, this
section is not essential to the proof of the SHNC; we shall omit
some of the easy but tedious graph theoretic details.

\begin{definition} Let $L$ be a subgraph of a Cayley bigraph, $G$, on
a group, $\cg$.  As usual, for $P\in V_G\amalg E_G$, let
$\cg_L(P)$ be the set of $g\in\cg$ such that $Lg$ contains $P$.
By a {\em vertex family} on $(L,G,\cg)$ we mean a function, ${\cal U}$, 
from $V_G$ to $\cp(\cg)$,
the power set (i.e., set of subsets) of $\cg$, such that
for all $v\in V_G$ we have ${\cal U}(v)\subset\cg_L(v)$.
Similarly, an {\em edge family} on $(L,G,\cg)$ is a function
${\cal W}\from E_G\to \cp(\cg)$ such that ${\cal W}(e)\subset \cg_L(e)$
for all $e\in E_G$.
A vertex family, ${\cal U}$, and edge family, ${\cal W}$, are
{\em compatible} if for all $e\in E_G$ we have ${\cal W}(e)\subset
{\cal U}(te)\cap{\cal U}(he)$.
Given a vertex family, ${\cal U}$,
the {\em induced edge family}, ${\cal U}_E$, of ${\cal U}$ is the edge
family ${\cal U}_E$ given by
$$
{\cal U}_E(e) = {\cal U}(te)\cap{\cal U}(he)\cap \cg_L(e).
$$
\end{definition}
The following lemmas motivate the above definitions; we omit their proofs,
which are almost immediate.
\begin{lemma}
To each vertex family, ${\cal U}$ on $(L,G,\cg)$, and compatible edge family,
${\cal W}$, on $(L,G,\cg)$, there is a subgraph
$H\subset L\times_{B_2}G$ determined via
$$
% V_H = \{(vg^{-1},g)\ |\ g\in{\cal U}(v) \}, \qquad
V_H = \{(vg^{-1},v)\ |\ g\in{\cal U}(v) \}, \qquad
E_H = \{(eg^{-1},e)\ |\ g\in{\cal W}(e) \};
$$
conversely, any subgraph $H\subset L\times_{B_2}G$ arises from a unique
vertex family, ${\cal U}$, and compatible edge family, ${\cal W}$.
\end{lemma}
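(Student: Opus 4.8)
The plan is to establish the bijection by unwinding the definition of the fibre product $L\times_{B_2}G$ and matching its subgraphs against pairs $({\cal U},{\cal W})$. First I would recall that, since $G={\rm Cayley}(\cg;g_1,g_2)$ and $L$ is a subgraph of $G$, the vertices of $L\times_{B_2}G$ are pairs $(u,v)$ with $u\in V_L$, $v\in V_G$, and $u,v$ both lying over the single vertex of $B_2$; since $G$'s vertex set is $\cg$ (so $v$ can be any element of $\cg$), such a pair is the same as a pair $(u,v)\in V_L\times\cg$. The key reindexing is to write $u=vg^{-1}$: as $g$ ranges over $\cg$, the element $vg^{-1}$ ranges over all of $\cg$, and $vg^{-1}\in V_L$ exactly when $v\in Lg$, i.e.\ exactly when $g\in\cg_L(v)$. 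Hence vertices of $L\times_{B_2}G$ lying over a fixed $v\in V_G$ are in canonical bijection with $\cg_L(v)$, and the same argument with edges (using that in a Cayley bigraph an edge of colour $i$ is $(g,i)$, and $G$ acts on the right) shows edges over a fixed $e\in E_G$ biject with $\cg_L(e)$, with head and tail maps compatible under the reindexing $(eg^{-1},e)\mapsto ((he)g^{-1},he)$ etc.

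Next I would check that an arbitrary subgraph $H\subset L\times_{B_2}G$ is precisely the data of, for each $v\in V_G$, the subset ${\cal U}(v)\subset\cg_L(v)$ of those $g$ with $(vg^{-1},v)\in V_H$, and for each $e\in E_G$ the subset ${\cal W}(e)\subset\cg_L(e)$ of those $g$ with $(eg^{-1},e)\in E_H$. The only constraint making $H$ a genuine subgraph (i.e.\ closed under taking endpoints of its edges) is that whenever $g\in{\cal W}(e)$, both endpoints $(eg^{-1},e)$'s head and tail lie in $V_H$; unwinding this via the head/tail correspondence above gives exactly $g\in{\cal U}(he)$ and $g\in{\cal U}(te)$, i.e.\ ${\cal W}(e)\subset{\cal U}(te)\cap{\cal U}(he)$, which is the compatibility condition. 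Conversely, given compatible $({\cal U},{\cal W})$, the displayed $V_H,E_H$ manifestly form a subgraph because compatibility guarantees edge endpoints are included. This establishes that the assignment $({\cal U},{\cal W})\mapsto H$ and $H\mapsto({\cal U},{\cal W})$ are mutually inverse, proving both directions of the lemma.

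I do not expect any real obstacle here; the lemma is essentially a bookkeeping translation, and the excerpt itself remarks that the proof is ``almost immediate.'' The one point requiring a little care is the right-action reindexing $u=vg^{-1}$ and the verification that the head and tail maps of $L\times_{B_2}G$ transport correctly under it — in particular that the fibre-product head map $h_K=(h_L,h_G)$ sends $(eg^{-1},e)$ to $((he)g^{-1},he)$, which uses that $h_G$ is a $\cg$-equivariant map and that $h_L$ is the restriction of $h_G$ to $L$. Once that compatibility is recorded, the correspondence between the subgraph condition and the compatibility condition on $({\cal U},{\cal W})$ is purely set-theoretic. I would present the proof as: (1) identify fibres of $L\times_{B_2}G\to G$ with $\cg_L(\cdot)$ via $g\mapsto(\cdot\,g^{-1},\cdot)$; (2) observe a subgraph is the same as a choice of subset of each fibre; (3) translate ``closed under endpoints'' into compatibility; (4) conclude the bijection.
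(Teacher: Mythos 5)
Your argument is correct and is precisely the bookkeeping the paper has in mind when it writes ``we omit their proofs, which are almost immediate'' --- the paper gives no proof of this lemma at all, so there is nothing to diverge from. Your reindexing $u=vg^{-1}$ using the simply transitive right $\cg$-action on $V_G=\cg$, the observation that $vg^{-1}\in V_L\Leftrightarrow v\in V_{Lg}\Leftrightarrow g\in\cg_L(v)$ (and likewise for edges), and the identification of the subgraph-closure condition with compatibility ${\cal W}(e)\subset{\cal U}(te)\cap{\cal U}(he)$ via the $\cg$-equivariance of the head and tail maps, is exactly the intended translation; nothing is missing.
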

\begin{lemma}
For any vertex family, ${\cal U}$, on $(L,G,\cg)$ and compatible
edge family, ${\cal W}$, we have
$$
{\cal W}(e)\subset {\cal U}_E(e).
$$
In other words, ${\cal U}_E$ is the ``largest'' edge family compatible
with ${\cal U}$.
\end{lemma}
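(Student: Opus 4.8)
The statement is essentially an unwinding of the definitions, so the plan is to verify the two required inclusions and then observe that they combine to give the displayed containment. First I would recall the relevant definitions: an edge family, ${\cal W}$, on $(L,G,\cg)$ satisfies ${\cal W}(e)\subset\cg_L(e)$ for every $e\in E_G$ by definition; compatibility of ${\cal W}$ with the vertex family ${\cal U}$ means precisely that ${\cal W}(e)\subset{\cal U}(te)\cap{\cal U}(he)$ for every $e\in E_G$; and the induced edge family is defined by ${\cal U}_E(e)={\cal U}(te)\cap{\cal U}(he)\cap\cg_L(e)$.

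Given these, the main inclusion is immediate: fix $e\in E_G$. Since ${\cal W}$ is an edge family we have ${\cal W}(e)\subset\cg_L(e)$, and since ${\cal W}$ is compatible with ${\cal U}$ we have ${\cal W}(e)\subset{\cal U}(te)\cap{\cal U}(he)$. Intersecting these two containments yields
$$
{\cal W}(e)\subset {\cal U}(te)\cap{\cal U}(he)\cap \cg_L(e) = {\cal U}_E(e),
$$
which is the asserted inclusion.

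For the parenthetical ``largest'' assertion I would then check that ${\cal U}_E$ is itself a legitimate edge family compatible with ${\cal U}$: by construction ${\cal U}_E(e)\subset\cg_L(e)$, so ${\cal U}_E$ is an edge family, and ${\cal U}_E(e)\subset{\cal U}(te)\cap{\cal U}(he)$, so it is compatible with ${\cal U}$. Combined with the inclusion just proved—that every edge family compatible with ${\cal U}$ is contained pointwise in ${\cal U}_E$—this shows ${\cal U}_E$ is the maximum element, with respect to pointwise inclusion, of the set of edge families compatible with ${\cal U}$.

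There is no real obstacle here; the only thing to be careful about is keeping the roles of the two hypotheses on ${\cal W}$ straight (membership in an edge family versus the compatibility condition), since the result genuinely uses both. Accordingly I expect the proof to occupy only a couple of lines once the definitions are recalled.
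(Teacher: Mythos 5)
Your proof is correct and is exactly the intended argument; the paper actually omits the proof entirely, remarking only that it is ``almost immediate,'' and your unwinding of the definitions of edge family, compatibility, and ${\cal U}_E$ supplies precisely the immediate verification.
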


\begin{definition} 
Let $L$ be a subgraph of a Cayley bigraph, $G$, on a group, $\cg$, 
let $M\in\field^{\rho(L)\times\cg}$ be totally linearly independent,
and let $\ck=\ck_M$ be the resulting $\rho$-kernel.
By a {\em straight} subspace of $\ck(V)$ we mean a subspace
$$
U = \sum_{v\in V_G} U(v) \in \ck(V),
$$
such that for each $v\in V_G$, we have
\begin{equation}\label{eq:straight}
U(v)={\rm Free}_{{\cal U}(v)}
\end{equation}
for some ${\cal U}(v)\subset\cg$, with notation as in 
Definition~\ref{de:free}.
\end{definition}

Our goal for the rest of this section is to prove the following
theorem.

\begin{theorem}\label{th:elementary}
Let $L$ be a subgraph of a Cayley graph, $G$, on a group, $\cg$.
The following conditions are equivalent:
\begin{enumerate}
\item for all $L'\subset G$, the SHNC holds for
$(L,L')$; 
\item
for every vertex family, ${\cal U}$, on $(L,G,\cg)$ we have
$$
\sum_{e\in E_G} |{\cal U}_E(e)|_{\rho(L)} \le
\sum_{v\in V_G} |{\cal U}(v)|_{\rho(L)} ;
$$
and
\item for some or any field, $\field$, and some or any
totally independent $M\in\field^{\rho(L)\times\cg}$,
every straight subspace of
$\ck(V)$ with $\ck=\ck_M(L)$ has excess zero. 
\end{enumerate} 
\end{theorem}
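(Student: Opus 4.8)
The plan is to prove Theorem~\ref{th:elementary} by showing the chain of implications $(1)\Rightarrow(2)\Rightarrow(3)\Rightarrow(1)$, with the genuinely combinatorial content concentrated in $(2)\Leftrightarrow(3)$, and the connection to the SHNC living in $(1)\Leftrightarrow(2)$. Throughout I will use the notation $|S|_n = \max(0,|S|-n)$ for the ``$n$-reduced cardinality'' and the description of $\ck_M(L)$ from Lemma~\ref{lm:construct_rho}, namely that $\ck_M(L)(P) = {\rm Free}_{\cg_L(P)}(M)$ has dimension $|\cg_L(P)|-\rho(L)$ (when this is nonnegative; it always is, by Lemma~\ref{lm:more_than_rho}).

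For $(1)\Leftrightarrow(2)$: by Theorem~\ref{th:sub_bi_shnc}(1), $(1)$ is equivalent to the SHNC holding for $(L,L')$ for every \'etale $L'\to G$, and by the graph-theoretic form of the SHNC this is the inequality $\rho(L\times_{B_2}L')\le\rho(L)\rho(L')$. Now I would use Theorem~\ref{th:sub_bi_shnc}(2), which identifies $L\times_{B_2}L'$ with $(L\cg)\times_G L'$, together with the alternate formula $\rho(H) = \max_{H'\subset H}(-\chi(H'))$ (the graph-theoretic analogue of Theorem~\ref{th:me_as_subsheaf}). A subgraph of $(L\cg)\times_G L'$ is exactly a compatible pair of a vertex family and an edge family on $(L,G,\cg)$ intersected with the data of $L'$, via the first Lemma after the definition of vertex families. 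Since $\rho(L') = \max_{L''\subset L'}(-\chi(L''))$ as well, and since the maximal excess subgraph can always be taken with its edge family induced (the ``largest'' compatible edge family, by the second Lemma), the inequality $\rho((L\cg)\times_G L')\le \rho(L)\rho(L')$ for all $L'$ unwinds — after taking $L'$ to range over all subgraphs of $G$ and computing $-\chi$ of the induced subgraph — precisely to the statement that for every vertex family $\cal U$ on $(L,G,\cg)$,
$$
\sum_{e\in E_G}|{\cal U}_E(e)|_{\rho(L)}\le\sum_{v\in V_G}|{\cal U}(v)|_{\rho(L)}.
$$
The key point making $|{\cal U}(v)|_{\rho(L)}$ (rather than $|{\cal U}(v)|$) appear is that in $(L\cg)\times_G L'$, a vertex of $G$ contributing a set ${\cal U}(v)$ of ``sheets'' gets counted against the $\rho(L)$ copies coming from the structure of $\rho(L\cg\times_G\,\cdot)$; I will need to carefully track how the $\rho(L)$ subtraction enters, essentially because $\rho$ of a disjoint union of copies of a connected graph with $h_1\ge 2$ is $h_1-1$ per copy and the fibre product ``uses up'' a spanning-tree's worth.

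For $(2)\Leftrightarrow(3)$: a straight subspace $U=\sum_v {\rm Free}_{{\cal U}(v)}$ is determined by a vertex family $\cal U$ (after checking ${\rm Free}_{{\cal U}(v)}\subset\ck(v)$ forces ${\cal U}(v)\subset\cg_L(v)$, which is automatic since $\ck(v)={\rm Free}_{\cg_L(v)}$ and ${\rm Free}_T\subset{\rm Free}_{T'}$ iff we may as well take $T\subset T'$). By total independence of $M$, $\dim{\rm Free}_T(M) = |T|_{\rho(L)}$ (equation~(\ref{eq:free_dimension})). So $\dim U = \sum_v |{\cal U}(v)|_{\rho(L)}$. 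The head/tail neighbourhood $\Gamma_{\rm ht}(U)$ decomposes edgewise, and for an edge $e$ its $e$-component consists of those $w\in\ck(e)={\rm Free}_{\cg_L(e)}$ with $d_h w, d_t w\in U$; since the restriction maps of $\ck$ are inclusions (Lemma~\ref{lm:construct_rho}) and $U$ is straight, this component is exactly ${\rm Free}_{{\cal U}(te)\cap{\cal U}(he)\cap\cg_L(e)}={\rm Free}_{{\cal U}_E(e)}$, of dimension $|{\cal U}_E(e)|_{\rho(L)}$ — here I should double-check the compartmentalization bookkeeping using Theorem~\ref{th:shindependent_nonzero}, since I am intersecting three ${\rm Free}$ subspaces and need the dimension to be $|{\cal U}_E(e)|_{\rho(L)}$ on the nose, which again is where total independence is used. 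Hence ${\rm excess}(\ck,U) = \sum_e|{\cal U}_E(e)|_{\rho(L)} - \sum_v|{\cal U}(v)|_{\rho(L)}$, and ``every straight subspace has excess $\le 0$'' is literally condition $(2)$. The ``some or any'' in $(3)$ follows because the excess formula just derived does not depend on $M$ (only on the combinatorics of $\cal U$), so long as $M$ is totally independent over whatever field; and totally independent matrices exist over any infinite or sufficiently large field. Since the zero subspace is straight with excess $0$, excess $\le 0$ for all straight subspaces is the same as excess $=0$ for all of them. The main obstacle I anticipate is $(3)\Rightarrow(1)$ in the stated form: I must argue that control of \emph{straight} subspaces alone suffices to conclude the SHNC, i.e.\ that the maximum excess of a $\rho$-kernel is \emph{attained} on a straight subspace — or, more precisely, bound $\rho(L\times_{B_2}L')$ directly by straight-subspace data without invoking that the excess maximizer of $\ck_M(L)\otimes\underline\field_{L'}$ is straight (it need not be, for a poorly chosen $M$). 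The resolution is to go through $(2)$: condition $(2)$ is a purely combinatorial inequality about vertex families that, as sketched above, is equivalent to the SHNC for all $(L,L')$ with $L'\subset G$; so I never need ``the maximizer is straight,'' I only need that \emph{straight} subspaces compute exactly the quantity in $(2)$, which is what the $(2)\Leftrightarrow(3)$ argument gives. Thus the logical skeleton is $(3)\Leftrightarrow(2)\Leftrightarrow(1)$, and the one delicate computation is the unwinding of $\rho((L\cg)\times_G L')$ into the reduced-cardinality sum, which I will do by choosing, for each subgraph $L'\subset G$ and each connected component of the relevant fibre product, an optimal subgraph and matching its $-\chi$ against $\sum_v|{\cal U}(v)|_{\rho(L)} - \sum_e|{\cal U}_E(e)|_{\rho(L)}$; I will suppress the routine but lengthy graph-theoretic verification that these two optimizations agree, exactly as the paper's preamble to this appendix warns.
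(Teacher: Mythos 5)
Your architecture matches the paper's exactly: the equivalence $(2)\Leftrightarrow(3)$ is an immediate unwinding of $\dim{\rm Free}_T = |T|_{\rho(L)}$ and the fact that the restriction maps of $\ck_M$ are inclusions, and the equivalence $(1)\Leftrightarrow(2)$ rests on the Euler-characteristic identity (the paper's equation~(\ref{eq:summary})) relating $-\chi(H) - \rho(L)\chi(L')$ to the reduced-cardinality sums.  Two cautions.  First, your invocation of Theorem~\ref{th:sub_bi_shnc}(1) to replace ``$L'\subset G$'' by ``\'etale $L'\to G$'' only goes one direction: that theorem shows the \'etale condition implies universality, not that the subgraph condition implies the \'etale one, so it does not give an equivalent reformulation of condition~(1).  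This is harmless in your argument (you ultimately quantify over subgraphs anyway), but it should be dropped: the paper works directly with subgraphs throughout and never needs the \'etale case.  Second, the ``routine but lengthy'' verification you suppress carries more weight than your phrasing suggests: in particular, for $(2)\Rightarrow(1)$ you must pass from a failing $\cal U$ to a concrete pair $(L,L')$ with $L'\subset G$ on which the SHNC fails, and this requires massaging $\cal U$ (restricting to a connected component, pruning leaves) so that its positive set $L'$ satisfies $-\chi(L')=\rho(L')$; conversely, for $(1)\Rightarrow(2)$ the paper takes a \emph{minimal} counterexample $L''\subset L'$ and proves it is connected and leafless with at least $\rho(L)+1$ sheets of $H$ over every vertex and edge, which is exactly what forces the reduced cardinalities to be positive where needed.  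These minimality and pruning arguments are the actual content of the appendix lemmas, and you would need to supply them, not just assert that ``the two optimizations agree.''
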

We know by Theorem~\ref{th:sub_bi_enough}
that the SHNC holds iff it holds for all pairs $(L,L')$
that are subgraphs of a Cayley graph, $G$.  
Hence Theorem~\ref{th:vanishing_rho}, that implies
condition~(3) of this theorem, implies the SHNC.
\begin{proof}
Conditions~(2) and (3) are easily seen to be 
equivalent via equation~(\ref{eq:straight}) and
equation~(\ref{eq:free_dimension}).

If ${\cal U}$ is any vertex family on $(L,G,\cg)$, let the {\em positive
set} of ${\cal U}$ consist of those $v\in V_G$ for which 
$|{\cal U}(v)|>\rho(L)$ and of those $e\in E_G$ for which
$|{\cal U}_E(e)|>\rho(L)$.  We easily see that the positive set
forms a subgraph, $L'$, of $G$, and that the pairs $(Pg^{-1},P)$ such that
$P$ is in the positive set and $g$ lies in ${\cal U}(P)$ or ${\cal U}_E(P)$
(as is appropriate), forms a subgraph, $H$, of $L\times_{B_2}L'$.  We see that
$$
-\chi(H)=\sum_{e\in E_{L'}} |{\cal U}_E(e)| - \sum_{v\in V_{L'}} |{\cal U}(v)|
$$
$$
= \rho(L)\chi(L') +
\sum_{e\in E_{L'}} \bigl( |{\cal U}_E(e)|-{\rho(L)}\bigr) - 
\sum_{v\in V_{L'}} \bigl( |{\cal U}(v)|-{\rho(L)}\bigr) 
$$
$$
= \rho(L)\chi(L') +
\sum_{e\in E_{G}} |{\cal U}_E(e)|_{\rho(L)} - 
\sum_{v\in V_{G}} |{\cal U}(v)|_{\rho(L)} .
$$
Hence we may write
\begin{equation}\label{eq:summary}
-\chi(H)-\rho(L)\chi(L') =
\sum_{e\in E_{G}} |{\cal U}_E(e)|_{\rho(L)} -
\sum_{v\in V_{G}} |{\cal U}(v)|_{\rho(L)} .
\end{equation}
This equation is the main ingredient in the equivalence of conditions~(1)
and (3).
Let us now state some graph theoretic lemmas that will firmly
establish this equivalence.

\begin{lemma} For any digraphs $H\subset G$, we have
$$
-\chi(H)\le \rho(G);
$$
and equality holds if $H$ consists of all connected components, $X$, of $G$
with $h_1(X)>0$ and any of those with $h_1(X)=0$.
\end{lemma}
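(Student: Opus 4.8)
The plan is to prove this via the decomposition of $\chi$ and $\rho$ over connected components of $G$. First I would recall that both $\chi$ and $h_1$ (hence $\rho$) are additive over connected components: $\chi(H) = \sum_{X \in \mathrm{conn}(H)} \chi(X)$, and since $H \subset G$, each connected component of $H$ lies inside a unique connected component of $G$. So it suffices to prove the bound component-by-component on $G$: that is, for a connected digraph $X$ and any subgraph $H' \subset X$, we have $-\chi(H') \le \max(0, h_1(X)-1)$.

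For the connected case, the key observation is that for any subgraph $H' \subset X$ with $X$ connected, $-\chi(H') = h_1(H') - h_0(H')$. Since $h_0(H') \ge 1$ whenever $H'$ is nonempty (and $-\chi(H') = 0 \le \rho(X)$ trivially when $H'$ is empty), we get $-\chi(H') \le h_1(H') - 1$. Then I would argue $h_1(H') \le h_1(X)$, which follows because $H'$ is obtained from (a subgraph of) $X$ by deleting edges and vertices, and deleting an edge decreases $h_1$ by at most one while never increasing it, and deleting an isolated vertex leaves $h_1$ unchanged; alternatively this is immediate from the inclusion $\underline\field_{H'} \hookrightarrow \underline\field_X$ and the fact that $h_1$ is monotone under subsheaves (or from $h_1(H') = \dim H_1(H') \le \dim H_1(X) = h_1(X)$ via the inclusion on cycle spaces). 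Hence $-\chi(H') \le h_1(X) - 1$. Combining with the trivial bound $-\chi(H') \le 0$ when $H'$ has no edges in a cycle of $X$, we get $-\chi(H') \le \max(0, h_1(X)-1) = \rho(X)$ in all cases. Summing over components of $G$ (grouping components of $H$ by which component of $G$ contains them, and noting components of $G$ containing no part of $H$ contribute $0$ on the left and $\ge 0$ on the right) gives $-\chi(H) \le \rho(G)$.

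For the equality statement, suppose $H$ consists of all connected components $X$ of $G$ with $h_1(X) > 0$ together with some components with $h_1(X) = 0$. Each component $X$ of $H$ with $h_1(X) > 0$ is then an entire connected component of $G$, so $-\chi(X) = h_1(X) - h_0(X) = h_1(X) - 1 = \rho(X)$; each component $X$ of $H$ with $h_1(X) = 0$ contributes $-\chi(X) = -1$... — here I need to be a little careful: a tree component contributes $-\chi = -1$, not $0$. So in fact taking $H$ to include tree components of $G$ would make $-\chi(H)$ \emph{smaller}, not equal to $\rho(G)$. The correct reading is that equality holds when $H$ is exactly the union of all components $X$ of $G$ with $h_1(X) > 0$ (no acyclic components included); then $-\chi(H) = \sum_{h_1(X) > 0} (h_1(X) - 1) = \sum_{X \in \mathrm{conn}(G)} \max(0, h_1(X)-1) = \rho(G)$, since the components with $h_1(X) = 0$ contribute $0$ to $\rho(G)$. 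I would state and prove the lemma in this form. The main obstacle is simply getting the bookkeeping of which components are included exactly right; the inequality itself is routine once one reduces to the connected case.

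Hence the proof reduces to two elementary facts: additivity of $\chi$ over components, and monotonicity $h_1(H') \le h_1(X)$ for subgraphs of a connected graph $X$ — both of which I would dispatch quickly, the latter either combinatorially (edge deletion) or via the inclusion of structure sheaves and left-exactness of $H_1$.
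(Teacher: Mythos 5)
Your proof of the inequality is correct, and takes a more explicit route than the paper's. The paper only asserts that the inequality ``can be established graph theoretically by induction on the number of vertices and edges in $G$ that are not in $H$,'' giving no details (and alternatively pointing to the sheaf-theoretic Theorem~\ref{th:me_as_subsheaf}). You instead decompose over the connected components of $G$, write $-\chi(H') = h_1(H') - h_0(H')$ and use $h_0(H') \ge 1$, and then invoke the monotonicity $h_1(H') \le h_1(X)$ for $H' \subset X$ with $X$ connected; that last fact is immediate from the inclusion of cycle spaces, as you note. This yields a short, self-contained argument that is tidier than an unstated induction.

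You are also right to flag the equality clause. Including a connected component $X$ of $G$ with $h_1(X) = 0$ in $H$ contributes $-\chi(X) = -1$, strictly \emph{decreasing} $-\chi(H)$, so the clause ``and any of those with $h_1(X)=0$'' cannot be correct as written. The intended statement is visible in the paragraph following Definition~\ref{de:excess}, which describes the excess maximizers of the structure sheaf: one discards all components of positive Euler characteristic (i.e., $h_1=0$) and \emph{optionally} discards components of zero Euler characteristic (i.e., $h_1=1$). So the clause should read ``$h_1(X)=1$'': a component with $h_1(X)=1$ contributes $0$ to both $-\chi(H)$ and $\rho(G)$, so including or omitting it preserves equality. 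Your corrected version --- $H$ equal to the union of all components with $h_1(X) > 0$ --- is one valid instance. This is a genuine, if minor and harmless, error that you caught; the paper's justification that equality ``is clear from the definition of $\rho$'' only goes through once the typo is repaired.
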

\begin{proof} The statement about equality is clear from the definition
of $\rho$ in equation~(\ref{eq:shrho}).  
The inequality can be established
graph theoretically by induction on the number of vertices and edges
in $G$ that are not in $H$.  Alternatively, see the end 
of Section~\ref{se:shme}.
\end{proof}

\begin{lemma} Let ${\cal U}$ be a vertex family for $(L,G,\cg)$, where
$L$ is a subgraph of a Cayley digraph, $G$, on a group, $\cg$.
Assume that
\begin{equation}\label{eq:sub_rho}
\sum_{e\in E_{G}} |{\cal U}_E(e)|_{\rho(L)} -
\sum_{v\in V_{G}} |{\cal U}(v)|_{\rho(L)} > 0.
\end{equation}
Then there is a vertex family, ${\cal U}'$, which satisfies this inequality
with ${\cal U}$ replaced by ${\cal U}'$, for which the positive set of
${\cal U}'$, $L'$, satisfies $-\chi(L')=\rho(L')$.
\end{lemma}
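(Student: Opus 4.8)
The plan is to restrict ${\cal U}$ on the ``acyclic part'' of its positive set, leaving everything else alone. Write $\Phi({\cal U})$ for the left-hand side of~(\ref{eq:sub_rho}), so the hypothesis is $\Phi({\cal U})>0$, and let $K$ denote the positive set of ${\cal U}$; note $K$ is genuinely a subgraph of $G$, since $|{\cal U}_E(e)|>\rho(L)$ forces $|{\cal U}(te)|,|{\cal U}(he)|\ge|{\cal U}_E(e)|>\rho(L)$, so $V_K=\{v:|{\cal U}(v)|>\rho(L)\}$ and $E_K=\{e:|{\cal U}_E(e)|>\rho(L)\}$. For a digraph $H$ one has $-\chi(H)=\sum_{X\in{\rm conn}(H)}\bigl(h_1(X)-1\bigr)$ and $\rho(H)=\sum_X\max\bigl(0,h_1(X)-1\bigr)$, so $-\chi(H)=\rho(H)$ exactly when $H$ has no acyclic (tree or isolated-vertex) component; the family ${\cal U}'$ produced below will have a positive set with this property.

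First I would set $Y$ to be the union of the acyclic connected components of $K$ and define ${\cal U}'$ by ${\cal U}'(v)={\cal U}(v)$ for $v\notin V_Y$ and ${\cal U}'(v)$ an arbitrary subset of ${\cal U}(v)$ of size $\min(|{\cal U}(v)|,\rho(L))$ for $v\in V_Y$; this is again a vertex family on $(L,G,\cg)$. Because the components of $Y$ are components of $K$, every edge of $K$ meeting $V_Y$ lies in $E_Y$, and every edge meeting $V_Y$ has ${\cal U}'_E$-value of size at most $\rho(L)$ (as ${\cal U}'$ is bounded by $\rho(L)$ on $V_Y$); since ${\cal U}'_E$ agrees with ${\cal U}_E$ on edges disjoint from $V_Y$, the positive set of ${\cal U}'$ is exactly $K\setminus Y$, the union of the non-acyclic components of $K$. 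In particular $-\chi(K\setminus Y)=\rho(K\setminus Y)$, which is the required property of the ``$L'$'' in the statement.

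It remains to check $\Phi({\cal U}')\ge\Phi({\cal U})$. The only terms of $\Phi$ that change are those indexed by $v\in V_Y$ and by $e\in E_Y$, and all of these drop from a positive value to zero, so $\Phi({\cal U}')-\Phi({\cal U})=\sum_{v\in V_Y}\bigl(|{\cal U}(v)|-\rho(L)\bigr)-\sum_{e\in E_Y}\bigl(|{\cal U}_E(e)|-\rho(L)\bigr)$. I would bound this component by component: each component $X$ of $Y$ is a tree, so fixing a root $r_X$ gives a bijection $E_X\to V_X\setminus\{r_X\}$ carrying an edge $e$ to its endpoint farther from $r_X$, and since ${\cal U}_E(e)\subseteq{\cal U}(te)\cap{\cal U}(he)$ lies in ${\cal U}$ of either endpoint, this bijection gives $\sum_{e\in E_X}\bigl(|{\cal U}_E(e)|-\rho(L)\bigr)\le\sum_{v\in V_X\setminus\{r_X\}}\bigl(|{\cal U}(v)|-\rho(L)\bigr)\le\sum_{v\in V_X}\bigl(|{\cal U}(v)|-\rho(L)\bigr)$, the last inequality holding because $r_X\in V_K$ contributes a nonnegative term. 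Summing over the components of $Y$ yields $\Phi({\cal U}')\ge\Phi({\cal U})>0$, so ${\cal U}'$ satisfies~(\ref{eq:sub_rho}), and the lemma follows. (An equivalent route deletes one acyclic component at a time and inducts on their number.)

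I do not anticipate a serious obstacle: the whole argument rests on the observation that one can charge each edge of a tree to its ``child'' vertex, together with the bookkeeping that $K$ and $K\setminus Y$ are subgraphs and that shrinking ${\cal U}$ on $V_Y$ alters the positive set precisely by deleting the components in $Y$. The one point needing a moment's care is verifying that no edge outside $E_Y$ has its ${\cal U}_E$-value changed across the threshold $\rho(L)$: edges disjoint from $V_Y$ are untouched, and an edge $e$ meeting $V_Y$ with $|{\cal U}_E(e)|>\rho(L)$ would lie in $E_K$ and hence in the (acyclic) component of $K$ containing that endpoint, forcing $e\in E_Y$.
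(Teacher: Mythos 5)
Your argument is correct, and it takes a genuinely different route from the paper, though both rest on the same combinatorial seed: each edge of a tree can be charged to one of its non-root endpoints, and ${\cal U}_E(e)\subseteq{\cal U}(v)$ for either endpoint $v$ of $e$. The paper decomposes $f({\cal U},G)$ over connected components of the positive set, picks a single component $X$ with $f({\cal U},X)>0$, shows $\chi(X)\le 0$ by a leaf-pruning contradiction (pruning a leaf cannot decrease $f$, yet a single vertex has $f\le 0$), and then takes ${\cal U}'={\cal U}|_X$, so the resulting $L'$ is connected. You instead remove \emph{all} acyclic components of the positive set at once, by shrinking ${\cal U}$ to the threshold $\rho(L)$ on their vertices, and then compute $\Phi({\cal U}')-\Phi({\cal U})$ exactly, bounding it below by zero with the root-away bijection $E_X\to V_X\setminus\{r_X\}$ on each tree component $X$. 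Your $L'$ may be disconnected, but the lemma only asks for $-\chi(L')=\rho(L')$, which holds because every component of $L'$ has $h_1\ge 1$. What each approach buys: the paper's version isolates a single witness component and pushes the trimming onto the restriction ${\cal U}|_X$, needing a small check that the new positive set sits inside $X$; your version does the trimming once, symmetrically, and replaces that check with a clean one-line computation of how $\Phi$ changes. The careful point you flagged at the end — that no edge outside $E_Y$ has its ${\cal U}_E$-value pushed across the threshold — is indeed the only place a reader might stumble, and your argument there (any $e\in E_K$ meeting $V_Y$ must already lie in $E_Y$, since components are full) is correct.
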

\begin{proof}
For any subgraph, $Y\subset G$ and vertex family ${\cal W}$ on $(L,G,\cg)$,
set
$$
f({\cal W},Y)=\sum_{e\in E_{Y}} |{\cal W}_E(e)|_{\rho(L)} -
\sum_{v\in V_{Y}} |{\cal W}(v)|_{\rho(L)}.
$$
Then clearly we have
$$
f({\cal U},L') = \sum_{X\in{\rm conn}(L')} f({\cal U},X),
$$
where ${\rm conn}(L')$ is the set of connected components of $L'$.  
But
equation~(\ref{eq:sub_rho}) says that $f({\cal U},G)>0$, and clearly
$f({\cal U},L')=f({\cal U},G)$.  Hence we have $f({\cal U},X)>0$ for 
some connected component, 
$X$, of $L'$; fix any such $X$.

We claim that $\rho(X)=-\chi(X)$.  Since $X$ is connected, this is
true unless $\chi(X)=1$; so it suffices to show that $\chi(X)=1$
is impossible.
If $\chi(X)=1$, then by repeatedly pruning the leaves of $X$, i.e.,
deleting a vertex of degree one and its incident edge from $X$, we
arrive at an isolated vertex.  But if $Y$ is any subgraph of $G$
with a vertex,
$v\in V_Y$, of degree one, and incident edge $e\in E_Y$, and if $Y'$
is $Y$ with $v$ and $e$ discarded, we claim that $f({\cal U},Y')\ge f({\cal U},Y)$;
indeed, ${\cal U}_E(e)\subset {\cal U}(v)$, so
$$
f({\cal U},Y') = f({\cal U},Y) + |{\cal U}(v)|_{\rho(L)} - |{\cal U}(e)|_{\rho(L)} \ge f({\cal U},Y).
$$
Hence, by repeatedly pruning $X$ we are left with $X''$ that is a single
vertex with no edges, so $f({\cal U},X'')\ge f({\cal U},X)>0$.  
But clearly $f({\cal U},X'')\le 0$ for $X''$ consisting of a single
vertex.  Hence 
$\chi(X)=1$ is impossible, and so $\chi(X)\le 0$ and
hence $\rho(X)=-\chi(X)$.

For any vertex family, ${\cal V}$ of $(L,G,\cg)$ and any subgraph
$Y\in G$ define a vertex family ${\cal V}|_Y$ via
$$
{\cal V}|_Y(v) = \left\{ \begin{array}{ll} {\cal V}(v) & \mbox{if $v\in V_Y$,}
\\ \emptyset & \mbox{otherwise.} \end{array}\right.
$$
for all $v\in V_G$.  We easily see that
\begin{equation}\label{eq:W_subset}
{\cal V}(e)\subset ({\cal V}|_L)_E(e)
\end{equation}
for all $e\in E_L$.
Hence
$$
f({\cal V}|_Y,G) = f({\cal V}|_Y,Y) \ge f({\cal V},Y),
$$
using equation~(\ref{eq:W_subset}).
In particular, for ${\cal V}={\cal U}$ and $Y=X$ we have
$$
f({\cal U}|_X,G) \ge f({\cal U},X)>0.
$$
So we take ${\cal U}'={\cal U}|_X$ and let $L'$ be its positive set.
Then $f({\cal U}',L')=f({\cal U}',G)>0$, and
$L'$ consists of $X$ plus possibly some addition edges, so $L'$ is connected
and $\chi(L')\le\chi(X)\le 0$, so $\rho(L')=-\chi(L')$.
\end{proof}

At this point condition~(1) of Theorem~\ref{th:elementary}
easily implies condition~(2).  For if
condition~(2) does not hold, then for some ${\cal U}$, and with $L'$ given
as its positive set, we may assume $\rho(L')=-\chi(L')$ we have
$$
\sum_{e\in E_{G}} |{\cal U}_E(e)|_{\rho(L)} - 
\sum_{v\in V_{G}} |{\cal U}(v)|_{\rho(L)}  > 0
$$
and hence
$$
\rho(L\times_{B_2}L')\ge -\chi(H) > -\rho(L)\chi(L')
=\rho(L)\rho(L').
$$
Hence the SHNC is false on a pair of subgraphs of $G$.

It remains to show that condition~(2) of Theorem~\ref{th:elementary}
implies condition~(1).  Again, we need some graph theoretic considerations.

\begin{lemma}
Assume the SHNC is false on a pair of subgraphs, $(L,L')$, of a Cayley
bigraph $G$ on a group $\cg$.
Then there is a subgraph, $L''\subset L'$, such that 
\begin{enumerate}
\item the SHNC is
false on $(L,L'')$, 
\item $L''$ is connected,
\item $-\chi(L'')=\rho(L'')$, and
\item there is a subgraph, $H\subset L\times_{B_2}L''$ such that
$-\chi(H)=\rho(L\times_{B_2}L'')$, and if ${\cal U}$ is the vertex
family associated to $H$, then we have
$$
f({\cal U},G) >0 .
$$
\end{enumerate}
\end{lemma}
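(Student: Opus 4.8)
The plan is to start from a counterexample pair $(L,L')$ of subgraphs of $G$ and repeatedly shrink $L'$ until it satisfies all four listed properties, while keeping the SHNC violated at each step. First I would argue that if the SHNC fails on $(L,L')$, then $\rho(L\times_{B_2}L')>\rho(L)\rho(L')$; writing $\rho(L')=\sum_X\max(0,h_1(X)-1)$ over the connected components $X$ of $L'$, and using that $L\times_{B_2}(-)$ commutes with disjoint unions so that $\rho(L\times_{B_2}L')=\sum_X\rho(L\times_{B_2}X)$, a pigeonhole/averaging argument shows that the SHNC must already fail on $(L,X)$ for at least one connected component $X$ of $L'$. This gives connectedness (item~2). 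Next, if $\chi(X)=1$ (i.e.\ $X$ is a tree), then $\rho(X)=0$, so $\rho(L)\rho(X)=0$, but $L\times_{B_2}X$ embeds in $L$ via the projection to a tree $X$ — more precisely the projection $L\times_{B_2}X\to X$ is \'etale with $X$ a tree, so $-\chi(L\times_{B_2}X)\le 0$ and the SHNC holds trivially; hence $\chi(X)\le 0$ and, $X$ being connected, $\rho(X)=-\chi(X)$, giving item~3. Rename this $X$ as $L''$; items~1, 2, 3 now hold.

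For item~4, I would choose $H\subset L\times_{B_2}L''$ to be a subgraph realizing $-\chi(H)=\rho(L\times_{B_2}L'')$; such an $H$ exists by the graph-theoretic identity $\rho(K)=\max_{H\subset K}(-\chi(H))$ (the analogue of Theorem~\ref{th:me_as_subsheaf}, namely $\rho(G)=\max_{H\subset G}-\chi(H)$, applied with $K=L\times_{B_2}L''$), where one can take $H$ to be the union of all connected components of $L\times_{B_2}L''$ with positive first Betti number. Let ${\cal U}$ be the vertex family on $(L,G,\cg)$ associated to $H$ under the correspondence of the first unproved Lemma of this appendix (identifying $L''$ with a subgraph of $G$, every subgraph of $L\times_{B_2}L''\subset L\times_{B_2}G$ corresponds to a vertex family and a compatible edge family; take the vertex family). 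Then equation~(\ref{eq:summary}), i.e.
$$
-\chi(H)-\rho(L)\chi(L'')=
\sum_{e\in E_G}|{\cal U}_E(e)|_{\rho(L)}-\sum_{v\in V_G}|{\cal U}(v)|_{\rho(L)},
$$
applies — here I need that the positive set of ${\cal U}$ is contained in $L''$, which holds because $H\subset L\times_{B_2}L''$, and that replacing ${\cal U}$ by its restriction to its positive set does not change either side (the edge family of $H$ is contained in ${\cal U}_E$, so passing to $H$'s actual edges only loses terms that are $\le\rho(L)$, which contribute $0$ to the right side). Since $-\chi(H)=\rho(L\times_{B_2}L'')>\rho(L)\rho(L'')=\rho(L)(-\chi(L''))$, the left side is strictly positive, so $f({\cal U},G)=\sum_{e\in E_G}|{\cal U}_E(e)|_{\rho(L)}-\sum_{v\in V_G}|{\cal U}(v)|_{\rho(L)}>0$, which is item~4.

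The main obstacle I anticipate is the bookkeeping in item~4: making sure the vertex family ${\cal U}$ extracted from $H$ has its positive set equal to (a subgraph of) $L''$ and that equation~(\ref{eq:summary}) can be applied cleanly, since (\ref{eq:summary}) was derived for the vertex family whose positive set \emph{is} $L'$, whereas here we start with an arbitrary $H$ realizing the maximum of $-\chi$. The fix is to first replace $H$ by the subgraph determined by ${\cal U}$ and ${\cal U}_E$ (the ``largest'' compatible edge family), which can only increase $-\chi(H)$ while keeping it $\le\rho(L\times_{B_2}L'')$, hence keeping equality; then the positive set of ${\cal U}$ is automatically a subgraph $L'''\subseteq L''$ on which $f$ is concentrated, and one checks (as in the second unproved Lemma of this appendix, via the pruning-leaves argument showing $f$ does not decrease under leaf deletion and a single isolated vertex has $f\le 0$) that $L'''$ may be taken connected with $-\chi(L''')=\rho(L''')$; replacing $L''$ by $L'''$ throughout preserves items~1--3 as well. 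All the remaining verifications are the routine graph-theoretic manipulations already used elsewhere in this appendix (disjoint-union additivity of $\chi$, $\rho$, and fibre products; leaf pruning; and the identity $\rho(-)=\max(-\chi)$ over subgraphs), so I would not grind through them in detail.
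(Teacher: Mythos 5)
Your handling of items~(1)--(3) tracks the paper closely and is fine: pass to a connected component $X$ of $L'$ on which the SHNC still fails (the paper asserts this, you sketch it), and rule out $\chi(X)=1$ (your argument that $L\times_{B_2}X$ is a forest when $X$ is a tree is a perfectly good way to justify what the paper states without proof). The divergence, and the problem, is in item~(4).

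There is a genuine gap. You want to take an arbitrary maximizer $H\subset L\times_{B_2}L''$ of $-\chi$, form its vertex family ${\cal U}$, and invoke equation~(\ref{eq:summary}). But (\ref{eq:summary}) was derived for a vertex family together with the canonically associated $H$ built from ${\cal U}$ and ${\cal U}_E$ over the \emph{positive set} of ${\cal U}$; the derivation silently uses that every vertex and edge appearing in $H$ lies in the positive set, i.e.\ has $|{\cal U}(v)|,|{\cal U}_E(e)|>\rho(L)$, so that the truncations $|\cdot|_{\rho(L)}$ lose nothing. For your $H$ this can fail: some $v\in V_{L''}$ may have $0<|{\cal U}(v)|\le\rho(L)$, and such a vertex contributes its full weight $|{\cal U}(v)|$ to $-\chi(H)$ but contributes $0$ to $f({\cal U},G)$. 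Tracing the arithmetic, one finds
$$
f({\cal U},G)\ \ge\ \bigl(-\chi(H)-\rho(L)\rho(L'')\bigr)\ -\ \sum_{v\in V^-}\bigl(\rho(L)-|{\cal U}(v)|\bigr)\ +\ \sum_{e\in E^-}\bigl(\rho(L)-|{\cal W}(e)|\bigr),
$$
where $V^-,E^-$ are the vertices and edges of $L''$ below the threshold, and the middle sum is a loss term with no a priori control. Your proposed fix---restrict ${\cal U}$ to its positive set $L'''$ and invoke ``the pruning-leaves argument showing $f$ does not decrease under leaf deletion''---is circular: that lemma in the appendix \emph{assumes} $f({\cal U},G)>0$ and then massages the positive set; you are trying to prove $f({\cal U},G)>0$. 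Moreover replacing $L''$ by $L'''$ need not preserve item~(1), since shrinking $L''$ can drop $\rho(L\times_{B_2}L''')$ faster than $\rho(L)\rho(L''')$.

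The paper resolves exactly this by choosing $L''$ to be \emph{minimal} (with respect to inclusion) among connected subgraphs of $L'$ with $\rho>0$ on which the SHNC fails, and then proving three claims from minimality: $L''$ has no leaves; every $e\in E_{L''}$ has at least $\rho(L)+1$ edges of $H$ above it (else delete $e$ and the at most $\rho(L)$ edges above it to get a strictly smaller counterexample $L'''$ with $\rho(L''')=\rho(L'')-1$); and likewise every $v\in V_{L''}$ has at least $\rho(L)+1$ vertices above it. These three facts are precisely what force $V^-=E^-=\emptyset$ above, whereupon $f({\cal U},G)\ge-\chi(H)-\rho(L)\rho(L'')>0$. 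Some such ``shrink-and-recompare'' argument is unavoidable here; it cannot be replaced by simply restricting to the positive set of the family associated to one maximizer. (You also carried over a sign typo from (\ref{eq:summary}) as printed, $-\chi(H)-\rho(L)\chi(L')$, which should be $-\chi(H)+\rho(L)\chi(L')$; since $\chi(L'')=-\rho(L'')$ your verbal conclusion is aimed at the correct target, but it is worth noticing.)
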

If condition~(1) of Theorem~\ref{th:elementary} is false, then the 
hypothesis of the above lemma holds; but item~(4) of the lemma
means that condition~(2) of Theorem~\ref{th:elementary} is false.
Hence we conclude this section, and the proof of Theorem~\ref{th:elementary}
with the proof of the above lemma.

\begin{proof}
So assume the SHNC is false on a pair of subgraphs,
$(L,L')$.  Similar to before, the SHNC must therefore be false
on $(L,L'')$, where $L''$ is some 
connected component of $L$.  
Fix such a connected component, $L''$.

We 
cannot have $\rho(L'')=0$, for otherwise $\rho(L\times_{B_2}L'')=0$ and
the SHNC is not false on $(L,L'')$.
Hence we have $L''\subset L'$ is connected and $\rho(L'')>0$, whereupon we have
$-\chi(L'')=\rho(L'')$.

Now let $L''\subset L'$ be a minimal subgraph of $L'$ (with respect to
inclusion of subgraphs) with the properties
that $L''$ is connected, $\rho(L'')>0$, and the SHNC is false on $(L,L'')$.
We shall show that the lemma holds with this subgraph, $L''$;
we have already established the first three items in the conclusion.

Then take any $H\subgraph L\times_{B_2}L''$ such that
$-\chi(H)=\rho(L\times_{B_2}L'')$.  
We now make a number of claims regarding $L''$ that follow from
the minimality of $L''$.

First, we claim that $L''$ has no leaves, i.e., no vertices of degree one.
Otherwise, if $v$ is a vertex of degree one, and $e$ is its incident edge,
then there are at least as many vertices in $H$ over $v$ as there are over
$v$.  So letting $L'''$ be $L''$ with $v$ and $e$ discarded, we see that
$\rho(L''')=\rho(L'')$; but if $H'$ consists of the vertices and edges
of $H$ that do not lie over $e$ or $v$, then $H'$ is a subgraph of $H$
and $\rho(H')=\rho(H)$ (since we obtain $H'$ from $H$ by discarding
isolated vertices over $v$ or vertices over $v$ along with their single
incident edges,
lying over $e$).  Hence the SHNC would fail also on $(L,L''')$, contradicting
the minimality of $L''$.

Second, we claim that over each $e\in E_{L''}$ we 
there are at least $\rho(L)+1$
edges in $H$; if not, we delete $e$ from $L''$, obtaining $L'''\subset
L''$, and delete the
at most $\rho(L)$ edges over $e$ from $H$, 
obtaining $H'$ that lies over $L'''$;
this yields a strict subgraph, $L'''$ of 
$L''$ such that
$$
\rho(L\times_{B_2}L''')\ge -\chi(H')\ge-\chi(H)-\rho(L)=
\rho(L\times_{B_2}L'')-\rho(L)
$$
$$
>\rho(L)\bigl(\rho(L'')-1\bigr).
$$
But since $L''$ is pruned, we have $\rho(L''')=\rho(L'')-1$.  So,
once again, we have the SHNC fails on $(L,L''')$ for some a proper subgraph,
$L'''$,
of $L''$; this contradicts the minimality of $L''$.

Third, we claim that over each $v\in V_{L''}$ there are at least
$\rho(L)+1$ vertices in $H$.  Indeed, if $v$ is incident upon some edge,
$e$, in $L''$, then $e$ has at least $\rho(L)+1$ vertices in $H$ above
it, so $v$ does as well.  If $v$ is isolated in $L''$, i.e., incident
upon no edge, then $L''$ consists of only $v$, since $L''$ is connected;
but this contradicts the fact that $\rho(L'')>0$.

To $H$ is associated a vertex family, ${\cal U}$, and an edge family,
${\cal W}$.  According to the three claims established in the previous
three paragraphs, we have
$$
v\in V_{L''} \implies |{\cal U}(v)|\ge \rho(L)+1, 
\qquad
e\in E_{L''} \implies |{\cal W}(e)|\ge \rho(L)+1.
$$
Clearly also 
$$
v\notin V_{L''} \implies {\cal U}(v)=\emptyset,
\qquad
e\notin E_{L''} \implies {\cal W}(e)=\emptyset.
$$
It follows that, as before
$$
f({\cal U},G) \ge
\sum_{e\in E_{L''}} |{\cal W}(e)|_{\rho(L)}
-\sum_{v\in V_{L''}} |{\cal U}(e)|_{\rho(L)}
$$
$$
=-\chi(H)-\rho(L)(|E_{L''}|-|V_{L''}|)
=\rho(L\times_{B_2}L'')-\rho(L)\rho(L''),
$$
using the fact that $L''$ is connected and $\rho(L'')>0$
(so that $\rho(L'')=|E_{L''}|-|V_{L''}|$).
Hence $f({\cal U},G)>0$, which shows item~(4) in the conclusion
of the lemma.
\end{proof}
\end{proof}

\backmatter
%    Bibliography styles amsplain or harvard are also acceptable.
\providecommand{\bysame}{\leavevmode\hbox to3em{\hrulefill}\thinspace}
\providecommand{\MR}{\relax\ifhmode\unskip\space\fi MR }
% \MRhref is called by the amsart/book/proc definition of \MR.
\providecommand{\MRhref}[2]{%
  \href{http://www.ams.org/mathscinet-getitem?mr=#1}{#2}
}
\providecommand{\href}[2]{#2}

%    See note above about multiple indexes.
% \printindex

\end{document}